\newenvironment{customthm}[1]
  {\innercustomthm}
  {\endinnercustomthm}
\newenvironment{customcor}[1]
  {\innercustomcor}
  {\endinnercustomcor}
\definecolor{myred}{rgb}{0.75,0,0}
\definecolor{mygreen}{rgb}{0,0.5,0}
\definecolor{myblue}{rgb}{0,0,0.65}
\newcommand{\R}{\mathbb{R}}
\newcommand{\C}{\mathbb{C}}
\newcommand{\Q}{\mathbb{Q}}
\newcommand{\A}{{\mathbb{A}}}
\newcommand{\Z}{\mathbb{Z}}
\newcommand{\F}{\mathbb{F}}
\renewcommand{\P}{\mathbb{P}}
\DeclareMathOperator{\Gal}{Gal}
\DeclareMathOperator{\Spec}{Spec}
\DeclareMathOperator{\Tr}{Tr}
\DeclareMathOperator{\rank}{rank}
\DeclareMathOperator{\lcm}{lcm}
\DeclareMathOperator{\norm}{N}
\theoremstyle{plain}
\newtheorem{theorem}{Theorem}[section]
\newtheorem{proposition}[theorem]{Proposition}
\newtheorem{lemma}[theorem]{Lemma}
\newtheorem{corollary}[theorem]{Corollary}
\theoremstyle{definition}
\newtheorem{definition}[theorem]{Definition}
\newtheorem{remark}[theorem]{Remark}
\newtheorem{example}[theorem]{Example}
\theoremstyle{remark}
\numberwithin{equation}{section}
\DeclareFontFamily{U}{wncy}{}
    \DeclareFontShape{U}{wncy}{m}{n}{<->wncyr10}{}
    \DeclareSymbolFont{mcy}{U}{wncy}{m}{n}
    \DeclareMathSymbol{\Sh}{\mathord}{mcy}{"58}
    \DeclareMathSymbol{\Den}{\mathord}{mcy}{"44}
    \DeclareMathSymbol{\Num}{\mathord}{mcy}{"4E}
\DeclareMathOperator{\ord}{ord}
\renewcommand{\O}{\mathcal{O}}
\newcommand{\Qbar}{\overline{\Q}}
\newcommand{\ie}{\textit{i.e.}}
\newcommand{\cchi}{\hm{\chi}}
\newcommand{\trivcar}{\hm{1}}
\newcommand{\Reg}{\mathrm{Reg}}
\newcommand{\gp}{\mathfrak{p}}
\newcommand{\Gauor}[1]{\mathbf{G}\left(#1\right)}
\newcommand{\Gauss}[3]{\mathrm{G}_{#1}{\left(#2, #3\right)}}
\newcommand{\oomega}{\hm \omega}
\newcommand{\llambda}{\hm \lambda}
\newcommand{\pval}{\nu_{\mathfrak{p}}}
\newcommand{\charpol}[2]{\det\left(1- {#1}\, T \left| #2\right.\right)}
\newcommand{\Jcal}{\mathcal{J}}
\newcommand{\TrivLat}{\Lambda}
\DeclareMathOperator{\Frob}{Fr}
\definecolor{nicolor}{RGB}{0,0,205}
\title{On the arithmetic of a family of superelliptic curves}
\author{Sarah Arpin, Richard Griffon, Libby Taylor, and Nicholas Triantafillou}
\begin{document}
\maketitle
\begin{abstract}
    Let $p$ be a prime, let $r$ and $q$ be powers of $p$, and let $a$ and $b$ be relatively prime integers not divisible by $p$. Let $C/\mathbb F_{r}(t)$ be the superelliptic curve with affine equation $y^b+x^a=t^q-t$. 
    Let $J$ be the Jacobian of $C$. By work of Pries--Ulmer \cite{PriesUlmer2016}, $J$ satisfies the Birch and Swinnerton-Dyer conjecture (BSD).
    Generalizing work of Griffon--Ulmer \cite{GriffonUlmer}, we compute the $L$-function of $J$ in terms of certain Gauss sums. 
    In addition, we estimate 
    several arithmetic invariants of $J$ appearing in BSD, including the rank of the Mordell--Weil group $J(\mathbb F_{r}(t))$, the Faltings height of $J$, and the Tamagawa numbers of $J$ in terms of the parameters $a,b,q$. 
    For any $p$ and $r$, we show that for certain $a$ and $b$ depending only on $p$ and $r$, these Jacobians provide new examples of families of simple abelian varieties of fixed dimension and with unbounded  analytic and algebraic rank as $q$ varies through powers of $p$.  
    Under a different set of criteria on $a$ and $b$, we prove that the order of the Tate--Shafarevich group $\Sh(J)$ grows quasilinearly in $q$ as $q \to \infty.$
\end{abstract}

\section{Introduction} 

Let $p$ be a prime number, let $r$ be a power of $p$, let $\mathbb F_{r}$ denote the finite field with $r$ elements, and let $K = \mathbb \F_{r}(t)$. Let $J/K$ be a principally polarized abelian variety of dimension $g$.

The Birch and Swinnerton-Dyer conjecture (abbreviated as BSD in what follows) is a sweeping statement that predicts a relationship between several important analytic and arithmetic quantities associated to $J$. On the analytic side, the central object of study is the $L$-function $L(J,T)$, a meromorphic function on the complex plane which encodes the action of Frobenius elements.
 
The order of vanishing $\ord_{T=r^{-1}}L(J,T)$ of $L(J,T)$ at the `central point' and the leading coefficient $L^*(J)$ of $L(J,T)$ expanded as a power series at $T = r^{-1}$ are of particular interest.
On the arithmetic side, $J(K)$ is a finitely generated abelian group by the Mordell--Weil theorem. Its rank, $\rank J(K) \colonequals \dim_{\mathbb Q} J(K) \otimes \mathbb Q$ is conjectured to equal $\ord_{T=r^{-1}}L(J,T)$. Other terms include the size of the torsion subgroup $J(K)_{\mathrm{tors}}$, the regulator $\Reg(J)$, the Tate--Shafarevich group $\Sh(J)$, the local Tamagawa numbers $c_{v}(J)$, and the exponential Faltings height $H(J)$.  
In this article, we study the BSD invariants for a family of abelian varieties $J/K$, which we now describe.

Let $q$ be a power of $p$ and let $a,b> 1$ be coprime integers which are both coprime to $p$. Let $C/K$ be the unique (up to isomorphism) smooth projective curve containing the affine curve defined by
\begin{equation}\label{MainEq}
  y^b+x^a=t^q-t
\end{equation}
as a dense open subset.  The curve $C$ is a cyclic Galois cover of $\mathbb P^1$, i.e. a \emph{superelliptic} curve. 
Let $J$ be the Jacobian of $C$. Since $J$ satisfies BSD by  \cite[Corollary~3.1.4]{PriesUlmer2016}, it is 
particularly interesting to study its $L$-function and BSD invariants.

Our main results include: an explicit formula for $L(J,T)$ in terms of Gauss sums, an analogue of the Brauer--Siegel theorem relating the asymptotic growth of $\Sh(J), \Reg(J)$, and $H(J)$ for $J$, and a criterion on $a$ and $b$ depending only on $r$ so that $\rank J(K)$ grows quasi-linearly in $q$. This last result provides new explicit examples of families of simple abelian varieties of fixed dimension, but unbounded rank. Under different criteria on $a$ and $b$, we prove that $\rank J(K) = 0$ and (via our Brauer--Siegel analogue for $J$) that the order of the Tate--Shafarevich group $\Sh(J)$ is unbounded as $q\to \infty$. In fact, by computing the Faltings height $H(J)$, we are able to provide explicit asymptotics for $\Sh(J) \cdot \Reg(J)$ more generally.

We also study a number of other arithmetic and geometric properties of $J$. For instance, we show that $J$ is simple if and only if $a$ and $b$ are both primes. We also compute the minimal proper regular simple normal crossings model of $J$ (using the method described in \cite{Dokchitser2018}) and apply it to show that at any place $v$ of bad reduction, $J$ has unipotent reduction, to determine that the Tamagawa numbers $c_{v}$ of $J$ are all equal to $1$, to compute the conductor $N(J)$, and to give an explicit formula for the the Faltings height of $J$.

In Section~\ref{sec:bsd_conj_for_j}, we include a discussion of the Birch and Swinnerton-Dyer conjecture for $J$:

\begin{theorem}\label{thm:bsd}
The Jacobian $J$ of $C$ satisfies the Birch and Swinnerton-Dyer conjecture. That is:

\begin{itemize}
    \item The algebraic and analytic ranks of $J$ coincide:
    $\ord_{T=r^{-1}}L(J,T)=\rank J(K)$.
    \item The Tate--Shafarevich group $\Sh(J)$ is finite.
    \item The BSD formula holds:
    \begin{equation}\label{eq:BSD.formula}
    L^*(J)=\frac{|\Sh(J)|\, \Reg(J)\, \prod_v c_v(J)}{H(J)\, r^{-g}\, |J(K)_{\mathrm{tors}}|^2},
    \end{equation}
    where the $c_v(J)$ are the local Tamagawa numbers of $J$ and $\Reg(J)$ is the regulator.
\end{itemize}
\end{theorem}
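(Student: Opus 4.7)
The strategy is to apply the criterion of Shioda~\cite{shioda86}: if the smooth projective $\F_{r}$-surface $S$ birational to the affine equation~\eqref{MainEq} is dominated by a product of smooth projective curves, then $J=\mathrm{Jac}(C)$ satisfies the full Birch and Swinnerton-Dyer conjecture --- i.e.\ the equality of algebraic and analytic ranks, the finiteness of $\Sh(J)$, and the formula~\eqref{eq:BSD.formula}. The essential task is therefore to construct such a dominant rational map to $S$ from a product of curves.

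Set $n=ab$, which is coprime to $p$ by hypothesis, and introduce two smooth curves over $\F_{r}$: the classical Fermat curve $F_{n}$ with affine model $U^{n}+V^{n}=1$ (smooth because $p\nmid n$), and the Artin--Schreier-type cover $E$ with affine model $t^{q}-t=z^{n}$ (smooth because $\partial_{t}(t^{q}-t-z^{n})=-1\ne 0$ in characteristic $p$). Define the rational map
\[
\Phi\colon E\times F_{n}\dashrightarrow S,\qquad (t,z,U,V)\longmapsto \bigl(t,\ z^{b}V^{b},\ z^{a}U^{a}\bigr).
\]
A direct check shows $\Phi$ lands in the affine surface defined by~\eqref{MainEq}:
\[
y^{b}+x^{a}=z^{ab}(U^{ab}+V^{ab})=z^{n}(U^{n}+V^{n})=z^{n}=t^{q}-t.
\]
Conversely, given a generic $(t,x,y)\in S$, choosing an $n$-th root $z$ of $t^{q}-t$ and then $a$-th and $b$-th roots to define $U,V$ (possible using $\gcd(a,b)=1$, and automatically satisfying $U^{n}+V^{n}=1$ by the defining equation of $S$) produces a preimage; hence $\Phi$ is generically finite, of degree $a^{2}b^{2}$, and in particular dominant.

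Passing to smooth projective models yields a dominant rational map $\widetilde{E}\times F_{n}\dashrightarrow S$ from a product of smooth projective curves. By the main theorem of~\cite{shioda86}, this reduces the Tate conjecture for $S$ to the Tate conjecture for a product of curves, which is classical, and Shioda then packages this input into the full BSD conjecture for the Jacobian of the generic fiber. The creative step is writing down $\Phi$; the obstacle to worry about is not the existence of $\Phi$ itself (which is transparent once guessed) but rather the care needed in passing from the affine to the smooth projective picture and in checking that Shioda's conclusion transfers cleanly to $J/K$. Both points are routine and handled by the translation already carried out in~\cite[\S3]{PriesUlmer2016}, which we will follow.
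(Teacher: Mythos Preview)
Your proposal is correct and follows essentially the same strategy as the paper: show that the associated surface is dominated by a product of curves, then invoke the Shioda/Ulmer machinery to conclude BSD. The only cosmetic difference is in the choice of dominating product---the paper uses $C_0\times Y$ where $C_0:\ y^b+x^a=1$ and $Y:\ u^{ab}=t^q-t$ (via the observation that $C$ becomes constant over $K(u)$), whereas you use the larger Fermat curve $F_n$ in place of $C_0$; your map factors through the paper's via $F_n\to C_0$, $(U,V)\mapsto(V^b,U^a)$.
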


Theorem~\ref{thm:bsd} follows from \cite[Theorem 3.1.2]{PriesUlmer2016}. In our setting, BSD opens up a powerful analytic approach to computing $\rank J(K)$. The strategy is to determine the $L$-function sufficiently explicitly so that one can compute/bound $\ord_{T=r^{-1}}L(J,T)$. In several cases, this strategy has led to new families of abelian varieties of fixed dimension but with unbounded rank.  In~\cite{Ulmer2002}, Ulmer used this strategy to produce the first non-isotrival families of elliptic curves over $\F_p(t)$ satisfying BSD and with arbitrarily large analytic rank. (Isotrivial families of elliptic curves over $\F_{p}(t)$ with unbounded rank had previously been constructed by more algebraic methods in \cite{tate1967rank}.) In~\cite{Ulmer2006}, Ulmer proves an analogue of the previous results for abelian varieties of larger dimension; in particular, he proves that for every $g>0$ and for every prime $p$, there is an absolutely simple, non-isotrivial abelian variety of dimension $g$ over $\F_p(t)$ satisfying BSD and of arbitrarily large analytic rank. These two papers use Kummer towers of field extensions to produce the abelian varieties. In \cite{BHPPPSSU2015}, the authors prove similar results for another family of curves over function fields. They develop new algebro-geometric techniques involving explicit subgroups of divisors on the Jacobian over towers of function fields, thereby expanding the tools used to study curves of arbitrary genus over function fields.

Following \cite{GriffonUlmer}, we compute the $L$-function using two different techniques: once using the arithmetic of Gauss sums (Section~\ref{sec:Lfunction}) and a second time via a cohomological computation (Section~\ref{sec:Cohomological.Lfunction}). In \cite{GriffonUlmer}, the authors were able to apply results of Shioda \cite{shioda1992} to compute the $L$-functions of their family of elliptic curves. Since Shioda's results depend upon the classification of reduction types of elliptic curves, they do not apply directly to higher genus curves, such as our family of superelliptic curves. Fortunately, we have a detailed description of the minimal proper regular SNC model (Section~\ref{sec:geometry}), which we use to extend Shioda's argument to compute the $L$-function of our family.

Other work has studied ranks of Jacobians of curves when the field varies in Artin--Schreier towers, which corresponds to varying $q$ in our setup. Given rational functions $f, g \in \F_{r}(t)$, \cite{PriesUlmer2016} includes a study of curves with affine model $f(x) - g(y) = t^q - t$. Under genericity conditions on $f$ and $g$, including critical points having multiplicity $1$ and restrictions on the order of poles, they prove that the rank of the Jacobian is unbounded as $q$ varies through powers of $p$. The case $f(x) = x^2$ satisfies their genericity assumptions, so their work applies to generic hyperelliptic curves. However, the critical points of $f(x) = x^a$ are not generic when $a > 2$, so their work does not apply to most superelliptic curves. In fact, \cite{PriesUlmer2016} shows that many families of superelliptic curves over $\F_{r}(t)$ have Jacobians with bounded rank as $q$ varies. More recently, \cite{GriffonUlmer} studied the family of elliptic (and superelliptic) curves with affine model $y^2 = x^3 + t^q - t$. In this case, they show that, as $q$ varies, either the the rank is always $0$ or the rank is unbounded, depending only on the congruence class of $p$ modulo $6$.

In this article, we generalize the work of \cite{GriffonUlmer}, showing that the rank of $J$ is sometimes $0$ and sometimes unbounded as $q$ varies, depending on $r$, $a$ and $b$. To state our results, we define $o_{p}(n)$ to be the order of $p$ in $\mathbb Z/n\mathbb Z$ and recall that an integer $n$ is said to be \emph{supersingular} for $p$ if some power of $p$ is congruent to $-1$ modulo $n$. Note that if $n$ is supersingular for $p$, then $o_{p}(n)$ is automatically even.

In Section~\ref{sec:rank.0}, we prove:
\begin{theorem}\label{thm:rankzero}
Suppose that the pair $(a,b)$ satisfies one of the following:
\begin{enumerate}[(1)]
    \item $a o_p(a)$ and $b o_p(b)$ are relatively prime;
    \item  $a o_p(a)$ is odd, and $b$ is supersingular for $p$; or 
    \item  $a$ is supersingular for $p$, and $b o_p(b)$ is odd.
\end{enumerate}
Then, for any power $q$ of $p$, we have 
$\ord_{T=r^{-1}} L(J,T) = \rank J(K) = 0$.
\end{theorem}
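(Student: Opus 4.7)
The plan is to leverage the explicit Gauss-sum factorization of $L(J,T)$ established earlier in the paper. Write $L(J,T) = \prod_{O} P_{O}(T)$, where $O$ runs over $\langle p \rangle$-orbits of non-trivial characters in the cohomological decomposition of $C$ under the action of $\mu_{a} \times \mu_{b}$, including the one-sided contributions coming from the intermediate covers $x^{a} = t^{q} - t$ and $y^{b} = t^{q} - t$. Each factor $P_{O}(T)$ has reciprocal roots of absolute value $r^{1/2}$, and a Hasse--Davenport analysis of the Gauss sums translates the condition $P_{O}(r^{-1}) = 0$ into a clean combinatorial statement: the orbit $O$ must be \emph{self-conjugate}, i.e., closed under the negation involution on the character group. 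For a joint orbit parametrised by $(i,j) \in (\Z/a\Z)^{\times} \times (\Z/b\Z)^{\times}$, self-conjugacy amounts to the existence of $k \geq 1$ with $p^{k} \equiv -1 \pmod{a}$ and $p^{k} \equiv -1 \pmod{b}$ simultaneously, i.e., simultaneous supersingularity of $a$ and $b$; for a one-sided orbit on the $a$-side (resp.\ $b$-side), it requires only $a$ (resp.\ $b$) to be supersingular for $p$. Thus $\ord_{T=r^{-1}}L(J,T)$ equals the number of self-conjugate orbits, and it suffices to show this count is zero under each hypothesis.

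With the dictionary in hand, each hypothesis produces an arithmetic obstruction to self-conjugacy. Under $(1)$, simultaneous supersingularity of $a$ and $b$ would force $o_{p}(a)$ and $o_{p}(b)$ both to be even, contradicting $\gcd(a\,o_{p}(a),\,b\,o_{p}(b)) = 1$; hence at most one of $a, b$ is supersingular, killing all joint self-conjugate orbits, and a further elementary check using the full coprimality hypothesis (in particular $\gcd(a, o_{p}(b)) = \gcd(b, o_{p}(a)) = 1$) rules out any one-sided self-conjugate orbit on the potentially supersingular side. Under $(2)$, $a\,o_{p}(a)$ odd forces $o_{p}(a)$ to be odd, so $a$ is not supersingular for $p$, killing all joint self-conjugate orbits and all $a$-side one-sided orbits; the supersingularity of $b$ is precisely the hypothesis needed to handle the $b$-side one-sided contributions, where a direct analysis of the $L$-function of $y^{b} = t^{q} - t$ shows that no self-conjugate orbits survive. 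Case $(3)$ follows by the evident symmetry swapping $a \leftrightarrow b$ and $x \leftrightarrow y$.

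Combining the three cases gives $\ord_{T=r^{-1}} L(J,T) = 0$, and Theorem~\ref{thm:bsd} then yields $\rank J(K) = 0$. The main obstacle is setting up the self-conjugacy dictionary precisely: one must use the Hasse--Davenport relation to pin down that a factor of $L(J,T)$ vanishes at exactly $T = r^{-1}$ (rather than at some other point on the circle $|T| = r^{-1/2}$), and extend this smoothly to the one-sided orbits from the two intermediate Artin--Schreier--Kummer covers. Once the dictionary is in place, the verification of each hypothesis reduces to elementary arithmetic on $o_{p}(\cdot)$ and the supersingularity definition.
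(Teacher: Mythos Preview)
Your proposal has genuine gaps. The paper's proof proceeds by an entirely different mechanism---$\mathfrak p$-adic valuations of Gauss sums via Stickelberger's theorem---and the ``self-conjugacy dictionary'' you posit is neither established in the paper nor correct as stated.

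First, the ``one-sided orbits'' do not exist. The $L$-function of $J$ (Theorem~\ref{theorem:Lfunction}) is indexed by orbits in $S=(\Z/a\Z\smallsetminus\{0\})\times(\Z/b\Z\smallsetminus\{0\})\times\F_q^\times$; the intermediate covers $x^a=t^q-t$ and $y^b=t^q-t$ have genus~$0$ and contribute nothing. So the entire apparatus you build to ``handle the $b$-side one-sided contributions'' in case~(2) is addressing a non-issue.

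Second, and more seriously, the claimed equivalence ``$P_O(r^{-1})=0$ iff $O$ is self-conjugate'' is not justified and is false in the direction you would need. The factor $(1-\oomega(o)T^{|o|})$ vanishes at $T=r^{-1}$ precisely when $\oomega(o)=r^{|o|}$. But Lemma~\ref{lem:oomega-value} shows that even when both $a$ and $b$ are supersingular (so every orbit is self-conjugate in your sense), one has $\oomega(o)=\llambda_{(i,\alpha)}(\alpha)^{-1}\llambda_{(j,\alpha)}(\alpha)^{-1}r^{|o|}$, and this equals $r^{|o|}$ only when $\alpha$ is an $(ab)$th power in $\F_r(\alpha)$. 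So self-conjugacy of the $(i,j)$-part does not decide the question; the $\alpha$-coordinate matters. Conversely, you give no argument that $\oomega(o)=r^{|o|}$ forces self-conjugacy, and ``Hasse--Davenport analysis'' does not supply one.

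Third, your use of the hypothesis in case~(2) is backwards. You claim that supersingularity of $b$ is what ``shows that no self-conjugate orbits survive'' on the $b$-side; but supersingularity of $b$ makes the $b$-projection \emph{more} self-conjugate, not less. In the paper's actual argument (Lemma~\ref{lem:bad-Gauss-sums}), supersingularity of $b$ is used via Lemma~\ref{lemma:num.den.supersing} to pin $\Den(\pi_b(o))=2$, while $a\,o_p(a)$ odd forces $\Den(\pi_a(o))$ to be odd by Proposition~\ref{prop:DenDivides_nopn}. The point is that two reduced fractions with unequal denominators cannot sum to $1$, so $\pval(\oomega(o))/|o|\neq 1$, whence $\oomega(o)\neq r^{|o|}$. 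This denominator-mismatch argument is the heart of the paper's proof, and it genuinely requires both halves of hypothesis~(2); your sketch would (if the dictionary held) deduce rank~$0$ from $a\,o_p(a)$ odd alone, which is not what the theorem asserts and is not what the paper proves.
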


For any prime $p$, the hypotheses of Theorem~\ref{thm:rankzero} are satisfied for infinitely many pairs of primes $a, b$, as we show in  Lemma~\ref{lem:infinitude-of-bad-omega}. In Section~\ref{sec:rank.large}, we prove:
\begin{theorem}
\label{thm:analytic-rank-lower-bounds}
Let $p \neq 2$ be an odd prime.
Let $a$ and $b$ be relatively prime positive integers which are both supersingular for $p$. Let $\nu_a, \nu_b\geq 1$ be the least positive integers such that $p^{\nu_{a}} \equiv -1 \pmod{a}$ and $p^{\nu_{b}} \equiv -1 \pmod{b}$.
Suppose also that $[\F_{r}:\F_{p}]$ is a multiple of both $4\nu_{a}$ and $4\nu_{b}$.

Then, we have
\[
(a-1)(b-1) \left \lceil \frac{1}{\log_p(q)}\left( \frac{q-1}{ab}  - \frac{p\sqrt{q} - 1}{p-1}\right)\right \rceil \leq \rank J(K)\,.
\]
\end{theorem}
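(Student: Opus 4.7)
The plan is to use Theorem~\ref{thm:bsd} to replace the algebraic rank $\rank J(K)$ with the analytic order of vanishing $\ord_{T=r^{-1}}L(J,T)$, and then to bound the latter using the explicit factorization of $L(J,T)$ developed earlier in the paper. That factorization writes $L(J,T)$ as a product indexed by Frobenius orbits $\mathcal{O}$ of admissible triples $(\chi_{a},\chi_{b},\psi)$, where $\chi_{a}$ (resp.\ $\chi_{b}$) is a nontrivial multiplicative character of $\F_{r}^{\times}$ of order dividing $a$ (resp.\ $b$) and $\psi$ is a nontrivial additive character of $\F_{q}$, of factors of the form $(1-\gamma_{\mathcal{O}}T^{|\mathcal{O}|})$, with $\gamma_{\mathcal{O}}$ an explicit Gauss/Jacobi-sum product. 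Such a factor contributes $+1$ to $\ord_{T=r^{-1}}L(J,T)$ precisely when $\gamma_{\mathcal{O}}=r^{|\mathcal{O}|}$, so the task reduces to exhibiting many orbits with this property.

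The divisibility hypothesis $4\nu_{a}\mid[\F_{r}:\F_{p}]$ will play two roles. First, it implies $o_{p}(a)\mid[\F_{r}:\F_{p}]$, so that the $r$-power Frobenius fixes every character $\chi_{a}$ of order dividing $a$ individually; combined with the analogous statement for $\chi_{b}$, this shows that Frobenius orbits of triples reduce to orbits on the $\psi$-coordinate, so the $(a-1)(b-1)$ pairs $(\chi_{a},\chi_{b})$ of nontrivial characters contribute independently. Second, by the standard supersingular Stickelberger/Hasse--Davenport identity, $g_{\F_{r}}(\chi_{a})$ is a real number of the form $\varepsilon_{\chi_{a}}\,r^{1/2}$ with $\varepsilon_{\chi_{a}}\in\{\pm 1\}$, and likewise for $\chi_{b}$; the upgrade from $2\nu_{a}$ to $4\nu_{a}$ in the hypothesis is precisely what is needed to eliminate the imaginary sign that would otherwise appear after a Hasse--Davenport lift. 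Putting this together, for each pair $(\chi_{a},\chi_{b})$ the multiplicative part of $\gamma_{\chi_{a},\chi_{b},\psi}$ is a known real number of absolute value $r$, and the equation $\gamma=r^{|\mathcal{O}|}$ collapses to a condition on the Gauss sum of $\psi$ alone.

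For each of the $(a-1)(b-1)$ pairs $(\chi_{a},\chi_{b})$, the final step is to count Frobenius orbits of nontrivial additive characters $\psi$ of $\F_{q}$ satisfying this $\psi$-condition. I parametrize such $\psi$ by elements $c\in\F_{q}^{\times}$ via $\psi_{c}(x)=\psi_{0}(cx)$, translate the required equality of Gauss sums into a congruence on $c$ modulo $ab$-th powers in $\F_{q}^{\times}$, and obtain a set of at least $(q-1)/ab$ admissible $c$. From these one must exclude the $\psi_{c}$'s whose Gauss sums fail to reach the full weight $r^{|\mathcal{O}|}$: by a Hasse--Davenport argument, these are exactly the $\psi_{c}$ with $c$ lying in a proper subfield of $\F_{q}$, whose total number is bounded by $1+p+\cdots+p^{\lfloor\log_{p}\sqrt{q}\rfloor}=(p\sqrt{q}-1)/(p-1)$. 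Since each Frobenius orbit has length at most $\log_{p}q$, per pair $(\chi_{a},\chi_{b})$ there are at least $\bigl\lceil\bigl(\tfrac{q-1}{ab}-\tfrac{p\sqrt{q}-1}{p-1}\bigr)/\log_{p}q\bigr\rceil$ good orbits; multiplying by $(a-1)(b-1)$ yields the stated bound. The hard part will be the second step: making the supersingular Stickelberger/Hasse--Davenport identity precise enough to pin down not only the absolute value but the exact sign of $\gamma_{\chi_{a},\chi_{b},\psi}$, so as to correctly identify the congruence class of $c$ modulo $ab$-th powers that produces $\gamma=r^{|\mathcal{O}|}$; once this is in hand, the coset counting and subfield exclusion are elementary.
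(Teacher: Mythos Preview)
Your approach is essentially the same as the paper's: reduce to counting orbits $o$ with $\oomega(o)=r^{|o|}$, use the hypothesis to make $\langle r\rangle$ act trivially on the $(i,j)$-coordinates, evaluate the supersingular Gauss sums as a root of unity times $r^{|o|/2}$, and then count admissible $\alpha$ (your $c$) up to orbits of length at most $\log_p q$.

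One correction, though: your stated reason for excluding the $c$ lying in proper subfields of $\F_q$ is wrong. The Gauss sums never ``fail to reach the full weight'' --- for nontrivial characters they always have absolute value $r^{|\mathcal{O}|/2}$. The actual point is that the supersingular evaluation (the paper's Lemmas~\ref{lemma:gauss-sum-quadratic} and~\ref{lemma:evaluate-gauss-sum}) gives $\gamma_{\mathcal{O}}=\llambda_{(i,c)}(c)^{-1}\llambda_{(j,c)}(c)^{-1}r^{|\mathcal{O}|}$, where the characters $\llambda$ live on $\F_r(c)^\times$, not on $\F_q^\times$. Thus $\gamma_{\mathcal{O}}=r^{|\mathcal{O}|}$ is equivalent to $c$ being an $(ab)$\textsuperscript{th} power in $\F_r(c)$, not in $\F_q$. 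Your count of $(q-1)/ab$ elements produces $(ab)$\textsuperscript{th} powers in $\F_q$; this only implies the needed condition when $\F_q\subseteq\F_r(c)$, which is guaranteed once $\F_p(c)=\F_q$, i.e.\ once $c$ avoids every proper subfield. With this replacement for your weight explanation, the argument goes through and matches the paper's.
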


For any $p$, there are infinitely many pairs of primes $a,b$ satisfying the hypotheses of Theorem~\ref{thm:analytic-rank-lower-bounds}. Fixing such a pair, as $q$ varies among powers of $p$, Theorem~\ref{thm:analytic-rank-lower-bounds} gives a family of Jacobians of fixed dimension satisfying BSD with unbounded rank.  When $a$ and $b$ are both prime, Theorem~\ref{thm:analytic-rank-lower-bounds} actually gives a family of \emph{simple} abelian varieties with these properties, which we prove in Section~\ref{sec:simplicity}:

\begin{theorem}\label{thm:simplicity}
The Jacobian of $y^b+x^a=t^q-t$ is simple over $\F_r(t)$ if and only if both $a$ and $b$ are prime.
\end{theorem}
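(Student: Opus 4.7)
We prove the two implications separately, both exploiting the $\mu_a \times \mu_b$ symmetry of $C$.

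\emph{Necessity.} Suppose $a = a_1 a_2$ with $1 < a_1, a_2 < a$. The substitution $z = x^{a_2}$ defines a dominant $K$-morphism $\pi \colon C \to C'$, where $C'$ is the smooth projective model of $y^b + z^{a_1} = t^q - t$. Since $\gcd(a,b) = 1$ forces $\gcd(a_1, b) = 1$, and both $a_1, b \ge 2$, the curve $C'$ has positive genus $(a_1 - 1)(b-1)/2$, strictly less than $g = (a-1)(b-1)/2$. The Albanese pushforward $\pi_*$ realizes $\mathrm{Jac}(C')$ as a proper non-zero $K$-isogeny quotient of $J$, so $J$ is not $K$-simple. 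The case of $b$ composite is symmetric.

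\emph{Sufficiency.} Assume $a, b$ are both prime. The commuting actions of $\mu_a$ and $\mu_b$ on $C$ (via $x \mapsto \zeta x$ and $y \mapsto \eta y$) give a ring homomorphism $\Q[\mu_a \times \mu_b] \to \End^0_{\overline K}(J)$. Each of $C/\mu_a$ and $C/\mu_b$ is rational (parametrized by $\A^1$ after solving for $x^a$ or $y^b$ in the defining equation), which implies that the $\mu_a$- and $\mu_b$-invariant subspaces of $V_\ell J$ both vanish. For $a, b$ prime this forces $V_\ell J$ to lie entirely in the unique ``fully faithful'' isotypic component of $\Q[\mu_a \times \mu_b]$, producing an embedding
\[
\Q(\zeta_{ab}) \;=\; \Q(\zeta_a) \otimes_\Q \Q(\zeta_b) \;\hookrightarrow\; \End^0_{\overline K}(J),
\]
with $[\Q(\zeta_{ab}) : \Q] = (a-1)(b-1) = 2g$. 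Hence $J_{\overline K}$ has complex multiplication by the cyclotomic field $\Q(\zeta_{ab})$.

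Suppose now, for contradiction, that $J \sim A_1 \times A_2$ over $K$ with both $A_i$ nonzero. If $A_1 \not\sim_K A_2$, then $L(J, T) = L(A_1, T) L(A_2, T)$ is a product of coprime nontrivial factors in $\Q[T]$; if $A_1 \sim_K A_2$, then $L(J, T)$ is a perfect square in $\Q[T]$. Both possibilities are ruled out by showing $L(J, T)$ is irreducible over $\Q$. From the explicit Gauss-sum factorization of $L(J, T)$ established earlier in the paper, the roots are permuted by $\Gal(\Q(\zeta_{ab})/\Q) \cong (\Z/a\Z)^\times \times (\Z/b\Z)^\times$ via componentwise multiplication on the indexing characters $(i, j) \in (\Z/a\Z)^\times \times (\Z/b\Z)^\times$; for $a, b$ prime this action is transitive (since every nonzero residue is a unit), and direct inspection of the Gauss-sum formulas confirms pairwise distinctness of the roots. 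Together these yield $\Q$-irreducibility of $L(J, T)$ and thus $K$-simplicity of $J$. The main technical step I anticipate is the pairwise distinctness verification, since it (like the transitivity of the Galois action) fails precisely when $a$ or $b$ is composite, mirroring the explicit decomposition exhibited in the necessity direction.
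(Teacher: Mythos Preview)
The necessity direction is correct and matches the paper.

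For sufficiency, you correctly set up the CM embedding $\Q(\zeta_{ab}) \hookrightarrow \End^0_{\overline K}(J)$, but then abandon it for an $L$-function irreducibility argument that does not work. The fatal issue is size: $L(J,T)$ has degree $(a-1)(b-1)(q-1)$, with roots indexed by orbits of triples $(i,j,\alpha)$ where $\alpha \in \F_q^\times$, not just by pairs $(i,j)$. Your proposed Galois group $(\Z/a\Z)^\times \times (\Z/b\Z)^\times$ has order only $(a-1)(b-1)$ and acts on the $(i,j)$-component alone; it cannot act transitively on the roots once $q-1>1$, so the polynomial is not $\Q$-irreducible in general. (Indeed the explicit factorization $\prod_{o}(1-\oomega(o)T^{|o|})$ already exhibits factors defined over proper cyclotomic subfields.) The auxiliary claims---coprimality of $L(A_1,T)$ and $L(A_2,T)$ for $A_1\not\sim_K A_2$, and pairwise distinctness of the roots---would also require real arguments, but they are moot given the transitivity failure.

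The paper instead pushes the CM structure one step further. It identifies the Galois action of $\Gal(L'/K)$ on $V_\ell(J)$ (where $L'=\overline{\F_r}(\sqrt[ab]{t^q-t})$) with the \emph{inverse} of the geometric $\mu_{ab}$-action. Consequently any $K$-abelian subvariety $A\subset J$, whose Tate module is $\Gal$-stable by definition, is automatically $\mu_{ab}$-stable. Since for $a,b$ prime every character of $\mu_{ab}$ occurring in $V_\ell(J)$ is faithful, the $\mu_{ab}$-action on $V_\ell(A)$ is faithful as well, giving $\Q(\zeta_{ab})\hookrightarrow \End^0(A)$. The bound $[\Q(\zeta_{ab}):\Q]=2g\le 2\dim A$ then forces $A=J$. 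You already have the representation-theoretic ingredients; the missing idea is the Galois/geometric comparison that transfers $\mu_{ab}$-stability from $J$ to an arbitrary $K$-subvariety.
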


Our other major results focus on understanding the BSD invariants and other properties of $C$ and $J$ via their geometry. Most notably, we show that many of these Jacobians are simple abelian varieties with Tate--Shafarevich group unbounded as $q$ varies. Recall that $H(J)$ is the exponential Faltings height of $J$. In Section~\ref{sec:sha}, we prove that for infinitely many $a, b$, the size of $\Sh(J)$ is asymptotic to $H(J)$. 
\begin{theorem}\label{thm:largesha}
Fix parameters $a,b$, and $r$ which satisfy the hypotheses of Theorem~\ref{thm:rankzero}. 
Then, as $q$ runs through powers of $p$, we have 
\[
|\Sh(J)| = H(J)^{1+ o(1)}.
\]
\end{theorem}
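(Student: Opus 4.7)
The plan is to apply the Birch and Swinnerton-Dyer formula of Theorem~\ref{thm:bsd} to express $|\Sh(J)|$ in terms of analytically tractable quantities. Under the hypotheses of Theorem~\ref{thm:rankzero} we have $\rank J(K) = 0$, so $\Reg(J) = 1$ (empty determinant) and the leading coefficient reduces to $L^{*}(J) = L(J, r^{-1})$. Combining this with the earlier results of the paper that all local Tamagawa numbers satisfy $c_{v}(J) = 1$, and that $|J(K)_{\mathrm{tors}}|$ is controlled (at most polynomially in $q$), the BSD formula simplifies to
$$
|\Sh(J)| \;=\; L(J, r^{-1}) \cdot H(J) \cdot r^{-g} \cdot |J(K)_{\mathrm{tors}}|^{2}.
$$
Taking logarithms, the theorem reduces to showing $\log H(J) \to \infty$ together with $\log|L(J, r^{-1})| = o\bigl(\log H(J)\bigr)$ as $q \to \infty$.

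For the first reduction, I would invoke the explicit formula for the exponential Faltings height $H(J)$ computed earlier in the paper via the minimal regular model of $C$. For fixed $a,b,r$ this formula is linear in $q$, yielding $\log H(J) \asymp q$, and in particular $\log H(J) \to \infty$. It therefore suffices to prove $\log|L(J, r^{-1})| = o(q)$.

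This upper bound on $L(J, r^{-1})$ is the crux. Writing $L(J, T) = \prod_{i}(1 - \gamma_{i}T)$ with $|\gamma_{i}| = \sqrt{r}$ by the Riemann hypothesis, the naive bound $|L(J, r^{-1})| \leq 2^{\deg L}$ is useless, since $\deg L$ itself grows linearly in $q$. Instead, I would exploit the explicit Gauss sum formula for $L(J, T)$ established earlier in the paper. Expanding $\log|1 - \gamma_{i}/r|$ as a power series in $1/\sqrt{r}$ and summing over $i$ reduces the problem to bounding the power sums $\sum_{i}\gamma_{i}^{k}$ for $k \geq 1$, which can be handled via the Gauss sum expression together with equidistribution of the Frobenius angles on the circle of radius $\sqrt{r}$. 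The target is a subpolynomial-in-$q$ bound on $|L(J, r^{-1})|$, which after taking logarithms is comfortably $o(q)$.

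The main obstacle is establishing this cancellation. Individually each factor $|1 - \gamma_{i}/r|$ is $\Theta(1)$, so no termwise argument suffices; one must genuinely exploit the Gauss sum structure. A natural strategy is to adapt the approach of Griffon--Ulmer, who handled the elliptic case $y^{2} = x^{3} + t^{q} - t$ via explicit manipulation of Gauss sums combined with Hasse--Davenport style identities. The hypotheses of Theorem~\ref{thm:rankzero} are precisely those that ensure no Frobenius eigenvalue causes a catastrophic contribution at the central point, so that the averaging argument can be carried through.
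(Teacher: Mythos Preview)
Your overall architecture is right and matches the paper: apply the BSD formula, use Theorem~\ref{thm:rankzero} to get $\Reg(J)=1$, use the computation $c_v(J)=1$, control torsion, and reduce everything to showing $\log H(J)\asymp q$ (from the explicit height formula) together with $\log L^\ast(J)=o(q)$. So far so good; this is exactly how the paper proceeds via Corollary~\ref{cor:brauersiegel} and Theorem~\ref{thm:specialvalue}.

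The gap is in your proposed proof of $\log L^\ast(J)=o(q)$. First, the normalization: in this paper the inverse roots satisfy $|\gamma_i|=r$, not $\sqrt{r}$ (Theorem~\ref{thm:weil2}(3) places the zeros of $L(J,T)$ on $|T|=r^{-1}$; equivalently, $|\oomega(o)|=r^{|o|}$). Hence at the central point $T=r^{-1}$ each factor has $|\gamma_i/r|=1$, so your Taylor expansion of $\log(1-\gamma_i/r)$ sits exactly on the boundary of the disc of convergence and cannot be used. More seriously, the hard direction is the \emph{lower} bound on $|L(J,r^{-1})|$: since the $\gamma_i/r$ lie on the unit circle, individual factors $|1-\gamma_i/r|$ can be extremely small, and no amount of equidistribution of angles prevents this. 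What is needed is a Diophantine input bounding how close $\oomega(o)/r^{|o|}$ can be to~$1$.

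The paper supplies this via a Baker--W\"ustholz linear-forms-in-logarithms estimate (Theorem~\ref{theorem:diophantine.estimates}, applied in Proposition~\ref{prop:spval.prelim}): for each orbit $o\in O_\ast$ one gets $\log\bigl|1-\oomega(o)/r^{|o|}\bigr|\geq -c_2-c_3\log|o|$, with constants depending only on $a,b,p$. This works because $\oomega(o)=\zeta_o\,g_o^{L_o}$ is a root of unity times a power of a fixed Weil integer (Proposition~\ref{prop:oomega.product}). Summing over orbits and invoking the combinatorial estimate $\sum_{o\in O}\log|o|\ll q\log\log q/\log q$ (Lemma~\ref{lemma:estimates.orbits}) yields $\log L^\ast(J)=o(q)$. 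Note also that this step does \emph{not} use the hypotheses of Theorem~\ref{thm:rankzero}; Theorem~\ref{thm:specialvalue} holds for all $a,b$. Those hypotheses enter only to force $\Reg(J)=1$.
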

Moreover, in Lemma~\ref{lem:height-computation} we show that there is a positive constant $D$ depending only on $a$ and $b$ and a positive constant $E$ depending only on $a$, $b$, and the residue class of $q \mod{ab}$ such that $H(J) = r^{Dq + E}$. In particular, the order of $\Sh(J)$ grows exponentially in $q$ as $q$ varies.

Theorem~\ref{thm:largesha} generalizes~\cite{GriDW20}, which exhibits sequences of elliptic curves over $\F_q(t)$ with arbitrarily large Tate--Shafarevich group, to simple abelian varieties of dimension greater than $1$.

We remark briefly that in contrast to our results in the function field setting, much less is known over number fields, and especially over $\mathbb Q$. Work of Clark and Sharif~\cite{clark2010period} (in the elliptic curve case) and of Creutz~\cite{creutz2011potential} (in the higher-dimensional case, building on previous work of Clark) shows that all principally polarized abelian varieties satisfying a certain technical hypothesis have arbitrarily large $\Sh$ after a suitable extension of the base field. If one restricts the ground field to $\Q$, work of Cassels in the 1960s \cite{cassels1964arithmetic} showed that when $A/\mathbb Q$ is an elliptic curve, $\Sh(A/\mathbb Q)$ can be arbitrarily large. Recent work of Flynn~\cite{flynn2018arbitrarily} extends this to abelian surfaces, but it is not known whether $\Sh(A/\Q)$ can be arbitrarily large when $A$ is a simple abelian variety of dimension greater than $2$.

In contrast, in the function field setting, our results give new examples of simple, principally polarized abelian varieties $A$ of arbitrarily large dimension over $\F_{p}(t)$ and with $\Sh(A/\F_{p}(t))$ arbitrarily large. Previously, the only known examples of such abelian varieties appeared in work of Ulmer~\cite{Ulmer2019}.

The proof of Theorem~\ref{thm:largesha} contains several statements which are of interest in their own right. For instance, in Section~\ref{sec:specialvalue},
we describe the asymptotics of the special value of the $L$-function as $q \to \infty$ via analytic methods, generalizing results from ~\cite{GriffonUlmer} in the elliptic curve case. We prove:
\begin{theorem}\label{thm:specialvalue} 
For fixed $a,b$, and $r$, as $q \to \infty$ runs through powers of $p$, 
\[
\frac{\log L^*(J)}{\log H(J) }=o(1).
\]
\end{theorem}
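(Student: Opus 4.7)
The plan is to use the explicit formula for $L(J,T)$ in terms of Gauss sums, obtained earlier in the paper, to write $L^*(J)$ as a product of elementary factors and estimate the resulting product via the Weil bound and equidistribution of Gauss sums. By Lemma~\ref{lem:height-computation}, $\log H(J) = (Dq+E)\log r$ is linear in $q$ (with $r$ fixed), so the statement reduces to showing $\log|L^*(J)| = o(q)$.

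Concretely, writing $L(J,T) = \prod_{\omega}(1-\gamma_\omega T)$, where $\omega$ runs over certain tuples of multiplicative and additive characters and each $\gamma_\omega$ is a product of Gauss sums of absolute value $r$, the factors with $\gamma_\omega = r$ are precisely those contributing to the vanishing of $L(J,T)$ at $T = r^{-1}$. Removing them gives
\[
L^*(J) \;=\; \prod_{\gamma_\omega \neq r}\!\left(1-\frac{\gamma_\omega}{r}\right), \qquad \log|L^*(J)| \;=\; \sum_{\gamma_\omega \neq r}\log\!\left|1-\frac{\gamma_\omega}{r}\right|.
\]
Writing $\gamma_\omega/r = e^{i\theta_\omega}$ on the unit circle, each summand equals $\log|2\sin(\theta_\omega/2)|$, at most $\log 2$ but potentially arbitrarily negative. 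Since the total number of indices $\omega$ is only $O(q)$, the trivial upper bound is already $O(q)$, so both directions of the estimate require finer information.

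For the upper bound $\log|L^*(J)| \leq o(q)$, I would exploit the fact that $\int_0^{2\pi}\log|2\sin(\theta/2)|\,\frac{d\theta}{2\pi} = 0$ together with equidistribution of the angles $\theta_\omega$. Grouping the characters $\omega$ by the orders of their multiplicative parts (which divide $ab$) and by the conductor of the additive component, and using standard equidistribution estimates for Gauss sums along the lines of \cite{GriffonUlmer}, one shows that for all but $o(q)$ of the indices $\omega$ the contribution $\log|2\sin(\theta_\omega/2)|$ is bounded, and that the sum over the remaining ``generic'' indices cancels down to $o(q)$.

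For the lower bound $\log|L^*(J)| \geq -o(q)$, the main issue is controlling how many $\gamma_\omega/r$ lie very close to $1$. Since each $\gamma_\omega$ is an algebraic integer in a cyclotomic field of degree bounded polynomially in $q$, a Liouville-type estimate (combined with the Weil bound $|\gamma_\omega| = r$) yields $|1-\gamma_\omega/r| \geq q^{-O(1)}$, so every single factor contributes at most $O(\log q)$ to $-\log|L^*(J)|$. A counting argument then bounds the number of near-vanishing factors (e.g.\ those with $|1-\gamma_\omega/r| \leq q^{-1/2}$) by $o(q/\log q)$, yielding a total negative contribution of $o(q)$. The main obstacle throughout is quantifying equidistribution precisely enough to upgrade the trivial $O(q)$ bounds to $o(q)$ on both sides; I would follow the template of \cite{GriffonUlmer}, adapted to our higher-dimensional superelliptic setting, where the Gauss sums are indexed by pairs of multiplicative characters and live in larger cyclotomic fields.
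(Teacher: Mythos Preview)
Your setup is correct, but the proposal has a genuine gap in the lower bound and misses a structural simplification that makes the upper bound immediate.

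\textbf{Upper bound.} You work with the full list of inverse zeros $\gamma_\omega$, of which there are $(a-1)(b-1)(q-1)\asymp q$. The paper instead keeps the factors grouped by $\langle r\rangle$-orbits, writing
\[
L^\ast(J)\;=\;\prod_{o\in O_0}|o|\;\cdot\;\prod_{o\in O_\ast}\Bigl(1-\frac{\oomega(o)}{r^{|o|}}\Bigr),
\]
where $O_0=\{o:\oomega(o)=r^{|o|}\}$ and $O_\ast=O\smallsetminus O_0$. The point is that $|O|\ll q/\log q$ (Lemma~\ref{lemma:estimates.orbits}), so the trivial bound $\log\bigl|1-\oomega(o)/r^{|o|}\bigr|\leq\log 2$ already gives $\sum_{o\in O_\ast}\leq |O|\log 2=o(q)$, and $\sum_{o\in O_0}\log|o|\leq\sum_{o\in O}\log|o|\ll q\log\log q/\log q$. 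No equidistribution of Gauss-sum angles is needed, and none is proved in the paper or in \cite{GriffonUlmer}.

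\textbf{Lower bound: the real gap.} Your Liouville estimate $|1-\gamma_\omega/r|\geq q^{-O(1)}$ is correct, but it only yields $\log|1-\gamma_\omega/r|\geq -O(\log q)$ per factor, and with $\asymp q$ factors this gives $-O(q\log q)$, far short of $o(q)$. Your proposed ``counting argument'' bounding the number of near-vanishing factors by $o(q/\log q)$ is exactly the hard part, and you give no mechanism for it. Even after grouping by orbits, Liouville applied to $1-\oomega(o)/r^{|o|}$ (where $\oomega(o)$ has bounded degree but absolute value $r^{|o|}$) only yields $\log|\,\cdot\,|\geq -O(|o|)$, and $\sum_o|o|\asymp q$, still not $o(q)$.

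The paper's essential input is Baker--W\"ustholz on linear forms in logarithms (Theorem~\ref{theorem:diophantine.estimates}). By Proposition~\ref{prop:oomega.product} one writes $\oomega(o)/r^{|o|}=\zeta_o\,(g_o\,p^{-\theta_{a,b}})^{L_o}$ with $\zeta_o$ an $(ab)$th root of unity, $g_o$ a Weil integer of \emph{fixed} size $p^{\theta_{a,b}}$ and bounded degree (depending only on $a,b,p$), and $L_o=[\F_r:\F_p]\,|o|/\theta_{a,b}$. Baker--W\"ustholz then gives
\[
\log\Bigl|1-\zeta_o\,(g_o\,p^{-\theta_{a,b}})^{L_o}\Bigr|\;\geq\;-c_2-c_3\log|o|,
\]
which is exponentially stronger than Liouville in the exponent $L_o$. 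Summing over orbits, $\sum_{o}\log|o|\ll q\log\log q/\log q=o(q)$, and the lower bound follows. This Diophantine input cannot be replaced by a Liouville-type argument.
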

In particular, note that this theorem does not require special assumptions on $a$ and $b$.

On the algebraic side, we are able to compute many BSD invariants of $J$ by studying the geometry of $C$. To begin, we use recent machinery from \cite{Dokchitser2018} to compute the minimal regular proper simple normal crossings model of our curves at any place of bad reduction. In our case, the special fibers of these models have a very simple structure --- all irreducible components have genus $0$ and the dual graph is a tree. From this information, we are able to conclude that $J$ has unipotent reduction at all bad places, to show that the local Tamagawa numbers $c_v(J)$ of $J$ are all equal to $1$, and to compute the conductor divisor of $J$. We also leverage the recipe from~\cite{Dokchitser2018} to compute a formula for the Faltings height $H(J)$ in Lemma~\ref{lem:height-computation}.

Combining these computations with Theorem~\ref{thm:specialvalue}, we deduce an
analogue of the Brauer--Siegel theorem for the family of Jacobians $(J_{a,b,q})_q$. (See \cite{HindryPacheco} for a nice explanation of the connection with Brauer--Siegel.) 
In Section~\ref{sec:BrauerSiegel} we prove:

\begin{corollary}\label{cor:brauersiegel}
For fixed $a,b$, and $r$, as $q\to\infty$ runs through powers of $p$,
\[\log\big(|\Sh(J)|\,\Reg(J)\big) \sim \log H(J).\]
\end{corollary}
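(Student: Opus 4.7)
The plan is to take logarithms of the BSD formula (Theorem~\ref{thm:bsd}) and then show that every term on the right-hand side other than $\log H(J)$ is $o(\log H(J))$ as $q \to \infty$ through powers of $p$. Specifically, rearranging \eqref{eq:BSD.formula} gives
\[
\log\big(|\Sh(J)|\,\Reg(J)\big) \;=\; \log L^*(J) \;+\; \log H(J) \;-\; g \log r \;+\; 2 \log |J(K)_{\mathrm{tors}}| \;-\; \sum_v \log c_v(J),
\]
so it suffices to prove that dividing by $\log H(J)$ sends the four ``error'' terms to $0$.

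First I would invoke Lemma~\ref{lem:height-computation}, which gives $\log H(J) = (Dq+E)\log r$ with $D>0$ depending only on $a,b$; in particular $\log H(J) \to \infty$ linearly in $q$ while $g = (a-1)(b-1)/2$ and $\log r$ remain fixed, so $g\log r / \log H(J) \to 0$. Next, by the geometric analysis of the regular SNC model sketched in the introduction (special fibers are trees of genus-$0$ components), every local Tamagawa number satisfies $c_v(J)=1$, so the Tamagawa contribution vanishes exactly. The special-value term $\log L^*(J)/\log H(J) \to 0$ is precisely the content of Theorem~\ref{thm:specialvalue}, which is already available.

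The remaining and most delicate ingredient is a bound $\log |J(K)_{\mathrm{tors}}| = o(\log H(J))$. For fixed $a$ and $b$, the dimension $g$ is constant, so one can pick a place $v_0$ of $\mathbb{P}^1_{\mathbb F_r}$ of bounded degree (for instance a rational place where $t^q - t$ does not vanish — such a place always exists once $q$ is large compared with $r$) where $C$, and hence $J$, has good reduction. The prime-to-$p$ part of $J(K)_{\mathrm{tors}}$ injects into $J_{v_0}(\kappa_{v_0})$, whose size is bounded by $(1+\sqrt{|\kappa_{v_0}|})^{2g} = O_{a,b,r}(1)$. For the $p$-part, the totally unipotent reduction at all bad places (another output of the SNC computation) together with standard facts about abelian varieties of constant good reduction over a rational function field allow a uniform bound independent of $q$. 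Combining these yields $|J(K)_{\mathrm{tors}}| = O_{a,b,r}(1)$, so $\log |J(K)_{\mathrm{tors}}|/\log H(J) \to 0$.

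The main obstacle is this last step: establishing that the torsion of $J(K)$ does not grow fast enough to spoil the asymptotic. Once this uniform boundedness is in hand, combining the four estimates produces $\log(|\Sh(J)|\,\Reg(J))/\log H(J) \to 1$, which is exactly the claimed equivalence.
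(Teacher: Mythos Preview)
Your overall strategy matches the paper's proof exactly: rearrange the BSD formula and show that each of the four error terms is $o(\log H(J))$. The height, Tamagawa, and special-value terms are handled just as in the paper (which invokes Proposition~\ref{prop:TamagawaNumber1} and Theorem~\ref{thm:specialvalue}).

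The one genuine gap is your torsion bound. The claim that a rational place of good reduction ``always exists once $q$ is large compared with $r$'' is false: whenever $\log_p r \mid \log_p q$ (which happens for infinitely many $q$), every $\F_r$-rational finite place is a root of $t^q - t$ and hence bad, and $\infty$ is bad as well, so there are no degree-one good places at all. More generally, for $\log_p q$ highly divisible there is no good place of degree bounded independently of $q$, so your $O_{a,b,r}(1)$ conclusion does not follow from the reduction argument as stated. Your sketch for the $p$-part is also too vague to assess. The paper sidesteps this by citing Theorem~3.8 of Hindry--Pacheco~\cite{HindryPacheco}, which gives $\log|J(K)_{\mathrm{tors}}| = o(\log H(J))$ in general. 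If you prefer a direct argument, isotriviality works cleanly: over $L = K(\sqrt[ab]{t^q-t})$ the Jacobian becomes the constant abelian variety $J_0 = \mathrm{Jac}(y^b + x^a = 1)$, and since $L$ has constant field $\F_r$ one gets $J(K)_{\mathrm{tors}} \hookrightarrow J_0(L)_{\mathrm{tors}} = J_0(\F_r)$, a fixed finite group independent of $q$.
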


Theorem~\ref{thm:largesha} follows since $\Reg(J) = 1$ when $\rank J(K) = 0$.

Several sequences of elliptic curves are known to satisfy a similar asymptotic description of   $|\Sh(A)|\Reg(A)$ in terms of the height $H(A)$ as in Corollary~\ref{cor:brauersiegel}. (For instance, see \cite{HindryPacheco, GriffonPHD16, GriffonLegendre18, GriffonAS18, GriffonUlmer}.) However, similar results for simple abelian varieties of higher dimension are much rarer. The only previous examples we are aware of appear in \cite[\S10.4, \S11.4]{Ulmer2019}.

\subsection{Roadmap to this article.}

The paper is organized as follows.  In Section~\ref{sec:geometry}, we study the geometry of $C$ and use~\cite{Dokchitser2018} to compute the minimal regular proper simple normal crossings model of our curves.  This model is used to compute the reduction types, Tamagawa numbers, and Faltings height of these curves.  We also prove Theorem~\ref{thm:simplicity} on the simplicity of $J$ in Section~\ref{sec:geometry}.  In Section ~\ref{sec:gausssums}, we recall classical results on Gauss sums which will be used in the computation of the $L$-function.  In Section~\ref{sec:Lfunction}, we give an explicit computation for the $L$-function of the Jacobian in terms of the valuations of some associated Gauss sums. In Section~\ref{sec:Cohomological.Lfunction}, we provide a second computation of the $L$-function of the Jacobian, this time using the geometry of the minimal proper regular SNC model $\mathcal{S}$ of $C$, confirming our computation in the previous section.  In Section~\ref{sec:rank}, we use $p$-adic valuations of Gauss sums to prove estimates on $\rank J(K)$ in Theorems~\ref{thm:rankzero} and ~\ref{thm:analytic-rank-lower-bounds}.  In Section~\ref{sec:specialvalue} we prove our asymptotic formula for $L^*(J)$ in Theorem~\ref{thm:specialvalue} and our analogue of Brauer--Siegel in Corollary~\ref{cor:brauersiegel}.  Finally, in Section~\ref{sec:sha}, we prove Theorem~\ref{thm:largesha} giving infinitely many families of simple abelian varieties with unbounded $\Sh(J)$ as $q$ varies.

\subsection*{Acknowledgements}

We thank the AMS and the organizers of the 2019 Mathematics Research Communities workshop on \textit{Explicit Methods in Characteristic $p$} for creating a productive working environment in which this project was started.  We thank Douglas Ulmer for his guidance and support during the realization of this project, and for his helpful comments on a previous draft. Thanks are also due to Daniel Litt for providing help with the proof in Appendix~\ref{app:Conductor} and to Rachel Pries and Dino Lorenzini for their careful reading and helpful comments.

The second author was funded by the Swiss National Science Foundation through the SNSF Professorship \#170565 awarded to Pierre Le Boudec, and received additional funding from ANR project ANR-17-CE40-0012 (FLAIR).
The third author was supported by an NSF graduate research fellowship.
The fourth author thanks the National Science Foundation Research Training Group in Algebra, Algebraic Geometry, and Number Theory at the University of Georgia [grant DMS-1344994] for funding this research.

\section[]{Geometry of $C$ and its Jacobian}\label{sec:geometry}

Fix a prime $p$, and let $r$ be a power of $p$. 
Let $\F_r$ be the finite field with $r$ elements, and let $K:=\F_r(t)$ denote the function field of the projective line $\P^1_{\F_r}$. When the field of definition is understood, we write $\mathbb{P}^1$ for $\P^1_{\F_r}$. For any power $q$ of $p$, and any pair  of relatively prime integers $a, b > 1$ which are both coprime to $p$, consider the superelliptic curve $C_{a,b,q}$ over $K$ given by the affine model
\begin{equation*}
    C_{a,b,q}:\qquad y^b + x^a = t^q-t.
\end{equation*}
In other words, $C_{a,b,q}$ is the unique (up to a birational morphism) smooth projective curve over $K$ which contains the affine curve $y^b + x^a = t^q-t$ as a dense open subset.
Let $J_{a,b,q}$ denote the Jacobian variety of $C_{a,b,q}$, which is an abelian variety over $K$.

Throughout the paper, the curve $C_{a,b,q}$ is denoted by $C$, and its Jacobian $J_{a,b,q}$ by $J$. 
We suppress the ``$/K$'' in the notation for invariants of $C$ and $J$, since both of these objects will only be studied over $K$.

\begin{proposition}
The genus of the curve $C=C_{a,b,q}$ 
is $g = (a-1)(b-1)/2$.
\end{proposition}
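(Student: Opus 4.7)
The plan is to compute the genus via the Riemann--Hurwitz formula applied to the degree-$b$ cover $\pi \colon C \to \mathbb P^1$ obtained from the $x$-coordinate projection $(x,y) \mapsto x$. Over $K$, writing $c \colonequals t^q - t \in K^*$, the function field extension is $K(x) \subset K(x)[y]/(y^b - (c - x^a))$, a Kummer extension of degree $b$, which is tame because $p \nmid b$.

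First I would locate the ramified places of $K(x)$ in this Kummer cover: a place $v$ of $K(x)$ ramifies exactly when $v(c - x^a) \not\equiv 0 \pmod{b}$, and in that case the ramification index above $v$ is $b / \gcd(b, v(c-x^a))$. The polynomial $c - x^a$ is separable because $c \neq 0$ and $p \nmid a$, so its $a$ roots in $\overline{K}$ give $a$ finite places at each of which $v(c - x^a) = 1$; each is therefore totally ramified, with a single point of $C$ above it having ramification index $b$. At the place at infinity of $\mathbb P^1_x$, $v_{\infty}(c - x^a) = -a$, so the ramification index is $b / \gcd(a,b) = b$ (using $\gcd(a,b) = 1$), and again there is a unique point of $C$ above it.

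Now I would plug into Riemann--Hurwitz. There are $a + 1$ totally ramified places, each contributing a single point with $e_P - 1 = b - 1$. Thus
\[
2g - 2 = b \cdot (-2) + (a+1)(b-1) = ab - a - b - 1,
\]
which rearranges to $g = (a-1)(b-1)/2$, as claimed.

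The only delicate step is the analysis at infinity, and it is precisely here that the coprimality hypothesis $\gcd(a,b) = 1$ is used: it guarantees that the place at infinity is totally ramified with a single point above it in the smooth projective model, rather than splitting into several points (which would happen if $\gcd(a,b) > 1$). The other hypotheses are also all needed: $p \nmid a$ ensures the separability of $c - x^a$, and $p \nmid b$ ensures that the Kummer cover is tame so that the above form of Riemann--Hurwitz applies without a wild contribution.
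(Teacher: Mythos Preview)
Your proof is correct and follows essentially the same approach as the paper, which merely asserts that the result follows from the Hurwitz genus formula together with the coprimality of $a$ and $b$. You have simply supplied the details of that computation: the projection to the $x$-line, the ramification analysis of the Kummer cover $y^b = c - x^a$, and the Riemann--Hurwitz tally.
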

\begin{proof} The result follows from a direct computation using the Hurwitz genus formula and the assumption that $a$ and $b$ are coprime.
\end{proof}

We prove various geometric properties about $C$ and $J$ in this section. In particular, we use the minimal proper regular SNC model of C to prove that $J$ has unipotent reduction at each place of bad reduction. For more specific information about the reduction type in the elliptic curve case, see \cite{GriffonUlmer}. We also compute the height of $J$, and prove that it is $K$-simple for when both $a$ and $b$ are prime.

\subsection[]{The minimal proper regular SNC model of $C$}\label{subsec:MinPropRegSNCModel}

In this section, we give a brief description of the minimal proper regular simple normal crossings model $\pi: \mathcal S \to \P^1_{\mathbb F_{r}}$ of $C/\mathbb F_{r}(t)$ using the recipe provided in \cite{Dokchitser2018}. This description allows us to read off the reduction of the Jacobian of $J$ at the places of bad reduction, which will in turn be necessary for the computation of the $L$-function. It is also useful for computing the Tamagawa numbers, exponential Faltings height, and conductor of $J$.

We will use notation from \cite{Dokchitser2018} freely throughout this section. The results presented here could alternately be recovered via a toric resolution of singularities.

We now recall the definition of a simple normal crossings model. We note that some authors call this a strict normal crossings model instead. First, recall (e.g. from \cite[\href{https://stacks.math.columbia.edu/tag/0CBN}{Section 0CBN, Definition 41.21.1}]{stacks-project}) that a \emph{simple normal crossings divisor} on a locally Noetherian scheme $\mathcal W$ is an effective Cartier divisor $D \subset \mathcal W$ such that for every prime $w \in D$, the local ring $\mathcal O_{W,w}$ is regular and there exists a regular system of parameters $x_{1}, \dots, x_{d}$ in the maximal ideal $\mathfrak m_{w}$ and $1 \leq r \leq d$ such that $D$ is cut out by the product $x_1 \cdots x_{r}$ in $\mathcal O_{X,p}$. When $\mathcal W$ is a curve over a DVR or a surface over a finite field, these conditions amount to saying that the irreducible components of $D$ are smooth and any singular points of $D$ `look like' the intersection of the coordinate axes in $\mathbb A^2$. More generally, an effective Cartier divisor $E$ on $\mathcal W$ is \emph{supported on a simple normal crossings divisor} if there is some simple normal crossing divisor $D$ on $\mathcal W$ such that $E \subset D$ set-theoretically. In this situation, if $D$ decomposes into irreducible components as $\bigcup_{i \in I} D_{i}$, then $E = \sum_{i \in I} a_{i} D_{i}$ for some integers $a_{i} \geq 0$.

\begin{definition}
Given a smooth proper curve $W$ over the fraction field $K_{v}$ of a discrete valuation ring $\mathcal O_{K_{v}}$, a \emph{simple normal crossings model} of $W$ is a scheme $\mathcal W$ over $\mathcal O_{K_v}$ such that the generic fiber $\mathcal W_{K_{v}}$ is isomorphic to $W$ and the special fiber $\mathcal W_{k_{v}}$, viewed as a Cartier divisor on $\mathcal W$, is supported on a simple normal crossing divisor.

More generally, given a smooth proper curve $W/\mathbb F_{r}(t)$, a \emph{simple normal crossings model} of $W$ is a surface $\mathcal W/\mathbb F_{r}$ equipped with a map $\pi: \mathcal W \to \mathbb P^1_{\F_{r}}$ such that the fiber over the generic point of $\mathbb P^1_{\F_{r}}$ is isomorphic to $W$ and the fiber $\mathcal W_{v}$ over any closed point of $v \in \mathbb P^1_{\F_{r}}$ is supported on a simple normal crossings divisor of $\mathcal W$.
\end{definition}

For $v \in \P^1$ a closed point, we study the fiber $\mathcal S_{v}$ of the minimal proper regular simple normal crossings model $\pi: \mathcal S \to \P^1_{\mathbb F_{r}}$ of $C/\mathbb F_{r}(t)$. Taking $K_{v}^{\mathrm{unram.}}$ to be the maximal unramified extension of the completion of $K$ at $v$, we will also describe the special fiber of the minimal proper regular simple normal crossings model of the base change $C \otimes_{\Spec K} \Spec K_{v}^{\mathrm{unram.}}$. We call this special fiber $\mathcal S_{\overline{v}}$. As we shall see, $\mathcal S_{\overline{v}} \cong \mathcal S_{v} \otimes_{\Spec k_{v}} \Spec \overline{k_{v}}$.

We abuse notation slightly by writing $v \in \F_{q} \cup \{\infty\}$ to mean that $v$ decomposes into degree one points over the compositum $\F_{r}\F_{q}$.
Equivalently, $v \in \F_{q} \cup \{\infty\}$ if every element of $v(\overline{\F_{q}})$ is fixed by the $\Gal(\overline{\F_{q}}/\F_{q})$-action on $\P^1(\overline{\F_{q}})$.

When $v \notin \F_{q} \cup \{\infty\}\subset \P^1$, the curve $C$ has good reduction, so $\mathcal S_{v}/k_{v}$ and $\mathcal S_{\overline{v}}/\overline{k_{v}}$ are smooth curves of genus $g$.

When $v \in \F_{q} \cup \{\infty\} \subset \P^1$, the curve $C$ has bad reduction at $v$. 
Set $Q = 1$ if $v \in \mathbb F_{q}$ and $Q = -q$ if $v = \infty$. In the notation of \cite{Dokchitser2018}, the Newton polytopes associated to $C$ at $v$ are 
\[
\Delta = \text{convex hull}(\{(0,0), (a,0), (0,b)\}) \subset \mathbb R^2
\]
and
\[
\Delta_{v} = \text{lower convex hull}(\{(0,0,Q), (a,0,0), (0,b,0)\}) \subset \mathbb R^2 \times \mathbb R \,.
\]
The polytope $\Delta_{v}$ consists of three $0$-dimensional vertices $(a,0,0), (0,b,0),$ and $(0,0,Q)$;  three $1$-dimensional (open) edges
\begin{itemize} 
    \item $L_{3}$ connecting $(a,0,0)$ to $(0,b,0)$ with denominator $\delta_{L_3} = 1$,
    \item $L_{2}$ connecting $(0,b,0)$ to $(0,0,Q)$ with denominator $\delta_{L_2} = b$, and
    \item $L_{1}$ connecting $(a,0,0)$ to $(0,0,Q)$ with denominator $\delta_{L_1} = a$; and
\end{itemize}
a single $2$-dimensional (open) face $F$ with denominator $\delta_{F} = ab$. Moreover, $F(\mathbb Z)_{\mathbb Z} \subset F \cap \mathbb Z^3 = \emptyset$, so $|F(\mathbb Z)_{\mathbb Z}| = 0\,.$ The face-polynomial $X_{F}$ and the side polynomials $X_{L_{i}}$ are all smooth, so $C$ is $\Delta_{v}$-regular, as defined in \cite[Definition~3.9]{Dokchitser2018}. As a result, we can read off the structure of $\mathcal S_{v}$ using \cite[Theorem~3.13]{Dokchitser2018}. 

We find that $\mathcal S_{v}$ consists of three chains of $\P^1$s (corresponding to the edges $L_1, L_2,$ and $L_{3}$) branching off of a central curve corresponding to the face $F$. Since the interior of $F$ contains no lattice points, $|F(\mathbb Z)_{\mathbb Z}| = 0\,.$ Moreover, $\delta_{F} = ab$, so the central curve has genus $0$ and multiplicity $ab.$ For $i = 1,2,3$, every curve in the chain of $\P^1$s corresponding to $L_{i}$ has multiplicity a multiple of $\delta_{i}$. The final curve in the chain has multiplicity exactly $\delta_{i}$. For a more precise description of the multiplicities of the components, see \cite{Dokchitser2018}. We give an examples of the resulting special fiber $\mathcal S_{v}$ when $v$ is a  finite place of bad reduction or $v = \infty$ in the case $a = 7, b = 5, q = 67$ in Figure~\ref{fig:SNC_03}.

Moreover, we note that the Newton polytopes associated to $C \otimes_{\Spec K} K_{v}^{\mathrm{unram.}}$ are the same as those associated to $C$ at $v$. In particular, $\mathcal S_{\overline{v}}$ admits the same description as a tree of $\mathbb P^1$s with multiplicity as does $\mathcal S_{v}$. It follows immediately that $\mathcal S_{\overline{v}}$ is obtained from $\mathcal S_{v}$ via base change to $\overline{k_{v}}$. More precisely, $\mathcal S_{\overline{v}} \cong \mathcal S_{v} \otimes_{\Spec k_{v}} \Spec \overline{k_{v}}$.

For later use, we note that the final component in $\mathcal S_{v}$ of the chain corresponding to $L_{3}$ always has multiplicity $1$. In particular, the gcd of the multiplicities of the components of $\mathcal S_{v}$ is $1$. This means that $\mathcal S_{K_{v}^\mathrm{unram.}}$ is a $(\Spec\, \O_{K_{v}^{\mathrm{unram.}}})$-curve (or $S$-curve) in the notation of \cite{Lorenzini1990}. 
 
   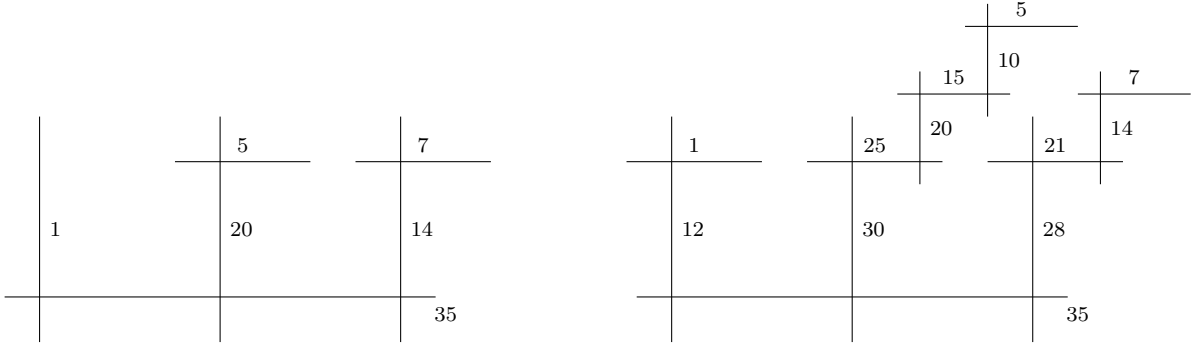
\begin{figure}[h]
        \centering
        \begin{tikzpicture}[scale=0.6,auto=center]
        \node (n1) at (0,10) {};
        \node (n2) at (10,10) {};
        \draw (1,9) -- (1,14)
        node[draw=none,fill=none,font=\scriptsize, midway, right] {$1$};
        \draw (n1) -- (n2)
        node[draw=none,fill=none,font=\scriptsize,right,below] {$35$};
        \draw (5,9) -- (5,14)
        node[draw=none,fill=none,font=\scriptsize,midway,right] {$20$};
        \draw (9,9) -- (9,14)
        node[draw=none,fill=none,font=\scriptsize,midway,right] {$14$};
        \draw (4,13) -- (7,13)
        node[draw=none,fill=none,font=\scriptsize,midway,above] {$5$};
        \draw (8,13) -- (11,13)
        node[draw=none,fill=none,font=\scriptsize,midway,above] {$7$};
        \end{tikzpicture}
        \qquad \qquad
        \begin{tikzpicture}[scale=0.6,auto=center]
        \node (n1) at (0,10) {};
        \node (n2) at (10,10) {};
        \draw (0,13) -- (3,13)
        node[draw=none,fill=none,font=\scriptsize, midway,above] {$1$};
        \draw (1,9) -- (1,14)
        node[draw=none,fill=none,font=\scriptsize, midway, right] {$12$};
        \draw (n1) -- (n2)
        node[draw=none,fill=none,font=\scriptsize,right,below] {$35$};
        \draw (5,9) -- (5,14)
        node[draw=none,fill=none,font=\scriptsize,midway,right] {$30$};
        \draw (9,9) -- (9,14)
        node[draw=none,fill=none,font=\scriptsize,midway,right] {$28$};
        \draw (4,13) -- (7,13)
        node[draw=none,fill=none,font=\scriptsize,midway,above] {$25$};
        \draw (6.5,12.5) -- (6.5,15)
        node[draw=none,fill=none,font=\scriptsize,midway,right] {$20$};
        \draw (6,14.5) -- (8.5,14.5)
        node[draw=none,fill=none,font=\scriptsize,midway,above] {$15$};
        \draw (8,14) -- (8,16.5)
        node[draw=none,fill=none,font=\scriptsize,midway,right] {$10$};
        \draw (7.5,16) -- (10,16)
        node[draw=none,fill=none,font=\scriptsize,midway,above] {$5$};
        \draw (8,13) -- (11,13)
        node[draw=none,fill=none,font=\scriptsize,midway,above] {$21$};
        \draw (10.5,12.5) -- (10.5,15)
        node[draw=none,fill=none,font=\scriptsize,midway,right] {$14$};
        \draw (10,14.5) -- (12.5,14.5)
        node[draw=none,fill=none,font=\scriptsize,midway,above] {$7$};
        \end{tikzpicture}
        \caption{Fibers of the minimal proper regular SNC model of $y^5 + x^7 = t^{67} - t$ over $\mathbb P^1_{\mathbb F_{67}}$ at finite places of bad reduction (left) and at infinity (right)}
        \label{fig:SNC_03}
    \end{figure}

\subsection[]{Unipotent reduction of $J$ at bad places.}

We give an analysis of the reduction types of $J$ at the finite places and the infinite place. 

\begin{proposition}\label{prop:unipotentreduction}
The Jacobian $J$ has potentially good, unipotent reduction above any $v \in \mathbb F_{q} \cup \{\infty\}\subset\P^1$, and it has good reduction elsewhere.
\end{proposition}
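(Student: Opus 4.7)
The plan is to read off the reduction of $J$ at each place from the minimal proper regular SNC model $\mathcal{S}$ described in Section~\ref{subsec:MinPropRegSNCModel}, together with an explicit tame base change that exhibits good reduction after extending $K_v$.

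At a place $v \notin \mathbb{F}_{q} \cup \{\infty\}$, the polynomial $t^{q} - t$ is a unit in $\mathcal{O}_{K_v}$, so the equation $y^b + x^a = t^q - t$ reduces modulo the maximal ideal to $y^b + x^a = c$ for some nonzero $c$ in the residue field. Using $\gcd(a,b) = \gcd(ab,p) = 1$, the projective closure of this affine curve is smooth of genus $g$, so $C$ (hence $J$) has good reduction at $v$.

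Now fix $v \in \mathbb{F}_{q} \cup \{\infty\}$, and consider the special fiber $\mathcal{S}_{v}$ described in Section~\ref{subsec:MinPropRegSNCModel}. Since the gcd of the multiplicities of the components of $\mathcal{S}_{v}$ is $1$ (noted at the end of that subsection), Raynaud's theorem (applied as in \cite{Dokchitser2018}) identifies the identity component of the special fiber of the N\'eron model of $J$ with $\mathrm{Pic}^{0}(\mathcal{S}_{v})$; its Chevalley decomposition has abelian rank equal to the sum of the geometric genera of the components of $\mathcal{S}_{v}$ and toric rank equal to $b_{1}$ of the dual graph. By the explicit description, every irreducible component of $\mathcal{S}_{v}$ is a $\mathbb{P}^{1}$ (so the abelian rank is $0$) and the dual graph is a tree (so the toric rank is $0$). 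The unipotent rank therefore equals $g = \dim J$, giving totally unipotent reduction at $v$. For potentially good reduction I would exhibit a direct tame base change: at $v = t_{0} \in \mathbb{F}_{q}$ with uniformizer $\pi = t - t_{0}$ one has $t^{q} - t = \pi \cdot u$ for a unit $u \in \mathcal{O}_{K_v}^{\times}$, and the substitution $\pi = \tau^{ab}$, $x = \tau^{b} X$, $y = \tau^{a} Y$ turns the defining equation into $Y^{b} + X^{a} = u$, whose reduction modulo $\tau$ is a smooth projective curve of genus $g$ over the residue field. A parallel computation at $v = \infty$ using $s = 1/t$, the base change $s = \tau^{ab}$, and the rescalings $x = X/\tau^{bq}$, $y = Y/\tau^{aq}$ produces the equation $Y^{b} + X^{a} = 1 - \tau^{ab(q-1)}$, again smooth modulo $\tau$. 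Hence $C$, and therefore $J$, acquires good reduction over $K_{v}(\pi^{1/ab})$.

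I expect the main subtlety to be the correct application of Raynaud's decomposition in this non-semistable setting, where the multiplicities of components of $\mathcal{S}_{v}$ are generically greater than one; the identification of the identity component of the N\'eron model with $\mathrm{Pic}^{0}(\mathcal{S}_{v})$ rests crucially on the coprimality of these multiplicities. Once this identification is in place, the tree-of-rational-curves structure of $\mathcal{S}_{v}$ makes the vanishing of the abelian and toric ranks immediate, and the explicit tame base change then promotes totally unipotent reduction to potentially good reduction.
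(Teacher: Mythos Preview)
Your argument is correct and follows essentially the same approach as the paper: good reduction away from $\mathbb{F}_{q}\cup\{\infty\}$ is immediate, totally unipotent reduction is read off from the tree-of-rational-curves structure of $\mathcal{S}_{v}$ (the paper cites Corollary~1.4 of \cite{Lorenzini1990} rather than Raynaud directly, but the content is the same and both rely on the multiplicities having gcd $1$), and potentially good reduction comes from isotriviality. The only cosmetic difference is that the paper handles potentially good reduction in one stroke by noting that $C$ becomes constant over $\mathbb{F}_{r}(\sqrt[ab]{t^{q}-t})$, whereas you carry out the explicit local tame base change at each bad place; your version is more explicit but amounts to the same thing.
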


\begin{proof}
The roots of $t^q - t$ lie in $\mathbb F_{q}$, so $C$ has good reduction away from $\mathbb F_{q} \cup \{\infty\}$. Moreover, $C$ is isotrivial and becomes isomorphic to $y^b + x^a = 1$ over $\mathbb F_{r}(\hspace{-3pt}\sqrt[ab]{t^q - t})\,$ so $C$ has potentially good reduction everywhere.

When $v \in \mathbb F_{q} \cup \{\infty\}$, we can read off the reduction of the Jacobian from the special fiber of the simple normal crossings model $\mathcal S$. Write $\mathcal J/\F_{r}$ for the (global) N\'{e}ron model of $J$. Given a point $v \in \mathbb P^1$, let $k_{v}$ denote the residue field at $v$ and let $\mathcal J_{v}^{0}$ denote the connected component of the identity of the fiber of $\mathcal J$ above $v$. 

Similarly, let $\mathcal J_{\overline{v}}^{0}$ denote the connected component of the identity of the special fiber of the N\'{e}ron model of the base change $J_{K_{v}^{\mathrm{unram.}}}$. Since $\mathcal S_{\overline{v}} \cong \mathcal S_{v} \otimes_{\Spec k_{v}} \Spec \overline{k_{v}}$, we have $\mathcal J_{\overline{v}}^{0} \cong (\mathcal J_{v} \otimes_{\Spec k_{v}} \Spec \overline{k_{v}})^{0}$. The advantage of passing to a N\'{e}ron model over $K_{v}^{\mathrm{unram.}}$ is that we may apply results from \cite{Lorenzini1990}, which requires an algebraically closed residue field. 

We recall some facts on the structure of $\mathcal J_{\overline{v}}^{0}$ from Section 1 of \cite{Lorenzini1990}.

Above any point $v \in \P^1$, there is a unipotent group scheme $U$, a torus $T$ and an abelian variety $A$ fitting into the following exact sequence of group schemes over $t_{0}$:
\[
0 \to U \times T \to \mathcal J_{\overline{v}}^{0} \to A \to 0\,.
\]

Since the $\mathcal{S}_{v}$ is the special fiber of a simple normal crossings model of a curve over $K_{v}^{\mathrm{unram.}}$, Corollary~1.4 of \cite{Lorenzini1990} states that $\dim(T)$ is equal to the first Betti number of the dual graph of $\mathcal S_{\overline{v}}$. The dual graph of $\mathcal S_{\overline{v}}$ is a tree, so it has trivial homology. Hence, $T$ is trivial.

Also, if $\mathcal S_{\overline{v}}$ has irreducible components $C_{1}, \dots, C_{r}$, then $\dim A = \sum_{i=1}^{r} \text{genus}(C_{i})$. For $v\in \mathbb F_{q} \cup \{\infty\}$, all of the components of $\mathcal S_{\overline{v}}$ have genus $0$, so $\dim A = 0$ as well. 

In summary, for any place $v$ of bad reduction for $C$, the group scheme $\mathcal J_{\overline{v}}^{0}$ is unipotent, since both the toric and abelian parts are trivial. We conclude that, up to twist, the same is true of $\mathcal J_{v}^{0}$.

\end{proof}

\subsection[]{Tamagawa numbers of $J$.}

From our description of the reduction of $J$ at bad places, we deduce an explicit expression for 
another important invariant of $J$: its Tamagawa number. First, recall the definition:

Given an abelian variety $A/K$ and any place $v$ of $K$, let $\mathcal A/\O_{v}$ be the N\'{e}ron model of $A_{K_{v}}$. The special fiber $\mathcal A_{v}$ of $\mathcal A$ may have multiple components. Let $\mathcal A_{v}^{0}$ be the component containing the identity. The quotient $\mathcal A_{v}/\mathcal A_{v}^{0}$ is a finite group scheme.

\begin{definition}[Tamagawa Number]
For any abelian variety $A/K$ and place $v$ of $K$, the \textit{local Tamagawa number} is defined by $c_v(A) \colonequals \#\left(\mathcal A_{v}/\mathcal A_{v}^{0}\right)(k_{v})\,.$  Equivalently, $c_{v}(A)$ is the number of irreducible components of $\mathcal A_{v}/k_{v}$ which remain irreducible after base change to $\overline{k_{v}}$. 
The \textit{Tamagawa number}  $\mathcal{T}(J/K)$ of $J$ is defined as the product $\prod_v c_v(J)$ over all places of $K$.
\end{definition}

\begin{proposition}\label{prop:TamagawaNumber1}
For $J=J_{a,b,q}$, the Tamagawa number $\mathcal{T}(J/K)$ is equal to $1$.
\end{proposition}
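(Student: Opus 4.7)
The plan is to compute $c_v(J)$ at each place $v$ of $K$ and verify that $\prod_v c_v(J) = 1$. At every $v \notin \F_q \cup \{\infty\}$, $J$ has good reduction by Proposition~\ref{prop:unipotentreduction}, so $c_v(J) = 1$ there, and the product reduces to the finitely many bad places. For each bad $v \in \F_q \cup \{\infty\}$, the strategy is to read off $c_v(J)$ from the explicit minimal proper regular SNC model $\mathcal{S}$ described in Section~\ref{subsec:MinPropRegSNCModel}.

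Next, I would invoke the standard description (as in Raynaud's work, presented in Bosch--Lütkebohmert--Raynaud Ch.~9.6, Liu Ch.~9, or embedded in the recipe of \cite{Dokchitser2018}) that expresses the geometric component group $\Phi_v(\bar k_v)$ of the N\'eron model of $J$ as a cokernel of the form $\ker(\alpha)/\operatorname{im}(M)$, where $\alpha$ sends each component to its multiplicity and $M$ is the intersection matrix of the components of $\mathcal{S}_v$. From Section~\ref{subsec:MinPropRegSNCModel}, the dual graph of $\mathcal{S}_v$ is a tree with a central component $F$ of multiplicity $ab$ to which three chains of rational curves are attached, corresponding to the edges $L_1, L_2, L_3$ of denominators $a$, $b$, and $1$; the multiplicities and (self-)intersection numbers along each chain are governed by the Hirzebruch--Jung continued fraction expansions associated to those denominators.

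The cokernel computation then reduces to evaluating tridiagonal determinants along each of the three chains, with the central component $F$ providing the relations tying them together. The key feature is that the chain $L_3$ of denominator~$1$ terminates in a component of multiplicity~$1$, which (together with the tree structure of the dual graph) one expects to force the restricted intersection matrix to be unimodular modulo the kernel spanned by the multiplicity vector. Concluding $\Phi_v(\bar k_v) = 0$ at every bad $v$ yields $c_v(J) = |\Phi_v(\bar k_v)^{\operatorname{Gal}(\bar k_v/k_v)}| = 1$, and hence $\mathcal{T}(J/K) = 1$.

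The main obstacle is this chain-by-chain determinant computation, particularly at $v = \infty$, where the chains are longer than at finite bad places (compare the left and right halves of Figure~\ref{fig:SNC_03}). Extra care is needed to track the Hirzebruch--Jung data along each chain and to verify that the multiplicity-$1$ endpoint of the $L_3$-chain indeed trivializes the cokernel. If this direct approach proves unwieldy, an alternative is to exploit the fact (used in the proof of Proposition~\ref{prop:unipotentreduction}) that $J$ acquires good reduction over the tame extension $K_v(\sqrt[ab]{t^q - t})$ and to apply a Galois-cohomological description of the component group in that setting.
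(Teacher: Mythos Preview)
Your approach via Raynaud's cokernel description is sound and would ultimately succeed, but the paper sidesteps the determinant computation entirely by invoking Corollary~1.5 of \cite{Lorenzini1990}. That result says that when the toric part of $\mathcal{J}_v^0$ vanishes (equivalently, when the dual graph of $\mathcal{S}_v$ is a tree, which you already know from Proposition~\ref{prop:unipotentreduction}), the local Tamagawa number is given by the closed formula
\[
c_v(J) \;=\; \prod_{i=1}^{n} r_i^{\,d_i - 2}\,,
\]
where $r_i$ is the multiplicity of the component $C_i$ and $d_i = \sum_{j \neq i} C_i \cdot C_j$ is the number of other components it meets. This collapses your entire intersection-matrix/cokernel computation to a one-line check on the combinatorics of $\mathcal{S}_v$: the central component has multiplicity $ab$ and $d=3$; the three leaves have multiplicities $a$, $b$, $1$ and $d=1$; every internal vertex of a chain has $d=2$ and so contributes trivially. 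Hence $c_v(J) = (ab)^{3-2}\cdot a^{1-2}\cdot b^{1-2}\cdot 1^{1-2} = 1$, uniformly over all bad $v$, finite or infinite.

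The two routes are not really different in substance --- Lorenzini's formula is precisely the outcome of the Raynaud cokernel computation specialized to tree dual graphs --- but citing it spares you from tracking the Hirzebruch--Jung data along each chain and from treating $v=\infty$ separately. If you prefer to keep your argument self-contained, it is cleaner to prove the product formula once for trees than to redo the determinant calculation fiber by fiber.
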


This fact is used in Section~\ref{sec:BrauerSiegel}. 
\begin{proof}
If $v$ is a place of good reduction for $J$, then $c_{v}(J) = 1$. 

To compute the local Tamagawa numbers from the simple normal crossings model at each place of bad reduction, we show that $\#\left(\mathcal J_{v}/\mathcal J_{v}^{0}\right)(\overline{k_{v}}) = 1$. Since $1 \leq \#\left(\mathcal J_{v}/\mathcal J_{v}^{0}\right)(k_{v}) \leq \#\left(\mathcal J_{v}/\mathcal J_{v}^{0}\right)(\overline{k_{v}})$, it will follow that $c_{v}(J) = 1$ as well.

Let $\mathcal J_{\overline{v}}$ be the special fiber of the N\'{e}ron model of the base change $J \otimes_{\Spec K_{v}} \Spec K_{v}^{\mathrm{unram.}}$. 
As in the proof of Proposition~\ref{prop:unipotentreduction}, since $\mathcal S_{\overline{v}} \cong \mathcal S_{v} \otimes_{\Spec k_{v}} \Spec \overline{k_{v}}$, we have $\mathcal J_{\overline{v}} \cong \mathcal J_{v} \otimes_{\Spec k_{v}} \Spec \overline{k_{v}}$. 
In particular, we have $\#\left(\mathcal J_{v}/\mathcal J_{v}^{0}\right)(\overline{k_{v}}) \leq \#\left(\mathcal J_{\overline{v}}/\mathcal J_{\overline{v}}^{0}\right)(\overline{k_{v}})$\,.

The advantage of base change to $K_{v}^{\mathrm{unram.}}$ is that we may apply Corollary~1.5 of \cite{Lorenzini1990} to compute the local Tamagawa numbers from the simple normal crossings models at the places of bad reduction. We recall this result here for convenience: If the special fiber of the SNC model is given by $\sum_{i=1}^nr_iC_i$, let $d_i:= \sum_{i\neq j}C_i\cdot C_j$. If the associated Jacobian has toric dimension 0, the local Tamagawa number is given by
\[c_{v}(J) = \prod\limits_{i=1}^n r_i^{d_i-2}.\]

Proposition~\ref{prop:unipotentreduction} says that $\mathcal J_{v}$ (and so also $\mathcal J_{\overline{v}}$) has toric dimension 0, so we may apply this result. We recall the relevant intersection numbers and multiplicities from Section~\ref{subsec:MinPropRegSNCModel}. At each place of bad reduction, there is one fiber of multiplicity $ab$ with 3 intersections, and three fibers of multiplicities $a,b,$ and $1$ with 1 intersection. All other fibers have 2 intersections, so the local Tamagawa number is $\#\left(\mathcal J_{\overline{v}}/\mathcal J_{\overline{v}}^{0}\right)(\overline{k_{v}}) = (ab)^1 a^{-1}b^{-1}1^{-1} = 1$. We conclude that $c_{v}(J) = 1$ as well.

Since all of the local Tamagawa numbers are equal to $1$,  we conclude $\mathcal{T}(J/K) = 1$.
\end{proof}

\subsection[]{Conductor of $J$}

We also use the reduction type of $J$ to compute the conductor divisor $N_J\in\mathrm{Div}(\P^1)$ of $J/K$ in Proposition~\ref{prop:DegOfCond}. In Section~\ref{sec:Lfunction}, we use this computation to verify the degree of $L(J,T)$.  

We refer the reader to \cite{Serre} for the construction of $N_{J}$. Fix, once and for all, a prime $\ell\neq p$ and let $V = V_{\ell}(J)$ be the $\ell$-adic Tate module of $J$ viewed as a representation of $\mathrm{Gal}(\overline{K}/K)$. Given a place $v \in \mathbb P^1$, let $I_{v}$ be the inertia subgroup and denote by $V^{I_{v}}$ the subspace fixed by $I_{v}$.

\begin{proposition}\label{prop:DegOfCond}
The conductor $N_J$ is an effective divisor on $\P^1$, supported on $\F_q\cup\{\infty\}$, with
\[\deg N_J = (a-1)(b-1)(q+1) = 2 g (q+1).\]
\end{proposition}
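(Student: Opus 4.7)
The plan is to decompose each local conductor exponent $f_v$ as a sum $f_v = \epsilon_v + \delta_v$, where $\epsilon_v := \dim_{\mathbb{Q}_\ell}(V/V^{I_v})$ is the tame part and $\delta_v \geq 0$ is the Swan (wild) conductor, then to compute each piece from information already in hand. Effectiveness of $N_J$ is automatic since all $f_v \geq 0$. For any $v \notin \mathbb F_q \cup \{\infty\}$, the curve $C$ has good reduction, so $I_v$ acts trivially on $V$ and $f_v = 0$; thus $N_J$ is supported on $\mathbb F_q \cup \{\infty\}$ as claimed. It therefore suffices to show $f_v = 2g$ at each bad place and then sum.

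At a bad $v$, I would compute $\epsilon_v$ using the structure of the special fiber $\mathcal{J}_v^0$ of the N\'eron model. By the standard relation (see \cite{Serre}),
\[
\dim V^{I_v} = 2\dim A + \dim T,
\]
where $A$ and $T$ are the abelian and toric parts of $\mathcal{J}_v^0$. Proposition~\ref{prop:unipotentreduction} says that $J$ has totally unipotent reduction at every bad place, so both $A$ and $T$ are trivial and $V^{I_v} = 0$. Hence $\epsilon_v = \dim V = 2g$.

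For the wild part, I would exploit the isotriviality established in the proof of Proposition~\ref{prop:unipotentreduction}: the substitution $x = s^b X$, $y = s^a Y$ with $s^{ab} = t^q - t$ identifies $C$ over $K(s)$ with the \emph{constant} smooth projective curve cut out by $Y^b + X^a = 1$, which has good reduction everywhere. Locally at any bad $v$, the extension $K(s)_v/K_v$ is generated by an $(ab)$-th root and $\gcd(ab,p)=1$, so it is tamely ramified. By the criterion of N\'eron--Ogg--Shafarevich, the inertia subgroup of $\mathrm{Gal}(\overline{K_v}/K(s)_v)$ acts trivially on $V$; since tameness forces the wild inertia $P_v \subseteq I_v$ to sit inside this subgroup, $P_v$ also acts trivially on $V$, and hence $\delta_v = 0$. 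Combining, $f_v = 2g$ at every bad place.

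To conclude, I would count degrees of bad closed points of $\mathbb P^1_{\mathbb F_r}$: the finite ones correspond to the monic irreducible factors of $t^q - t \in \mathbb F_r[t]$, whose degrees sum to $q$, and infinity contributes $1$. Therefore
\[
\deg N_J = \sum_{v \text{ bad}} f_v \cdot \deg(v) = 2g \cdot (q+1) = (a-1)(b-1)(q+1).
\]
The main obstacle is justifying the vanishing of the Swan conductor; the tame isotrivial model above is precisely what makes this clean, and once it is in place the remainder of the argument is formal bookkeeping from Proposition~\ref{prop:unipotentreduction} and the standard formalism of $\ell$-adic conductors.
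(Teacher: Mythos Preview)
Your proof is correct and follows essentially the same approach as the paper. Both arguments reduce to showing that $V^{I_v}=0$ at bad places via the totally unipotent reduction of Proposition~\ref{prop:unipotentreduction}, and both kill the wild part by observing that $J$ acquires good reduction over the tamely ramified Kummer extension $K(\sqrt[ab]{t^q-t})$; the paper relegates this tameness argument to a footnote in Appendix~\ref{app:Conductor}, whereas you spell it out in the body of the proof, which is arguably cleaner.
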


\begin{proof}
From the definition of $N_{J}$, we see that
\[\deg(N_J)= \sum_{v \text{ bad reduction}}(2g - \dim(V^{I_v})) \deg v\,.\]
By Proposition~\ref{prop:unipotentreduction}, the places of bad reduction of $J$ are exactly those closed points $v$ of $\P^1$ with $v \in \F_{q}\cup\{\infty\}$.  At each of those places, the Jacobian $J$ has unipotent reduction, hence $V^{I_v}$ is trivial by \cite[\S3]{SerreTate}. Therefore, $2g-\dim(V^{I_v})=2g$ at every such place $v$. 
So, 
\[
\sum_{v \text{ bad reduction}}(2g - \dim(V^{I_v})) \deg v = 2g \sum_{v \in \F_{q} \cup\{\infty\}} \deg v = 2g(q+1)\,.
\]
\end{proof}

\subsection[]{Height of $J$}

In this section, we compute the Faltings height of $J$. 
Let $\mathcal J \to \mathbb P^1$ be the (global) N\'{e}ron model of $J/\mathbb F_{r}(t)$. Let $z: \mathbb P^1 \to \mathcal J$ be the identity section. Let $\Omega^g_{\mathcal J/ \mathbb P^1}$ be the relative dualizing sheaf on $\mathcal J$. This sheaf pulls back to a line bundle $\omega_{J} \colonequals z^{*} \Omega^g_{\mathcal J/\mathbb P^1}$ on $\mathbb P^1$. The Faltings height of $J$ is defined as 
\[h(J) \colonequals \deg(\omega_{J}) \]
and the exponential Faltings height of $J$ is defined as $H(J) \colonequals r^{h(J)}$.

\begin{lemma}\label{lem:height-computation}
There is a positive $D \in \mathbb Q$ depending only on $a$ and $b$ and a positive $E \in \mathbb Q$ depending only on $a$, $b$, and the congruence class of $q \bmod{ab}$ such that the Faltings height of $J$ is 
\[
h(J) = D q + E\,.  
\]
The values $D$ and $E$ satisfy
\[
\frac{(ab - a - b)^3}{6 a^2 b^2} < D < \frac{ab}{6}\, \qquad \text{ and } \qquad
0 < E < g_{C}\,.
\]

\end{lemma}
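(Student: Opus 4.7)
The plan is to extract $h(J) = \deg \omega_J$ from the explicit minimal proper regular SNC model $\pi\colon \mathcal{S} \to \mathbb{P}^1$ constructed in Section~\ref{subsec:MinPropRegSNCModel}. Two ingredients make this feasible. First, the line bundle $\omega_J$ on $\mathbb{P}^1$ can be recovered from $\pi_* \omega_{\mathcal{S}/\mathbb{P}^1}$ after correcting for non-semistability via Artin conductors at each bad fiber (equivalently, one can apply Noether's formula to the surface $\mathcal{S}$ and isolate $h(J)$). Second, because $C$ is $\Delta_v$-regular at every bad place, the recipe of \cite{Dokchitser2018} expresses all the relevant invariants of $\mathcal{S}_v$ --- multiplicities, intersection numbers, and the restriction of the dualizing sheaf --- in terms of the Newton polytope $\Delta_v$. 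Combining these, $h(J) = \sum_v h_v(J) \deg(v)$ decomposes as a sum of local contributions supported on the bad places $v \in \mathbb{F}_q \cup \{\infty\}$ (Proposition~\ref{prop:unipotentreduction}), and each $h_v(J)$ is determined by $\Delta_v$ alone.

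From this decomposition, the shape $h(J) = Dq + E$ is immediate. At each of the $q$ finite bad places $Q = 1$, so $\Delta_v$ (and therefore $h_v(J)$) is a single constant $D$ depending only on $a$ and $b$, contributing $qD$ in total. At $v = \infty$, where $Q = -q$, there is one contribution, which we denote by $E$. To see that $E$ depends on $q$ only through $q \bmod ab$, recall that in Dokchitser's toric recipe the chain of $\mathbb{P}^1$'s attached along each edge $L_i$ of $\Delta_\infty$ is determined by the Hirzebruch--Jung continued-fraction expansion of the slope of $L_i$ modulo the ambient lattice; these slopes are $q/a$ and $q/b$, and the resulting continued-fraction data (multiplicities, intersection numbers, dualizing-sheaf degrees) depend only on $q \bmod a$ and $q \bmod b$, hence on $q \bmod ab$ by CRT.

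For the bounds, I would identify $D$ with a normalized lattice-point count over the tetrahedron $T = \{(x,y,z) : x,y,z \geq 0,\ x/a + y/b + z \leq 1\}$. The upper bound $D < ab/6$ is the continuous volume of $T$, and the lower bound $(ab-a-b)^3/(6 a^2 b^2)$ is the volume of the subtetrahedron cut out by the further constraint $x/a + y/b \geq (ab-a-b)/(ab)$; this reflects that only lattice points beyond the Frobenius threshold $ab - a - b$ contribute genuine ``genus content'' to $\Delta$. Positivity of $E$ is automatic from positivity of each local toric contribution, while the bound $E < g_C$ follows by comparing the $\infty$-contribution against the total geometric genus $g_C$ of $C$. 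The main obstacle is the bookkeeping: disentangling $h_v(J)$ from $\deg \pi_* \omega_{\mathcal{S}/\mathbb{P}^1}$ through the Artin-conductor corrections, and converting the Hirzebruch--Jung continued-fraction sums at $\infty$ into a closed-form expression depending only on $q \bmod ab$. Once that accounting is carried out, both the formula $h(J) = Dq + E$ and the claimed bounds follow by direct toric estimation.
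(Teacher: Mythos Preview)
Your strategy of reading $h(J)$ off the SNC model via Dokchitser's machinery is exactly what the paper does, but your attribution of the terms $Dq$ and $E$ is backwards, and this is not a bookkeeping issue but a structural one.

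The paper identifies $\omega_J \cong \bigwedge^g \pi_*\Omega^1_{\mathcal S/\mathbb P^1}$ and computes $\deg\omega_J$ by choosing the explicit basis $\omega_{i,j}=x^{i-1}y^{j-b}\,dx$ (for $i,j>0$, $bi+aj<ab$) of $\Omega^1_C$ and tracking the order of $\eta=\bigwedge\omega_{i,j}$ along each fiber using \cite[Theorem~8.12]{Dokchitser2018}. At \emph{every} finite place, including the $q$ bad ones in $\F_q$, the floors $\lfloor V_{i,j,v}\rfloor$ vanish, so the $\omega_{i,j}$ already form an $\mathcal O_v$-basis for the relative dualizing sheaf and $\mathrm{ord}_v(\pi_*\eta)=0$. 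The \emph{entire} height comes from $v=\infty$, where
\[
\mathrm{ord}_\infty(\pi_*\eta)=\sum_{\substack{i,j>0\\ bi+aj<ab}}\Big\lceil q\,\tfrac{ab-bi-aj}{ab}\Big\rceil.
\]
Splitting each summand into its linear part and fractional part gives $h(J)=Dq+E$ with $D=\sum(ab-bi-aj)/ab$ and $E$ a sum of $g$ terms each in $(0,1)$; the bound $0<E<g$ is then immediate, and the bounds on $D$ come from sandwiching this two-dimensional Riemann sum between the two pyramidal integrals $\iint(ab-bx-ay)/ab\,dx\,dy$ over the shifted triangles.

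Your claim that the contribution at $\infty$ depends only on $q\bmod ab$ is the error. The polytope $\Delta_\infty$ has a vertex at height $-q$, and the chains of $\mathbb P^1$'s along $L_1$ and $L_2$ genuinely lengthen as $q$ grows (compare the two sides of Figure~\ref{fig:SNC_03}); the Hirzebruch--Jung data there is not periodic in $q$. So any local invariant built from $\mathcal S_\infty$ --- whether via $\pi_*\omega$, Noether's formula, or conductor corrections --- picks up the linear term $Dq$, not just a residue-class constant. Conversely, there is no constant $D$ coming from each finite bad fiber: those fibers are identical and independent of $q$, but in the natural normalization they contribute zero. Your volume heuristic for the upper bound $ab/6$ is fine, but the actual argument is a two-variable Riemann sum over the triangle $\{bi+aj<ab\}$, not a three-dimensional lattice count.
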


\begin{proof}
Since $J$ is a Jacobian, the Faltings height can be reinterpreted in terms of our regular model $\mathcal S$ for $C$ and the map $\pi: \mathcal S \to \mathbb P^1$. There is a section $s:\mathbb P^1 \to \mathcal S$ which maps $\mathbb P^1$ isomorphically onto the Zariski closure in $\mathcal S$ of the point at infinity on the generic fiber $C$. So, we may apply Proposition~7.4 of \cite{BHPPPSSU2015}, which gives
\[
\omega_{J} \cong \bigwedge^{g} \pi_{*} \Omega^1_{\mathcal S/\mathbb P^1}.
\]

For any integers $i,j\geq 1$, consider the meromorphic differential $\omega_{i,j} \colonequals x^{i-1} y^{j-b} dx\in\Omega^1_{\mathcal S/\mathbb P^1}$.
The set 
\[
\big\{ \omega_{i,j}|_{C} : i > 0, j > 0, \text{ and } ab > bi + aj \big\} 
\]
of differentials restricted to the generic fiber $C$ of $\mathcal{S}\to \P^1$ forms a $K$-basis for $\Omega^1_C$.  
We may thus compute $\deg \omega_{J}$ in terms of the orders of poles/zeros of the relative differential $g$-form on $\mathcal{S}$ defined by 
\[
\eta \colonequals \underset{\substack{(i,j):  i,j > 0 \\ ab > bi + aj \,.}}{\bigwedge}\omega_{i,j} \,.
\]
More precisely, we have
\[
\deg(\omega_{J}) = \sum_{v \in \mathbb P^1} \ord_{v}(\pi_{*} \eta) \deg v\,.
\] 
Since $\pi_{*}\eta$ has finitely many zeros and poles, the sum is finite. Given a point $v$ of $\mathbb P^1$, let $\O_{v}$ denote the local ring at $v$ and let $\mathcal S_{v}$ be the base change of $\mathcal S$ to $\O_{v}$. 
We use \cite[Theorem~8.12]{Dokchitser2018} to understand $\ord_{v}(\pi_{*} \eta)$. For $v \in \mathbb A^1 \subset \mathbb P^1\,,$ set 
\[
V_{i,j,v} = \begin{cases} (ab - bi - aj)/ab \qquad & \text{if} \quad v \in \F_{q}\,,\\
0 \qquad &\text{otherwise.}
\end{cases}
\]
In all cases, $\left\lfloor V_{i,j,v} \right \rfloor = 0$. So, by \cite[Theorem~8.12]{Dokchitser2018} the $\omega_{i,j}|_{\mathcal S_{f}}$ form a $R_{f}$ basis for the relative canonical sheaf on $\mathcal S_{f}$. Hence, the $g$-form $\eta$ is regular and nonvanishing on $\mathcal S_{f}$. In other words, $\ord_{f}(\pi_{*}\eta) = 0$. It follows that $\deg(\omega_{J}) = \ord_{\infty}(\eta)$. 

Set 
\[
V_{i,j, \infty} \colonequals (bi + aj - ab)  \frac{q}{ab} \,.
\]
Taking local parameter $s = t^{-1}$ on the fiber $\mathcal S_{\infty}$ above infinity, Theorem~8.12 of \cite{Dokchitser2018} says that an $\mathbb F_{q}[[s]]$-basis for the relative dualizing sheaf is given by
\[
\{ s^{\left\lfloor V_{i,j,\infty}\right \rfloor} \omega_{i,j} : i > 0, j > 0, ab > bi + aj \} \,.
\]
Hence, 
\[
\ord_{\infty}(\eta) = \sum_{\substack{(i,j):  i,j > 0 \\ ab > bi + aj \,.}} -\left\lfloor V_{i,j,\infty} \right \rfloor = \sum_{\substack{(i,j):  i,j > 0 \\ ab > bi + aj \,.}} -\left\lfloor (bi + aj - ab) \frac{q}{ab} \right \rfloor =  \sum_{\substack{(i,j): i,j>0 \\
ab > bi + aj}} \left \lceil q\frac{ab - (bi + aj)}{ab} \right \rceil \,.
\]
If we set
\[
D \colonequals \sum_{\substack{(i,j): i,j>0 \\
ab > bi + aj}} \frac{ab - (bi + aj)}{ab} 
\]
and 
\[
E \colonequals \sum_{\substack{(i,j): i,j>0 \\
ab > bi + aj}} \left \lceil q  \frac{ab - (bi + aj)}{ab} \right \rceil - q \frac{ab - (bi + aj)}{ab}\,,
\]
then $h(J) = \deg(\omega_{J}) = D q + E$. The definition of $D$ depends only on $a$ and $b$, while $E$ only depends on $a, b$ and the residue class of $q \pmod{ab}$.

To bound $E$, we note that
\[
E = \sum_{\substack{(i,j): i,j>0 \\
ab > bi + aj}} \left \lceil q  \frac{ab - (bi + aj)}{ab} \right \rceil - q\frac{ab - (bi + aj)}{ab} < \sum_{\substack{(i,j): i,j>0 \\
ab > bi + aj}} 1 = g\,.
\]

To bound $D$, we interpret each term $(ab - bi - aj)/ab$ as the volume of a rectangular prism with height $(ab-bi-aj)/ab$ and base a square of side length $1$. If we take as the base the square $[i,i+1] \times [j,j+1]$, then the tops of these prisms lie above the hyperplane $z = (ab - bx - ay)/ab$. If we take as base the square $[i-1,i] \times [j-1,j]$, the tops of these prisms lie below this hyperplane. Hence, we may bound $D$ between the areas of two right triangular pyramids, or equivalently the integrals
\[
\frac{(ab - a - b)^3}{6 a^2 b^2} = \underset{\left\{\substack{(x,y):x,y > 1, \\ ab > bx + ay}\right\}}{\iint} \frac{ab - (bx + ay)}{ab} dxdy < D < \underset{\left\{\substack{(x,y):x,y > 0, \\ ab > bx + ay}\right\}}{\iint} \frac{ab - (bx + ay)}{ab} dxdy =  \frac{ab}{6}\,.
\]
\end{proof}

\begin{remark}
When $a = 2$, we can compute that $D = (b-1)^2/8b$, since
\[
D = \frac{1}{2b}\sum_{j: 0 < ja < b} (b-ja) = \frac{1}{2b} \left(\frac{b-1}{2}\right)^2 = \frac{(b-1)^2}{8b}\,.
\]
\end{remark}

\begin{remark} For a fixed pair $a,b$, note that the ratio $h(J)/q$ is bounded from above and from below by positive constants depending only on $a$ and $b$
as $q$ tends to $+\infty$ through powers of $p$. 
\end{remark}

\subsection{Decomposition of the Jacobian} \label{sec:simplicity} 

In this section, we prove Theorem~\ref{thm:simplicity} on the simplicity of $J$. In Section~\ref{sec:rank.large}, we produce examples of abelian varieties with large rank. In Section~\ref{sec:BrauerSiegel}, we show that our $J$ satisfy a Brauer--Siegel ratio as $q$ varies. In Section~\ref{sec:sha}, we produce examples of abelian varieties with with large order of Tate--Shafarevich. Theorem~\ref{thm:simplicity} shows that all of our examples can be constructed as simple abelian varieties, and are not built as isogeny products of elliptic curves over $K$.

\begin{customthm}{\ref{thm:simplicity}}
The Jacobian $J$ is $K$-simple if and only if $a$ and $b$ are both prime.
\end{customthm}  

Before we begin the proof of Theorem~\ref{thm:simplicity}, we study the $\ell$-adic Tate module of an auxiliary curve. Let $C_{0}/\overline{k}$ be the projective curve with a dense open subset defined by the affine equation 
\[
y^b + x^a = 1\,.
\]
Let $J_{0}/\overline{k}$ be the Jacobian of $C_{0}$. The curve $C_{0}$ admits an action of $\mu_{ab}(\overline{k})$ by $\zeta \cdot (x,y) = (\zeta^b x, \zeta^a y)$. This induces an action of $\mu_{ab}(\overline{k})$ on $J_{0}$ and therefore also on its $\ell$-adic Tate module $V_{\ell}(J_{0})$ for any auxiliary prime $\ell$. We will typically choose $\ell$ not equal to $a,b,$ or $p$. Our first task is to describe $V_{\ell}(J_{0})$ as a representation of the finite abelian group $\mu_{ab}(\overline{k})$.

We begin with an auxiliary lemma.

\begin{lemma}\label{lem:subclaim}
Let $G$ be a finite group, let $X$ be a curve over a field equipped with a $G$ action, let $Y = X/G$, and let $f: X \to Y$ be the quotient map. 
Then, $J_{X}^{G} \sim J_{Y}$. That is, the subabelian variety of $G$-invariants of $J_{X}$ is isogenous to $J_{Y}$.
\end{lemma}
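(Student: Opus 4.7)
The plan is to construct explicit morphisms in both directions between $J_Y$ and $J_X^G$ whose compositions in either order equal multiplication by $|G|$; this will exhibit both as isogenies. Covariant functoriality of the Jacobian applied to the $G$-action on $X$ yields an action $G\to\Aut(J_X)$, and by $J_X^G$ we mean the identity component of the fixed subgroup scheme of this action (equivalently, the image of the idempotent $\frac{1}{|G|}\sum_{g\in G} g$ in the isogeny category of abelian varieties).

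First, I would introduce the pullback and pushforward maps associated to the finite map $f\colon X\to Y$: namely $f^{*}\colon J_Y\to J_X$ defined on divisor classes by $[D]\mapsto [f^{*}D]$, and the Albanese/norm map $f_{*}\colon J_X\to J_Y$ defined on divisors by $[E]\mapsto[f_{*}E]$. Since $f\circ g = f$ for every $g\in G$, one has $g\circ f^{*} = (f\circ g)^{*} = f^{*}$, so the image of $f^{*}$ lands inside $J_X^G$. Thus $f^{*}$ factors as $J_Y\to J_X^G\hookrightarrow J_X$, and we may restrict $f_{*}$ to $J_X^G\to J_Y$.

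The two key identities are the standard relations
\[
f_{*}\circ f^{*} = [\deg f]=[|G|]\quad\text{on } J_Y,
\qquad
f^{*}\circ f_{*} = \sum_{g\in G} g\quad\text{on } J_X,
\]
the first being the projection formula for a finite cover of degree $|G|$, the second following because for any divisor $D$ on $X$ one has $f^{*}f_{*}D=\sum_{g\in G} g^{*}D$ when $f$ is Galois with group $G$. Restricting the second identity to $J_X^G$, where every $g$ acts trivially, gives $f^{*}\circ f_{*}=|G|\cdot\mathrm{id}_{J_X^G}$. Hence both compositions of $f^{*}\colon J_Y\to J_X^G$ and $f_{*}|_{J_X^G}\colon J_X^G\to J_Y$ equal $[|G|]$, which is an isogeny on each abelian variety; therefore $f^{*}$ is itself an isogeny $J_Y\to J_X^G$, proving $J_X^G\sim J_Y$.

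The main obstacle is not any single computation but rather fixing the meaning of $J_X^G$ so that the identity $f^{*}\circ f_{*}=\sum_{g}g$ truly restricts to multiplication by $|G|$; I would handle this by working in the isogeny category, where the idempotent $\frac{1}{|G|}\sum_{g\in G}g$ is available and carves out $J_X^G$ as a direct factor of $J_X$ canonically. Verifying the projection formula and the Galois covering identity for a (possibly ramified) cover of smooth curves is standard and I would cite it rather than reprove it.
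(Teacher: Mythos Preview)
Your proposal is correct and follows essentially the same approach as the paper: both arguments hinge on the fact that $f^{*}$ lands in $J_X^{G}$ and that the averaging operator $\sum_{g\in G} g$ factors through $f^{*}$ (in the paper's language, the $G$-invariant divisor $\sum_g g\cdot D$ is a pullback from $Y$). Your version is slightly cleaner in that you also invoke $f_{*}\circ f^{*}=[|G|]$ to verify that $f^{*}$ has finite kernel, a point the paper leaves implicit when it passes from ``image of $f^{*}$ has finite index in $J_X^{G}$'' to the isogeny conclusion.
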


\begin{proof}
Suppose that $P \in J_{X}(\overline{K})^G$ is $G$-invariant. Then, $P$ is represented by some divisor $D$ on $X$, and $\# G \cdot P$ is represented by the $G$-invariant divisor $\sum_{g \in G} g\cdot D$ on $X$, which is the pullback of some divisor on $Y$. In particular, $\# G \cdot P$ is in the image of the finite map $f^{*}: J_{Y} \to J_{X}$. The image of $f^{*}$ is contained in $J_{X}^{G}$. So, the image of $f^{*}$ is finite index in $J_{X}^{G}$. It follows that $J_{X}^{G} \sim J_{Y}$.
\end{proof}

As a consequence of Lemma~\ref{lem:subclaim}, we have a similar result on the level of $\ell$-adic Tate modules. More precisely, we have $V_{\ell}(J_{X})^G = V_{\ell}(J_{Y})$ and $(V_{\ell}(J_{X}) \otimes \overline{\Q_{\ell}})^G = V_{\ell}(J_{Y}) \otimes \overline{\Q_{\ell}}$.

\begin{lemma}\label{lem:claim}
Let $\ell$ be a prime not dividing $abp$ such that $\ell \not\equiv 1$ modulo any odd prime dividing $ab$ and $\ell \not \equiv 1 \pmod{4}$ if $ab$ is even. Then, as a representation of $\mu_{ab}(\overline{k})$, the vector space $V_\ell(J_0)\otimes \overline{\Q_\ell}$ is the direct sum of $2g$ one-dimensional $\overline{\Q}_\ell$-vector spaces indexed by the characters $\chi: \mu_{ab} \to \overline{\Q_\ell}^\times$ which are nontrivial when restricted to both $\mu_a$ and $\mu_b$.  There are $(a-1)(b-1)=2g$ such characters.
\end{lemma}

\begin{proof}
The assumption on $\ell$ implies that for any positive $n$ dividing $ab$, the group $\Gal(\overline{\Q_{\ell}}/\Q_{\ell})$ acts transitively on the set of primitive $n$\textsuperscript{th} roots of unity in $\overline{\Q_{\ell}}$. Note that $\Gal(\overline{\Q_\ell}/\Q_\ell)$ acts on $V_{\ell}(J_{0})\otimes \overline{\Q_\ell}$ by acting trivially on the first factor and by the natural action on the second factor. Since $\mu_{ab}(\overline{k})$ acts only on the first factor, the actions of $\mu_{ab}(\overline{k})$ and  $\Gal(\overline{\Q_{\ell}}/\Q_{\ell})$ commute. 

Since $\mu_{ab}(\overline{k})$ is an abelian group and $\overline{\mathbb Q_{\ell}}$ is an algebraically closed field of characteristic zero,
$V_{\ell}(J_{0})\otimes \overline{\Q_\ell}$ is a direct sum of $2g$ one-dimensional $\overline{\Q_{\ell}}$-representations of $\mu_{ab}$. 
We now consider the multiplicities of these one-dimensional representations.

First, we claim that each representation of a given order occurs to the same multiplicity. 

If $\chi'$ and $\chi$ have the same order, then there is some $c \in (\Z/ab\Z)^\times$ so that $\chi' = \chi^c$. Since $\Gal(\overline{\Q_\ell}/\Q_\ell)$ acts transitively on the primitive $n$\textsuperscript{th} roots of unity for any $n$ dividing $ab$, there is some $\sigma \in \Gal(\overline{\Q_\ell}/\Q_\ell)$ such that $\sigma(\chi(\zeta)) = \chi(\zeta)^c$ for all $\zeta \in \mu_{ab}(\overline{k})$. Now suppose $P \in V_{\ell}(J_{0})\otimes \overline{\Q_\ell}$ satisfies $\zeta \cdot P = \chi(\zeta)$ for all $\zeta \in \mu_{ab}(\overline{k})$. Then,
\[
\zeta \cdot \sigma(P) = \sigma(\zeta \cdot P) = \sigma( \chi(\zeta)P ) = \sigma(\chi(\zeta))\sigma(P) = \chi(\zeta)^c \sigma(P) = \chi'(\zeta) \sigma(P)\,.
\]
So, $\sigma$ defines an isomorphism between the subspaces of $V_{\ell}(J_{0})\otimes \overline{\Q_\ell}$ where $\mu_{ab}(\overline{k})$ acts by $\chi$ and by $\chi'$. Hence, each character of $\mu_{ab}(\overline{k})$ of a given order has the same multiplicity in $V_{\ell}(J_{0})\otimes \overline{\Q_\ell}$.

Next, we prove that each of the primitive characters of $\mu_{ab}(\overline{k})$ appears with multiplicity $1$ in $V_{\ell}(J_{0})\otimes \overline{\Q_\ell}$. 

To do this, we focus on the \emph{imprimitive} characters. For any subgroup $G \subset \mu_{ab}(\overline{k})$, there is some $\alpha$ dividing $a$ and $\beta$ dividing $b$ such that $\mu_{\alpha \beta}(\overline{k}) \cong \mu_{ab}(\overline{k})/G$. Then, the quotient $C_{0}/G$ is the curve over $\overline{k}$ with dense open subset defined by the affine equation $y^{\beta} + x^{\alpha} = 1$, which has genus $(\alpha - 1)(\beta - 1)/2$. Now, the span of the spaces where $\mu_{ab}(\overline{k})$ acts by characters which are trivial on $G$ is $(V_{\ell}(J_{0})\otimes \overline{\Q_\ell})^G$, which has dimension $(\alpha - 1)(\beta - 1)$ by the previous sentence together with Lemma~\ref{lem:subclaim}.

We can therefore determine the dimension of the subspace of $V_{\ell}(J_{0})\otimes \overline{\Q_\ell}$ where $\mu_{ab}(\overline{k})$ acts by primitive characters using an inclusion-exclusion argument on the subgroups of $G$. After a short computation, taking $\phi(n) = \#(\mathbb Z/n\mathbb Z)^\times$, one finds that this space has dimension $\phi(a) \phi(b) = \phi(ab)\,.$ Since there are $\phi(ab)$ primitive characters of $\mu_{ab}(\overline{k})$ and each appears with the same multiplicity, we see that each primitive character of $\mu_{ab}(\overline{k})$ appears in $V_{\ell}(J_{0})\otimes \overline{\Q_\ell}$ with multiplicity $1$.

Applying a similar argument to the curves $y^{\beta} + x^{\alpha} = 1$ with $\alpha > 1$ and $\beta > 1$ we see that in fact, each character of $\mu_{ab}(\overline{k})$ which is nontrivial when restricted to both $\mu_{a}(\overline{k})$ and $\mu_{b}(\overline{k})$ appears in $V_{\ell}(J_{0})\otimes \overline{\Q_\ell}$ with multiplicity $1$. Those characters which are trivial when restricted to either $\mu_{a}(\overline{k})$ or $\mu_{b}(\overline{k})$ do not appear in $V_{\ell}(J_{0})\otimes \overline{\Q_\ell}$, because they would arise from the Jacobians of curves $y + x^{\alpha} = 1$ or $y^{\beta} + x = 1$ of genus $0$, which are trivial. 

We conclude that 
\[
V_{\ell}(J_{0}) \cong \bigoplus_{\substack{\chi: \mu_{ab} \to \Q_{\ell}^{\times} :  \\ \chi|_{\mu_{a}} \text{ nontrivial and} \\
\chi|_{\mu_{b}} \text{ nontrivial.}}} \chi\,.
\]
\end{proof}

\begin{proof}[Proof of Theorem~\ref{thm:simplicity}]
We begin by proving the  ``only if'' direction of the statement:
Assume that at least one of $a$ and $b$ is composite.  By symmetry, assume that $a$ is composite, and let $d$ be one of its nontrivial divisors. Let $C_{d,b}$ be the curve with open affine defined by $y^b + x^d = t^q - t$. The map $(x,y)\mapsto (x^{a/d}, y)$ extends to a nonconstant $K$-morphism $C_{a,b}\to C_{d,b}$. 
The curve $C_{d,b}$ has positive genus since $d>1$. On the other hand, the genus of $C_{d,b}$ is strictly smaller than that of $C_{a,b}$ since $d<a$. The contravariant functoriality of the Jacobian then implies the existence of a morphism of abelian varieties $J_{d,b}\hookrightarrow J_{a,b}$, whose image is a positive-dimensional strict abelian subvariety of $J_{a,b}$ defined over $K$. Hence $J_{a,b}$ is not simple.

For the other direction, we prove the slightly stronger statement that if $a$ and $b$ are prime, then $J$ is simple over the compositum $K' = K \overline{k}$. Let $L' = K'(\hspace{-3pt}\sqrt[ab]{t^q - t})$. Our goal is to prove that $J_{K'}$ is simple by computing the action of $\Gal(L'/K')$ on $V_{\ell}(J_{K'})$.

We begin by observing that $C_{0}$ and $C$ become isomorphic after base change to $L'$. Namely, writing $u = \sqrt[ab]{t^q - t}$, there is an isomorphism $\phi: (C_{0})_{L'} \to C_{L'}$ defined on the affine patches by $(x,y) \mapsto (u^b x, u^a y)\,.$ The morphism $\phi$ commutes with the action of $\mu_{ab}(\overline{k})$ on both curves, but since $\phi$ is only defined over $L'$ and not over $K'$, the Galois group $\Gal(L'/K')$ acts differently on the two curves. If $\sigma \in \Gal(L'/K')$, then there is some primitive $(ab)$\textsuperscript{th} root of unity $\zeta \in \mu_{ab}(\overline{k})$ such that $\sigma(u) = \zeta(u)$. Then, for any $(x,y) \in C_0(\overline{k})$ we have that 
\[
\sigma(\phi(x,y)) = \sigma(u^b x, u^a y)  = (\zeta^b u^b x, \zeta^a u^a y) = \phi((\zeta^b x, \zeta^a y)) = \phi(\zeta \cdot (x,y)) = \zeta \cdot \phi((x,y))\,.
\]
The pushforward $\phi_{*}: (J_{0})_{L'} \to J_{L'}$ is also an isomorphism. For any $P \in J_{0}(\overline{k})$, we have $\sigma(\phi_{*}(P)) = \zeta \cdot \phi_{*}(P).$ Finally, since all of the $\ell$-power torsion of $J_{0}$ is defined over $\overline{k}$, we see that $\sigma \in \Gal(L'/K')$ acts on $V_{\ell}(J_{K'})\otimes \overline{\Q_{\ell}}$ in the same way as $\zeta \in \mu_{ab}(\overline{k})$ acts on $V_{\ell}(J_{0})\otimes \overline{\Q_{\ell}}$. Applying Lemma~\ref{lem:claim} in the case that $a$ and $b$ are both prime, we conclude that each of the primitive characters of $\Gal(L'/K') \cong \mu_{ab}(\overline{k})$ appears with multiplicity $1$, and no other characters appear. Since $\Gal(\overline{\Q_{\ell}}/\Q_{\ell})$ acts transitively on the $(ab)$\textsuperscript{th} roots of unity in $\overline{\Q_{\ell}}$, this can only happen if the representation $V_{\ell}(J_{K'})$ is simple as a $\Gal(L'/K')$-representation valued in $\Q_{\ell}$. We conclude that $J_{K'}$ is simple as an abelian variety over $K'$.
\end{proof}

\begin{remark}
Without much extra effort, one can refine Theorem~\ref{thm:simplicity} to show that $J$ has one $K'$-simple isogeny factor of dimension $(\alpha - 1)(\beta - 1)$ for each pair $(\alpha, \beta)$ such that the positive integer $\alpha$ divides $a$, the positive integer $\beta$ divides $b$ and both $a \neq 1$ and $b \neq 1$. Of course, if $a$ and $b$ are both prime, the only such pair is $(a,b)$.

On the other hand, Theorem~\ref{thm:simplicity} cannot be refined to show that $J$ is geometrically simple. Under certain congruence conditions on $a, b$, and $r$, the Jacobian $J_{0}$ has repeated isogeny factors. For instance, the Jacobian of the genus $2$ curve $y^2 + x^5 = 1$ over $\F_{19}$ is isogenous to the square of a supersingular elliptic curve. Since $J$ and $J_{0}$ become isomorphic after a suitable base change, the Jacobian of the genus $2$ curve $y^2 + x^5 = t^q - t$ over $\F_{19}$ is not geometrically simple.
\end{remark}

\section{Background on Gauss sums}\label{sec:gausssums}

In this section, we gather some facts about Gauss sums which will be useful in future sections.

\subsection[]{Multiplicative and additive characters on extensions of $\F_p$}
\label{sec:characters}

We fix an algebraic closure $\overline{\Q}$ of $\Q$ and denote by $\overline{\Z}$ the ring of algebraic integers.
We choose, once and for all, a prime ideal $\mathfrak{p}$ of $\overline{\Z}$ which lies over the rational prime $p$.
We write $\pval:\overline{\Q}\to\Q$ for the $\mathfrak{p}$-adic valuation on $\overline{\Q}$, normalised so that $\pval(r)=1$.
\\

The quotient $\overline{\Z}/\mathfrak{p}$ is an algebraic closure of $\F_p$, denoted by $\overline{\F_p}$. All finite extensions of $\F_p$ will be viewed as subfields of $\overline{\F_p}$. 
The quotient map $\overline{\Z}\to\overline{\Z}/\mathfrak{p}=\overline{\F_p}$ further induces an isomorphism between the group of roots of unity in $\overline{\Q}$ whose order is prime to $p$, and ${\overline{\F_p}}^\times$. Let $\cchi:\overline{\F_p}^\times\to\overline{\Q}^\times$  denote the inverse of this isomorphism. The isomorphism $\cchi$ is sometimes called the Teichm\"uller character of $\overline{\F_p}$.

\begin{definition}
Let $\F$ be a finite field extension of $\F_p$, and $n$ be a positive integer dividing $|\F^\times|$. We define a multiplicative character $\chi_{\F,n}$ on $\F$ by
\[\chi_{\F, n} : \F^\times \to \overline{\Q}^\times, \quad
x\mapsto \cchi(x)^{|\F^\times|/n}.\]
\end{definition}
A straightforward computation shows that $\chi_{\F,n}$ has exact order $n$.

We fix a nontrivial additive character $\psi_0$ on $\F_p$.
We may, and will, assume that $\psi_0$ takes values in $\Q(\zeta_p)$.
For any finite extension $\F/\F_p$, we denote the relative trace map by  $\Tr_{\F/\F_p} : \F \to\F_p$.
The composition $\psi_0\circ\Tr_{\F/\F_p}$ is then a nontrivial additive character on $\F$. More generally:

\begin{definition}
Let $\F$ be any finite field extension of $\F_p$, and let $\alpha\in\F$. We define an additive character $\psi_{\F,\alpha}$ on $\F$ by 
\[\psi_{\F, \alpha} : \F \to\Q(\zeta_p)^\times, \quad
x\mapsto (\psi_0\circ\Tr_{\F/\F_p})(\alpha x)\,.\]
The character $\psi_{\F,\alpha}$ is nontrivial for any $\alpha\neq0$.
\end{definition}

To lighten expressions, we suppress $\F$ from the notation when it is clear from context.

\subsection{Classical properties of Gauss Sums}\label{ssec:gauss.sums.basic}

We begin by recalling the definition of Gauss sums and some of their classical properties.  

\begin{definition}
Let $\F$ be a finite field of characteristic $p$.
Given an additive character $\psi$ and a multiplicative character $\chi$ on $\F$, we define the Gauss sum $\Gauss{\F}{\chi}{\psi}$ by
\[\Gauss{\F}{\chi}{\psi}=-\sum_{x \in \F^{\times}} \chi(x)\psi(x).\] 
\end{definition}

Let $\F$ be a finite field of characteristic $p$. For any  additive character $\psi$ and any multiplicative character $\chi$ on $\F$, we have the following facts.  
\begin{enumerate}
\item 
    If $\chi$ has order $n$, then $\Gauss{\F}{\chi}{\psi}$ is an algebraic integer in the cyclotomic field $\Q(\mu_{np})$.

\item If $\chi$ is nontrivial, orthogonality of characters implies that in any complex embedding, 
\begin{align} \label{eqn:complex-valuation-Gauss-sum}
    |\Gauss{\F}{\chi}{\psi}| = |\F|^{1/2}\,.
\end{align}

\item 
    For $\alpha \in \F^\times$, in the notation introduced in the previous subsection, 
    \begin{align}\label{eqn:additive-char-Gauss-sum}
   \Gauss{\F}{\chi}{\psi_{\F, \alpha}} 
    = \chi(\alpha)^{-1} \Gauss{\F}{\chi}{\psi_{\F, 1}}\,.
    \end{align}
 
\item (Hasse-Davenport relation) 
    For any finite extension $\F'/\F$, 
    \begin{align} \label{eqn:HasseDavenportRelation}
  \Gauss{\F'}{\chi\circ \norm_{\F'/\F}}{\psi\circ \Tr_{\F'/\F}}=\Gauss{\F}{\chi}{\psi}^{[\F':\F]}.  \end{align}

\end{enumerate}

\subsection{Orbits}\label{sec:orbits}
Let $p$ be a prime number and $r$ be a fixed power of $p$. For any integers $a,b$ which are relatively prime to each other and coprime to~$p$, and for any power $q$ of $p$, define
\[S:= S_{a,b,q} = (\Z/a\Z\smallsetminus\{0\})\times (\Z/b\Z\smallsetminus\{0\}) \times \F_q^\times.\]
The subgroup $\langle r\rangle$ of $\mathbb{Q}^\times$ generated by $r$ acts on $S$ via the rule
\[\forall (i,j,\alpha)\in S, \qquad 
r\cdot (i,j,\alpha) := (ri, rj, \alpha^{1/r}).\]
In other words, $r$ acts on $(\Z/a\Z\setminus\{0\})\times (\Z/b\Z\setminus\{0\})$ by component-wise multiplication and on $\F_q^\times$ by the inverse of the $r$-power Frobenius. 

We denote by $O:=O_{r,a,b,q}$ the set of orbits of $\langle r\rangle$ on $S$. Recall that for $n\geq 1$ coprime to $p$, we have defined $o_{p}(n)$ (resp. $o_{r}(n)$) to be the multiplicative order of $p$ (resp. $r$) modulo $n$. For $n\geq1$ coprime to $p$ and $i\in\Z/n\Z\smallsetminus\{0\}$,  we write $\kappa_{r, n}(i)$ for
the multiplicative order of $r$ modulo $n/\gcd(n,i)$. I.e., 
\[
\kappa_{r,n}(i) := o_r \big(n/\gcd(n,i)\big)\,.
\]

If $o\in O$ is the orbit of $(i,j, \alpha)\in S$, then a computation shows that
\begin{equation}
|o| = \lcm\big(\kappa_{r,a}(i), \kappa_{r,b}(j),  [\F_r(\alpha), \F_r]\big)\,.
\end{equation}

For any integer $n$ coprime to $p$, let
\[S'_n:=(\Z/n\Z\smallsetminus\{0\})\times \F_q^\times\,.\]
Endow $S_{n}'$ with an action of $\langle r\rangle$ \emph{via} the rule $r\cdot (i, \alpha) = (r i, \alpha^{1/r})$.
Write $O'_n$ for the  set of orbits of $S'_n$ under this action.

If $(i, \alpha)\in S'_n$, 
 then the length $|o'|$ of its orbit $o'\in O'_n$ is the smallest integer~$f\geq 1$ such that both~$\alpha \in \F_{r^f}$ and $n$ divides $i(r^f-1)$. 
In other words, 
\begin{align} \label{eqn:orbitsize.prime}
  |o'|=\lcm\big(\kappa_{r,a}(i), [\F_r(\alpha):\F_r]\big)\,.  
\end{align}

With $S_n'$ and $O$ as above, the natural projection maps $S_{a,b,q}\to S'_a$ and $S_{a,b,q}\to S'_b$ commute with the actions of $\langle r\rangle$ on these sets.
These projections therefore induce surjective maps $ \pi_a : O \to O'_a$ and $\pi_b : O \to O'_b$.
For any $o\in O$, we let 
\[\nu_a(o) := |o|/|\pi_a(o)| \qquad \text{ and }\qquad \nu_b(o) := |o|/|\pi_b(o)|. \]

If $o$ is the orbit of $(i,j,\alpha)$, we have
\[\nu_a(o) = \frac{\lcm\big(\kappa_{r,a}(i),\kappa_{r,b}(j), [\F_r(\alpha): \F_r]\big)}{\lcm\big( \kappa_{r,a}(i),  [\F_r(\alpha): \F_r]\big)}
= \frac{\lcm\big(|\pi_a(o)|, \kappa_{r,b}(j)\big)}{|\pi_a(o)|}
= \frac{\kappa_{r,b}(j)}{\gcd\big(|\pi_a(o)|, \kappa_{r,b}(j)\big)}.\]
In particular, $\nu_a(o)$ and $\nu_b(o)$ are integers, and $\nu_a(o)=1$ if and only if $\kappa_{r,b}(j)$ divides $|\pi_a(o)|$.

Since $a$ and $b$ are relatively prime, the Chinese remainder theorem gives a natural isomorphism $\phi: \Z/a\Z\times\Z/b\Z\simeq \Z/ab\Z$. The set $\phi ((\Z/a\Z\smallsetminus\{0\})\times (\Z/b\Z\smallsetminus\{0\}))$ is clearly stable under the action of $\langle r\rangle$ by component-wise multiplication on $\Z/ab\Z\smallsetminus\{0\}$, so the orbit set $O_{r,a,b,q}$ may be viewed as a subset of $O'_{ab}$.

\subsection{Gauss sums associated to orbits}\label{sec:gausssums.orbits}

Recall that we have fixed a nontrivial additive character $\psi_0$ on $\F_p$. Let $n$ be an integer which is coprime to $p$. Consider the set $S'_n$ as above, with its action of $\langle r \rangle$. Let $(i, \alpha)\in S'_n$, and write $o'\in O'_n$ for its orbit under
the action $\langle r\rangle$  on $S'_n$.
Let $\F'$ be the extension of $\F_r$ of degree $|o'|$.
By construction, $\alpha^{r^{|o'|}} = \alpha$. So, we have $\alpha\in\F'$. 
Hence, we may consider the  nontrivial additive character $\Psi_{(i,\alpha)}$ on $\F'$ defined by 
\[\forall x\in\F',\qquad
\Psi_{(i,\alpha)}(x):=\psi_{\F',\alpha}(x) = (\psi_0\circ\Tr_{\F'/\F_p})(\alpha x).\]
 By construction,  $n$ divides $i\,(r^{|o|}-1)= i \, |{\F'}^\times|$. We introduce a nontrivial multiplicative character $\llambda_{(i, \alpha)}$ on $\F'$ defined by 
\[\forall x\in\F', \qquad 
\llambda_{(i, \alpha)}(x) := \cchi(x)^{i  (r^{|o|}-1)/n}. \]
This leads us to consider the Gauss sum $\Gauss{\F'}{\llambda_{(i, \alpha)}}{\Psi_{(i, \alpha)}}$. 

\begin{lemma}\label{lem:InvariantGaussSums}
For all $(i,\alpha) \in S'_{n}$, we have
\[\Gauss{\F'}{\llambda_{(i, \alpha)}}{\Psi_{(i, \alpha)}} 
= \Gauss{\F'}{\llambda_{r\cdot (i, \alpha)}}{\Psi_{r\cdot(i, \alpha)}}. \]
In other words, the value of $\Gauss{\F}{\llambda_{(i, \alpha)}}{\Psi_{(i, \alpha)}}$ is constant along the $\langle r \rangle$-orbit $o'$ of $(i, \alpha)$.
\end{lemma}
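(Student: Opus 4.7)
The plan is to verify the identity by a direct change of variables in the Gauss sum on the right-hand side, exploiting three simple observations: (i) the $r$-th power map is a bijection of $\F'^\times$ since $\gcd(r, r^{|o'|}-1)=1$, so in particular the $r$-th root $\alpha^{1/r}$ lies in $\F'$; (ii) the Teichm\"uller character $\cchi$ is multiplicative, so $\cchi(x^{1/r})^r=\cchi(x)$; and (iii) the trace $\Tr_{\F'/\F_p}$ is invariant under the $r$-power Frobenius $z\mapsto z^r$, which is a Galois automorphism of $\F'/\F_p$ because $r$ is a power of $p$. Iterating the resulting identity then gives invariance along the entire $\langle r\rangle$-orbit of $(i,\alpha)$.

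Concretely, writing $N := r^{|o'|}-1$, I would expand
\[
\Gauss{\F'}{\llambda_{r\cdot(i,\alpha)}}{\Psi_{r\cdot(i,\alpha)}}
= -\sum_{x\in\F'^\times}\cchi(x)^{riN/n}\,\psi_0\!\bigl(\Tr_{\F'/\F_p}(\alpha^{1/r}\,x)\bigr),
\]
and perform the bijective substitution $x\mapsto x^{1/r}$. By (ii) the multiplicative factor collapses as $\cchi(x^{1/r})^{riN/n}=\cchi(x)^{iN/n}=\llambda_{(i,\alpha)}(x)$, which is exactly the multiplicative character attached to the original pair. For the additive factor, combining the $r$-th roots gives $\alpha^{1/r}x^{1/r}=(\alpha x)^{1/r}$, and then (iii) yields $\Tr_{\F'/\F_p}\!\bigl((\alpha x)^{1/r}\bigr)=\Tr_{\F'/\F_p}(\alpha x)$. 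The sum therefore equals $\Gauss{\F'}{\llambda_{(i,\alpha)}}{\Psi_{(i,\alpha)}}$, as required.

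There is no serious obstacle in the argument; everything amounts to bookkeeping around the $r$-th power map on $\F'^\times$ and the compatibility of $\cchi$, $\Tr_{\F'/\F_p}$, and $\psi_0$ with this map. The only points that merit a brief verification are that $\alpha^{1/r}\in\F'$ (which follows from $\alpha\in\F'$ and the bijectivity of the $r$-th power on $\F'^\times$) and that the exponent $riN/n$ collapses correctly after substitution, where the specific form $i(r^{|o'|}-1)/n$ of the exponent defining $\llambda_{(i,\alpha)}$ is used in an essential way.
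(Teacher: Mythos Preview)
Your proof is correct and follows essentially the same route as the paper: both expand $\Gauss{\F'}{\llambda_{r\cdot(i,\alpha)}}{\Psi_{r\cdot(i,\alpha)}}$, perform the bijective change of variables $x\mapsto x^{1/r}$ (equivalently, reindex by $y=x^r$), use that $\cchi(x^{1/r})^{r}=\cchi(x)$ to collapse the multiplicative factor, and invoke the Galois invariance $\Tr_{\F'/\F_p}(z^{1/r})=\Tr_{\F'/\F_p}(z)$ to handle the additive factor. The only cosmetic difference is that the paper phrases the trace step as ``$z$ is conjugate to $z^r$ over $\F_r$'' whereas you state it as invariance under the $r$-power Frobenius in $\Gal(\F'/\F_p)$; these are the same observation.
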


\begin{proof} By definition,
\begin{align*}
- \Gauss{\F}{\llambda_{r\cdot (i, \alpha)}}{\Psi_{r\cdot(i, \alpha)}}
&= \sum_{x\in(\F')^\times} \cchi(x)^{r i (r^{|o|}-1)/n}  \, (\psi_{0}\circ\Tr_{\F'/\F_p})(\alpha^{1/r} x)
\end{align*}

Since $x \mapsto x^r$ defines a bijection $(\F')^\times \to (\F')^\times$, we may set $y = x^r$ and reindex. This yields
\begin{align*}
- \Gauss{\F}{\llambda_{r\cdot (i, \alpha)}}{\Psi_{r\cdot(i, \alpha)}}
&= \sum_{y\in(\F')^\times} \cchi(y)^{ i (r^{|o|}-1)/n} \, (\psi_{0}\circ\Tr_{\F'/\F_p})(\alpha^{1/r} y^{1/r})\\
&= \sum_{y\in(\F')^\times} \llambda_{(i, \alpha)}(y)\, (\psi_{0}\circ\Tr_{\F'/\F_p})((\alpha y)^{1/r})\,.
\end{align*}

Since $\F_{r} \subset \F'$, if $z \in \F'$, then $z$ is conjugate to $z^r$ over $\F_{r}$. So,  $\Tr_{\F'/\F_p}(z)=\Tr_{\F'/\F_p}(z^r)$. Hence,
\begin{align*}
- \Gauss{\F}{\llambda_{r\cdot (i, \alpha)}}{\Psi_{r\cdot(i, \alpha)}}
& = \sum_{y\in(\F')^\times} \llambda_{(i, \alpha)}(y) \,(\psi_{0}\circ\Tr_{\F'/\F_p})(\alpha y) \\
& = - \Gauss{\F'}{\llambda_{(i,\alpha)}}{\Psi_{(i, \alpha)}}.
\end{align*}
\end{proof}

Lemma~\ref{lem:InvariantGaussSums} allows us to define Gauss sums associated to $\langle r \rangle$-orbits:
\begin{definition}\label{def:bold_g}
In the above setting, for an orbit $o'\in O'_n$, 
we write $\F'$ for the extension of $\F_r$ of degree $|o'|$, 
and we set
\[\Gauor{o'} := \Gauss{\F'}{\llambda_{(i, \alpha)}}{\Psi_{(i, \alpha)}}\]
for one/any representative $(i, \alpha)\in S'_n$ of $o'$.
\end{definition}
Since $\llambda_{(i, \alpha)}$ is nontrivial, by \eqref{eqn:complex-valuation-Gauss-sum} we have 
\[|\Gauor{o'}| =|\F'|^{1/2} =r^{|o'|/2}\]
in any complex embedding of $\overline{\Q}$.

Now let $a$ and $b$ be  relatively prime integers which are coprime to $p$,
and consider the set $O$ of orbits of $\langle r\rangle$ acting on the set $S_{a,b,q}$ introduced in \S\ref{sec:orbits}. 
Recall that there are surjective maps 
$\pi_a:O\to O'_a$ and $\pi_b : O\to O'_b$. 
We may finally introduce:

\begin{definition}\label{def:oomega}
In the above setting, for any orbit $o\in O$, we let 
\[\oomega(o) := \Gauor{\pi_a(o)}^{\nu_a(o)} \Gauor{\pi_b(o)}^{\nu_b(o)},\]
where $\nu_a(o) = |o|/|\pi_a(o)|$ and  $\nu_b(o) = |o|/|\pi_b(o)|$.
\end{definition}

For any orbit $o\in O$, we have $|\oomega(o)|=  r^{|o|}$ in any complex embedding of $\overline{\Q}$.
\\

For any $a,b$, we let $\theta_{a,b} := \lcm(o_p(a), o_p(b))$. Recall that an algebraic integer $g$ is called a Weil integer of size $p^\theta$ (with $\theta\in \frac{1}{2}\Z$) if and only if $g$ has magnitude $p^\theta$ in any complex embedding of $\overline{\Q}$.
We record the following proposition for future use.

\begin{proposition}\label{prop:oomega.product} 
For any orbit $o\in O$, 
there exist an $(ab)$\textsuperscript{th} root of unity $\zeta_o$ and a Weil integer $g_o$ of size $p^{\theta_{a,b}}$ such that 
\[\oomega(o) =\zeta_o  g_o^{[\F_r:\F_p]\cdot |o|/\theta_{a,b}}.\]
\end{proposition}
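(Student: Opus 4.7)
The strategy is to apply the Hasse--Davenport relation~\eqref{eqn:HasseDavenportRelation} twice in combination with the additive-character-twist identity~\eqref{eqn:additive-char-Gauss-sum}, rewriting $\oomega(o)$ in terms of Gauss sums over the minimal subfields of $\overline{\F_p}$ that contain the relevant roots of unity, up to an explicit root of unity prefactor.

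First I would lift both factors of $\oomega(o)$ to a common ambient field. Let $\F''$ be the degree-$|o|$ extension of $\F_r$; then $\F'_a$ and $\F'_b$ (the domains of $\Gauor{\pi_a(o)}$ and $\Gauor{\pi_b(o)}$) sit inside $\F''$ as subfields of degrees $\nu_a(o)$ and $\nu_b(o)$ respectively. Hasse--Davenport then gives
\[
\Gauor{\pi_a(o)}^{\nu_a(o)}\,\Gauor{\pi_b(o)}^{\nu_b(o)} = \Gauss{\F''}{\widetilde\llambda_i}{\psi_{\F'',\alpha}}\cdot\Gauss{\F''}{\widetilde\llambda_j}{\psi_{\F'',\alpha}},
\]
where $\widetilde\llambda_i(x) := \cchi(x)^{i(|\F''|-1)/a}$ is the natural analogue of $\llambda_{(i,\alpha)}$ on $\F''$ (similarly for $\widetilde\llambda_j$) and the lifted additive characters collapse to $\psi_{\F'',\alpha}$ via transitivity of trace. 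Applying~\eqref{eqn:additive-char-Gauss-sum} to each Gauss sum then factors out the prefactor $\widetilde\llambda_i(\alpha)^{-1}\widetilde\llambda_j(\alpha)^{-1}$, which is an $a$-th times a $b$-th root of unity and therefore an $ab$-th root of unity; this is our candidate $\zeta_o$.

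Next I would descend the two resulting ``untwisted'' Gauss sums over $\F''$ to the minimal subfields $\F_a := \F_{p^{o_p(a)}}$ and $\F_b := \F_{p^{o_p(b)}}$ of $\overline{\F_p}$ containing $a$-th and $b$-th roots of unity (both contained in $\F''$, since the well-definedness of $\llambda_{(i,\alpha)}$ on $\F'_a$ combined with $|\pi_a(o)|\mid|o|$ forces $o_p(a)\mid[\F_r:\F_p]\cdot|o|$, and analogously for $b$). Writing $\psi_{\F'',1} = \psi_{\F_a,1}\circ\Tr_{\F''/\F_a}$ and $\widetilde\llambda_i = \chi_{\F_a,a}^{\,i}\circ\norm_{\F''/\F_a}$, Hasse--Davenport now in the descending direction yields $\Gauss{\F''}{\widetilde\llambda_i}{\psi_{\F'',1}} = G_{a,o}^{[\F'':\F_a]}$, where $G_{a,o} := \Gauss{\F_a}{\chi_{\F_a,a}^{\,i}}{\psi_{\F_a,1}}$ has magnitude $p^{o_p(a)/2}$ by~\eqref{eqn:complex-valuation-Gauss-sum} and $[\F'':\F_a]=[\F_r:\F_p]\cdot|o|/o_p(a)$; analogously we obtain a Gauss sum $G_{b,o}$ over $\F_b$.

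Finally, setting $g_o := G_{a,o}^{\theta_{a,b}/o_p(a)}\,G_{b,o}^{\theta_{a,b}/o_p(b)}$ produces a Weil integer with $|g_o| = p^{\theta_{a,b}/2 + \theta_{a,b}/2} = p^{\theta_{a,b}}$, and a short exponent count (noting that $[\F'':\F_a] = (\theta_{a,b}/o_p(a))\cdot([\F_r:\F_p]\cdot|o|/\theta_{a,b})$ and similarly for $b$) confirms that $\oomega(o) = \zeta_o\,g_o^{[\F_r:\F_p]\cdot|o|/\theta_{a,b}}$. The main obstacle will be the bookkeeping in the descent step: one must verify that the pulled-back Teichm\"uller character on $\F''$ really does match $\chi_{\F_a,a}^{\,i}\circ\norm_{\F''/\F_a}$ via the exponent identity $\norm_{\F''/\F_a}(x)=x^{(|\F''|-1)/(|\F_a|-1)}$, and treat the non-generic case $\gcd(a,i)>1$ by first descending to the smaller field $\F_{p^{o_p(a/\gcd(a,i))}}$ where the character becomes primitive before re-inflating to the claimed magnitude for $g_o$.
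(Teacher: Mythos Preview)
Your approach is essentially the same as the paper's: factor out a root of unity via the additive-twist identity~\eqref{eqn:additive-char-Gauss-sum}, then apply Hasse--Davenport to descend to a small subfield of $\overline{\F_p}$ where the Gauss sum has a controlled absolute value. The paper's organization is slightly leaner: it does \emph{not} first lift both factors to the common field $\F''$; instead, it works with each factor $\Gauor{\pi_a(o)}$ over $\F'_a$ directly, extracts the root of unity $\llambda_{(i,\alpha)}(\alpha)^{-1}$ there, and descends from $\F'_a$ straight to $\F_{p^{\kappa_{p,a}(i)}}$ where $\kappa_{p,a}(i)=o_p(a/\gcd(a,i))$. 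This yields $g_{o'}$ of size $p^{\kappa_{p,a}(i)/2}$, and then $g_o := g_{\pi_a(o)}^{\theta_{a,b}/\kappa_{p,a}(i)} g_{\pi_b(o)}^{\theta_{a,b}/\kappa_{p,b}(j)}$.

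One point to correct in your write-up: your parenthetical justification that $\F_a=\F_{p^{o_p(a)}}\subset\F''$ is \emph{not valid in general}. The well-definedness of $\llambda_{(i,\alpha)}$ on $\F'_a$ only forces $o_p(a/\gcd(a,i))\mid[\F_r:\F_p]\cdot|\pi_a(o)|$, not $o_p(a)\mid[\F_r:\F_p]\cdot|o|$. (Concretely: take $p=2$, $a=9$, $i=3$, $b=5$, $j=1$, $r=2$, $q=2$, $\alpha=1$; then $|o|=4$ but $o_p(9)=6\nmid 4$.) You do flag this at the end as the ``non-generic case'' and propose descending to $\F_{p^{o_p(a/\gcd(a,i))}}$ instead, which is exactly the paper's move and resolves the issue; but note that this is not a corner case requiring separate treatment --- it is the correct target field in \emph{all} cases, and descending there from the start (as the paper does) eliminates the case distinction entirely.
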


\begin{proof}
Let $(i,j,\alpha)\in S$ have orbit $o\in O$. Then, $(i,\alpha)\in S'_a$ is a representative of $o':=\pi_a(o)\in O'_a$ and $(j, \alpha)\in S'_b$ is a representative of $\pi_b(o)\in O'_b$. 
Let $\F'$ be the extension of $\F_r$ of degree $|o'|$.
By the definition of $\Gauor{o'}$ and equation~\eqref{eqn:additive-char-Gauss-sum},  
we have
\[ \Gauor{o'} = \llambda_{(i, \alpha)}(\alpha)^{-1} \Gauss{\F'}{\llambda_{(i, \alpha)}}{\psi_{\F', 1}}.\]
Observe that $\zeta_{o'}:=\llambda_{(i, \alpha)}(\alpha)^{-1}$ is an $a$\textsuperscript{th} root of unity because $\llambda_{(i, \alpha)}$ has order dividing $a$.
Let $\F$ be the extension of $\F_p$ of degree $\kappa_{p,a}(i)=o_p(a/\gcd(i, a))$. 
We note that $[\F':\F] = [\F_r:\F_p]\cdot |o'|/\kappa_{p,a}(i)$.
Moreover, the character $\llambda_{(i, \alpha)}$ is none other than 
$\chi_{\F, |\F^\times|}^{i|\F^\times|/a}\circ\norm_{\F'/\F}$. 

Define $g_{o'}:=\Gauss{\F}{\chi_{\F, |\F|}^{i|\F^\times|/a}}{\psi_{\F, 1}}$. Then, $g_{o'}$ is a Weil integer of size $p^{\kappa_{p,a}(i)/2}$. Applying the Hasse--Davenport relation \eqref{eqn:HasseDavenportRelation} for Gauss sums, we deduce that
\[\Gauor{o'} = \llambda_{(i, \alpha)}(\alpha)^{-1}\left(\Gauss{\F}{\chi_{\F, |\F^\times|}^{i|\F^\times|/a}}{\psi_{\F, 1}}\right)^{[\F':\F]}
= \zeta_{o'}\, g_{o'}^{[\F_r:\F_p]|o'|/\kappa_{p,a}(i)}\,.\]
A similar argument shows that if we define $\zeta_{\pi_{b}(o)} \colonequals \llambda_{(j, \alpha)}(\alpha)^{-1}$ and $g_{\pi_{b}(o)}:=\Gauss{\F}{\chi_{\F, |\F|}^{j|\F^\times|/b}}{\psi_{\F, 1}}$, then 
$\Gauor{\pi_b(b)} = \zeta_{\pi_b(o)} g_{\pi_b(o)}^{[\F_r:\F_p]|\pi_b(o)|/\kappa_{p,b}(j)}$.

By the definition of $\oomega(o)$, we may write
\begin{align*}
\oomega(o) 
&= \zeta_{\pi_a(o)}^{\nu_a(o)}\zeta_{\pi_b(o)}^{\nu_b(o)}
g_{\pi_a(o)}^{[\F_r:\F_p]|o|/\kappa_{p,a}(i)}g_{\pi_b(o)}^{[\F_r:\F_p]|o|/\kappa_{p,b}(j)} \\
&= \left(\zeta_{\pi_a(o)}^{\nu_a(o)}\zeta_{\pi_b(o)}^{\nu_b(o)}\right)
\left(g_{\pi_a(o)}^{\theta_{a,b}/\kappa_{p,a}(i)}g_{\pi_b(o)}^{\theta_{a,b}/\kappa_{p,b}(j)}\right)^{[\F_r:\F_p]\cdot|o|/\theta_{a,b}}.
\end{align*}
Note that both $\kappa_{p,a}(i)$ and $\kappa_{p,b}(j)$ divide $\theta_{a,b}$. 
In this expression, 
$\zeta_o :=\zeta_{\pi_a(o)}^{\nu_a(o)}\zeta_{\pi_b(o)}^{\nu_b(o)}$ 
is a root of unity of order dividing $ab$, and the term
\[g_o 
:= g_{\pi_a(o)}^{\theta_{a,b}/\kappa_{p,a}(i)}g_{\pi_b(o)}^{\theta_{a,b}/\kappa_{p,b}(j)}\] 
is a Weil integer of size $p^{\theta_{a,b}}$.  
Therefore, $\oomega(o)$ may be written in the desired form.
\end{proof}

\section{Explicit expression for the L-function and the BSD conjecture}\label{sec:Lfunction}

In this section, we provide an explicit formula for the $L$-function of the Jacobian $J$ of the curve~$C$. 
Our proof is based on a computation with character sums. In Section~\ref{sec:bsd_conj_for_j}, we remark that $J$ satisfies the BSD conjecture. BSD will be used in Section~\ref{sec:rank} to make further observations about the rank of $J$. We give an alternate cohomological proof of our explicit formula for $L(J,T)$ in Section~\ref{sec:Cohomological.Lfunction}.

\subsection[]{Definition of the $L$-function}\label{ssec:Lfunction.definition}
\newcommand{\frob}{\mathrm{Fr}}
Fix a prime number $\ell\neq p$, and let $H^1(J) \colonequals H^1_{\mathrm{et.}}(J, \overline{\Q_{\ell}})$ denote the first $\ell$-adic \'etale cohomology group of $J/K$. 
It is well-known that $H^1(J)$ is a $\overline{\Q_\ell}$-vector space of dimension $2g$ which is equipped with a natural action of the absolute Galois group of $K$.
For any place $v$ of $K$, we let $\frob_v$ denote the geometric Frobenius at $v$, let $I_v$ denote the inertia group at $v$, and let $V_{\ell}(J)$ denote the $\ell$-adic Tate module of $J$. As a Galois module, $H^1(J)$ is isomorphic to the dual of $V_\ell(J) \otimes \overline{\mathbb Q_{\ell}}$.  (This duality follows by using the short exact sequence $0 \to \mu_{\ell^n} \to \mathbb{G}_m \to \mathbb{G}_m \to 0$ of sheaves on $J$ and taking an inverse limit over $n$.)

The Hasse--Weil $L$-function of $J$ may be defined by the Euler product:
\begin{equation}\label{eq:def.Lfunction}
L(J,T) 
= \prod_{v}\det\big(1 - T^{\deg v}\,\frob_v   \mid H^1(J)^{I_v}\big)^{-1},    
\end{equation}
where the product runs over all places $v$ of $K$. 
Here,   $H^1(J)^{I_v}$ designates the $I_v$-invariant subspace of $H^1(J)$.  

The abelian variety $J$ has good reduction at a place $v$ if and only if $I_v$ acts trivially on $H^1(J)$, or equivalently, if and only if $H^1(J)^{I_v}$ has dimension $2g$ \cite{SerreTate}.\\

The power series in $T$ resulting from the formal expansion of the product \eqref{eq:def.Lfunction}
is known, by the Hasse--Weil bound on the eigenvalues of $\frob$ acting on $H^1(J)$, to converge on the complex open disc $\{T\in\C : |T|<r^{-3/2}\}$.
But actually, much more is true! We summarize deep results of Grothendieck, Deligne, and others in the following theorem.

\begin{theorem}\label{thm:weil2}
Let $J/K$ be as above. Write $g=\dim J$ for its dimension, and $N_J\in\mathrm{Div}(\P^1)$ for its conductor divisor.
\begin{enumerate}[(1.\hspace{-3pt}]
    \item Rationality)
    The $L$-function $L(J, T)$ is a rational function in $T$ with integral coefficients.  The global degree of $L(J,T)$, defined to the degree of the numerator minus the degree of the denominator, is denoted by $b(J)$. The degree $b(J)$ is related to $\deg N_J$ by
    $b(J) = \deg N_J -4g$.
    
    \item Functional equation) There is some $w(J) \in \{\pm 1\}$ such that  
    $L(J, T)$ satisfies
    \[L(J, T) = w(J) \, (rT)^{b(J)} L\left(J,(r^2T)^{-1}\right)\,.\]
    \item Riemann Hypothesis)
    If $z\in\C$ is such that $L(J, z)=0$, then $|z|=r^{-1}$.
\end{enumerate}
\end{theorem}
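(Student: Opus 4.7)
The plan is to translate each of the three claims into a statement about $\ell$-adic étale cohomology on $\P^1$ and then invoke the classical results of Grothendieck (rationality and Euler--Poincaré), Verdier (Poincaré duality) and Deligne (purity). Let $U \subset \P^1_{\F_r}$ be the open complement of $\F_q \cup \{\infty\}$, which by Proposition~\ref{prop:unipotentreduction} is the locus of good reduction of $J$, and let $j: U \hookrightarrow \P^1$ denote the open immersion. The Galois module $H^1(J)$ corresponds to a lisse $\overline{\Q}_\ell$-sheaf $\mathcal{F}$ of rank $2g$ on $U$ whose Frobenius characteristic polynomials at closed points of $U$ recover the Euler factors of $L(J,T)$. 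The Grothendieck--Lefschetz trace formula applied to $j_* \mathcal{F}$ on $\P^1$ then gives
\[
L(J,T) \;=\; \prod_{i=0}^{2} \det\!\bigl(1 - T\,\mathrm{Fr} \,\bigm|\, H^i(\P^1_{\overline{\F_r}},\, j_* \mathcal{F})\bigr)^{(-1)^{i+1}}.
\]
From this formula, rationality and integrality of $L(J,T)$ are immediate, and its global degree as a rational function equals $-\chi(\P^1_{\overline{\F_r}}, j_* \mathcal{F})$. Because $J$ has unipotent, hence tame, reduction at every bad place (Proposition~\ref{prop:unipotentreduction}), all Swan conductors vanish and the Grothendieck--Ogg--Shafarevich formula reduces to
\[
\chi(\P^1_{\overline{\F_r}},\, j_* \mathcal{F}) \;=\; 2g \cdot \chi(\P^1) - \sum_{v} \bigl(2g - \dim \mathcal{F}^{I_v}\bigr)\deg v \;=\; 4g - \deg N_J,
\]
yielding part (1).

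For part (2), the principal polarization on $J$ (equivalently the Weil pairing on $V_\ell(J)$) provides an auto-duality $\mathcal{F} \cong \mathcal{F}^\vee(-1)$ of lisse sheaves on $U$. Because inertia acts tamely at every missing point, this pairing extends to a self-duality of the middle extension $j_*\mathcal{F} = j_{!*}\mathcal{F}$ on $\P^1$, and Poincaré duality for the curve $\P^1_{\overline{\F_r}}$ converts this into a perfect, $\mathrm{Fr}$-equivariant pairing between $H^1(\P^1_{\overline{\F_r}}, j_* \mathcal{F})$ and its Tate twist by $\overline{\Q}_\ell(-2)$.  Unwinding this symmetry through the cohomological formula above produces the asserted functional equation, with $w(J) \in \{\pm 1\}$ coming from the sign of the duality pairing. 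For part (3), at every closed point $v \in U$ Weil's theorem for abelian varieties shows that Frobenius eigenvalues on $\mathcal{F}_v$ all have complex absolute value $\sqrt{r^{\deg v}}$, so $\mathcal{F}$ is pointwise pure of weight~$1$ on $U$; middle extension preserves pointwise purity and Deligne's main theorem of Weil~II forces $H^1(\P^1_{\overline{\F_r}}, j_* \mathcal{F})$ to be pure of weight $2$, so every inverse root has complex absolute value $r$, giving $|z| = r^{-1}$ for every zero $z$ of $L(J,T)$.

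The most subtle point in executing this plan is the behaviour of the $H^0$ and $H^2$ factors in the cohomological formula above, which a priori could contribute zeros or poles of $L(J,T)$ at incorrect absolute values and distort both the degree count in (1), the functional equation in (2), and the purity statement in (3). The abelian variety $J$ is isotrivial (it becomes isomorphic to the constant Jacobian of $y^b+x^a=1$ after base change to $L = \F_r(\sqrt[ab]{t^q-t})$), so one cannot simply appeal to non-isotriviality to rule out invariants. The remedy is already implicit in the proof of Theorem~\ref{thm:simplicity}: the decomposition of $V_\ell(J) \otimes \overline{\Q}_\ell$ into $2g$ character lines indexed by pairs of nontrivial characters of $\mu_a\times\mu_b$, together with the faithful geometric action of $\mu_{ab}$ on every such line, shows that the geometric monodromy acts without fixed vectors or covectors on $\mathcal{F}$. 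Hence $H^0(\P^1_{\overline{\F_r}},\,j_*\mathcal{F}) = H^2(\P^1_{\overline{\F_r}},\,j_*\mathcal{F}) = 0$, so the product collapses to a single polynomial factor of degree $b(J) = \deg N_J - 4g$ whose inverse roots are Weil numbers of weight~$2$, exactly as stated.
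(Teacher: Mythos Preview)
Your argument is essentially the paper's own, carried out in greater detail: the paper simply refers to Deligne's Weil~II for rationality, the functional equation, and the Riemann Hypothesis, and proves only the degree formula $b(J)=\deg N_J-4g$ separately (in an appendix) via the Grothendieck--Ogg--Shafarevich formula, just as you do.

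Two small points are worth tightening. First, the phrase ``unipotent, hence tame'' is not a valid implication in general; the tameness you need follows instead from the fact (established in the proof of Proposition~\ref{prop:unipotentreduction}, and invoked explicitly in the paper's appendix) that $J$ trivializes over the degree-$ab$ extension $\F_r(\sqrt[ab]{t^q-t})$ with $\gcd(ab,p)=1$. Second, your appeal to the proof of Theorem~\ref{thm:simplicity} for the vanishing of $H^0$ and $H^2$ is slightly off, since that proof assumes $a$ and $b$ are both prime; the statement you actually need---that every character of $\mu_{ab}$ occurring in $V_\ell(J_0)\otimes\overline{\Q}_\ell$ is nontrivial on both $\mu_a$ and $\mu_b$---holds for arbitrary coprime $a,b$ by applying Lemma~\ref{lem:subclaim} to the genus-zero quotients $C_0/\mu_a$ and $C_0/\mu_b$.
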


\begin{proof}
For the proofs of rationality, the functional equation, and the Riemann hypothesis, we refer the reader to \cite{deligneweil2}. We provide a proof of the formula for the degree $b(J)$ of $L(J,T)$ in Proposition~\ref{prop:degOfL}. 
\end{proof}
Once we compute the $L$-function of our Jacobian in Theorem~\ref{theorem:Lfunction}, we check the degree in Remark~\ref{remark:deg_check} using the formula $b(J) = \deg N_J -4g$. We will also use this formula in the cohomological computation of $L(J,T)$ in Section~\ref{sec:Cohomological.Lfunction}.

\subsection[]{Explicit expression for the $L$-function}\label{sec:ExplicitL}

We let $p, r,a,b,q$ have the same meaning as in the introduction. With the notation introduced in Section~\ref{sec:gausssums}, we state our formula for the $L$-function of $J$.

\begin{theorem}
\label{theorem:Lfunction} 
Let $O$ be the orbit set defined in \S\ref{sec:orbits} and, for any $o\in O$, define $\oomega(o)$ as in Definition~\ref{def:oomega}. The $L$-function $L(J, T)\in\Z[T]$ of $J/K$ admits the following expression:
\begin{equation}\label{equation:Lfunction}
    L(J, T) = \prod_{o\in O} 
    \left(1-\oomega(o)\, T^{|o|}\right)\,.
\end{equation}
\end{theorem}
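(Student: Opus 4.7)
The plan is to compute $L(J,T)$ by a point-counting argument: expand $-\log L(J,T)$ using the Grothendieck--Lefschetz trace formula, compute each fiber's contribution via character sums, and reorganize the result into the orbit-indexed product \eqref{equation:Lfunction}.

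By the trace formula applied to \eqref{eq:def.Lfunction}, one has $-\log L(J, T) = \sum_{n\ge 1} S_n T^n/n$, where $S_n$ aggregates Frobenius traces on $H^1(J)^{I_v}$ over places $v$ of $K$. By Proposition~\ref{prop:unipotentreduction}, $J$ has totally unipotent reduction at every $v\in\F_q\cup\{\infty\}$, so $V_\ell(J)^{I_v}=0$ (as used in Proposition~\ref{prop:DegOfCond}) and hence $H^1(J)^{I_v}=0$ at each bad place. Thus only good fibers $C_{t_0}\colon y^b+x^a=t_0^q-t_0$ contribute, and the Weil conjectures for smooth curves convert these contributions into $r^n+1-\#C_{t_0}(\F_{r^n})$ summed over the good $t_0\in\F_{r^n}$.

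The fiber point counts are treated by multiplicative-character expansion: for $c:=t_0^q-t_0\neq 0$, the identities $\#\{y : y^b=u\}=\sum_{\chi^b=1}\chi(u)$ and $\#\{x : x^a=v\}=\sum_{\psi^a=1}\psi(v)$, together with the fact that $C$ has a unique rational point at infinity (since $\gcd(a,b)=1$), give
\[
\#C_{t_0}(\F_{r^n})=r^n+1+\sum_{\substack{1\le i\le a-1\\ 1\le j\le b-1}}\chi_a^i\chi_b^j(c)\cdot J\bigl(\chi_a^i,\chi_b^j\bigr).
\]
The outer sum over $t_0$ then calls on the Artin--Schreier structure of $t_0\mapsto t_0^q-t_0$: expanding the indicator of its image (the kernel of $\Tr_{\F_{r^n}/\F_q}$) via the additive characters $\psi_{\F_{r^n},\alpha}$ for $\alpha\in\F_q\subset\F_{r^n}$ converts $\sum_{t_0}\chi_a^i\chi_b^j(t_0^q-t_0)$ into a sum of Gauss sums $\Gauss{\F_{r^n}}{\chi_a^i\chi_b^j}{\psi_{\F_{r^n},\alpha}}$. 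The Jacobi-to-Gauss identity then cancels the denominator $\Gauss{\F_{r^n}}{\chi_a^i\chi_b^j}{\psi_{\F_{r^n},1}}$ of $J(\chi_a^i,\chi_b^j)$ against the additive Gauss sum, leaving a clean product indexed by a triple $(i,j,\alpha)\in S$ from Section~\ref{sec:orbits}.

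Galois conjugation permutes these triples via the $\langle r\rangle$-action on $S$; by Lemma~\ref{lem:InvariantGaussSums} the result is constant on orbits $o\in O$, and the Hasse--Davenport identity \eqref{eqn:HasseDavenportRelation} lifts each orbit's Gauss sum from its minimal field of definition to $\F_{r^n}$, yielding a contribution of $|o|\cdot\oomega(o)^{n/|o|}$ to $S_n$ whenever $|o|\mid n$. Exponentiating collapses these geometric series into the finite product $\prod_{o\in O}(1-\oomega(o)T^{|o|})$ of \eqref{equation:Lfunction}. As a sanity check, $\sum_{o\in O}|o|=|S|=(a-1)(b-1)(q-1)$ equals $\deg N_J-4g=2g(q-1)$ by Proposition~\ref{prop:DegOfCond} and Theorem~\ref{thm:weil2}(1), so no factor is lost.

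The principal difficulty is the bookkeeping in the third step: one must verify that the Artin--Schreier expansion eliminates the Jacobi-sum denominator (using crucially that $a,b$ are coprime so that $\chi_a^i\chi_b^j$ is never trivial for the allowed $(i,j)$) and installs the correct additive character at precisely the right extension of $\F_r$. That extension has degree equal to the orbit length $|\pi_a(o)|=\lcm(\kappa_{r,a}(i),[\F_r(\alpha):\F_r])$ from \eqref{eqn:orbitsize.prime}, and only after matching these fields of definition does Hasse--Davenport repackage the resulting Gauss sums into the expression $\Gauor{\pi_a(o)}^{\nu_a(o)}\Gauor{\pi_b(o)}^{\nu_b(o)}$ of Definition~\ref{def:oomega}.
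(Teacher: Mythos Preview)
Your proposal is correct and follows the same overall strategy as the paper: reduce to good-fiber point counts via unipotent reduction, expand via multiplicative characters and the Artin--Schreier count, collect into orbit-indexed Gauss sums via Hasse--Davenport, and exponentiate. The only organizational difference is that you reach the product $\Gauss{\F_{r^n}}{\chi_a^i}{\psi_\alpha}\,\Gauss{\F_{r^n}}{\chi_b^j}{\psi_\alpha}$ by passing through Jacobi sums and the identity $J(\chi_1,\chi_2)=G(\chi_1)G(\chi_2)/G(\chi_1\chi_2)$, whereas the paper (Lemma~\ref{lemm:Lfunc.thethird}) obtains the same product by the elementary substitution $y=w-z$ in the double sum $\sum_{w,z}\lambda_1(z)\lambda_2(w-z)\psi(\alpha w)$, avoiding Jacobi sums altogether.
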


The proof of Theorem~\ref{theorem:Lfunction} occupies the rest of Section~\ref{sec:ExplicitL}.
We start by proving a number of elementary lemmas in Section~\ref{sec:Lfunction-preliminaries}, before gathering our results to conclude the proof in Section~\ref{sec:proof-of-Lfunction}.

\subsection{Preliminary lemmas} \label{sec:Lfunction-preliminaries}

We first recall an expression for the logarithm of $L(J, T)$.
For any $\beta\in\overline{\F_r}^\times$, let $X_\beta$ denote the smooth projective curve over $\F_{r}(\beta)$ which is birational to the curve defined by the affine model $x^a + y^b =  \beta^q-\beta$.

\begin{lemma}\label{lemm:Lfunc.thefirst}
For $m \in \Z_{\geq 1}$ and $\beta \in \F_{r^{m}}$, set $A_{J}(\beta,m) = r^m + 1 - |X_{\beta}(\F_{r^{m}})|$. Then,
\[ \log L(J,T)  
= \sum_{m\geq 1}
\left(\sum_{\beta\in\F_{r^m}^\times}  A_J(\beta, m)\right)
\frac{T^m}{m}.\]

\end{lemma}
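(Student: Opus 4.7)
The plan is to start from the Euler-product definition \eqref{eq:def.Lfunction} of $L(J,T)$, take its formal logarithm, rearrange by powers of $T$, and interpret the resulting Frobenius traces as point counts on the fibers via the Grothendieck--Lefschetz trace formula.

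Explicitly, applying the identity $-\log\det(1-TF)=\sum_{k\ge 1}\Tr(F^k)T^k/k$ to each local factor and reindexing by $m=k\deg v$ gives
\[
\log L(J,T) \;=\; \sum_{m\ge 1}\frac{T^m}{m}\sum_{\substack{v \\ \deg v\mid m}} \deg v \cdot \Tr\bigl(\frob_v^{m/\deg v}\mid H^1(J)^{I_v}\bigr).
\]
I would then identify the inner sum with $\sum_{\beta\in\F_{r^m}^\times} A_J(\beta,m)$. The closed points $v$ of $\P^1_{\F_r}$ with $\deg v\mid m$ are in bijection with $\Gal(\F_{r^m}/\F_r)$-orbits in $\F_{r^m}\cup\{\infty\}$, each of size $\deg v$; since the trace is Galois-invariant, the factor $\deg v$ exactly accounts for the $\deg v$ summands produced by indexing instead by each $\beta$ in the orbit, with $\frob_v^{m/\deg v}$ corresponding to $\frob_{r^m}$ after base change to $\F_{r^m}$.

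For $\beta$ with $\beta^q\neq\beta$ (good reduction), smooth and proper base change, combined with the standard identification between $H^1$ of a Jacobian and $H^1$ of the underlying curve, provides a Frobenius-equivariant isomorphism $H^1(J)^{I_{v(\beta)}}\cong H^1(X_\beta\times\overline{\F_r},\Q_\ell)$. The Grothendieck--Lefschetz trace formula then yields
\[
\Tr\bigl(\frob_{r^m}\mid H^1(X_\beta)\bigr) \;=\; r^m + 1 - |X_\beta(\F_{r^m})| \;=\; A_J(\beta,m).
\]
At bad places $\beta\in\F_q\cup\{\infty\}$, Proposition~\ref{prop:unipotentreduction} gives totally unipotent reduction, so $H^1(J)^{I_{v(\beta)}}=0$ and the trace vanishes. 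Correspondingly on the point-count side, for $\beta\in\F_q^\times$ the affine equation degenerates to $y^b+x^a=0$, whose smooth projective normalization is $\P^1$ (because $\gcd(a,b)=1$), giving $|X_\beta(\F_{r^m})|=r^m+1$ and $A_J(\beta,m)=0$; the remaining cases $\beta=0$ and $\beta=\infty$ are excluded by the $\F_{r^m}^\times$ in the statement. Reading off the coefficient of $T^m/m$ therefore yields the claimed expression.

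The main (mild) technical point is the Frobenius-equivariant identification at good places. This is essentially folklore: it follows from the canonical Galois-equivariant isomorphism $H^1(J,\Q_\ell)\cong H^1(C,\Q_\ell)$ between the first $\ell$-adic cohomology of a Jacobian and that of the underlying curve, together with smooth and proper base change applied to a smooth model of $C$ over the good reduction locus of $\P^1_{\F_r}$. Once it is in place, the remainder of the argument is purely combinatorial bookkeeping.
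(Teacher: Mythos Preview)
Your proposal is correct and follows essentially the same route as the paper: both expand the Euler product logarithmically, reindex by $m=k\deg v$, use Proposition~\ref{prop:unipotentreduction} to discard bad places from the $L$-function side, and identify the Frobenius trace with $A_J(\beta,m)$ at good places via base change, the canonical isomorphism $H^1(J)\cong H^1(C)$, and Grothendieck--Lefschetz. Your explicit verification that $A_J(\beta,m)=0$ for $\beta\in\F_q^\times$ (because the normalization of $y^b+x^a=0$ is $\P^1$) is actually a point the paper leaves implicit when passing from the sum over $U(\F_{r^m})$ to the sum over $\F_{r^m}^\times$ in the statement, so this is a welcome clarification rather than a departure.
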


 \begin{proof}

We have shown in Proposition \ref{prop:unipotentreduction} that $J$ has unipotent reduction at all of its places of bad reduction. 
At a place $v$ of unipotent reduction for $J$, $\dim_{\Q_\ell} H^1(J)^{I_v} = 0$. (See \cite{SerreTate}.) Hence, the associated Euler factor 
$\det(1 - T^{\deg v}\,\text{Fr}_v \mid H^1(J)^{I_v})$ in $L(J, T)$ is equal to $1$.

Hence, in the Euler product \eqref{eq:def.Lfunction} defining $L(J, T)$, we may ignore the factors corresponding to places of bad reduction.
We thus have
\[
L(J, T ) =\prod_{\text{good }v} \det(1 - T^{\deg v}\, \text{Fr}_v \mid H^1(J)^{I_v})^{-1}\,. 
\]

At a place $v$ of good reduction, the inertia group $I_v$ acts trivially on $H^1(J)$ (see \cite{SerreTate} again), so that  $H^1(J)^{I_v}$ has dimension $2g$. 
We write $\alpha_{v, 1}, \dots, \alpha_{v, 2g}\in\overline{\Q_\ell}$ for the eigenvalues of $\frob_v$ acting on $H^1(J)$.
Formally expanding the power series $\log L(J, T)\in\overline{\Q_\ell}[[T]]$, we obtain that
\begin{align*}
 \log L(J, T) 
&=  - \sum_{\text{good }v} \sum_{i=1}^{2g} \log(1-\alpha_{v,i} \, T^{\deg v}) \\
&=   \sum_{\text{good }v} \sum_{i=1}^{2g}\sum_{k=1}^{\infty} \frac{ (\alpha_{v,i} \, T^{\deg v})^k}{k}\\
&=   \sum_{k=1}^{\infty} \left(\sum_{\text{good }v}  \left( \sum_{i=1}^{2g}\alpha_{v,i}^k\right)\frac{T^{k \deg v}}{k}\right)\,.
\end{align*}
We write $m = k \deg v$ and reindex the sums. Since 
\begin{align}
    \Tr(\frob_{v_\beta}^{m/\deg v_\beta}|H^1(J)) = \sum_{i=1}^{2g} \alpha_{v,i}^{m/\deg v} \,,
\end{align}
this yields
\begin{align}
\label{eq:Lfunction.lemma1}
 \log L(J, T) 
= \sum_{m=1}^\infty\left( \sum_{ \substack{ \text{good } v \\ \deg v \mid m}}
\Tr(\frob_{v}^{m/\deg v}|H^1(J))\, \deg v \, \frac{T^m}{m}\right)\,.
\end{align}

Since $K$ is the function field of $\P^1$, a place $v$ of $K$ may be viewed as the  $\Gal(\overline{\F_r}/\F_r)$-orbit of an $\overline{\F_r}$-rational point  on $\P^1$. 
The degree of $v$ is the number of elements in the associated orbit.

Let $\beta\in\P^1(\overline{\F_r})$ and $v_\beta$ be the corresponding place of $K$. 
The orbit of $\beta$ under the action of $\Gal(\overline{\F_r}/\F_r)$ has exactly $[\F_r(\beta):\F_r]$ elements. So, 
$\deg(v_\beta)= [\F_r(\beta):\F_r]$.
By construction, the numbers $\Tr(\frob_{v_\beta}^{m/\deg v_\beta}|H^1(J))$ do not depend on the choice of a representative $\beta\in \P^1(\overline{\F_r})$ of the orbit $v$.

Let $U$ be the largest subscheme of $\P^1$ such that $J_{v}$ has good reduction at all places $v \in U$. 
By Proposition~\ref{prop:unipotentreduction}, we have $U = \A^1 \smallsetminus \{z : z^q -z = 0\}\,.$ We may thus rewrite identity~\eqref{eq:Lfunction.lemma1} as
\begin{equation}\label{eq:Lfunction.lemma4}
\log L(J, T) = 
\sum_{m=1}^\infty \left( \sum_{\beta\in U(\F_{r^m})} \Tr(\frob_{v_\beta}^{m/\deg v_\beta}|H^1(J))\right) \frac{T^m}{m}.    
\end{equation}

By flat base change, we have $H^1(J) \cong H^1_{\text{\'et}}(J_v,\mathbb{Q}_\ell)$. From \cite[5.3.5]{Poonen_Curves}, we have $H^1_{\text{et.}}(J_{v}, \mathbb Q_{\ell}) \cong H^1_{\text{\'et}}(X_{v}, \mathbb Q_{\ell})$. Together, we see
\[
H^1(J)^{I_{v}} = H^1(J) \cong H^1_{\text{et.}}(J_{v}, \mathbb Q_{\ell}) \cong H^1_{\text{et.}}(X_{v}, \mathbb Q_{\ell})\,.
\]
So, the Grothendieck--Lefschetz trace formula gives 
$\Tr(\frob_{v_\beta}^{m/\deg v_\beta}|H^1(J)) = |\F_{r^m}|+1 - |X_\beta(\F_{r^m})| = A_J(\beta,m)$.
\end{proof}

We now interpret  the quantities $A_J(\beta, m)$ appearing in Lemma \ref{lemm:Lfunc.thefirst} as character sums. Write $\trivcar$ for the trivial multiplicative character.
For any $m\geq 1$ and $c \geq 2$ we set
\begin{eqnarray*}
M_{c}(r^m) & \colonequals \{\phantom{\text{nontrivial }} \text{characters } \lambda: \F_{r^m}^{\times} \to \mathbb C^{\times}\text{ such that } \lambda^c = \trivcar\}\,, \\
M'_{c}(r^m) & \colonequals \{\text{nontrivial } \text{characters } \lambda: \F_{r^m}^{\times} \to \mathbb C^{\times} \text{ such that } \lambda^c = \trivcar\}\,.
\end{eqnarray*}
We further define $M'_{a,b}(r^m) = M'_{a}(r^m) \times M'_{b}(r^m)$ and extend all multiplicative characters $\lambda$ by $\lambda(0) = 0$.
For any pair $(\lambda_2, \lambda_2)$ of multiplicative characters  on $\F_{r^m}$, any additive character $\psi$ on $\F_{r^m}$ and any $\alpha\in \F_{r^m}$, we set 
\[S_{r^m}(\lambda_1, \lambda_2, \psi, \alpha)
\colonequals\sum_{(w,z)\in(\F_{r^m})^2} \lambda_1(z) \lambda_2(w-z) \psi(\alpha w). \] 
With this new notation at hand, we may now state:
\begin{lemma}
\label{lemm:Lfunc.thesecond}
For any nontrivial additive character $\psi_r$ on $\F_r$, and any $m\geq 1$, we have
\[ \sum_{\beta\in\F_{r^m}^\times} A_J(\beta, m)
= - \sum_{\substack{\alpha\in\F_{r^m}\cap\mathbb{F}_q, \\(\lambda_1, \lambda_2)\in M'_{a,b}(r^m)}} S_{r^m}(\lambda_1, \lambda_2,\psi_r\circ\Tr_{\F_{r^m}/\F_r},\alpha).\]
\end{lemma}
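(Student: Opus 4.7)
The plan is to convert the LHS into a character-sum expression by counting points on the affine model, then collapse the $\beta$-sum via an orthogonality identity tailored to the Artin--Schreier map $\beta \mapsto \beta^q - \beta$. Set $F_m := \F_{r^m}$ and $N_\beta := \#\{(x,y)\in F_m^2 : y^b + x^a = \beta^q-\beta\}$.

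First, I would show $A_J(\beta,m) = r^m - N_\beta$ for every $\beta \in F_m^\times$. For $\beta \in F_m \smallsetminus \F_q$ the curve $X_\beta$ is smooth; since $\gcd(a,b)=1$, the place at infinity of its function field is unique, totally ramified of index $b$, and has residue field $F_m$, so $|X_\beta(F_m)| = N_\beta + 1$. For $\beta \in (F_m \cap \F_q)^\times$, the degenerate equation $y^b + x^a = 0$ admits a rational parametrization ($x = -t^b$, $y = t^a$ when $a$ is odd, symmetrically otherwise), so $X_\beta \cong \P^1_{F_m}$ and $A_J(\beta,m) = 0$; the parametrization is a bijection onto the solution set, so $N_0 = r^m$ and the identity $A_J(\beta,m) = r^m - N_\beta$ still holds. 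Hence $\sum_{\beta \in F_m^\times} A_J(\beta,m) = (r^m-1) r^m - \sum_{\beta \in F_m^\times} N_\beta$.

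Next, I would apply Fourier analysis on $F_m$ with $\psi := \psi_r \circ \Tr_{F_m/\F_r}$ to expand
\[
N_\beta = r^m + \frac{1}{r^m}\sum_{\alpha \in F_m^\times} \psi\bigl(-\alpha(\beta^q-\beta)\bigr)\, T_a(\alpha)\, T_b(\alpha), \qquad T_d(\alpha) := \sum_{x \in F_m}\psi(\alpha x^d).
\]
Interchanging sums, the inner $\beta$-sum collapses via the orthogonality identity $\sum_{\beta \in F_m} \psi(-\alpha(\beta^q - \beta)) = r^m \cdot \mathbf{1}_{\alpha \in F_m \cap \F_q}$, which I would prove by passing through the absolute trace: $\Tr_{F_m/\F_p}(\alpha\beta^q) = \Tr_{F_m/\F_p}(\sigma^{-1}(\alpha)\beta)$, where $\sigma: \beta \mapsto \beta^q$ is the $q$-power Frobenius on $F_m$, so the $\F_p$-character in $\beta$ is trivial iff $\sigma^{-1}(\alpha) = \alpha$, \emph{i.e.}\ iff $\alpha \in F_m \cap \F_q$. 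After correcting for the $\beta = 0$ term, one obtains $\sum_{\beta\in F_m^\times} A_J(\beta,m) = -\sum_{\alpha\in(F_m\cap\F_q)^\times} T_a(\alpha) T_b(\alpha)$.

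Finally, I would expand each $T_d(\alpha)$ in multiplicative characters: using $\#\{x \in F_m^\times : x^d = w\} = \sum_{\lambda \in M_d(r^m)}\lambda(w)$, isolating the trivial $\lambda$ whose $-1$ contribution (for $\alpha\neq 0$) cancels the $x=0$ term, and applying $\sum_{w \in F_m^\times}\lambda(w)\psi(\alpha w) = -\lambda(\alpha)^{-1}\, \Gauss{F_m}{\lambda}{\psi}$ for nontrivial $\lambda$, one has $T_d(\alpha) = -\sum_{\lambda \in M'_d(r^m)} \lambda(\alpha)^{-1}\, \Gauss{F_m}{\lambda}{\psi}$. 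A direct substitution $u=z$, $v=w-z$ in the definition of $S_{r^m}$ shows that for $\alpha \neq 0$, $S_{r^m}(\lambda_1,\lambda_2,\psi,\alpha) = \lambda_1(\alpha)^{-1}\lambda_2(\alpha)^{-1}\, \Gauss{F_m}{\lambda_1}{\psi}\, \Gauss{F_m}{\lambda_2}{\psi}$, whence $T_a(\alpha) T_b(\alpha) = \sum_{(\lambda_1,\lambda_2)\in M'_{a,b}(r^m)} S_{r^m}(\lambda_1,\lambda_2,\psi,\alpha)$; since $S_{r^m}(\cdot,\cdot,\cdot,0) = 0$, extending the $\alpha$-sum to all of $F_m \cap \F_q$ is free, and the lemma follows. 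The main obstacle I anticipate is the orthogonality step: when $\F_r$ and $\F_q$ are not nested, $\sigma$ does not lie in $\mathrm{Gal}(F_m/\F_r)$, so one cannot argue directly with the relative trace $\Tr_{F_m/\F_r}$; routing the computation through the absolute trace, where $\sigma$ \emph{is} an $\F_p$-algebra automorphism of $F_m$, is the clean way around.
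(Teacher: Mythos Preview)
Your approach is correct and genuinely different from the paper's. The paper expands in multiplicative characters first---writing $A_J(\beta,m)$ as a sum of Jacobi-type terms $\sum_z \theta(z)\lambda(-z+\beta^q-\beta)$ over $(\theta,\lambda)\in M'_{a,b}(r^m)$---and only then collapses the $\beta$-sum by reindexing through $w=\beta^q-\beta$ and invoking the counting identity $|\{\beta:\beta^q-\beta=w\}|=\sum_{\alpha\in F_m\cap\F_q}\psi(\alpha w)$ from \cite{GriffonAS18}. You invert the order: apply additive Fourier analysis first to get the exponential sums $T_a(\alpha)T_b(\alpha)$, collapse the $\beta$-sum via the dual orthogonality relation, and only then expand $T_d$ in multiplicative characters. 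Your route is arguably more transparent, since the factorisation $S_{r^m}(\lambda_1,\lambda_2,\psi,\alpha)=\Gauss{F_m}{\lambda_1}{\psi_\alpha}\Gauss{F_m}{\lambda_2}{\psi_\alpha}$ appears naturally rather than requiring a separate lemma (the paper's Lemma~\ref{lemm:Lfunc.thethird}). The paper's route has the virtue of staying closer to the geometric point-count throughout.

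One caveat on your orthogonality step. When you pass to the absolute trace you write $\Tr_{F_m/\F_p}(\alpha\beta^q)=\Tr_{F_m/\F_p}(\sigma^{-1}(\alpha)\beta)$ and conclude the character is trivial iff $\alpha\in\F_q$. But an arbitrary $\psi_r$ on $\F_r$ has the form $\psi_r(x)=\psi_0(\Tr_{\F_r/\F_p}(\gamma x))$ for some $\gamma\in\F_r^\times$, so $\psi(c)=\psi_0(\Tr_{F_m/\F_p}(\gamma c))$; carrying the $\gamma$ through, the triviality condition becomes $\gamma\alpha\in\F_q$, not $\alpha\in\F_q$. For $\gamma\notin\F_q$ these differ, and your displayed identity $\sum_\beta\psi(-\alpha(\beta^q-\beta))=r^m\mathbf{1}_{\alpha\in F_m\cap\F_q}$ fails as stated. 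The fix is either to restrict to $\psi_r=\psi_0\circ\Tr_{\F_r/\F_p}$ (which is the only case needed downstream and is what the paper's fixed $\psi_0$ amounts to), or to carry $\gamma$ through and substitute $\alpha'=\gamma\alpha$ at the end. Note that the counting identity the paper quotes from \cite{GriffonAS18} has the same restriction on $\psi$, so this is a wrinkle in the ``for any $\psi_r$'' formulation of the lemma rather than a defect peculiar to your argument.
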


\begin{remark}
It may seem odd that the right-hand side appears to depend on the choice of a nontrivial additive character $\psi_r$ while the left-hand side does not.
However, as should be clear after the proof, a different choice of $\psi_r$ merely permutes the terms $S_{r^m}(\lambda_1, \lambda_2, \psi, \alpha)$.
\end{remark}

\begin{proof} For a given $\beta\in\F_{r^m}^\times$, we
 begin by  giving an expression of $|X_\beta(\F_{r^m})|$ as a character sum. 
Recall that there is a unique point at infinity in $X_{\beta}(\overline{\F_{r^m}})$. This point is rational over $\F_{r^m}$. We have
$|X_\beta(\F_{r^m})| = 1 + \big|\big\{(x,y)\in(\F_{r^m})^2 : x^a+ y^b = \beta^q-\beta\big\}\big|$,
so that
\begin{equation}\label{eq:Lfunction.lemma2}
|X_\beta(\F_{r^m})|  - 1 =
    \sum_{x\in\F_{r^m}}\left|\big\{y\in\F_{r^n} : x^a+ y^b = \beta^q-\beta\big\}\right|.
\end{equation}
It is classical (see~\cite[Lemma 2.5.21]{Coh07}) that for any integer $N \geq 2$ and any $z\in \F_{r^m}$, we have
\begin{equation}\label{eq:character.identity.power}
\big|\big\{y\in\F_{r^m} : y^N = z\big\}\big| 
= \sum_{\lambda \in M_{N}(r^m)}\lambda(z),	
\end{equation}
The term corresponding to $\lambda=\trivcar$ contributes $1$. 
Evaluating \eqref{eq:character.identity.power} with $N=b$ and $z = -x^a + \beta^q - \beta$ into \eqref{eq:Lfunction.lemma2} and swapping the sums yields
\[ |X_\beta(\F_{r^m})|  - 1 
=\sum_{\lambda\in M_{b}(r^m)}  \sum_{x\in\F_{r^m}}\lambda(-x^a +\beta^q-\beta) 
= r^m + \sum_{\lambda\in M'_{b}(r^m)}  \sum_{x\in\F_{r^m}}\lambda(-x^a +\beta^q-\beta).\]
It follows that, for all $\beta\in\F_{r^m}$, we have
\[A_J(\beta, m) = - \sum_{\lambda\in M'_{b}(r^m)}  \sum_{x\in\F_{r^m}}\lambda(-x^a +\beta^q-\beta)\,.\]
For each $\lambda\in M'_{b}(r^m)$, we use \eqref{eq:character.identity.power} once more, this time with $N=a$, to reindex the sum over $x$ in the above display. This yields
\begin{align*}
\sum_{x\in\F_{r^m}}\lambda(-x^a +\beta^q-\beta) 
&= \sum_{z\in\F_{r^m}} \big|\big\{x\in\F_{r^m} : x^a = z\big\}\big|\,  \lambda(-z +\beta^q-\beta) \\
&= \sum_{\theta\in M_{a}(r^m)}  \sum_{z\in\F_{r^m}} \theta(z)\lambda(-z +\beta^q-\beta) 
=   \sum_{\theta\in M'_{a}(r^m)} \sum_{z\in\F_{r^m}} \theta(z) \lambda(-z +\beta^q-\beta).
\end{align*}
To justify the last equality, we note that the term corresponding to $\theta=\trivcar$ does not contribute by orthogonality of characters for $\F_{r^m}$.
We have thus proved that 
\[A_J(\beta, m) 
= \sum_{\lambda\in M'_{b}(r^m)} 
\sum_{\theta\in M'_{a}(r^m)}
  \sum_{z\in\F_{r^m}}\theta(z)\lambda(-z +\beta^q-\beta)\,.\]
Applying orthogonality of characters for $\F_{r^m}^\times$ once again, we also note that if $\theta$ and $\lambda$ are multiplicative characters such that $\theta\neq \lambda^{-1}$, the sum   $\sum_{z\in\F_{r^m}}\theta(z)\lambda(-z +\beta^q-\beta)$ vanishes if  $\beta^q -\beta=0$, including if $\beta = 0$. 
It follows that from the previous paragraph that, for all $m\geq 1$, we have 
\begin{align}
\sum_{\beta\in \F_{r^m}^\times} A_J(\beta, m)
&
= - \sum_{\beta\in  \F_{r^m}^\times} 
\sum_{\theta\in M'_{a}(r^m)}
\sum_{\lambda\in M'_{b}(r^m)}  
 \sum_{z\in\F_{r^m}}\theta(z)\lambda(-z +\beta^q-\beta) \notag\\
&
= - \sum_{\theta\in M'_{a}(r^m)}
\sum_{\lambda\in M'_{b}(r^m)} 
\left( \sum_{\beta\in  \F_{r^m}}\sum_{z\in\F_{r^m}}\theta(z)\lambda(-z +\beta^q-\beta)\right)\,. \label{eq:Lfunction.lemma3}
\end{align}
For fixed $(\theta,\lambda) \in M_{a,b}'(r^m)$,  
we now reindex the inner sum: 
\[ \sum_{\beta\in  \F_{r^m}} \sum_{z\in\F_{r^m}}\theta(z)\lambda(-z +\beta^q-\beta)
=  \sum_{w\in  \F_{r^m}} \big|\big\{\beta\in\F_{r^m} : w = \beta^q -\beta\big\}\big| \left(\sum_{z\in\F_{r^m}}\theta(z)\lambda(-z+w) \right).\]
We  now appeal to \cite[Lemma~4.5]{GriffonAS18}, which states that for any $z\in\F_{r^m}$ and any nontrivial additive character $\psi$ on $\F_{r^m}$ we have 
\begin{align}\label{eqn:Griffoneqn}
\big|\big\{\beta\in\F_{r^m} : w = \beta^q -\beta\big\}\big| 
=\sum_{\alpha\in (\F_{r^m}\cap\F_q)} \psi(\alpha w)\,.
\end{align}
Plugging \eqref{eqn:Griffoneqn} into \eqref{eq:Lfunction.lemma3} and reordering the sums, for any nontrivial additive character $\psi$ on $\F_{r^m}$ we obtain  
\[\sum_{\beta\in U(\F_{r^m})}A_J(\beta, m) 
= - \sum_{\theta\in M'_{a}(r^m)}
\sum_{\lambda\in M'_{b}(r^m)} 
		\sum_{\alpha\in (\F_{r^m}\cap\F_q)}
\left( \sum_{(w,z)\in(\F_{r^m})^2} \theta(z) \lambda(w-z)  \psi(\alpha w)\right)\,.\]
Note that the sum between brackets is equal to $S_{r^m}(\theta, \lambda, \psi, \alpha)$.

To conclude, recall that we have fixed a nontrivial additive character $\psi_r$ on $\F_r$. 
For any integer $m\geq1$, we  write the last display for $\psi = \psi_r\circ \Tr_{\F_{r^m}/\F_r}$, which is indeed a nontrivial additive character on $\F_{r^m}$. This yields that, for any $m\geq 1$, 
\[\sum_{\beta\in U(\F_{r^m})}A_J(\beta, m) 
= - \sum_{(\lambda_1, \lambda_2)\in M'_{a,b}(r^m)} \sum_{\alpha\in (\F_{r^m}\cap\F_q)}
S_{r^m}(\lambda_1, \lambda_2, \psi_r\circ \Tr_{\F_{r^m}/\F_r}, \alpha).\]
This identity together with \eqref{eq:Lfunction.lemma4} yields the lemma.
\end{proof}

Our next step towards proving Theorem~\ref{theorem:Lfunction} is to give a more recognizable form to the inner sums 
which appear in  Lemma~\ref{lemm:Lfunc.thesecond}.

\begin{lemma}\label{lemm:Lfunc.thethird}
Let $m\geq 1$. Given  a pair $(\lambda_1, \lambda_2)$ of nontrivial multiplicative characters on $\F=\F_{r^m}$, a nontrivial additive character $\psi$ on $\F_{r^m}$ and an element $\alpha\in\F_{r^m}$, we have
\[ S_{r^m}(\lambda_1, \lambda_2, \psi, \alpha) = \Gauss{\F}{\lambda_1}{\psi_{\alpha}} \Gauss{\F}{\lambda_2}{\psi_{\alpha}}\,, \]
where $\psi_\alpha$ is the additive character on $\F_{r^m}$ defined by $x\mapsto \psi(\alpha x)$.
\end{lemma}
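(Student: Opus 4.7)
The plan is to reduce $S_{r^m}(\lambda_1,\lambda_2,\psi,\alpha)$ to a product of two Gauss sums by an elementary change of variables in the inner sum. Since both $\lambda_1$ and $\lambda_2$ are extended by $\lambda_i(0) = 0$, the contribution from $z = 0$ vanishes, so we may restrict the sum over $z$ to $\F^\times$ without loss.

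The key step is then the substitution $u \colonequals w - z$, so that as $w$ runs over $\F$ for fixed $z$, the element $u$ also runs over $\F$. This rewrites the double sum as
\[
S_{r^m}(\lambda_1,\lambda_2,\psi,\alpha)
= \sum_{z\in\F^\times}\sum_{u\in\F} \lambda_1(z)\,\lambda_2(u)\,\psi\bigl(\alpha(z+u)\bigr),
\]
and because $\psi$ is a homomorphism we may factor $\psi(\alpha(z+u)) = \psi_\alpha(z)\psi_\alpha(u)$. Using once more that $\lambda_2(0)=0$, the sum separates as
\[
S_{r^m}(\lambda_1,\lambda_2,\psi,\alpha)
= \left(\sum_{z\in\F^\times} \lambda_1(z)\psi_\alpha(z)\right)\left(\sum_{u\in\F^\times} \lambda_2(u)\psi_\alpha(u)\right).
\]

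Finally, I would invoke the definition of the Gauss sum from Section~\ref{ssec:gauss.sums.basic}, namely $\Gauss{\F}{\chi}{\psi} = -\sum_{x\in\F^\times} \chi(x)\psi(x)$, to identify each factor (up to a sign of $-1$) with a Gauss sum. The two minus signs cancel, yielding $\Gauss{\F}{\lambda_1}{\psi_\alpha}\,\Gauss{\F}{\lambda_2}{\psi_\alpha}$, as desired. I do not anticipate a real obstacle here; the only subtlety is bookkeeping with the sign in the Gauss sum convention, and noting that the argument is valid even when $\alpha = 0$ (in which case both sides vanish by orthogonality, since $\lambda_1,\lambda_2$ are non-trivial).
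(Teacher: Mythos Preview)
Your proposal is correct and follows essentially the same route as the paper: both arguments perform the change of variables $u=w-z$ (the paper writes $y=w-z$), split $\psi(\alpha(z+u))=\psi_\alpha(z)\psi_\alpha(u)$, and factor the resulting double sum into the product of two Gauss sums. Your treatment is slightly more careful in tracking the sign convention $\Gauss{\F}{\chi}{\psi}=-\sum_{x\in\F^\times}\chi(x)\psi(x)$ and the $\alpha=0$ case, but the substance is identical.
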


\begin{proof} 
By definition of $S_{r^m}(\lambda_1, \lambda_2, \psi, \alpha)$, we have
\[ S_{r^m}(\lambda_1, \lambda_2, \psi, \alpha)  
= \sum_{z\in\F}\sum_{w\in\F} \lambda_1(z) \lambda_2(w-z) \psi(\alpha w)\,.\]
Re-indexing the inner sum by setting $y=w-z$, we obtain 
\begin{align*}
 S_{r^m}(\lambda_1, \lambda_2, \psi, \alpha)
 &=\sum_{y\in\F}\sum_{z\in\F}\lambda_1(z) \lambda_2(y) \psi(\alpha y+\alpha z)\\
 &=\left(\sum_{y\in\F}\lambda_1(z)\psi(\alpha y)\right)
 		\left(\sum_{z\in\F}\lambda_2(z)\psi(\alpha z)\right)\\
 &= \Gauss{\F_{r^m}}{\lambda_1}{\psi_{\alpha}} \Gauss{\F_{r^m}}{\lambda_2}{\psi_{\alpha}}\,.
\end{align*}
This concludes the proof. 
Note that both sides vanish if $\alpha=0$.
\end{proof}

 Recall that $o_{r}(n)$ denotes the multiplicative order of $r$ modulo $n$ and that $\cchi: \overline{\F_p}^\times \to \overline{\Q}^{\times}$ is the Teichm\"uller character defined in Section~\ref{sec:characters}.
\begin{lemma}\label{lemm:Lfunc.characters}
Fix an integer $c\geq 1$ which is coprime to $p$. For $i\in\Z/c\Z\smallsetminus\{0\}$, let $\kappa = o_r\big(c/\gcd(c,i)\big)$. Then, the map
\begin{align*}
 \big\{i\in\Z/c\Z\smallsetminus\{0\} : \kappa \mid m\big\} &  \to M'_{c}(r^m) \\
 i & \mapsto \left[ x \mapsto \big(\cchi\circ \norm_{\F_{r^m}/\F_{r^\kappa}}\big)(x)^{i (r^{\kappa}-1)/c}\right]
\end{align*}
is a bijection.
\end{lemma}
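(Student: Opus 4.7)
The plan is to reduce the lemma to the standard parametrization of characters of a cyclic group. Concretely, I would first simplify the formula defining the map. Since $N_{\F_{r^m}/\F_{r^\kappa}}(x) = x^{(r^m-1)/(r^\kappa-1)}$ for $x \in \F_{r^m}^\times$ and the Teichm\"uller character $\cchi$ is multiplicative, the candidate character is
\[
\cchi\bigl(N_{\F_{r^m}/\F_{r^\kappa}}(x)\bigr)^{i(r^\kappa-1)/c}
\;=\; \cchi(x)^{i(r^m-1)/c}\,.
\]
So the map under consideration is really just $i \mapsto \chi_{\F_{r^m},\, r^m-1}^{\,i(r^m-1)/c}$, independent of the auxiliary integer $\kappa$.

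Next I would check that the condition $\kappa \mid m$ is exactly what makes the exponent $i(r^m-1)/c$ a (non-negative) integer. Writing $d=\gcd(c,i)$, the definition gives $\kappa = o_r(c/d)$, so $\kappa \mid m$ iff $c/d \mid r^m-1$, iff $c \mid d\,(r^m-1)$, iff $c \mid i\,(r^m-1)$ (using $d \mid i$). Thus for $i$ in the stated domain the map is a well-defined character of $\F_{r^m}^\times$.

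Now I would invoke the fact that $\chi_{\F_{r^m}, r^m-1}$ is the Teichm\"uller character on $\F_{r^m}$ and hence has exact order $r^m-1$, so it generates the full character group of the cyclic group $\F_{r^m}^\times$. The character $\chi_{\F_{r^m}, r^m-1}^{\,k}$ satisfies $\lambda^c = \trivcar$ iff $(r^m-1) \mid ck$, iff $k$ is a multiple of $(r^m-1)/\gcd(c, r^m-1)$; and its order is exactly $c/\gcd(c,i)$ when $k = i(r^m-1)/c$, so it is nontrivial iff $i \not\equiv 0 \pmod c$. Injectivity is immediate: $\chi_{\F_{r^m}, r^m-1}^{\,i(r^m-1)/c} = \chi_{\F_{r^m}, r^m-1}^{\,j(r^m-1)/c}$ forces $(i-j)(r^m-1)/c \equiv 0 \pmod{r^m-1}$, i.e. $i \equiv j \pmod c$.

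For surjectivity, which I regard as the mildly delicate step, I would do a cardinality count. One checks that $\{i \in \Z/c\Z : c/\gcd(c,i) \mid r^m-1\}$ coincides with the multiples of $c/\gcd(c, r^m-1)$ inside $\Z/c\Z$, a set of size $\gcd(c, r^m-1)$; removing $i=0$ leaves $\gcd(c, r^m-1)-1$ elements. On the other side, $M'_c(r^m)$ is the set of nontrivial characters of $\F_{r^m}^\times$ (a cyclic group of order $r^m-1$) of order dividing $c$, which has cardinality $\gcd(c, r^m-1)-1$. Combined with injectivity, this forces bijectivity. The main (minor) obstacle is just verifying the equivalence of the two descriptions of the domain; everything else is formal once the map is rewritten as $i \mapsto \chi_{\F_{r^m},\, r^m-1}^{\,i(r^m-1)/c}$.
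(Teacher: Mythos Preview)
Your argument is correct and rests on the same reduction as the paper's: both proofs recognize that the given map is, after unwinding the norm, simply $i \mapsto \cchi|_{\F_{r^m}}^{\,i(r^m-1)/c}$, and that the domain condition $\kappa \mid m$ is equivalent to $c \mid i(r^m-1)$. The only difference is in how bijectivity is concluded: the paper constructs an explicit inverse (given $\lambda = \cchi^{\ell}$, it recovers $i = \ell c/(r^m-1)$ and checks it lies in the domain), whereas you verify injectivity directly and then match cardinalities via $\gcd(c, r^m-1)-1$. Both are standard moves for characters of a cyclic group; your cardinality count is perhaps slightly cleaner, while the paper's explicit inverse makes the parametrization more visibly two-sided.
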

 
\begin{proof}
Choose any $i\in\Z/c\Z\smallsetminus\{0\}$ such that $\kappa$ divides $m$. 
The multiplicative character $\lambda: \F_{r^m}^{\times} \to \C^\times$ defined by $\lambda(x) =  (\cchi\circ\norm_{\F_{r^m}/\F_{r^\kappa}})(x)^{i (r^{\kappa}-1)/c}$ for all $x\in\F_{r^m}^\times$ has exact order $c/\gcd(i,c)$. In particular, $\lambda$ is nontrivial and has order dividing $c$, so $\lambda\in M'_{c}(r^m)$.

Conversely, let $\lambda$ be a nontrivial multiplicative character on $\F_{r^m}$ whose $c$\textsuperscript{th} power is trivial.
The Teichm\"uller character $\cchi$ generates the group of multiplicative characters on $\F_{r^m}$, so $\lambda = \cchi^\ell$ for some integer $\ell\in\{1, \dots, r^m -2\}$. 
Since $\lambda^c$ is trivial on $\F_{r^m}^\times$ and since $\cchi$ has order exactly $r^m-1$, there exists an integer $i\geq 1$ such that $\ell c = i(r^m -1)$.
Since $1 \leq \ell \leq r^m-2$, we have $1\leq i \leq c-1$.
Letting $c'=c/\gcd(c,i)$ and $i' = i/\gcd(c,i)$, we find that  $\ell c' = i'(r^m-1)$. By construction, $\gcd(c',i')=1$ and so $c'$ divides $r^m-1$.
In particular the order $\kappa$ of $r$ modulo $c'$ divides $m$ and so $i'(r^\kappa-1)/c'$ is an integer.  
We have $\ell = i(r^m -1)/c$. 
So, for all $x\in\F_{r^m}^\times$, 
\begin{align*}
\lambda(x) 
& = \cchi(x)^{i(r^m-1)/c} 
=\cchi(x)^{\frac{i'(r^\kappa-1)}{c'}(1+r^\kappa +\dots+ r^{m-\kappa})} 
=\cchi\left(x^{1+r^{\kappa} + \dots + r^{m-\kappa}}\right)^{\frac{i'(r^\kappa-1)}{c'}}\\
&= \big(\cchi\circ \norm_{\F_{r^m}/\F_{r^\kappa}}\big)(x)^{i(r^\kappa-1)/c}\,.
\end{align*}
Hence $\lambda$ has the desired form.
\end{proof}

We now connect our last results with the discussion in \S\ref{sec:orbits}--\S\ref{sec:gausssums.orbits}.
We previously introduced the set $O$ of orbits of the action of $r$ on $(\Z/a\Z \setminus \{0\}) \times (\Z/b\Z \setminus \{0\}) \times \F_q^\times$ and denoted the size of an orbit $o$ by $|o|$.  We also defined the natural projection maps 
\begin{align*}
    \pi_a: & (\Z/a\Z \setminus \{0\}) \times (\Z/b\Z \setminus \{0\}) \times \F_q^\times \to (\Z/a\Z \setminus \{0\}) \times \F_q^\times\, \qquad \text{and} \\
    \pi_b: & (\Z/a\Z \setminus \{0\}) \times (\Z/b\Z \setminus \{0\}) \times \F_q^\times \to (\Z/b\Z \setminus \{0\}) \times \F_q^\times\,
\end{align*} and fixed an additive character $\psi_0$ on $\F_p$.

For any $m\geq 1$ and $\alpha \in \F_{r^m} \cap \F_{q}$,
define an additive character $\psi_{m, \alpha} : \F_{r^m}\to\overline{\Q}$ by $\psi_{m,\alpha}(x) = (\psi_0\circ \Tr_{\F_{r^m}/\F_p})(\alpha x)$.

\begin{lemma}\label{lemm:Lfunc.thefourth}
For any $m\geq 1$, we have
\[\sum_{\substack{o\in O \text{ s.t.}\\ |o| \text{ divides } m}} |o|\, \oomega(o)^{m/|o|}
 = \sum_{ \substack{ \alpha\in (\F_{r^m}\cap\F_q)^\times, \\ (\lambda_1, \lambda_2)\in M'_{a,b}(r^m)}} \Gauss{r^m}{\lambda_1}{\psi_{m, \alpha}} \Gauss{r^m}{\lambda_2}{\psi_{m, \alpha}}\,.\]
\end{lemma}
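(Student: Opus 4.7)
The plan is to re-parametrize the right-hand side via Lemma~\ref{lemm:Lfunc.characters}, regroup the resulting sum by $\langle r\rangle$-orbits in $S$, and then apply Hasse--Davenport to collapse each orbit-sum into a single power of $\oomega(o)$.

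First, I would use Lemma~\ref{lemm:Lfunc.characters} to rewrite the right-hand side as
\[
\sum_{(i,j,\alpha)\in S,\ |o(i,j,\alpha)|\,\mid\, m} \Gauss{r^m}{\lambda_{1,i}}{\psi_{m,\alpha}}\, \Gauss{r^m}{\lambda_{2,j}}{\psi_{m,\alpha}},
\]
noting that the three conditions $\kappa_{r,a}(i)\mid m$, $\kappa_{r,b}(j)\mid m$ and $[\F_r(\alpha):\F_r]\mid m$ amount exactly to $|o(i,j,\alpha)|\mid m$, by the formula for orbit size in \S\ref{sec:orbits}. I would then partition this sum according to the orbit $o$ of $(i,j,\alpha)$.

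The next step is to check that the product of the two Gauss sums is constant along each orbit; this is the direct analogue of Lemma~\ref{lem:InvariantGaussSums} but now over $\F_{r^m}$, and follows by the substitution $x\mapsto x^{r^{m-1}}$ in each factor together with $\Tr_{\F_{r^m}/\F_p}(z^r)=\Tr_{\F_{r^m}/\F_p}(z)$ and $\cchi(z)^{r^m}=\cchi(z)$. This yields an inner sum which is $|o|$ copies of the same quantity, computed at any chosen representative $(i,j,\alpha)\in o$.

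For a fixed representative, I would then relate the Gauss sum over $\F_{r^m}$ to the Gauss sum $\Gauor{\pi_a(o)}$ over the smaller field $\F'_a=\F_{r^{|\pi_a(o)|}}$. The direct computation from the definitions gives
\[
\lambda_{1,i} = \llambda_{(i,\alpha)} \circ \norm_{\F_{r^m}/\F'_a} \qquad\text{and}\qquad \psi_{m,\alpha} = \Psi_{(i,\alpha)} \circ \Tr_{\F_{r^m}/\F'_a},
\]
the second identity using $\alpha\in\F'_a$ and transitivity of the trace. The Hasse--Davenport relation \eqref{eqn:HasseDavenportRelation} then gives $\Gauss{r^m}{\lambda_{1,i}}{\psi_{m,\alpha}}=\Gauor{\pi_a(o)}^{m/|\pi_a(o)|}$, and likewise for $b$. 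Using $m/|\pi_a(o)|=\nu_a(o)\cdot m/|o|$ and the analogous identity for $b$, the product of the two Gauss sums equals $\oomega(o)^{m/|o|}$, so the inner orbit-sum contributes $|o|\,\oomega(o)^{m/|o|}$ as required.

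The main obstacle will be the careful bookkeeping in the third step: verifying that the character $\lambda_{1,i}$ of $\F_{r^m}$ supplied by Lemma~\ref{lemm:Lfunc.characters} really factors through $\norm_{\F_{r^m}/\F'_a}$ as $\llambda_{(i,\alpha)}\circ\norm_{\F_{r^m}/\F'_a}$, given that Lemma~\ref{lemm:Lfunc.characters} naturally writes $\lambda_{1,i}$ as a norm from the possibly smaller field $\F_{r^{\kappa_{r,a}(i)}}$ rather than from $\F'_a$. This is a short computation using $\cchi(x)^{i(r^m-1)/a}$ as the universal expression for $\lambda_{1,i}$ on $\F_{r^m}^\times$ together with $\norm_{\F_{r^m}/\F'_a}(x)=x^{(r^m-1)/(r^{|\pi_a(o)|}-1)}$, but it is the one place where the three different extensions ($\F_{r^{\kappa_{r,a}(i)}}$, $\F'_a$, $\F_{r^m}$) all interact, and so is the easiest place to make a sign or index error.
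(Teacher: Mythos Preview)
Your proposal is correct and follows essentially the same approach as the paper, just in the reverse direction: the paper starts from the orbit sum on the left, expands $\oomega(o)^{m/|o|}$ via Hasse--Davenport at an arbitrary representative to obtain the Gauss-sum product over $\F_{r^m}$, then invokes Lemma~\ref{lemm:Lfunc.characters} to re-index and arrive at the right-hand side. Your explicit orbit-invariance check is redundant (once Hasse--Davenport shows the value at \emph{any} representative is $\oomega(o)^{m/|o|}$, invariance is automatic), and the ``main obstacle'' you flag---reconciling the field $\F_{r^{\kappa_{r,a}(i)}}$ of Lemma~\ref{lemm:Lfunc.characters} with $\F'_a=\F_{r^{|\pi_a(o)|}}$---is exactly the short computation the paper also carries out.
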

\begin{proof}

For any integer $m\geq 1$ and any orbit $o\in O$ we note that $|\pi_a(o)|$ and $|\pi_b(o)|$ both divide $|o|$. If $|o|$ divides $m$, then $|\pi_a(o)|$ and $|\pi_b(o)|$ must also divide $m$.
Since $\nu_a(o) = |o|/|\pi_a(o)|$, we have 
\begin{equation}\label{eq:omega_relation}
    \oomega(o)^{m/|o|}
	= \Gauor{\pi_a(o)}^{m/|\pi_a(o)|} \Gauor{\pi_b(o)}^{m/|\pi_b(o)|}\,.
\end{equation}

Pick a representative $(i,j, \alpha)\in S$ of $o\in O$. 
Then, $(i, \alpha)\in S'_a$  is a representative of $\pi_a(o)$ and $(j, \alpha)\in S'_b$ is a representative of $\pi_b(o)$. 
We write $r_a = r^{|\pi_a(o)|}$. Using 
the Hasse--Davenport relation for Gauss sums and noting that $\Psi_{(i, \alpha)}\circ\Tr_{\F_{r^m}/\F_{r_a}} = \psi_{m, \alpha}$ yields
\begin{align*}
\Gauor{\pi_a(o)}^{m/|\pi_a(o)|} 
&=  \Gauss{r_a}{\llambda_{(i, \alpha)}}{\Psi_{(i, \alpha)}}^{m/\pi_a(o)} 
= \Gauss{r^m}{\llambda_{(i, \alpha)}\circ\norm_{\F_{r^m}/\F_{r_a}}}{\Psi_{(i, \alpha)}\circ\Tr_{\F_{r^m}/\F_{r_a}}} \\
&= \Gauss{r^m}{\llambda_{(i, \alpha)}\circ\norm_{\F_{r^m}/\F_{r_a}}}{\psi_{m, \alpha}}\,.
\end{align*}
A similar computation shows
\begin{align*}
\Gauor{\pi_b(o)}^{m/|\pi_b(o)|} 
&= \Gauss{r^m}{\llambda_{(j, \alpha)}\circ\norm_{\F_{r^m}/\F_{r_b}}}{\psi_{m, \alpha}}\,.
\end{align*}

If $o$ is the orbit of $(i,j,\alpha)\in S$, then $|o|$ divides $m$ if and only if 
(i) $\alpha\in \F_{r^m}$, 
(ii)  the order of $r$ modulo $a/\gcd(a,i)$ divides $m$ (which happens if and only if $a$ divides $i(r^m-1)$) and 
(iii)  the order of $r$ modulo $b/\gcd(b,j)$ divides $m$ (which happens if and only if $b$ divides $j(r^m-1)$).

Recall that we have set $\kappa_{r,a}(i) = o_r(a/\gcd(a,i))$. We have
\begin{align} \label{eqn:MessyProductOfGaussSums}
\sum_{\substack{o\in O \\ |o| \text{ divides } m}} |o|\,\oomega(o)^{m/|o|}
= \sum_{\substack{ (i,j,\alpha)\in S \\ \alpha\in\F_{r^m}^\times \\ \kappa_{r,a}(i) \mid m \\ \kappa_{r,b}(j) \mid m }} 
\Gauss{r^m}{\llambda_{(i, \alpha)}\circ\norm_{\F_{r^m}/\F_{r_a}}}{\psi_{m, \alpha}}
\Gauss{r^m}{\llambda_{(j, \alpha)}\circ\norm_{\F_{r^m}/\F_{r_b}}}{\psi_{m, \alpha}}\,.
\end{align}

Set $\kappa = \kappa_{r,a}(i).$ Then, $\kappa$ divides $o_{r}(a)$ which divides $\pi_{a}(o)$. Also, note that for any finite field $\F$ of
characteristic $p$ and any extension $\F'$
of $\F$, we have $\cchi|_{\F}\circ N_{\F'/\F} = (\cchi|_{\F'})^{|\F'^\times|/|\F^\times|}$. Together, these imply that 
\[
  \llambda_{(i, \alpha)}\circ\norm_{\F_{r^m}/\F_{r_a}}
 = (\cchi \circ \norm_{\F_{r^m}/\F_{r_a}})^{\frac{i(r_a^\kappa -1)}{a}}
 = (\cchi \circ \norm_{\F_{r^m}/\F_{r_a}}\circ \norm_{\F_{r^m}/\F_{r^\kappa}})^{\frac{i(r^\kappa -1)}{a}} 
 = (\cchi \circ \norm_{\F_{r^m}/\F_{r^\kappa}})^{\frac{i(r^\kappa -1)}{a}}\,.
\]
So, for any $m\geq 1$,  Lemma~\ref{lemm:Lfunc.characters} says that as $i$ varies over all elements of $(\Z/a\Z\smallsetminus\{0\})$ satisfying $\kappa_{r,a}(i)\mid m$, the character  $\llambda_{(i, \alpha)}\circ\norm_{\F_{r^m}/\F_{r_a}}$ varies over all characters $\lambda_1 \in M_{a}'(r^m)$. Similarly, as $j$ varies over all elements of $(\Z/b\Z\smallsetminus\{0\})$ satisfying $\kappa_{r,b}(j)\mid m$, the character  $\llambda_{(j, \alpha)}\circ\norm_{\F_{r^m}/\F_{r_b}}$ varies over all characters $\lambda_2 \in M_{b}'(r^m)$. 
Finally, recalling that $S = (\mathbb{Z}/a\mathbb{Z}\setminus \{0\})\times(\mathbb{Z}/b\mathbb{Z}\setminus \{0\})\times\F_q^\times$, we see that if $(i,j,\alpha) \in S$, then $\alpha \in \F_{q}^{\times}$. Altogether, we conclude that reindexing the sum on the right-hand side of \eqref{eqn:MessyProductOfGaussSums} gives the desired result.
\end{proof}

\subsection{Proof of Theorem \ref{theorem:Lfunction}} \label{sec:proof-of-Lfunction}

We make use of the notation introduced in the previous subsection.
By Lemma \ref{lemm:Lfunc.thefirst}, we have
\[\log L(J,T)  
= \sum_{m\geq 1}
\left(\sum_{\beta\in\F_{r^m}^\times}  A_J(\beta, m)\right)
\frac{T^m}{m}.\]
Combining Lemmas \ref{lemm:Lfunc.thesecond} and \ref{lemm:Lfunc.thethird} yields that, for all $m\geq 1$,
\[ \sum_{\beta\in\F_{r^m}^\times} A_J(\beta, m)
= - \sum_{\substack{\alpha\in\F_{r^m}\cap\mathbb{F}_q, \\(\lambda_1, \lambda_2)\in M'_{a,b}(r^m)}} \Gauss{\F}{\lambda_1}{\psi_{m,\alpha}} \Gauss{\F}{\lambda_2}{\psi_{m,\alpha}}\,.\]
Here, we may ignore the term $\alpha=0$ because $\Gauss{\F}{\lambda_1}{\psi_{m,0}} \Gauss{\F}{\lambda_2}{\psi_{m,0}}$ vanishes.
We combine this identity with Lemma \ref{lemm:Lfunc.thefourth} to obtain 
\[- \log L(J,T)  
= \sum_{m\geq 1}
\left( \sum_{\substack{o\in O \text{ s.t.}\\ |o| \text{ divides } m}} |o|\, \oomega(o)^{m/|o|}\right)
\frac{T^m}{m}.\]
On the other hand, expanding the logarithm, we see that 
\begin{align*}
-\log\prod_{o\in O}(1 - \oomega(o) T^{|o|}) 
	&= \sum_{o\in O} \log\left(1 - \oomega(o) T^{|o|}\right) 
	= \sum_{o\in O}\sum_{n\geq 1} \frac{\big(\oomega(o) T^{|o|}\big)^n}{n} \\
	&=\sum_{m\geq 1}\left(\sum_{\substack{o\in O \\ |o| \text{ divides } m}} |o|\, \oomega(o)^{m/|o|}\right)\cdot\frac{T^m}{m}.
\end{align*}
Therefore, 
\[\log L(J, T) = \log\prod_{o\in O}(1 - \oomega(o) T^{|o|}).\]
Exponentiating this identity concludes the proof of Theorem \ref{theorem:Lfunction}.

\hfill$\Box$

\begin{remark}\label{remark:deg_check}
We verify the degree of $L(J,T)$ using Theorem~\ref{thm:weil2}: 
\[\deg L(J,T)  = b(J) = \deg N_J  - 4g.\]
From this formula and the computation of $\deg N_J $ in Proposition~\ref{prop:DegOfCond}, we find
\[\deg(L(J,T)) = (a-1)(b-1)(q+1) - 4\frac{(a-1)(b-1)}{2} = (a-1)(b-1)(q-1)\,.\]

Alternately, from our computations in Theorem~\ref{theorem:Lfunction}, the degree of the $L(J,T)$ is $\sum_{o\in O}|o|$, where $O$ is the set of orbits $\langle r\rangle$ on $S = (\Z/a\Z\setminus\{0\})\times (\Z/b\Z\setminus\{0\})\times \mathbb{F}_q^\times$, where $r$ acts on $(i,j,\alpha)\in S$ via $r\cdot(i,j,\alpha) = (ri,rj,\alpha^{1/r})$, as defined in Section~\ref{sec:orbits}. The sum of the sizes of these orbits is equal to the size of $S$, namely $(a-1)(b-1)(q-1)$.
\end{remark}

\subsection[]{The BSD conjecture for $J$}\label{sec:bsd_conj_for_j}

The special value $L^\ast(J)$ of the $L$-function of $J$ at $T=r^{-1}$ is defined as
\[L^\ast(J) := \left. \frac{L(J, T)}{(1-rT)^{v}}\right|_{T=r^{-1}}, \quad\text{where } v=\ord_{T=r^{-1}} L(J,T).\]
This definition makes sense since the $L$-function is a rational function of $T$. (See Theorem~\ref{thm:weil2}.) 
By definition of $L(J, T)$, the function $\mathcal{L}: s\mapsto L(J, r^{-s})$ is positive on $[3/2, \infty)$. By the Riemann Hypothesis for $L$-functions of abelian varieties over $K$, the function $\mathcal{L}$ does not vanish on $(1, 3/2]$. The special value $L^\ast(J)$ is thus nonnegative. 
Since $L^\ast(J)$ is by definition a nonzero rational number, we conclude that $L^\ast(J)\in \Q_{>0}$.

Let $\widehat{J}$ denote the dual abelian variety to $K$ and let
\[
\langle \cdot,\cdot \rangle :J(K) \times \widehat{J}(K) \to \Q
\]
denote the canonical N\'eron--Tate height divided by $\log r$. Then, $\langle \cdot,\cdot \rangle$ is a bilinear pairing which is nondegenerate modulo torsion. Choosing a basis $P_1,\dots,P_r$ for $J(K)$ modulo torsion and a basis $\widehat{P_1},\dots,\widehat{P_r}$ for $\widehat{J}(K)$ modulo torsion, the regulator of $J$ is defined to be
\[
\Reg(J):= |\det\langle P_i,\widehat{P_j}\rangle_{1\le i,j \le r}|.
\]

These definitions provide us with the setting for the Birch and Swinnerton-Dyer conjecture, Theorem~\ref{thm:bsd}:

\begin{customthm}{\ref{thm:bsd}}
Let $C$ and $J$ be as above.
The abelian variety $J$ satisfies the Birch and Swinnerton-Dyer conjecture. 
This means that
\begin{itemize}
    \item The algebraic and analytic ranks of $J$ coincide:
    $\ord_{T=r^{-1}}L(J,T)=\rank J(K)$.
    \item The Tate--Shafarevich group $\Sh(J)$ is finite.
    \item The BSD formula holds:
    \begin{equation}
    L^*(J)=\frac{|\Sh(J)|\,\Reg(J)\,\prod_v c_v(J)}{H(J)\,r^{-g}\, |J(K)_{\mathrm{tors}}|^2},
    \end{equation}
    where the $c_v(J)$ are the local Tamagawa numbers of $J$ and $\Reg(J)$ is the regulator.
\end{itemize}
\end{customthm}

We refer the reader to \cite[\S6.2.3]{Ulmer_CurvesJacobians} for more background about the Birch and Swinnerton-Dyer conjecture for Jacobians over function fields.

\newcommand{\X}{\mathcal{X}}
\begin{proof}
Theorem~\ref{thm:bsd} is a special case of \cite[Theorem 3.1.2]{PriesUlmer2016}.
\end{proof}

This result will allow us to make conclusions about $\mathrm{rank}\,J(K)$ in Section~\ref{sec:rank}.

\begin{remark}
The more typical statement of the BSD formula is
\begin{equation}
    L^*(J)=\frac{|\Sh(J)|\,\Reg(J)\,\prod_v c_v(J)}{H(J)\, r^{-g}\, |J(K)_{\mathrm{tors}}|\, |J^\vee(K)_{\mathrm{tors}}|}.
    \end{equation}
In our case, $J$ is principally polarized since $J$ is the Jacobian of a curve, so $J \cong J^\vee$. 
In particular, $|J(K)_{\mathrm{tors}}|\, |J^\vee(K)_{\mathrm{tors}}| = |J(K)_{\mathrm{tors}}|^2$, and our statement agrees with the typical one.
\end{remark}

\section{Cohomological computation of $L(J, T)$}
\label{sec:Cohomological.Lfunction}

Our goal in this section is to provide an alternative computation of the $L$-function $L(J,T)$ using the geometry of the minimal proper regular SNC model $\mathcal S$ of $C$.
In particular, we compute the zeta function of $\mathcal S$ in two different ways -- first by decomposing it via the fibers over $\mathbb P^1$ and a second time by understanding the cohomology of $\mathcal S$ in terms of a product of curves which dominates $\mathcal S$. \\

Throughout the section, we denote by $H^n(-)$ the $n$\textsuperscript{th} $\ell$-adic cohomology group of a variety over $\F_r$. 
That is, $H^n(X)$ denotes $H^n_{\text{\'et}}(X\times_{\F_r}\overline{\F_r}, \overline{\Q_\ell})$ for a prime $\ell\neq p$. 
This cohomology group is endowed with a natural action of the geometric $r$\textsuperscript{th} power Frobenius $\Frob_r$.

The following linear algebra fact (also used in \cite{GriffonUlmer}, \cite{Ulmer2006}) will be useful for the linear algebra arguments in our cohomology computation:
\begin{lemma}\label{lemm:easylinearalgebra}
Let $V$ be a finite-dimensional vector space with subspaces $W_i$ indexed by $i\in\Z/m\Z$ such that $V=\bigoplus_{i\in\Z/m\Z}W_i$, 
and let $\phi:V\to V$ be a linear map such that $\phi(W_i)\subset W_{i+1}$ for all $i\in\Z/m\Z$. 
Then
\[\charpol{\phi}{V} = \det\ (1 - \phi^m T^m|W_0)\,.\]
\end{lemma}

\subsection{Preliminaries about Artin--Schreier curves}\label{subsec:ArtinSchreier}
For any prime-to-$p$ integer $d\geq 1$ and any power $q$ of $p$, let $X_{d,q}$ be the smooth projective curve over $\F_r$ defined by the affine equation
\[ X_{d,q}:\qquad w^d = z^q -z\,.\]
Since $d$ and $q$ are relatively prime, $X_{d,q}$ admits a unique point at infinity which we denote by $P_\infty\in X_{d,q}$. 
We note that $P_{\infty}$ is $\F_r$-rational. 
A straightforward application of the Riemann--Hurwitz formula yields that $X_{d,q}$ has genus $(q-1)(d-1)/2$. 
Hence, $\dim_{\Q_\ell}H^1(X_{d,q})= (q-1)(d-1)$.

Recall from \S\ref{sec:orbits} that we defined 
$S'_d = (\Z/d\Z\smallsetminus\{0\})\times\F_q^\times$ and endowed it with an action by $\langle r \rangle$, 
and let $O_{d}'$ be the set of orbits of $S_{d}'$ under this action. 
Moreover for any $(i,\alpha) \in S_{d}'$, we defined (in \S\ref{sec:gausssums.orbits}) an additive character $\llambda_{(i,\alpha)}$ and a multiplicative character $\Psi_{(i,\alpha)}$ on $\F_{r^{|o|'}}$. 
By construction, $\llambda_{(i,\alpha)}$ induces a character $\lambda_{(i,\alpha)}$ of $\mu_{d}$ by composition with the quotient map \[(\F_{r^{|o'|}})^{\times} \to (\F_{r^{|o'|}})^\times/\ker\lambda_{(i, \alpha)} \simeq\mu_{d/(d,i)}\subset \mu_d,\] 
and $\Psi_{(i,\alpha)}$ induces an additive character $\psi_{(i,\alpha)}$ of $\F_{q}$ by composition with the trace map $\Tr_{\F_{r^{|o'|}}/\F_{q}}$. The map which takes $(i,\alpha)$ to the product character $\lambda_{(i,\alpha)}\psi_{(i,\alpha)}$ is a bijection between $S_{d}'$ and the group of characters of $\mu_{d} \times \F_{q}$. 

The curve $X_{d,q} \times_{\F_r} \overline{\F_{r}}$ is naturally endowed with an action of $\mu_d\times\F_q$, defined as follows: for any $\zeta \in\mu_d$ and any $\alpha\in\F_q$, set
$(\zeta, \alpha)\cdot (w,z) := (\zeta w, z+\alpha)$
for any $(w,z)\in X_{d,q}\smallsetminus\{P_{\infty}\}$, and $(\zeta, \alpha) \cdot P_{\infty} = P_{\infty}$. By the functoriality of cohomology, this induces an action of  $\mu_d\times\F_q$ on $H^1(X_{d,q})$. For any $(i, \alpha)\in S'_d$, we denote by $H^1(X_{d,q})^{(i, \alpha)}$ the subspace of $H^1(X_{d,q})$ on which $\mu_d\times\F_q$ acts as multiplication by $\lambda_{(i,\alpha)}\psi_{(i,\alpha)}\,.$ 

By \cite{katz1981crystalline}, each $H^1(X_{d,q})^{(i, \alpha)}$ has dimension $1$. In particular, $H^1(X_{d,q})$ decomposes as a direct sum of lines:
\begin{equation}\label{eq:eigenvalue.Fr11}
H^1(X_{d,q}) = \bigoplus_{(i, \alpha)\in S'_d} H^1(X_{d,q})^{(i, \alpha)}.
\end{equation}
The action of $\Frob_r$ on $H^1(X_{d,q})$ sends the line $H^1(X_{d,q})^{(i, \alpha)}$ indexed by $(i, \alpha)\in S'_d$ onto the line indexed by $(ri, \alpha^{1/r})$. 
We deduce from the above that, for any orbit $o'\in O'_{d}$, the $|o'|$\textsuperscript{th} iterate of $\Frob_r$ stabilizes the line $H^1(X_{d,q})^{(i, \alpha)}$ for any representative $(i, \alpha)\in o'$.
By \cite{katz1981crystalline}, the eigenvalue of $(\Frob_r)^{|o'|}$ acting on the line $H^1(X_{d,q})^{(i, \alpha)}$ is the Gauss sum $\Gauor{o'}$ which we defined in \S\ref{sec:gausssums.orbits}, Definition~\ref{def:oomega}.
In other words, we have
\begin{equation}\label{eq:eigenvalue.Fr}
\charpol{\Frob_r^{|o'|}}{H^1(X_{d,q})^{(i, \alpha)}} = 1- \Gauor{o'} T,
\end{equation}
for any $(i, \alpha)\in o'$.
Furthermore, the direct sum
\[H^1(X_{d,q})_{o'}:=\bigoplus_{(i,\alpha)\in o'}H^1(X_{d,q})^{(i, \alpha)}\]
is stable under the action of $\Frob_r$, and the action of $\Frob_r$ cyclically permutes the summands thereof. 
By Lemma~\ref{lemm:easylinearalgebra}, we thus have 
\begin{equation*}
\charpol{\Frob_r}{H^1(X_{d,q})_{o'}} =1 - \Gauor{o'} T^{|o'|}\,.
\end{equation*}
We conclude that 
\begin{equation*}
\charpol{\Frob_r}{H^1(X_{d,q})}	 
= \prod_{o'\in O'_{d}} \left(1-\Gauor{o'}T^{|o'|}\right)\,, 
\end{equation*}
is the $L$-function of the curve $X_{d,q}/\F_r$ (\ie{}, the numerator of its Hasse--Weil $\zeta$-function, viewed as a rational function in $T$).

\subsection{Domination by a product of curves}\label{subsec:domination_by_product}

Let $a,b\geq 1$ be relatively prime integers which are both coprime to $p$, and let $q$ be a power of $p$. 

Let $X_{a}$ and $Y_{b}$ be smooth projective curves over $\overline{\F_{r}}$ defined by the (singular) affine equations
\begin{align*}
X_{a}: x^a = u_1\,, \\
Y_{b}: y^b = u_2\,.
\end{align*}
Let $\infty_{a}$ denote the unique point at infinity on $X_{a}$ and let $\infty_{b}$ denote the unique point at infinity on $Y_{b}$. 
Let $\mathcal P$ be the product $X_{a} \times Y_{b}$ and let $\pi:\mathcal S_{0}\to \P^1_{\overline{\F_{r}}}$ be the minimal proper regular model of the curve with affine equation $x^a + y^b = u$ over $\overline{\F_r}(u)$.

The surface $\mathcal P$ is equipped with a rational map $\pi_{0}: \mathcal P \dashrightarrow \P^1$ defined on the affine patch by
\begin{center}
	\begin{tabular}{rccc}
		$\pi_{0}$\ : & $\mathcal P$ & $\dashrightarrow$ & $\P^1$, \\
		& $((x,u_1), (y,u_2))$ & $\mapsto$ & $u_1 + u_2$\,.
	\end{tabular}   
\end{center}
The rational map $\pi_{0}$ also maps $\{\infty_{a}\} \times (Y_{b} \smallsetminus \{\infty_{b}\})$ and $\{\infty_{a}\} \times (Y_{b} \smallsetminus \{\infty_{b}\})$ to $\infty \in \P^1$, and has a unique point of indeterminacy at $(\infty_{a},\infty_{b})$. As is explained in the proof of Proposition~3.1.5 of \cite{PriesUlmer2016}, one can resolve the indeterminacy in $\pi_{0}$ through a series of blow-ups at the point of indeterminacy. Moreover, as \cite{PriesUlmer2016} explains in Remark~3.1.6, the exceptional fiber of the last blow-up maps isomorphically to $\mathbb P^1$ and all other fibers map to $\infty \in \P^1$. Let $\mathcal R$ be the result of this blow-up. Examining the construction and comparing to the recipe for constructing minimal proper regular SNC models from \cite{Dokchitser2018}, we find that in fact, $\mathcal R$ is the minimal proper regular model of the curve with affine equation $x^a + y^b = u$ over $\overline{\F_r}(u)$. 

Let $\mathcal{P}_{q,q} = X_{a,q} \times Y_{b,q}$. The surface $\mathcal P_{q,q}$ is a Galois cover of $\mathcal P$ with Galois group $\mathbb F_{q} \times \mathbb F_{q}$. Let $\mathcal R_{q,q}$ be the fiber product $\mathcal R \times_{\mathcal{P}} \mathcal{P}_{q,q}.$ Then, $\mathcal R_{q,q}$ is a Galois cover of $\mathcal R$ with Galois group $\mathbb F_{q} \times \mathbb F_{q}$. There is an `antidiagonal' action of $\F_{q}$ on $\mathcal P_{q,q}$ and $\mathcal R_{q,q}$ where $\alpha$ acts by $(\alpha, -\alpha)$ and this action preserves fibers of the rational map $\mathcal P_{q,q}$ to $\mathbb P^1$. Let $\mathcal P_{q} \colonequals \mathcal P_{q,q}/\F_{q}$ and $\mathcal R_{q} \colonequals \mathcal R_{q,q}/\F_{q}$ be the quotients by this action. By construction, $\mathcal P_{q}$ is a $\F_{q}$-Galois cover of $\mathcal P$ and $\mathcal R_{q}$ is a $\F_{q}$-Galois cover of $\mathcal R$. We can also recognize $\mathcal P_q$ and $\mathcal R_{q}$ as pullbacks. We have $\mathcal P_{q} = \mathcal P \times_{\P^1_{u}} \P^1_{t}$ and $\mathcal R_{q} = \mathcal R \times_{\mathcal{P}} \mathcal{P}_{q}$. We summarize these maps in the following commutative diagram:

\begin{equation*}
\begin{tikzcd}
 \mathcal R_{q,q} \arrow[r, "/\F_{q}"]\arrow[d]\arrow[dr, phantom, "\scalebox{1.5}{$\lrcorner$}" , very near start, color=black] & \mathcal R_{q} \arrow[r]\arrow[d]\arrow[dr, phantom, "\scalebox{1.5}{$\lrcorner$}" , very near start, color=black] & 
 \mathcal R \arrow[d]\\
 \mathcal P_{q,q} = X_{a,q} \times Y_{b,q} \arrow[r, "/\F_{q}"]\arrow[d, dashed] &  \mathcal P_{q} \arrow[r]\arrow[d, dashed]\arrow[dr, phantom, "\scalebox{1.5}{$\lrcorner$}", very near start, color=black]  &  \mathcal P = X_{a} \times Y_{b} \arrow[d, dashed, "\pi_{0}"] \\
 \P^1_{t} \arrow[r, equals] & \P^1_{t} \arrow["u = t^q - t"]{r} & \P^1_{u}
\end{tikzcd}
\end{equation*}

We now relate the surfaces appearing in the commutative diagram above to the minimal proper regular SNC model $\mathcal S$ of $\mathcal C_{a,b}$, as defined in \S\ref{sec:geometry}.

First, let $\pi:\mathcal S_{0}\to \P^1_{\overline{\F_{r}}}$ be the minimal proper regular model of the curve with affine equation $x^a + y^b = u$ over $\overline{\F_r}(u)$. There is a rational map $\phi: \mathcal P \to \mathcal S_{0}$ defined on the affine patch by 
\begin{center}
	\begin{tabular}{rccc}
		$\phi$\ : & $\mathcal P$ & $\dashrightarrow$ & $\mathcal S_0$, \\
		& $((x,u_1), (y,u_2))$ & $\mapsto$ & $(x,y,u_1 + u_2)$\,.
	\end{tabular}   
\end{center}
The rational map $\phi$ has a unique point of indeterminacy at $(\infty_{a}, \infty_{b})$, and this indeterminacy can be resolved by the same series of blow-ups that resolves $\pi_{0}$, yielding a morphism $\phi: \mathcal R \to \mathcal S_{0}$. In fact, we have already remarked that $\mathcal R$ is the minimal proper regular model of the curve $x^a + y^b = u$, and $\phi: \mathcal R \to \mathcal S_{0}$ is an isomorphism.

Now, set $\mathcal S_{q} \colonequals \mathcal S_{0} \times_{\P^1_u} \P^1_t$ where the second fiber maps $\P^1_t \to \P^1_u$ via the Artin--Schreier map $t \mapsto t^q - t$, so that $\mathcal S_{q}$ is a model of $x^a + y^b = t^q - t$. The rational map $\phi_{q,q}: \mathcal P_{q,q} \dashrightarrow \mathcal S_{q}, ((x,t_1),(y,t_2)) \mapsto (x,y,t_1+t_2)$ is invariant under the antidiagonal $\F_{q}$-action. The induced rational map $\phi_{q}: \mathcal P_{q} \dashrightarrow \mathcal S_{q}$ from the quotient is the same as the pullback of $\phi: \mathcal P \dashrightarrow \mathcal S_{0}$. We now resolve the indeterminacy of these rational maps.

The isomorphism $\phi: \mathcal R \to \mathcal S_{0}$ pulls back to an isomorphism $\phi_{q}: \mathcal R_{q} \to \mathcal S_{q}$ which resolves the indeterminacy of $\phi_{q}: \mathcal P_{q} \dashrightarrow \mathcal S_{q}$. Moreover, the induced map $\mathcal R_{q,q} \to \mathcal S_{q}$ given by composing $\phi_{q}$ with the antidiagonal quotient resolves the indeterminacy of the rational map $\phi_{q,q}: \mathcal P_{q,q} \to \mathcal S_{q}$.

In Section~\ref{subsec:FrobOnH2}, these morphisms will allow us to relate the action of Frobenius on the `antidiagonal $\F_q$'-invariant subspace of $H^2(\mathcal P_{q,q})$ to the action of Frobenius on $H^2(\mathcal S_{q})$ modulo its `trivial lattice'.

We summarize in Figure~\ref{fig.diagram.maps} the maps considered here in a commutative diagram, where dashed arrows denote rational maps and solid arrows are everywhere defined. The maps from $\mathcal R_{q,q}, \mathcal R_{q},$ and $\mathcal R$ resolve the indeterminacy of the maps from $\mathcal P_{q,q}, \mathcal P_{q}$ and $\mathcal P$ with the same targets.\\

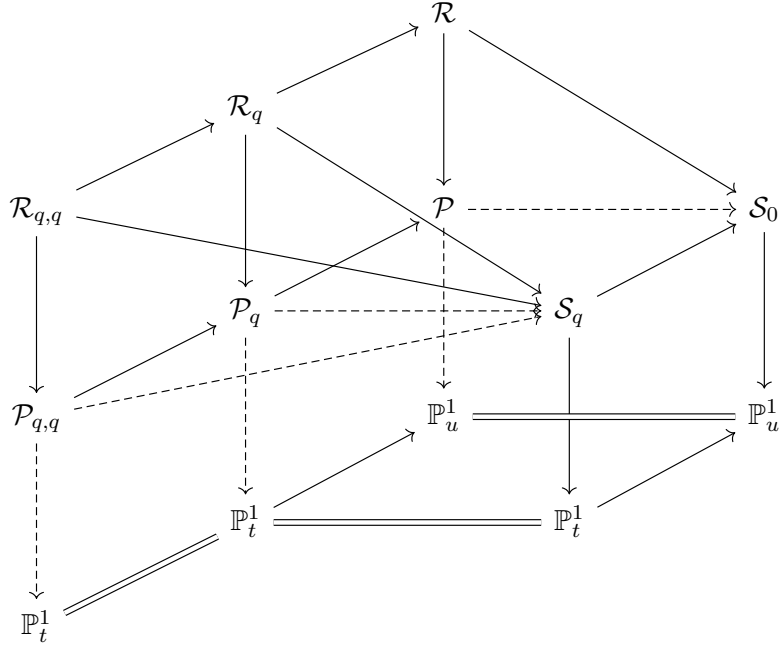
\begin{figure}[h]
\begin{center}
\begin{tikzcd}
                 & &                & & \mathcal R\arrow[rrrdd]\arrow[dd] &                & & \\
                 & & \mathcal R_{q} \arrow[rru]\arrow[rrrdd]\arrow[dd]  & &            &                & & \\
\mathcal R_{q,q}\arrow[rru]\arrow[dd]\arrow[rrrrrd] & &                & & \mathcal P\arrow[rrr,dashed]\arrow[dd,dashed] &                & & \mathcal S_{0} \arrow[dd]\\
                 & & \mathcal P_{q}\arrow[rru]\arrow[rrr,dashed] \arrow[dd,dashed] & &            & \mathcal S_{q}\arrow[dd] \arrow[rru] & & \\
\mathcal P_{q,q}\arrow[rru]\arrow[dd,dashed] \arrow[rrrrru,dashed]& &                & & \P^1_u \arrow[rrr,equals]    &                & & \P^1_{u} \\
                 & & \P^1_t  \arrow[rru] \arrow[rrr,equals]      & &            & \P^1_{t}\arrow[rru]       & & \\
\P^1_t  \arrow[rru,equals]         & &                & &            &                & &         
\end{tikzcd}
\caption{Summary of maps}\label{fig.diagram.maps}
\end{center}
\end{figure}

Finally, we relate $\mathcal S_{q}$ to $\mathcal S$. In Section~\ref{subsec:cohom_of_S}, this relationship will allow us to identify the action of Frobenius on $H^2(\mathcal S)$ modulo its `trivial lattice' to the action of Frobenius on $H^2(\mathcal S_{q})$ modulo its `trivial lattice'.

Upon restricting to the fibers over $\P^1 \smallsetminus (\F_{q} \cup \{\infty\})$, the surfaces $\mathcal S$ and $\mathcal S_{q}$ become isomorphic as models of $\mathcal C_{a,b}$. However, since $\mathcal S_{q}$ is a ramified cover of $\mathcal S_0$, the surface $\mathcal S_{q}$ may not be a regular model for $\mathcal C_{a,b}$, and there need not be morphisms between $\mathcal S_{q}$ and $\mathcal S$ in either direction. 

Now, $\mathcal S_{q} \to \mathcal S_{0}$ is \'etale away from the fiber above infinity, so the only singularities of $\mathcal S_{q}$ lie on the fiber above infinity. When blowing up these singularities to get a proper regular model, the exceptional fibers all map to $\infty \in \P^1_{t}$. After further blow-ups at the singularities on fibers, one gets a proper regular SNC model $\mathcal S'$ of $\mathcal C_{a,b}$ equipped with a blow-up map $\mathcal C_{a,b} \to \mathcal S_{q}$. The exceptional fibers of the blow-ups are components of the singular fibers (above $\F_{q}$ and $\infty$). By the minimality of $\mathcal S$ and since $\mathcal S, \mathcal S'$, and $\mathcal S_{q}$ are all isomorphic away from the singular fibers, the birational isomorphism $\mathcal S' \to \mathcal S$ defined away from the singular fibers extends to a morphism which is defined by iteratively contracting certain $-1$ curves which are contained in singular fibers of the composition $\mathcal S' \to \mathcal S_{q} \to \P^1_{t}$.

\subsection{Cohomology of $\mathcal S$ in degree $1$}\label{subsec:cohom_of_S_1}

Our next goal is to show that the $H^1$ of the minimal proper regular SNC model $\mathcal S$ of $C$ is trivial by comparing it with the cohomology of the product of Artin--Schreier curves $\mathcal P_{q,q}$ constructed in Section~\ref{subsec:domination_by_product}.

First, we relate the cohomology of $\mathcal R_{q}$ to the cohomology of the curves $X_{a,q}$ and $Y_{b,q}$. Since we construct $\mathcal R_{q}$ from $\mathcal P_{q}$ by repeatedly blowing up at a point and the exceptional divisor (as a union of $\mathbb P^1$s) has trivial $H^1$, the blow-up formula (see \cite{MilneEC}) gives 
\begin{align} \label{eqn:H1-blowup}
    H^1(\mathcal R_{q}) \cong H^1(\mathcal P_{q})\,.
\end{align}
 Since $\mathcal P_{q} = (X_{a,q} \times Y_{b,q})/\F_q$, we have
 \begin{align} \label{eqn:H1-invariants}
    H^1(\mathcal P_{q}) \cong H^1(X_{a,q} \times Y_{b,q})^{\F_{q}}\,.
\end{align}
The Kunneth formula gives  
 \begin{align} \label{eqn:H1-Kunneth}
    H^1(X_{a,q} \times Y_{b,q}) \cong (H^1(X_{a,q}) \otimes H^0(Y_{b,q}) \oplus H^0(X_{a,q}) \otimes H^1(Y_{b,q}))^{\F_{q}} \,.
\end{align}
Now, $\F_{q}$ acts trivially on $H^0(X_{a,q})$ and $H^0(Y_{b,q})$, and we saw in Section~\ref{subsec:ArtinSchreier} that the subspaces of $H^1(X_{a,q})$ and $H^1(Y_{b,q})$ fixed by $\F_{q}$ are both trivial. So, combining \eqref{eqn:H1-blowup}, \eqref{eqn:H1-invariants}, and \eqref{eqn:H1-Kunneth}, we find
$H^1(\mathcal R_{q}) = \{0\}\,.$ Since $\mathcal R_{q} \to \mathcal S_{q}$ is a dominant morphism, the induced map $H^1(\mathcal R_{q}) \to H^1(\mathcal S_{q})$ is surjective, whence $H^1(\mathcal S_{q})$ is trivial. Using the blow-up formula as in the justification of \eqref{eqn:H1-blowup} gives $H^1(\mathcal S_{q}) \cong H^1(\mathcal S)$. We conclude that $H^1(\mathcal S) = \{0\}\,.$

\subsection{Cohomological interpretation of the $L$-function}\label{subsec:FrobOnH2}

Our goal in this subsection is to relate $L(J,T)$ to the characteristic polynomial of Frobenius acting on a certain quotient of $H^2(\mathcal S)$. 

As before, let $K=\F_r(t)$. We choose an algebraic closure $\overline{K}$ of $K$ and a separable closure $K^{\mathrm{sep}}$ within $\overline{K}$. Denote by $G$ the absolute Galois group of $K$. 
Fix a pair $(a,b)$ of positive coprime integers which are both coprime to $p$ as well as a power $q$ of $p$.
Write $C=C_{a,b,q}$ and $J=J_{a,b,q}$.

For any place $v$ of $K$, we let  $\Frob_{v}$ denote the \emph{geometric} Frobenius at $v$. (The geometric Frobenius $\Frob_{v}$ is a well-defined up to conjugacy in $G$.)
Recall from \S\ref{ssec:Lfunction.definition} that the $L$-function of $J$ is defined by
\begin{align}\label{eqn:L-function_def}
   L(J,T) \colonequals \prod_{v} \charpol{\Frob_v}{H^1(\Jcal_v)^{I_v}}^{-1}.
\end{align}
If $v$ is a place of bad reduction of $J$, we know from  Proposition~\ref{prop:unipotentreduction} that $J$ has unipotent reduction at $v$. Hence, by \cite[pg. 504, Remark 2]{SerreTate}, the action of  inertia group at $v$ on $H^1(\Jcal_v)$ only fixes  the trivial subspace, so that $H^1(\Jcal_v)^{I_v} =\{0\}$. 
On the other hand, if $v$ is a place of good reduction of $J$,  we have $H^1(\Jcal_v)^{I_v} = H^1(\Jcal_v)$ since $I_{v}$ acts trivially. 
Furthermore, at such a place $v$, the space $H^1(\Jcal_v)$ is canonically isomorphic to $H^1(\mathcal S_v)$ by (for instance) \cite[5.3.5]{Poonen_Curves}, compatibly with the action of $\Frob_v$. 
The Euler product in \eqref{eqn:L-function_def} thus simplifies to
\begin{align} \label{eqn:L-function_def_simp}
   L(J,T) = \prod_{v \text{ good}} \charpol{\Frob_v}{H^1(\mathcal S_v)}^{-1},
\end{align}
where the product is restricted to places of good reduction of $J$.
In order to shorten notation, we set
$P_v(T) := \charpol{\Frob_v}{H^1(\mathcal S_v)}$ for any place $v$ of $K$. \\

For a variety $X$ over $\F_r$, recall (e.g. from
\cite[Def. 3.4.1]{Poonen_Curves}) that its zeta function is defined by
\[
Z(X,T) = \prod_{ P \in |X|} \left(1 - T^{\deg P}\right)^{-1}\,,\]
where the product runs over the set of closed points of $X$.
If $X$ is smooth and projective, by Grothendieck--Lefschetz trace formula (see \cite[Corollary 3.7]{SGA4.5_Deligne}), we have
\[Z(X,T) = \prod_{i=0}^{2 \dim X} (-1)^{i+1} \charpol{\Frob_r}{H^i(X)}.
\]

In particular, we have $Z(\P^1_{\F_r}, T) = \big({(1-T)(1 - rT)}\big)^{-1}$.

We showed in Section~\ref{subsec:cohom_of_S_1} that $H^1(\mathcal S)=\{0\}$. 
It follows from Poincar\'e duality (see \cite[Appendix C.3]{Hartshorne}) that $H^3(\mathcal S)=\{0\}$ as well.
These remarks show that
\begin{equation}\label{eq:zetaS.expr1}
    Z(\mathcal S, T) = \frac{1}{(1-T)\, \charpol{\Frob_r}{H^2(\mathcal S)} \, (1-r^2 T)}.
\end{equation}
Similarly, for any place $v$ of good reduction, we have
\[ Z(\mathcal S_{v}, T)  = \frac{P_v(T)}{(1-T^{\deg v})(1 - (rT)^{\deg v})}.\]
Since $\mathcal S$ is a disjoint union of the fibers of the map $\mathcal{S}\to\P^1$, we can also express $Z(\mathcal S, T)$ in terms of the zeta functions of the fibers: 
\[Z(\mathcal S, T) = \prod_{v} Z(\mathcal S_{v}, T) =  \prod_{v \text{ good}} Z(\mathcal S_{v}, T) \prod_{v \text{ bad}} Z(\mathcal S_{v}, T)\,.\]

Combining the last two displayed formulas and \eqref{eqn:L-function_def_simp}, we find that 
\begin{align*}
     \prod_{v \text{ good}} Z(\mathcal S_{v}, T) 
     & = \prod_{v \text{ good}} \frac{P_v(T)}{(1-T^{\deg v})(1 - (rT)^{\deg v})}
     = \prod_{v \text{ good}} \frac{1}{P_{v}(T)^{-1}}\,\frac{1}{(1-T^{\deg v})(1 - (rT)^{\deg v})} \\
    & =\left(\prod_{v \text{ good}} \frac{1}{P_{v}(T)^{-1}}\right) 
     \left(\prod_{v} \frac{1}{(1-T^{\deg v})(1 - (rT)^{\deg v})} \right) \left(\prod_{v \text{ bad}} (1-T^{\deg v})(1 - (rT)^{\deg v})\right) \\
    & = \frac{Z(\P^1_{\F_r}, T) Z(\P^1_{\F_r}, rT)}{L(J,T)} \left(\prod_{v \text{ bad}} (1-T^{\deg v})(1 - (rT)^{\deg v})\right)
\end{align*}
This gives us another expression for $Z(\mathcal S, T)$:
\begin{equation}\label{eq:zetaS.expr2}
Z(\mathcal S, T) = \frac{1}{(1-T)(1-r T)^2(1-r^2 T) L(J,T)} \prod_{v \text{ bad}} Z(\mathcal S_{v},T) (1-T^{\deg v})(1 - (rT)^{\deg v})\,.
\end{equation}

In fact, we can simplify this further since we know (from Section \ref{subsec:MinPropRegSNCModel}) that the fiber $\mathcal S_{v}$ at a place $v$ of bad reduction is a tree of $\mathbb P^1$s. 
For any such place $v$, let $m_{v}$ be the number of irreducible components of $\mathcal{S}_{v}$. 
Then, a straightforward computation shows that
\begin{align*}
Z(\mathcal S_{v},T)  & = \frac{Z(\P^1_{\F_v},T)^{m_v}}{Z(\Spec \F_v,T)^{m_v - 1}}   
 = \frac{1}{(1-T^{\deg v})(1 - (rT)^{\deg v})^{m_{v}}}\,.
\end{align*}
 Plugging this into \eqref{eq:zetaS.expr2} yields that 
\begin{equation}\label{eq:zetaS.expr3}
Z(\mathcal S, T) = \frac{1}{(1-T)(1-r T)^2(1-r^2 T) L(J,T)} \prod_{v \text{ bad}} {(1 - (rT)^{\deg v})^{1-m_{v}}} \,. 
\end{equation}

Comparing formulas \eqref{eq:zetaS.expr1}  and \eqref{eq:zetaS.expr3} for $Z(\mathcal S, T)$ and rearranging terms, we find
\begin{align}
   L(J,T)  & = \frac{P_2(T)}{(1-rT)^2}\prod_{v \text{ bad}} Z(\mathcal S_{v},T) (1-T^{\deg v})(1 - (rT)^{\deg v}) \notag\\
   & = \frac{P_2(T)}{(1-rT)^2} \, \prod_{v \text{ bad}} (1 - (rT)^{\deg v})^{1- m_{v}}\,.\label{eq:Lfunc.zetaS}
\end{align}

Let $s_{\infty} : \mathbb P^1 \to \mathcal S$ be the `infinity section' $s_\infty$ which maps each point $t \in \mathbb P^1$ to the unique `point at infinity' on the fiber $\mathcal S_{t}$. Let $\TrivLat \subset H^2(\mathcal S)$ be the trivial lattice, that is 
the subspace spanned by the images under the cycle class map of (the image of) $s_{\infty}$ and all components of fibers of $\mathcal S \to \P^1$. 

Let $D$ be an irreducible (over $\mathbb F_{r}$) component of a fiber of $\mathcal S \to \P^1$. 
After base change to $\overline{\F_{r}}$, we can decompose $D$ as $D_{\overline{\F_{r}}} = \bigcup_{j \in \Z/n\Z} D_{j}$ with indices chosen so that $\Frob_{r} D_{j} = D_{j+1}$. 
Let $W_{j}$ be the subspace of $H^2(\mathcal S)$ spanned by the image of $1_{D_{j}}$ under $i_{*}:H^0(D_{j})(-1) \to H^2(\mathcal S)$. 
We have $\Frob_{r} W_{j} \subset W_{j+1}$, and $\Frob_{r^n}$ acts on each $W_{j}$ by multiplication by $r^n$. Since $W_{j}$ is one-dimensional, we find $\det(1 - \Frob_{r}^{n} T^n|W_{0}) = 1 - r^n T^n$. Hence, by Lemma~\ref{lemm:easylinearalgebra}, the characteristic polynomial of $\Frob_{r}$ acting on the subspace of $H^2(\mathcal S)$ spanned by the classes of the components of $D_{\overline{\F_{r}}}$ is $(1-(rT)^n)$.

Now, the trivial lattice $\TrivLat$ has a basis consisting of the image of $s_\infty$ (which is defined over $\mathbb F_{r}$), the fiber over any $\mathbb F_{r}$-rational point of $\mathbb P^1$ (which is again defined over $\mathbb F_{r}$) and the components of the singular fibers which do not meet $s_\infty$. We conclude that 
\[
\charpol{\Frob_{r}}{\TrivLat} = (1-rT)^2\prod_{v \text{ bad}} (1 - (rT)^{\deg v})^{m_{v} - 1}\,.
\]
Combining \eqref{eq:Lfunc.zetaS} with the above finally yields the following:
\begin{proposition}\label{prop.Lfunc.H2S} 
We have
\[L(J, T) = \charpol{\Frob_r}{H^2(\mathcal S)/\TrivLat}\,.\]	
\end{proposition}
With our computation of the degree of the conductor of $J/K$ (see Proposition~\ref{prop:DegOfCond}), the N\'eron--Ogg--Shafarevich formula (see Appendix \ref{app:Conductor}) yields that $\deg L(J, T)=(a-1)(b-1)(q-1)$. 
It follows from the above  that 
\begin{equation}\label{eq.dimH2S}
    \dim H^2(\mathcal S)/\TrivLat = (a-1)(b-1)(q-1).
\end{equation}

\subsection{Cohomology of $\mathcal S$ in degree $2$}\label{subsec:cohom_of_S}

Our next goal is to relate the $H^2$ of the minimal proper regular SNC model $\mathcal S$ of $C$ to the cohomology of the product of Artin--Schreier curves $\mathcal P_{q,q}$ constructed in Section~\ref{subsec:domination_by_product}. Our strategy will mirror that of Section~\ref{subsec:cohom_of_S_1}. The main differences are that the blow-up divisor has nontrivial $H^2$, which we will need to track more carefully, and that we will need to use \eqref{eq.dimH2S} to show that the surjection we construct is actually an isomorphism. 

First, we relate the cohomology of $\mathcal R_{q}$ to the cohomology of the curves $X_{a,q}$ and $Y_{b,q}$. Let $B$ be the subspace of $H^2(\mathcal R_{q})$ spanned by the pullbacks of the blow-up divisor from $\mathcal R \to \mathcal P$ (see Section \ref{subsec:domination_by_product}). 
Successively applying the blow-up formula, taking invariants, and applying the K\"unneth formula, we find
\begin{align*}
H^2(\mathcal R_{q}) & \cong H^2(\mathcal P_q) \oplus B 
                     \cong H^2((X_{a,q} \times Y_{b,q})/\F_q) \oplus B 
                     \cong H^2(X_{a,q} \times Y_{b,q})^{\F_{q}} \oplus B \\
					& \cong (H^1(X_{a,q}) \otimes H^1(Y_{b,q}))^{\F_{q}} \oplus (H^0(X_{a,q}) \otimes H^2(Y_{b,q}))^{\F_{q}} \oplus (H^2(X_{a,q}) \otimes H^0(Y_{b,q}))^{\F_{q}} \oplus B.
\end{align*}

Now let $\TrivLat_{q}$ be the subspace of $H^2(\mathcal S_{q})$ which is spanned by components of fibers of $\mathcal S_{q} \to \mathbb P^1$ together with the class of the `infinity section' $s_{\infty,q}: \P^1 \to \mathcal S_{q}$ which takes $t \in \A^1 \subset \P^1$ to the unique `point at infinity' on that fiber. Recall that $\mathcal S$ is the minimal proper regular SNC model of $C$ and that we have defined $\TrivLat \subset H^2(\mathcal S)$ to be the trivial lattice. Since $\mathcal S$ and $\mathcal S_q$ are related by a series of blow-ups and blow-downs where the exceptional fibers lie in the fibers over $\P^1$, we automatically have $H^2(\mathcal S_{q})/\TrivLat_{q} \cong H^2(\mathcal S)/\TrivLat$. 

The blow-up divisor in $\mathcal R_{q}$ maps to the union of (the image of) the infinity section $s_{\infty,0}$ and the fiber at infinity of $\mathcal S_{0}$. Similarly, the blow-up divisor in $\mathcal R_{q}$ maps to the union of the infinity section $s_{\infty,q}$ and the fiber at infinity of $\mathcal S_{q}$. Moreover, the classes in $H^0(X_{a,q}) \otimes H^2(Y_{b,q})$ and $H^2(X_{a,q})\otimes H^0(Y_{b,q})$ are generated by the strict transforms of the images of $X_{a,q} \times \infty_{b}$ and $\infty_{a} \times Y_{b,q}$, which also map to the fiber above $\infty\in \P^1$ in $\mathcal S_{q}$.

All told, we find that the image of $(H^0(X_{a,q})\otimes H^2(Y_{b,q})) \oplus (H^2(X_{a,q}) \otimes H^0(Y_{b,q})) \oplus B$ under the induced map $H^2(\mathcal R_{q}) \to H^2(\mathcal S_{q})$ is contained in $\TrivLat_{q}$. Since $\mathcal R_{q} \to \mathcal S_{q}$ is a dominant morphism, the induced map $H^2(\mathcal R_{q}) \to H^2(\mathcal S_{q})$ is surjective and induces a Galois-equivariant canonical surjection
\[
\varpi: (H^1(X_{a,q}) \otimes H^1(Y_{b,q}))^{\F_{q}} \to H^2(\mathcal S_{q})/\TrivLat_{q} \cong H^2(\mathcal S)/\TrivLat\,.
\]
From the description of $(H^1(X_{a,q})\otimes H^1(Y_{b,q}))^{\F_q}$ obtained in Section~\ref{subsec:grand.conclusion}  below (see \eqref{eq:decomp.h1tensorh1}), we see that that space has dimension $(a-1)(b-1)(q-1)$. Formula \eqref{eq.dimH2S} in the previous subsection yields that $H^2(\mathcal S)/\TrivLat$ has the same dimension. 
We deduce that $\varpi$ is a Galois-equivariant isomorphism. Therefore, 
\begin{equation}\label{eq.Lfunc.H2.H1H1}
\charpol{\Frob_{r}}{H^2(\mathcal S)/\TrivLat} 
= \charpol{\Frob_{r}}{(H^1(X_{a,q}) \otimes H^1(Y_{b,q}))^{\F_{q}}}\,.
\end{equation}

\subsection{Computation of the $L$-function}
\label{subsec:grand.conclusion}

Combining Proposition \ref{prop.Lfunc.H2S} with \eqref{eq.Lfunc.H2.H1H1}, we find that 
\[L(J,T) = \charpol{\Frob_{r}}{(H^1(X_{a,q}) \otimes H^1(Y_{b,q}))^{\F_{q}}}.\]
Finally, we use the facts about the cohomology of Artin--Schreier curves from Section~\ref{subsec:ArtinSchreier} to give a more explicit expression for $L(J,T)$.
Recall from Section~\ref{subsec:ArtinSchreier} that we have 
\[
H^1(X_{a,q}) = \bigoplus_{(i,\alpha) \in S_{a}'} H^{1}(X_{a,q})^{(i,\alpha)}\,
\quad\text{ and }\quad 
H^1(Y_{b,q}) = \bigoplus_{(i,\alpha) \in S_{b}'} H^{1}(Y_{b,q})^{(i,\alpha)}\,.
\]
In each of these direct sums indexed by elements of $S'_a=(\Z/a\Z\smallsetminus\{0\}) \times \mathbb F_{q}$ or $S'_b$  
respectively, each summand $H^{1}(X_{a,q})^{(i,\alpha)}$ and $H^{1}(Y_{b,q})^{(i,\alpha)}$ is one-dimensional. 
This means that 
\[
H^1(X_{a,q}) \otimes H^1(Y_{b,q}) = \bigoplus_{(i_1,\alpha_1) \in S_{a}'} \bigoplus_{(i_2,\alpha_2) \in S_{b}'} H^{1}(X_{a,q})^{(i_1,\alpha_1)} \otimes   H^{1}(Y_{b,q})^{(i_2,\alpha_2)}
\]
decomposes as a direct sum of lines.
Tracing through the definitions, one sees that, among the lines $H^{1}(X_{a,q})^{(i_1,\alpha_1)} \otimes   H^{1}(Y_{b,q})^{(i_2,\alpha_2)}$, the $\F_q$-invariant lines are those indexed by pairs $(i_1, \alpha_1), (i_2, \alpha_2)$ with $\alpha_1=\alpha_2$.
So,
\begin{align}\label{eq:decomp.h1tensorh1}
(H^1(X_{a,q}) \otimes H^1(Y_{b,q}))^{\F_{q}} & = \bigoplus_{(i_1, i_2, \alpha) \in S}  H^{1}(X_{a,q})^{(i_1,\alpha)} \otimes   H^{1}(Y_{b,q})^{(i_2,\alpha)}\,.
\end{align}
We now compute the characteristic polynomial of Frobenius on this space in the same way that we computed the characteristic polynomial of Frobenius acting on $H^1(X_{d,q})$ in Section~\ref{subsec:ArtinSchreier}.
For any orbit $o \in O = O_{r,a,b,q}$ (as defined in Section~\ref{sec:orbits}) the $|o|$\textsuperscript{th} iterate of $\Frob_{r}$ stabilizes the line $H^{1}(X_{a,q})^{(i_1,\alpha)} \otimes   H^{1}(Y_{b,q})^{(i_2,\alpha)}$ for any representative $(i_1,i_2,\alpha) \in o'$. 
For any $(i_1,i_2,\alpha) \in o$, we deduce from the computation following~\eqref{eq:eigenvalue.Fr11} in Section~\ref{subsec:ArtinSchreier} that the eigenvalue of $(\Frob_{r})^{|o|}$ acting on the line $H^{1}(X_{a,q})^{(i_1,\alpha)} \otimes   H^{1}(Y_{b,q})^{(i_2,\alpha)}$ is $\oomega(o) = \Gauor{\pi_{a}(o)}^{\nu_{a}(o)}\Gauor{\pi_{b}(o)}^{\nu_{b}(o)}$. 
In other words, for any  $(i_1,i_2,\alpha) \in o$, we have
\[ \charpol{(\Frob_r)^{|o|}}{H^{1}(X_{a,q})^{(i_1,\alpha)} \otimes   H^{1}(Y_{b,q})^{(i_2,\alpha)}} = 1-\oomega(o)T.\]
Since $\Frob_{r}$ cyclically permutes the lines $H^{1}(X_{a,q})^{(i_1,\alpha)} \otimes   H^{1}(Y_{b,q})^{(i_2,\alpha)}$ for $(i_1,i_2,\alpha) \in o$, Lemma~\ref{lemm:easylinearalgebra} yields
\[
\charpol{\Frob_r}{\bigoplus_{(i_1, i_2, \alpha) \in o}  H^{1}(X_{a,q})^{(i_1,\alpha)} \otimes   H^{1}(Y_{b,q})^{(i_2,\alpha)}} = 1 - \oomega(o) T^{|o|}\,.
\]
Taking the product over all orbits $o\in O$, we finally obtain
\[
L(J,T) 
= \prod_{o\in O} \left( 1 - \oomega(o) T^{|o|} \right)\,.
\]
This confirms our result in Theorem \ref{theorem:Lfunction}.

\section[]{Rank and $\gp$-adic valuation of Gauss sums}\label{sec:rank}

By the BSD conjecture (Theorem \ref{thm:bsd}), we have
\begin{align} \label{eqn:BSD1.restated}
\rank J(K) = \ord_{T=r^{-1}}L(J, T)\,. 
\end{align}
In this section, we use our explicit expression for $L(J, T)$ from Theorem \ref{theorem:Lfunction} to study $\rank J(K)$ in terms of the parameters $a,b$, and $q$. 

\begin{lemma}\label{lem:rank-formula-orbits}
The rank of $J(K)$ is given by
\begin{equation}\label{eq:rank.formula}
    \rank J(K)
=  \left|\big\{o\in O : \oomega(o) = r^{|o|}\big\}\right|.
\end{equation}
\end{lemma}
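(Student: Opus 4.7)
The plan is to combine BSD (Theorem~\ref{thm:bsd}) with the explicit product expansion of $L(J,T)$ in Theorem~\ref{theorem:Lfunction}. From \eqref{eqn:BSD1.restated} we have $\rank J(K) = \ord_{T=r^{-1}} L(J,T)$, so the task reduces to computing the order of vanishing of
\[
L(J,T) = \prod_{o \in O}\bigl(1 - \oomega(o)\, T^{|o|}\bigr)
\]
at $T = r^{-1}$. The order of vanishing of a product of nonzero polynomials is the sum of the orders of vanishing of each factor, so I would analyze each factor $F_o(T) \colonequals 1 - \oomega(o) T^{|o|}$ separately.

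For a fixed orbit $o \in O$, the roots of $F_o(T)$ in $\overline{\Q}$ are exactly the $|o|$-th roots of $\oomega(o)^{-1}$. In particular, $T=r^{-1}$ is a root of $F_o$ if and only if $\oomega(o) = r^{|o|}$, and in this case it is a simple root since the polynomial $F_o$ has $|o|$ distinct roots (they differ by multiplication by $|o|$-th roots of unity, and $\oomega(o)$ is nonzero because $|\oomega(o)| = r^{|o|}$ in any complex embedding, as noted after Definition~\ref{def:oomega}). Consequently, $\ord_{T=r^{-1}} F_o(T) \in \{0,1\}$, and it equals $1$ precisely when $\oomega(o) = r^{|o|}$.

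Summing the contributions over all $o \in O$ gives
\[
\ord_{T=r^{-1}} L(J,T) = \sum_{o \in O} \ord_{T=r^{-1}} F_o(T) = \bigl|\{o \in O : \oomega(o) = r^{|o|}\}\bigr|\,.
\]
Combined with \eqref{eqn:BSD1.restated}, this yields the formula of the lemma.

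There is no real obstacle here: the statement is a direct unwinding of the rank part of BSD together with the product formula for $L(J,T)$. The only subtlety worth remarking on is that one should confirm $T=r^{-1}$ is a simple root of each factor $F_o$, which follows immediately from the fact that $\oomega(o) \neq 0$ (so $F_o$ has distinct roots), rather than from any deeper property of the Gauss sums.
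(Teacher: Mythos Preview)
Your proof is correct and follows essentially the same approach as the paper: combine \eqref{eqn:BSD1.restated} with the product expression from Theorem~\ref{theorem:Lfunction}, then observe that each factor $1-\oomega(o)T^{|o|}$ vanishes at $T=r^{-1}$ to order $0$ or $1$ according to whether $\oomega(o)=r^{|o|}$. Your justification that the root is simple (via distinctness of the roots of $F_o$) is slightly more explicit than the paper's, but the argument is the same.
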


\begin{proof}
Using \eqref{eqn:BSD1.restated} for the first equality and Theorem~\ref{theorem:Lfunction} for the second, we have
\[
\rank J(K) = \ord_{T=r^{-1}}L(J, T) = \ord_{T=r^{-1}}\prod_{o \in O} (1 - \oomega(o) T^{|o|}) = \sum_{o \in O} \ord_{T=r^{-1}} (1 - \oomega(o) T^{|o|})\,.
\]
The result follows immediately from the observation that
\[
\ord_{T = r^{-1}} (1 - \oomega(o)T^{|o|}) =
\begin{cases}
1 & \text{ if } \oomega(o) = r^{|o|}\,, \\
0 & \text{ otherwise.}
\end{cases}
\]
\end{proof}

\begin{theorem}\label{thm:rank.upperbound}
We have
\[0 \leq \rank J(K) \leq (a-1)(b-1)(q-1) = 2g(q-1).\]
\end{theorem}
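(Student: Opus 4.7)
The plan is to derive both bounds directly from the explicit formula for $L(J,T)$ given in Theorem~\ref{theorem:Lfunction} together with the orbit-counting expression for the rank in Lemma~\ref{lem:rank-formula-orbits}. The lower bound $\rank J(K) \geq 0$ is immediate, since the rank of a finitely generated abelian group is a non-negative integer; equivalently, it is a count of orbits and hence non-negative.

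For the upper bound, the key observation is that $L(J,T)$ is a polynomial whose degree we can compute, and the order of vanishing at any point is bounded above by this degree. More concretely, I would argue as follows. By Lemma~\ref{lem:rank-formula-orbits}, we have
\[
\rank J(K) \;=\; \bigl|\{o \in O : \oomega(o) = r^{|o|}\}\bigr| \;\leq\; |O|,
\]
so it suffices to bound the number of $\langle r \rangle$-orbits on $S = (\Z/a\Z\setminus\{0\}) \times (\Z/b\Z\setminus\{0\}) \times \F_q^\times$. Since orbits partition $S$ and each orbit is non-empty,
\[
|O| \;\leq\; |S| \;=\; (a-1)(b-1)(q-1) \;=\; 2g(q-1),
\]
which gives the desired upper bound.

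There is essentially no obstacle here: the theorem is a direct corollary of the formula in Theorem~\ref{theorem:Lfunction} (equivalently of the degree computation carried out in Remark~\ref{remark:deg_check}) combined with the trivial bound $|O| \leq |S|$. The more substantive content is in Theorems~\ref{thm:rankzero} and~\ref{thm:analytic-rank-lower-bounds}, which refine this crude bound and which exploit $\mathfrak{p}$-adic valuations of the Gauss sums $\oomega(o)$ to decide whether $\oomega(o) = r^{|o|}$ for each orbit $o$.
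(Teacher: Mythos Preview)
Your proof is correct and matches the paper's own argument essentially verbatim: bound the rank by $|O|$ via Lemma~\ref{lem:rank-formula-orbits} (the paper's \eqref{eq:rank.formula}), then observe $|O|\leq |S|=(a-1)(b-1)(q-1)$ since $O$ is a set of orbits on $S$.
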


\begin{proof} 
From \eqref{eq:rank.formula}, we see that $\rank J(K) \leq |O|$. Since $O$ is a set of orbits on a set of cardinality $(a-1)(b-1)(q-1),$ we have $|O|\leq (a-1)(b-1)(q-1)$. 
\end{proof}

In the remainder of this section, we estimate the rank of $J(K)$ more precisely than in Theorem~\ref{thm:rank.upperbound} under various assumptions on $a, b,$ and $q$\,. In \S\ref{sec:rank.0}, we provide conditions on $a,b,q$ so that $\rank J(K) = 0$. In \S\ref{sec:rank.large}, we provide conditions so that $\rank J(K)$ is ``large,'' that is, such that the upper bound in Theorem~\ref{thm:rank.upperbound} is tight.

In order to refine our bounds on $\rank J(K)$, we estimate the right-hand side of \eqref{eq:rank.formula} using explicit results about the Gauss sums appearing in $\oomega(o)$. We gather the necessary results in subsections \ref{ss:explicit.Gauss.sums} and \ref{ss:p.adic.val.Gauss}.

\subsection{Explicit Gauss sums}\label{ss:explicit.Gauss.sums}
Let $n\geq 2$ be a prime-to-$p$ integer. As in \S\ref{sec:orbits}, we consider the set $S'_n := (\Z/n\Z\smallsetminus\{0\})\times \F_q^\times$ equipped with its action of $\langle r\rangle$.
We write $O'_n$ for the set of orbits of this action.
In this subsection, we describe situations where the values of the Gauss sums $\Gauor{o'}$ (for $o'\in O'_n$) may be explicitly determined. We refer to \S\ref{sec:gausssums.orbits} for the definition of $\Gauor{o'}$.\\

Recall that for any prime-to-$p$ integer $n \geq 1$, we denote by $o_p(n)$ the multiplicative order of $p$ modulo $n$ \ie{}, $o_p(n)$ is the least integer $e\geq 1$ such that $p^e\equiv 1 \bmod{n}$.

\begin{definition}[Supersingular Integer]\label{def:supersingular_integer}
A positive prime-to-$p$ integer $n$ is called \emph{supersingular (for~$p$)} if there exists a positive integer $\nu\geq 1$ such that $p^\nu\equiv -1\pmod{n}\,.$
\end{definition}

\begin{lemma}\label{lemma:even-order-orbit}
Suppose that $n$ is supersingular for $p$ and $[\F_{r}: \F_{p}]$ is odd. Let $o' \in O'_{n}$ be an orbit with representative $(i,\alpha)$. If $2i \neq n$, then the cardinality of $o'$ is even. 
\end{lemma}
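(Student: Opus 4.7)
The plan is to reduce the parity of $|o'|$ to the parity of $\kappa_{r,n}(i)$, and then to propagate the supersingularity hypothesis down to $n/\gcd(n,i)$. By the orbit-length formula~\eqref{eqn:orbitsize.prime}, $|o'|=\lcm\big(\kappa_{r,n}(i),\,[\F_r(\alpha):\F_r]\big)$, so it suffices to prove that $\kappa_{r,n}(i)=o_r(n')$ is even, where $n':=n/\gcd(n,i)$.

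First, I would use the assumption $2i\neq n$ to rule out the degenerate small values of $n'$. Since $i\not\equiv 0\pmod n$, we have $\gcd(n,i)<n$, hence $n'\geq 2$; and $n'=2$ would force $\gcd(n,i)=n/2$, i.e.\ $2i\equiv 0\pmod n$, which is excluded. So $n'\geq 3$.

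Next I would transport supersingularity from $n$ to $n'$. By hypothesis there exists $\nu\geq 1$ with $p^\nu\equiv -1\pmod n$; reducing modulo the divisor $n'$ of $n$ gives $p^\nu\equiv -1\pmod{n'}$. Because $n'\geq 3$, the residues $-1$ and $1$ are distinct modulo $n'$, so $p^\nu\not\equiv 1\pmod{n'}$, whereas $p^{2\nu}\equiv 1\pmod{n'}$. Hence $o_p(n')$ divides $2\nu$ but not $\nu$, forcing $o_p(n')$ to be even.

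Finally, writing $d:=[\F_r:\F_p]$ (odd by hypothesis) and $r=p^d$, the relation $r^k\equiv 1\pmod{n'}\iff o_p(n')\mid dk$ yields
\[o_r(n')=\frac{o_p(n')}{\gcd(o_p(n'),d)}.\]
Since $o_p(n')$ is even and $d$ is odd, $\gcd(o_p(n'),d)$ is odd, so the quotient is even. Therefore $\kappa_{r,n}(i)$ is even, and so is $|o'|$. The argument is essentially bookkeeping; the only subtle point is using $2i\neq n$ precisely to ensure $n'\geq 3$, so that reducing the supersingularity relation modulo $n'$ does not collapse to the trivial identity $1\equiv -1$.
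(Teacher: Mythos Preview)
Your proof is correct and follows essentially the same approach as the paper: reduce to showing $o_r(n')$ is even for $n'=n/\gcd(n,i)$, observe that $2i\neq n$ forces $n'>2$, propagate supersingularity to the divisor $n'$ to conclude $o_p(n')$ is even, and then use that $[\F_r:\F_p]$ is odd to deduce $o_r(n')$ is even. Your exposition is slightly more explicit in justifying $n'\geq 3$ and in writing out the formula $o_r(n')=o_p(n')/\gcd(o_p(n'),d)$, but the argument is the same.
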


\begin{proof}
Note that if $p^{\nu} \equiv -1 \pmod{n}$ and $d|n$, then $p^{\nu} \equiv -1 \pmod{d}$. So, if $n$ is supersingular for $p$, then any divisor of $n$ is supersingular for $p$.

If $d > 2$ is a divisor of $n$ and $\nu_{0}$ is the least positive integer such that $p^{\nu_{0}} \equiv -1 \pmod{d},$ we have $o_{p}(d) = 2 \nu_{0}$. 
In particular, the order $o_{p}(d)$ is even. Since $r$ is an odd power of $p$, the multiplicative order of $r$ modulo $d$ is also even.

Given $o' \in O'_{n}$, choose a representative $(i, \alpha) \in S_{n}'$. 
Since $2i \neq n$, we have $n/\gcd(n,i) > 2$. In particular, the previous paragraph implies that $o_{r}(n/\gcd(n,i))$ is even. On the other hand, we know from equation \eqref{eqn:orbitsize.prime} that
\[
|o'|=\lcm\left(o_{r}\left(\frac{n}{\gcd(n,i)}\right),[\F_r(\alpha):\F_r]\right)\,,
\]
whence we conclude that $|o'|$ is even.
\end{proof}

We now describe situations where one can compute $\Gauor{o'}$ explicitly.

\begin{lemma}\label{lemma:gauss-sum-quadratic}
Let $p \neq 2$ be an odd prime. Let $n \geq 2$ be an even integer and let $o' \in O'_{n}$ be an orbit with representative $(n/2, \alpha) \in S'_{n}$.  Then,
\[
\Gauor{o'}^2 = (-1)^{(p-1)|o'|\,[\F_{r}:\F_{p}]} \, r^{|o'|}\,.
\]
If $[\F_{r}:\F_{p}]$ is a multiple of $4$, then
\begin{equation}\label{eqn:quadratic-Gauss-Sum-simple-2}
\Gauor{o'} = \llambda_{(n/2,\alpha)}(\alpha)^{-1}  r^{|o'|/2}\,.
\end{equation}
If $[\F_{r}: \F_{p}]$ is a multiple of $4$ and $\alpha$ is an square in $(\F')^\times$, then
\begin{equation} \label{eqn:quadratic-Gauss-Sum-extra-simple}
\Gauor{o'} = r^{|o'|/2}\,.
\end{equation}
\end{lemma}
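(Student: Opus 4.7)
The plan is to unpack the definitions of $\llambda_{(n/2,\alpha)}$ and $\Psi_{(n/2,\alpha)}$, reduce to a classical quadratic Gauss sum over $\F_p$, and apply the Hasse--Davenport relation to propagate the result up to $\F'$. First I would observe that by Definition of $\llambda$ in Section~3.3, the exponent $(n/2)\cdot(r^{|o'|}-1)/n = (r^{|o'|}-1)/2 = |\F'^\times|/2$ means that $\llambda_{(n/2,\alpha)}$ is precisely the unique quadratic character $\eta_{\F'}$ on $\F'^\times$. Then property \eqref{eqn:additive-char-Gauss-sum} gives the clean rewriting
\[
\Gauor{o'} \;=\; \llambda_{(n/2,\alpha)}(\alpha)^{-1}\,\Gauss{\F'}{\eta_{\F'}}{\psi_0\circ\Tr_{\F'/\F_p}}\,.
\]

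Next I would invoke the fact that the norm map $N_{\F'/\F_p}:\F'^\times \twoheadrightarrow \F_p^\times$ is surjective, so that $\eta_{\F_p}\circ N_{\F'/\F_p}$ is a nontrivial character of order $2$ on $\F'^\times$; by uniqueness, $\eta_{\F'} = \eta_{\F_p}\circ N_{\F'/\F_p}$. The Hasse--Davenport relation~\eqref{eqn:HasseDavenportRelation} then yields
\[
\Gauss{\F'}{\eta_{\F'}}{\psi_0\circ\Tr_{\F'/\F_p}} \;=\; g^{[\F':\F_p]}, \qquad g := \Gauss{\F_p}{\eta_{\F_p}}{\psi_0}\,,
\]
with $[\F':\F_p] = |o'|\,[\F_r:\F_p]$. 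The classical quadratic Gauss sum identity gives $g^2 = \eta_{\F_p}(-1)\cdot p = (-1)^{(p-1)/2}\,p$.

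Squaring the two displayed equations and noting $\llambda_{(n/2,\alpha)}(\alpha)^{-2}=1$ (since $\llambda_{(n/2,\alpha)}$ has order $2$) yields the first assertion
\[
\Gauor{o'}^{2} \;=\; (g^2)^{[\F':\F_p]} \;=\; (-1)^{(p-1)|o'|[\F_r:\F_p]/2}\,r^{|o'|}\,.
\]

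For the remaining two assertions, under the hypothesis $4 \mid [\F_r:\F_p]$ the exponent $[\F':\F_p] = |o'|[\F_r:\F_p]$ is divisible by $4$. The main obstacle here is the sign ambiguity in extracting a square root, and this is resolved by going up to a fourth power: $g^4 = (g^2)^2 = p^2$ is unambiguously positive, so writing $[\F':\F_p] = 4k$ gives $g^{[\F':\F_p]} = (g^4)^k = p^{2k} = r^{|o'|/2}$ with no sign to worry about. Substituting back produces \eqref{eqn:quadratic-Gauss-Sum-simple-2}. Finally, if $\alpha$ is a square in $\F'^\times$ then $\eta_{\F'}(\alpha) = 1$, so the leading factor $\llambda_{(n/2,\alpha)}(\alpha)^{-1}$ disappears, yielding \eqref{eqn:quadratic-Gauss-Sum-extra-simple}.
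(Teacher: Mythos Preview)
Your approach is essentially the same as the paper's: both pull out the factor $\llambda_{(n/2,\alpha)}(\alpha)^{-1}$ via \eqref{eqn:additive-char-Gauss-sum}, descend by Hasse--Davenport, and invoke the classical value of a quadratic Gauss sum (the paper descends to the index-$4$ subfield of $\F'$ for the second assertion, you descend all the way to $\F_p$ throughout; either works). One remark: your square gives the exponent $(p-1)|o'|\,[\F_r:\F_p]/2$ rather than the stated $(p-1)|o'|\,[\F_r:\F_p]$, and yours is the correct one (already for $p=3$, $\F'=\F_3$ one has $\Gauor{o'}^2=-3$), so the printed formula has a typo; this does not affect \eqref{eqn:quadratic-Gauss-Sum-simple-2}, \eqref{eqn:quadratic-Gauss-Sum-extra-simple}, or their later uses.
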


\begin{proof}
Write $\F'$ for the extension of $\F_{r}$ of degree $|o'|$.
By Definition~\ref{def:bold_g}, 
$\Gauor{o'} = \Gauss{\F'}{\llambda_{(n/2, \alpha)}}{\Psi_{(n/2, \alpha)}}$. Now, $\llambda_{(n/2,\alpha)} = \cchi^{(r^{|o'|}-1)/2}$ is a quadratic character on $(\F')^{\times}$. The first claim then follows from a short computation on Gauss sums for quadratic characters dating back to Gauss. See \cite[Lemma~6.1]{Cyclo} for a proof.

For the second claim, we note that if $[\F_{r}:\F_{p}]$ is a multiple of~$4$, then $[\F':\F_{p}]$ is a multiple of~$4$. Let~$\F$ denote the subextension of $\F'/\F_p$ with $[\F':\F] = 4\,.$ We deduce from equation \eqref{eqn:additive-char-Gauss-sum}  in~\S\ref{ssec:gauss.sums.basic} that 
\begin{equation}\label{eq:expl.gauss.0}
    \Gauor{o'} = \llambda_{(n/2, \alpha)}(\alpha)^{-1}\Gauss{\F'}{\llambda_{(n/2, \alpha)}}{\psi_{\F', 1}}\,.
\end{equation}
Then, the Hasse--Davenport relation (\eqref{eqn:HasseDavenportRelation} in \S\ref{ssec:gauss.sums.basic}) implies that
\begin{equation}\label{eq:expl.gauss.2b}
\Gauss{\F'}{\llambda_{(n/2, \alpha)}}{\psi_{\F', 1}} 
= \Gauss{\F'}{{\cchi|_{\F}}^{(|\F|-1)/2} \circ\norm_{\F'/\F}}{\psi_{\F, 1}\circ\Tr_{\F'/\F}} 
= \Gauss{\F}{\cchi^{n(|\F|-1)/2}}{\psi_{\F, 1}}^{4}\,.
\end{equation}
Since $\cchi^{n(|\F|-1)/2}$ is a quadratic character on $\F$, the same computation of Gauss as in the first claim yields that
\[
\Gauss{\F}{\cchi^{n(|\F|-1)/2}}{\psi_{\F, 1}}^{4} = |\F|^2 = |\F'|^{1/2}\,.
\]
The second claim follows by combining the previous three equations.

The third claim is immediate from the fact that $\llambda_{(n/2,\alpha)}$ is a quadratic character on $\F'$\,.
\end{proof}

Let us recall the following result of Shafarevich and Tate, as stated in~\cite[Lemma 8.3]{Ulmer2002}.
\begin{lemma}[Shafarevich--Tate] \label{lem:Shafarevich-Tate}
Let $\F_0$ be a finite field extension of $\F_p$, and $\F/\F_0$ be a quadratic extension.
Let $\psi = \psi_{\F, 1}$ be the standard nontrivial additive character on $\F$. Let $\chi$ be a nontrivial multiplicative character on $\F$ which is trivial upon restriction to $\F_0$. 
For any element $x\in(\F)^{\times}$ with $\Tr_{\F/\F_0}(x) = 0$, we have
 \[ \Gauss{\F}{\chi}{\psi} = -\chi(x)\, |\F_0|\,.\]
\end{lemma}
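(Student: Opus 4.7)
Write $q_0 = |\F_0|$, so $|\F| = q_0^2$. By the paper's sign convention, the claim is equivalent to showing that
\[
G \colonequals \sum_{y \in \F^\times} \chi(y)\psi(y) = \chi(x)\, q_0\,.
\]
The plan is to exploit the hypothesis that $\chi|_{\F_0^\times} = 1$ by partitioning $\F^\times$ into the $q_0 + 1$ cosets of the subgroup $\F_0^\times$, since $\chi$ is constant on each such coset.

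First I would identify the ``special'' coset. Because $\Tr_{\F/\F_0}$ is $\F_0$-linear and surjective, its kernel $V_0 \subset \F$ is a one-dimensional $\F_0$-subspace. The assumption $\Tr_{\F/\F_0}(x) = 0$ with $x \neq 0$ gives $V_0 = x \F_0$, so $V_0 \cap \F^\times = x\F_0^\times$ is exactly one coset of $\F_0^\times$ in $\F^\times$, namely the one represented by $x$. Moreover, for any other coset $y_0 \F_0^\times$ one has $t_0 \colonequals \Tr_{\F/\F_0}(y_0) \neq 0$.

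Next I would compute the inner $\psi$-sum on each coset. Writing $\psi = \psi_0 \circ \Tr_{\F_0/\F_p} \circ \Tr_{\F/\F_0}$ and using $\F_0$-linearity of the inner trace, for a representative $y_0$ with $t_0 = \Tr_{\F/\F_0}(y_0)$ we obtain
\[
\sum_{c \in \F_0^\times} \psi(y_0 c) = \sum_{c \in \F_0^\times} \psi_0\bigl(\Tr_{\F_0/\F_p}(c t_0)\bigr) = \begin{cases} q_0 - 1 & \text{if } t_0 = 0,\\ -1 & \text{if } t_0 \neq 0, \end{cases}
\]
the second case coming from the fact that $\sum_{c \in \F_0} \psi_0(\Tr_{\F_0/\F_p}(c t_0)) = 0$ whenever $t_0 \neq 0$, since the additive character $c \mapsto \psi_0(\Tr_{\F_0/\F_p}(c t_0))$ is nontrivial on $\F_0$.

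Finally, combining the two previous steps and letting $\bar y$ denote the class of $y$ in $P \colonequals \F^\times/\F_0^\times$, I would write
\[
G = \chi(\bar x)(q_0 - 1) - \sum_{\bar y \in P,\; \bar y \neq \bar x} \chi(\bar y) = \chi(x)\, q_0 - \sum_{\bar y \in P} \chi(\bar y)\,.
\]
Since $\chi$ descends to a \emph{nontrivial} character of $P$ (nontrivial because $\chi$ itself is nontrivial on $\F^\times$), orthogonality of characters on $P$ kills the remaining sum, yielding $G = \chi(x)\, q_0$ and hence the lemma. The only mildly delicate point is the identification $V_0 \cap \F^\times = x\F_0^\times$, which ensures that there is exactly one coset on which the $\psi$-contribution is $q_0 - 1$ rather than $-1$; everything else is bookkeeping.
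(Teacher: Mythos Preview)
Your argument is correct: partitioning $\F^\times$ into the $q_0+1$ cosets of $\F_0^\times$, using the transitivity of trace to evaluate the additive sum on each coset, and then killing the residual character sum over $\F^\times/\F_0^\times$ by orthogonality is exactly the right computation. The paper itself does not give a proof of this lemma; it simply cites it as \cite[Lemma~8.3]{Ulmer2002}, so there is nothing to compare against beyond noting that your argument is the standard one.
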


We  use Lemma~\ref{lem:Shafarevich-Tate} to prove the following:

\begin{lemma}\label{lemma:evaluate-gauss-sum}
Let $p \neq 2$ be an odd prime. 
Let $n \geq 2$ be a supersingular integer, and 
let $o'\in O'_n$ be an orbit with representative $(i, \alpha)\in S'_n$ such that $2i \neq n$.
Let $\nu_{i}$ be the smallest positive integer such that $p^{\nu_{i}} \equiv -1$ modulo $n/\gcd(n,i)$.
Then, 
\begin{equation} \label{eqn:Gauss-Sum-complicated}
\Gauor{o'} = (-1)^{\left(1 + \frac{i(p^{\nu_i} + 1)}{n}\right) \frac{|o'|\, [\F_r:\F_p]}{2\nu_i}} \llambda_{(i,\alpha)}(\alpha)^{-1} r^{|o'|/2}\,.
\end{equation}
In particular, if $[\F_{r}:\F_{p}]$ is a multiple of $4\nu_{i}$, then
\begin{equation}\label{eqn:Gauss-Sum-simple-2}
\Gauor{o'} = \llambda_{(i,\alpha)}(\alpha)^{-1}  r^{|o'|/2}\,.
\end{equation}
If $[\F_{r}: \F_{p}]$ is a multiple of $4\nu_{i}$ and $\alpha$ is an $n$\textsuperscript{th} power in $(\F')^\times$, then
\begin{equation} \label{eqn:Gauss-Sum-extra-simple}
\Gauor{o'} = r^{|o'|/2}\,.
\end{equation}
\end{lemma}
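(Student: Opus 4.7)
The strategy is to peel the Gauss sum down until the Shafarevich--Tate computation of Lemma~\ref{lem:Shafarevich-Tate} applies. First, starting from Definition~\ref{def:bold_g}, I would use the twisting identity~\eqref{eqn:additive-char-Gauss-sum} to extract the prefactor $\llambda_{(i,\alpha)}(\alpha)^{-1}$, reducing everything to evaluating $\Gauss{\F'}{\llambda_{(i,\alpha)}}{\psi_{\F',1}}$, where $\F'$ is the degree-$|o'|$ extension of $\F_r$.

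Set $n' := n/\gcd(n,i)$. Since $2i \neq n$ and $n$ is supersingular, $n' > 2$ is a divisor of $n$, hence also supersingular with $o_p(n') = 2\nu_i$ and $n'\mid p^{\nu_i}+1$. I put $\F_0 := \F_{p^{\nu_i}}$ and $\F := \F_{p^{2\nu_i}}$, so $\F/\F_0$ is quadratic. A short degree check using the identity $o_r(n')\cdot [\F_r:\F_p] = \lcm([\F_r:\F_p], 2\nu_i)$ shows $2\nu_i \mid [\F':\F_p]$, so that $\F \subseteq \F'$. Using the standard compatibility $\cchi(\norm_{\F'/\F}(x)) = \cchi(x)^{(|\F'|-1)/(|\F|-1)}$ of the Teichm\"uller character with norms, I would then verify the crucial factorization
\[\llambda_{(i,\alpha)} = \chi \circ \norm_{\F'/\F}, \qquad \chi := \cchi|_{\F}^{\,i(|\F|-1)/n},\]
which combined with $\psi_{\F',1} = \psi_{\F,1} \circ \Tr_{\F'/\F}$ and the Hasse--Davenport relation~\eqref{eqn:HasseDavenportRelation} yields
\[\Gauss{\F'}{\llambda_{(i,\alpha)}}{\psi_{\F',1}} = \Gauss{\F}{\chi}{\psi_{\F,1}}^{[\F':\F]}, \qquad [\F':\F] = |o'|[\F_r:\F_p]/(2\nu_i).\]

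The main computational step is to apply Lemma~\ref{lem:Shafarevich-Tate} to $\Gauss{\F}{\chi}{\psi_{\F,1}}$. The character $\chi$ has order $n' > 1$, and it is trivial on $\F_0^\times$ because its exponent $i(|\F|-1)/n = j(p^{\nu_i}-1)(p^{\nu_i}+1)/n'$ (with $j := i/\gcd(n,i)$) is divisible by $|\F_0^\times| = p^{\nu_i}-1$, using $n' \mid p^{\nu_i}+1$. Picking any $x \in \F^\times$ with $\Tr_{\F/\F_0}(x) = 0$ forces $x^{p^{\nu_i}-1} = -1$, and a brief exponent calculation gives $\chi(x) = (-1)^{i(p^{\nu_i}+1)/n}$ (this exponent is a genuine integer as it equals $j(p^{\nu_i}+1)/n'$). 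Lemma~\ref{lem:Shafarevich-Tate} then produces
\[\Gauss{\F}{\chi}{\psi_{\F,1}} = -(-1)^{i(p^{\nu_i}+1)/n}\, p^{\nu_i},\]
and raising to the $[\F':\F]$-th power, together with $p^{\nu_i\cdot|o'|[\F_r:\F_p]/(2\nu_i)} = r^{|o'|/2}$, yields the general formula~\eqref{eqn:Gauss-Sum-complicated}.

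Both specializations are immediate from the master formula. For~\eqref{eqn:Gauss-Sum-simple-2}, the hypothesis $4\nu_i \mid [\F_r:\F_p]$ makes $[\F_r:\F_p]/(2\nu_i)$ even, killing the sign. For~\eqref{eqn:Gauss-Sum-extra-simple}, writing $\alpha = \gamma^n$ with $\gamma \in (\F')^\times$, one finds $\llambda_{(i,\alpha)}(\alpha) = \cchi(\gamma)^{i(|\F'|-1)} = 1$ since $\cchi|_{\F'}$ has order $|\F'|-1$. The main obstacle is bookkeeping: one must keep track of $n$ versus $n' = n/\gcd(n,i)$ and $i$ versus $j$ so that each fraction appearing as an exponent is verifiably an integer, and so that the quadratic subextension $\F/\F_0$ indeed sits inside $\F'$; once these divisibilities are organized, the Shafarevich--Tate evaluation and Hasse--Davenport do the rest.
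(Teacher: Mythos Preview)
Your proposal is correct and follows essentially the same route as the paper's proof: extract the twist factor via \eqref{eqn:additive-char-Gauss-sum}, descend to $\F = \F_{p^{2\nu_i}}$ via Hasse--Davenport, and apply the Shafarevich--Tate evaluation there. The only cosmetic difference is that the paper picks the explicit trace-zero element $x = g^{(p^{\nu_i}+1)/2}$ for a generator $g$ of $\F^\times$, whereas you use the relation $x^{p^{\nu_i}-1} = -1$ valid for any trace-zero $x \in \F^\times$; both immediately yield $\chi(x) = (-1)^{i(p^{\nu_i}+1)/n}$.
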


Before the proof, we remark that by construction, the exponent of $-1$ in \eqref{eqn:Gauss-Sum-complicated} is an integer.

\begin{proof}
Let $\F'$ denote the extension of $\F_r$ of degree $|o'|$.
By Definition~\ref{def:bold_g}, we have 
$\Gauor{o'} = \Gauss{\F'}{\llambda_{(i, \alpha)}}{\Psi_{(i, \alpha)}}$. We deduce from equation \eqref{eqn:additive-char-Gauss-sum}  in~\S\ref{ssec:gauss.sums.basic} that 
\begin{equation}\label{eq:expl.gauss.1}
    \Gauor{o'} = \llambda_{(i, \alpha)}(\alpha)^{-1}\Gauss{\F'}{\llambda_{(i, \alpha)}}{\psi_{\F', 1}}\,.
\end{equation}
Set $n' = n/\gcd(n,i)$. Recall that the character $\llambda_{(i, \alpha)} = \cchi^{i(r^{|o'|} -1)/n}$ has exact order $n'$. 
We now focus on providing an explicit expression for the Gauss sum $\Gauss{\F'}{\llambda_{(i, \alpha)}}{\psi_{\F', 1}}$\,.

Since $n'$ divides $n$ and $n$ is supersingular for $p$, we see that $n'$ is also supersingular for $p$. As in the statement of Lemma~\ref{lemma:evaluate-gauss-sum}, let $\nu_i$ denote the smallest positive integer such that $p^{\nu_i}\equiv -1\bmod{n'}$. Since $2i \neq n$, we have $n' > 2$. Hence, the order of $p$ modulo $n'$ is $o_p(n') = 2\nu_i$. 

Let $\F_0$ denote the extension of $\F_p$ of degree $\nu_i$ and let $\F$ denote its quadratic extension. We claim that $\F$ is a subextension of $\F'/\F_p$. Indeed, $[\F':\F_p] =  [\F_r:\F_p] |o'|$ is a multiple of $[\F_r:\F_p] o_r(n')$ and 
\[
[\F_r:\F_p] o_r(n') 
= \frac{[\F_r:\F_p]}{\gcd([\F_r:\F_p],o_p(n'))} o_p(n') = \frac{[\F_r:\F_p]}{\gcd([\F_r:\F_p],o_p(n'))} [\F:\F_{p}]
\]
is in turn an integer multiple of $[\F:\F_p]$.

By construction, $n'$ divides $|\F|-1$. So, $n$ divides $i(|\F|-1)$. 
In particular, we deduce that 
\[\llambda_{(i, \alpha)} = {\cchi|_{\F'}}^{i(|\F'|-1)/n} 
=({\cchi|_\F}\circ\norm_{\F'/\F})^{i(|\F|-1)/n}.\]
By the Hasse--Davenport relation ( \eqref{eqn:HasseDavenportRelation} in \S\ref{ssec:gauss.sums.basic}), we have 
\begin{equation}\label{eq:expl.gauss.2}
\Gauss{\F'}{\llambda_{(i, \alpha)}}{\psi_{\F', 1}} 
= \Gauss{\F'}{{\cchi|_{\F}}^{i(|\F|-1)/n} \circ\norm_{\F'/\F}}{\psi_{\F, 1}\circ\Tr_{\F'/\F}} 
= \Gauss{\F}{\cchi^{i(|\F|-1)/n}}{\psi_{\F, 1}}^{[\F':\F]}\,.
\end{equation}

Consider the multiplicative character $\chi=\cchi^{i(|\F|-1)/n}$ on $\F$. The character $\chi$ has exact order $n'$. In particular, the order of $\chi$ is greater than $2$.
Since $n'$ divides $p^{\nu_{i}}+1$, the restriction of $\chi$ to the quadratic subextension $\F_0$ of $\F$ is trivial.

Now, let $g$ be a generator of the cyclic group $\F^{\times}$. Set $x = g^{(p^{\nu_{i}} +1)/2}$. Since $|\F^{\times}|/|\F_{0}^{\times}| = p^{\nu_{i}} + 1$, we have $x \in \F^{\times} \smallsetminus \F_{0}^\times$ and $x^2 \in \F_{0}^{\times}$. So, $\Tr_{\F/\F_{0}}(x) = 0\,.$

With this choice of $x$, Lemma~\ref{lem:Shafarevich-Tate} gives $\Gauss{\F}{\chi}{\psi_{\F, 1}} = -\chi(x)  |\F|^{1/2}$\,. Moreover,
\[\chi(x) = \cchi\left(g^{\frac{p^{\nu_i}+1}{2}}\right)^{i(|\F|-1))/n}
= \cchi\left(g^{\frac{|\F|-1}{2}}\right)^{i(p^{\nu_i}+1)/n} 
= \cchi\left(-1 \right)^{i(p^{\nu_i}+1)/n} = (-1)^{i(p^{\nu_i}+1)/n}.\]
It follows that 
\begin{equation}\label{eq:expl.gauss.3}
    \Gauss{\F}{\chi}{\psi_{\F, 1}}= (-1)^{1+i(p^{\nu_i}+1)/n} |\F|^{1/2}.
\end{equation}
We now put \eqref{eq:expl.gauss.1}, \eqref{eq:expl.gauss.2}, and \eqref{eq:expl.gauss.3} together to deduce that 
\[\Gauor{o'} = \llambda_{(i, \alpha)}(\alpha)^{-1} (-1)^{[\F':\F]\, \left(1 + i(p^{\nu_i}+1)/n\right)}\,|\F'|^{1/2}\,.\]
Finally, we note that 
\[ [\F':\F] = 
\frac{[\F':\F_r][\F_r:\F_p]}{[\F:\F_p]} 
= \frac{|o'|\, [\F_r:\F_p]}{2{\nu_i}}.\]
This completes the proof of \eqref{eqn:Gauss-Sum-complicated}.

If $[\F_{r}:\F_{p}]$ is a multiple of $4\nu_{i}$, then 
\[\frac{|o'|\,[\F_r:\F_p]}{2\nu_i}\left(1 + \frac{i(p^{\nu_i}+1)}{n}\right)\]
is even and $\Gauor{o'} = \llambda_{(i, \alpha)}(\alpha)^{-1}|\F'|^{1/2}$.
Finally, if $\alpha\in\F_q^\times$ is a $n$\textsuperscript{th} power in $(\F')^\times$, we have $\llambda_{(i,\alpha)}(\alpha)=1$ because the order of $\llambda_{(i,\alpha)}$ divides $n$.
\end{proof}

\subsection[]{Denominators of $\gp$-adic valuation of Gauss sums}\label{ss:p.adic.val.Gauss}
 
We work with the same notation as in the previous subsection. 
Recall that we have fixed a prime ideal $\mathfrak{p}$ of $\overline{\Q}$ above $p$.
This choice allowed us to define the Teichm\"uller character $\cchi:\overline{\F_p}^\times\to\Qbar^\times$, in~\S\ref{sec:characters}.
Recall also that $\pval$ denotes the valuation on $\Qbar$ associated to $\gp$, normalised so that $\pval(r)=1$. Throughout this section, given $x \in \mathbb R$, we let $\{x\}$ denote the fractional part of $x$. 

Let $n\geq 2$ be an integer coprime to $p$. For any orbit $o'\in O'_n$, the $\gp$-adic valuation of the Gauss sum $\Gauor{o'}$ is a nonnegative rational number.

For any orbit $o'\in O'_n$, we write 
${\pval(\Gauor{o'})}/{|o'|}$ as a reduced fraction: 
\[ \frac{\pval(\Gauor{o'})}{|o'|}
= \frac{\Num(o')}{\Den(o')},\]
for integers $\Num(o')\geq 0$, $\Den(o')\geq 1$ such that 
$\gcd(\Num(o'),\Den(o'))=1$. 

Our goal in this section is to control  $\Den(o')$ under various hypotheses on $p, r$, and $n$. We begin with an immediate consequence of Lemmas~\ref{lemma:gauss-sum-quadratic}~and~\ref{lemma:evaluate-gauss-sum}.

\begin{lemma}\label{lemma:num.den.supersing}
Suppose $n\geq 2$ is supersingular for $p$. Then, for all $o'\in O'_n$, $\Num(o')/\Den(o')=1/2$.
\end{lemma}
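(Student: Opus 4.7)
The plan is to invoke the explicit evaluations of $\Gauor{o'}$ from Lemmas~\ref{lemma:gauss-sum-quadratic} and~\ref{lemma:evaluate-gauss-sum} and read off $\pval(\Gauor{o'}) = |o'|/2$ essentially by inspection. The only real content is a small case analysis, because those two lemmas apply in complementary situations; the ``$p \neq 2$'' hypothesis of both is not restrictive for us, since an even $n$ cannot be supersingular for $p=2$ (any $p^\nu$ with $p=2$ is even, so $p^\nu \equiv -1 \pmod n$ forces $n$ odd). When $p=2$ and $n$ is odd the same conclusion can be recovered from Stickelberger's theorem, which bounds digit-sums of $k(|\F'|-1)/n'$ in base $p$: the supersingularity assumption $p^{\nu} \equiv -1 \pmod {n'}$ pairs each such $a$ with $(|\F'|-1)-a$ (cyclically shifting the base-$p$ digits), forcing the digit sum to be exactly $[\F':\F_p](p-1)/2$, which translates into $\pval(\Gauor{o'}) = |o'|/2$ in any normalization.

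Fix $o' \in O'_n$ with representative $(i,\alpha)\in S'_n$ and split into two cases. If $2i = n$ (which forces $n$ even and hence $p$ odd), Lemma~\ref{lemma:gauss-sum-quadratic} gives $\Gauor{o'}^2 = \pm\,r^{|o'|}$; applying $\pval$, using $\pval(\pm 1) = 0$ together with the normalization $\pval(r)=1$, yields $2\,\pval(\Gauor{o'}) = |o'|$, hence $\pval(\Gauor{o'}) = |o'|/2$. Otherwise $2i \neq n$, and equation~\eqref{eqn:Gauss-Sum-complicated} of Lemma~\ref{lemma:evaluate-gauss-sum} expresses $\Gauor{o'}$ as a product of $\pm 1$, the root of unity $\llambda_{(i,\alpha)}(\alpha)^{-1}$ (of order dividing $n$, hence coprime to $p$), and the factor $r^{|o'|/2}$. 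Since roots of unity of order coprime to $p$ have $\pval$-valuation zero, one again obtains $\pval(\Gauor{o'}) = |o'|/2$.

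Combining both cases, $\pval(\Gauor{o'})/|o'| = 1/2$, and since $\gcd(1,2)=1$ this fraction is already in lowest terms, giving $\Num(o') = 1$ and $\Den(o') = 2$ as required. There is no genuine obstacle here; the substantive work was already done in deriving the explicit Gauss sum formulas. The only minor point to notice is that when $|o'|$ is odd, the expression $r^{|o'|/2}$ must be interpreted in $\overline{\Q}$ rather than $\Q$, but this is harmless: $p^{1/2}$ is an algebraic integer and $\pval(p^{1/2}) = 1/(2[\F_r:\F_p])$, so the identity $\pval(r^{|o'|/2}) = |o'|/2$ continues to hold.
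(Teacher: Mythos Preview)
Your proof is correct and follows the paper's approach: the paper simply states that the lemma is ``an immediate consequence of Lemmas~\ref{lemma:gauss-sum-quadratic}~and~\ref{lemma:evaluate-gauss-sum}'' without further argument, and your case split on $2i = n$ versus $2i \neq n$ is exactly how one unpacks that citation. Your additional care about $p=2$ (where both cited lemmas have $p\neq 2$ as a hypothesis) is a genuine improvement over the paper's one-line justification; the Stickelberger pairing argument you sketch---pairing $k$ with $k+\nu_i$ so that $\{-ip^k/n\}+\{-ip^{k+\nu_i}/n\}=1$---works cleanly and in fact handles all cases uniformly, making the appeal to the explicit Gauss-sum formulas unnecessary.
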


When $n$ is not supersingular for $p$, we  need to do more work to control $\Den(o')$. Our main tool is the following lemma, 
which gives an explicit formula for $\pval(\Gauor{o'})/|o'|$.

 For $x \in \R$, let

\begin{lemma}\label{lemm:gauss.stickelberger} 
Let $n\geq 2$ be an integer coprime to $p$. Let $o'\in O'_n$ be an orbit and pick a representative $(i,\alpha)\in S'_n$ of $o'$. Let $\mu = [\F_r:\F_p] \,|o'|$. 
Write $i\in\Z$ for any lift of $i\in\Z/n\Z$ to $\Z$.
Then,
\begin{equation}\label{eq:gauss.stickelberger}
    \frac{\pval(\Gauor{o'})}{|o'|} 
 = \frac{1}{\mu} \sum_{k=0}^{\mu-1} \left\{ \frac{-i p^k}{n}\right\}\,,
\end{equation}
 where $\{x\}$ denote the fractional part of $x\in\R$. 
\end{lemma}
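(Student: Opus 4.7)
The plan is to deduce this identity from the classical Stickelberger theorem on $\gp$-adic valuations of Gauss sums, after accounting for the non-standard additive character in the definition of $\Gauor{o'}$ and for the normalization $\pval(r) = 1$ adopted in this paper.

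First, I would reduce to the Gauss sum with respect to the ``standard'' additive character $\psi_{\F',1}$. Writing $\F'$ for the extension of $\F_r$ of degree $|o'|$, equation \eqref{eqn:additive-char-Gauss-sum} gives
\[\Gauor{o'} = \Gauss{\F'}{\llambda_{(i,\alpha)}}{\Psi_{(i,\alpha)}} = \llambda_{(i,\alpha)}(\alpha)^{-1}\,\Gauss{\F'}{\llambda_{(i,\alpha)}}{\psi_{\F',1}}.\]
The prefactor $\llambda_{(i,\alpha)}(\alpha)^{-1}$ is a root of unity of order dividing $n$, hence prime to $p$, so it has trivial $\gp$-adic valuation. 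Thus $\pval(\Gauor{o'})$ equals $\pval$ of the Gauss sum on the right.

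Next, I would apply Stickelberger's theorem in its fractional-part form: if $\chi = \cchi^{-a}$ is a nontrivial character on $\F_{p^\mu}^\times$ with $1 \le a \le p^\mu - 2$, then with the normalization $v_p(p) = 1$,
\[v_p\bigl(\Gauss{\F'}{\chi}{\psi_{\F',1}}\bigr) = \sum_{k=0}^{\mu - 1} \left\{ \frac{a p^k}{p^\mu - 1} \right\}.\]
This can be extracted, for instance, from the Gross--Koblitz formula after noting that the relevant $p$-adic gamma values are $p$-adic units. In our situation, $\llambda_{(i,\alpha)} = \cchi^{i(p^\mu-1)/n}$ on $(\F')^\times = \F_{p^\mu}^\times$, so we may pick $a$ with $a \equiv -i(p^\mu-1)/n \pmod{p^\mu-1}$. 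The key simplification is that for every $k$, the quantity $a p^k/(p^\mu-1) + i p^k/n$ lies in $\Z$, whence
\[\left\{\frac{a p^k}{p^\mu - 1}\right\} = \left\{\frac{-i p^k}{n}\right\}.\]

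Finally, I would convert between valuation normalizations: since $r = p^{[\F_r:\F_p]}$ and $\pval(r) = 1$, we have $\pval = v_p/[\F_r:\F_p]$. Combining this with $\mu = [\F_r:\F_p]\cdot|o'|$ and dividing by $|o'|$ yields formula~\eqref{eq:gauss.stickelberger}. The argument is fundamentally a bookkeeping exercise, and the main subtlety is ensuring that the prime $\gp$ of $\overline{\Z}$ (which simultaneously defines $\cchi$ via the Teichm\"uller lift and $\pval$ via restriction) is the same prime implicit in the Gross--Koblitz-type formula used for Stickelberger; the independence of the statement from the choice of integer lift of $i$ also requires a brief check, but follows immediately from the fact that changing $i$ by a multiple of $n$ shifts $-ip^k/n$ by an integer.
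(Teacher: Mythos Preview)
Your proof is correct and follows essentially the same approach as the paper. The only cosmetic differences are that you first strip off the $\alpha$-twist via \eqref{eqn:additive-char-Gauss-sum} before invoking Stickelberger, whereas the paper's version of Stickelberger (Theorem~\ref{theo:stickelberger}) is stated for an arbitrary nontrivial additive character and already incorporates the $\pval(r)=1$ normalization, so that step and your final normalization conversion are absorbed into the quoted theorem; the core identification $\{ap^k/(p^\mu-1)\}=\{-ip^k/n\}$ is identical in both arguments.
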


The proof of Lemma~\ref{lemm:gauss.stickelberger} relies on a version of Stickelberger's Theorem. We use Lemma~6.14 from \cite{Cyclo}, which we restate here in our notation 
for the reader's convenience. The extra factor $[\F_r:\F_p]$ appearing in our statement comes from our different choice of normalization for $\pval$.

\begin{theorem}[Stickelberger's Theorem]\label{theo:stickelberger}
 Let $\F$ be a finite extension of $\F_p$ with degree $\mu=[\F:\F_p]$.  
Fix an integer $s$ such that $0<s < p^\mu-1$. 
For any nontrivial additive character $\psi$ on $\F$, we have
\[
\pval\left(\Gauss{\F}{(\cchi|_{\F^\times})^{-s}}{\psi}\right)  = \frac{1}{[\F_r:\F_p]}\sum_{k=0}^{\mu-1} \left\{ \frac{s p^k}{p^\mu - 1} \right\}\,,
\]
where $\{x\}$ denote the fractional part of $x\in\R$. Here, as above, $\cchi$ denotes the Teichm\"uller character.
\end{theorem}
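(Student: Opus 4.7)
The plan is to reduce the formula to the classical Stickelberger congruence in two stages: first simplify the right-hand side combinatorially, then compute the $\gp$-adic valuation of the Gauss sum by a $\pi$-adic expansion.

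Step one is a combinatorial simplification of the right-hand side. Write $s$ in base $p$ as $s = s_0 + s_1 p + \cdots + s_{\mu-1} p^{\mu-1}$ with $0\leq s_i<p$, and set $\sigma(s):=s_0+\cdots+s_{\mu-1}$. Since $p^\mu\equiv 1\pmod{p^\mu-1}$, multiplication by $p$ cyclically shifts the base-$p$ digits, giving
\[
sp^k \bmod (p^\mu-1) = \sum_{j=0}^{\mu-1} s_{(j-k)\bmod \mu}\, p^j.
\]
Summing over $k$ pairs each digit of $s$ with each power of $p$ exactly once, yielding
\[
\sum_{k=0}^{\mu-1}\left\{\frac{sp^k}{p^\mu-1}\right\} = \frac{1}{p^\mu-1}\sum_{k=0}^{\mu-1}\bigl(sp^k \bmod (p^\mu-1)\bigr) = \frac{\sigma(s)}{p^\mu-1}\cdot\frac{p^\mu-1}{p-1}=\frac{\sigma(s)}{p-1}.
\]
So the claim reduces to $\pval\bigl(\Gauss{\F}{\cchi^{-s}}{\psi}\bigr)=\sigma(s)/([\F_r:\F_p](p-1))$.

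Next I reduce to a standard additive character and set up the uniformizer. By property~\eqref{eqn:additive-char-Gauss-sum}, replacing $\psi$ by $\psi_{\F,\alpha}$ multiplies the Gauss sum by $\cchi(\alpha)^{s}$, which is a root of unity of order dividing $p^\mu-1$ and hence a unit at $\gp$. So $\pval$ is independent of the choice of nontrivial $\psi$, and I fix $\psi=\psi_{\F,1}=\psi_0\circ\Tr_{\F/\F_p}$ with $\psi_0(1)=\zeta_p$ a primitive $p$-th root of unity. Set $\pi:=\zeta_p-1$; the minimal polynomial $\Phi_p(1+X)$ is Eisenstein at $p$, so $\gp$ is totally ramified of degree $p-1$ in $\Z[\zeta_p]$ with uniformizer $\pi$, giving $\pval(\pi)=1/([\F_r:\F_p](p-1))$.

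The core of the argument is the Stickelberger congruence
\[
\Gauss{\F}{\cchi^{-s}}{\psi_{\F,1}} \equiv \frac{\pi^{\sigma(s)}}{s_0!\,s_1!\cdots s_{\mu-1}!} \pmod{\pi^{\sigma(s)+1}}.
\]
To establish it, expand $\zeta_p^{\Tr(x)}=(1+\pi)^{\Tr(x)}$ by the binomial theorem and interchange sums:
\[
\Gauss{\F}{\cchi^{-s}}{\psi_{\F,1}} = -\sum_{n\geq 0}\frac{\pi^n}{n!}\sum_{x\in\F^\times}\cchi(x)^{-s}\,\Tr(x)\bigl(\Tr(x)-1\bigr)\cdots\bigl(\Tr(x)-n+1\bigr).
\]
Use $\Tr(x)=x+x^p+\cdots+x^{p^{\mu-1}}$ together with the multinomial theorem to expand the inner polynomial as a combination of monomials $x^{e_0+e_1 p+\cdots+e_{\mu-1} p^{\mu-1}}$ with $\sum e_i\leq n$, then apply orthogonality: $\sum_{x\in\F^\times}\cchi(x)^{e-s}$ vanishes unless $e\equiv s\pmod{p^\mu-1}$. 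For $n<\sigma(s)$ no such $e$ exists (since digits are bounded by $p-1$), so all terms vanish; at $n=\sigma(s)$ the unique surviving exponent is $(e_0,\ldots,e_{\mu-1})=(s_0,\ldots,s_{\mu-1})$, contributing the multinomial coefficient $\sigma(s)!/\prod s_i!$ which combines with $1/\sigma(s)!$ from the binomial to give the stated leading term. Taking $\pval$ yields $\sigma(s)\cdot\pval(\pi)=\sigma(s)/([\F_r:\F_p](p-1))$, matching the reduced right-hand side. The main obstacle is the Stickelberger congruence itself: one must carefully track the interaction between the binomial factorials and the multinomial coefficients, handle the sign/unit contribution from the paper's sign convention in defining $\Gauss{}{}{}$, and verify that higher-order terms in $\pi$ cannot cancel the leading nonzero contribution; I would follow the approach in \cite[Chapter 6]{Cyclo}, renormalized so that $\pval(r)=1$.
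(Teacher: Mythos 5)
The paper itself does not prove this statement: it is quoted from \cite{Cyclo} (Lemma~6.14 there), with the factor $1/[\F_r:\F_p]$ simply accounting for the normalization $\pval(r)=1$. Your outer reductions are correct and are exactly what that citation rests on: the digit identity $\sum_{k=0}^{\mu-1}\left\{ sp^{k}/(p^{\mu}-1)\right\}=\sigma(s)/(p-1)$, the independence of $\pval$ from the choice of nontrivial $\psi$ via \eqref{eqn:additive-char-Gauss-sum}, and $\pval(\zeta_p-1)=1/\big((p-1)[\F_r:\F_p]\big)$. So everything hinges on the Stickelberger congruence, and there your sketch has a genuine gap as written.

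You expand $\zeta_p^{\Tr(x)}=(1+\pi)^{\Tr(x)}$, substitute $\Tr(x)=x+x^{p}+\cdots+x^{p^{\mu-1}}$, and then apply orthogonality to $\cchi(x)^{e-s}$. But $\Tr(x)$ lies in $\F_p$ and the displayed trace identity is an identity in $\F$, while the orthogonality step takes place in characteristic zero: to exponentiate you must pick a lift, and with the obvious integer lift $t(x)\in\{0,\dots,p-1\}$ the binomial expansion terminates at $n\le p-1$, and the replacement of $\binom{t(x)}{n}$ by the falling factorial in $x+x^{p}+\cdots$ is only a congruence modulo $\gp$. Mod-$p$ precision is not enough: it only shows the low-order coefficients are divisible by $p=(\mathrm{unit})\cdot\pi^{p-1}$, which does not push the terms $c_n\pi^{n}$ past $\pi^{\sigma(s)}$ once $\sigma(s)\ge p$ --- and in the paper's application (Lemma~\ref{lemm:gauss.stickelberger}) the digit sum is of order $\mu(p-1)/2$, so this is the main case, not a corner case. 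The standard repair: exponentiate by the trace of the Teichm\"uller lift, $S(x)=\cchi(x)+\cchi(x)^{p}+\cdots+\cchi(x)^{p^{\mu-1}}$, noting $(1+\pi)^{p}=1$ so that $(1+\pi)^{S(x)}=\zeta_p^{\Tr(x)}$ depends only on $S(x)\bmod p$; keep the $p$-adically integral binomial coefficients $\binom{S(x)}{n}$ intact (do not split off $1/n!$), so that $c_n=\sum_{x}\cchi(x)^{-s}\binom{S(x)}{n}$ is an algebraic integer and the tail $n>\sigma(s)$ contributes valuation $>\sigma(s)\,\pval(\pi)$ automatically; and invoke the weight lemma that $\sum_i m_ip^{i}\equiv s\pmod{p^{\mu}-1}$ with $m_i\ge 0$, not all zero, forces $\sum_i m_i\ge\sigma(s)$, with equality only for the digit vector $(s_0,\dots,s_{\mu-1})$. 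With these three points your computation of the leading coefficient $(q-1)/\prod_i s_i!$ (a $\gp$-unit; the paper's sign convention only changes the unit, not the valuation) goes through and yields $\pval(\Gauss{\F}{\cchi^{-s}}{\psi})=\sigma(s)\,\pval(\pi)$, as you claim.
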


\begin{proof}[Proof of Lemma \ref{lemm:gauss.stickelberger}] 
Let $(i,\alpha)\in S'_n$ be a representative of the orbit $o'\in O'_n$. 
Let $\F'$ denote the finite field extension of $\F_r$ of degree $|o'|$.
By Definition~\ref{def:bold_g} in \S\ref{sec:gausssums.orbits}, 
\[ \Gauor{o'} = \Gauss{\F'}{\llambda_{(i, \alpha)}}{\Psi_{(i, \alpha)}} = \Gauss{\F'}{\left(\cchi|_{(\F')^\times}\right)^{i(r^{|o'|}-1)/n}}{\Psi_{(i, \alpha)}} \,.\]

Since $\alpha\neq 0$, the additive character $\Psi_{(i,\alpha)}$ on $\F'$ is nontrivial.

Note that $[\F':\F_p] = |o'|\cdot [\F_r:\F_p] = \mu$ and $r^{|o'|} = p^{\mu}$. Moreover, $r^{|o'|}$ acts trivially on $(\Z/n\Z)^{\times}\,,$ so $i(r^{|o'|}-1)/n$ is an integer. Applying Stickelberger's Theorem (Theorem~\ref{theo:stickelberger}) gives
\[\frac{\pval(\Gauor{o'})}{|o'|}
=  \frac{1}{[\F_r:\F_p]\, |o'|} \sum_{k=0}^{\mu-1} \left\{\frac{-i (r^{|o'|} - 1)}{n}\frac{p^k}{p^\mu-1}\right\}
=  \frac{1}{\mu} \sum_{k=0}^{\mu-1} \left\{\frac{-ip^k}{n}\right\}\,.\]
\end{proof}

\begin{corollary}\label{cor:Den_lower_bound}
Let $n\geq 1$ be a prime-to-$p$ integer. For any orbit $o'\in O'_n$, we have 
\[\frac{1}{n} \leq \frac{\pval(\Gauor{o'})}{|o'|} = \frac{\Num(o')}{\Den(o')}\leq 1-\frac{1}{n}\,.\]
In particular, $1 \leq \Num(o') < \Den(o')\,.$
\end{corollary}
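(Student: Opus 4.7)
The plan is to apply Lemma~\ref{lemm:gauss.stickelberger} term by term. Pick any representative $(i,\alpha) \in S'_n$ of $o'$ and set $\mu = [\F_r:\F_p]\,|o'|$, so that
\[
\frac{\pval(\Gauor{o'})}{|o'|} = \frac{1}{\mu} \sum_{k=0}^{\mu-1} \left\{\frac{-ip^k}{n}\right\}.
\]
The key observation is that, since $i \in \Z/n\Z \setminus \{0\}$ has a representative in $\Z$ not divisible by $n$, and $\gcd(p,n)=1$, the integer $-ip^k$ is never divisible by $n$. Hence for every $k$ the fractional part $\left\{-ip^k/n\right\}$ is a rational number with denominator dividing $n$ lying strictly between $0$ and $1$, so
\[
\frac{1}{n} \;\leq\; \left\{\frac{-ip^k}{n}\right\} \;\leq\; \frac{n-1}{n}.
\]

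Averaging these inequalities over $k = 0, 1, \dots, \mu-1$ gives
\[
\frac{1}{n} \;\leq\; \frac{\pval(\Gauor{o'})}{|o'|} \;\leq\; 1 - \frac{1}{n},
\]
which is the displayed estimate. The final assertion $1 \leq \Num(o') < \Den(o')$ is an immediate consequence: the lower bound forces $\pval(\Gauor{o'})/|o'|$ to be strictly positive, hence $\Num(o') \geq 1$; the upper bound forces it to be strictly less than $1$, hence $\Num(o') < \Den(o')$. There is no real obstacle here; the statement is essentially a pointwise bound on each summand in the Stickelberger-type formula of Lemma~\ref{lemm:gauss.stickelberger}, together with the non-vanishing of $i$ modulo $n$.
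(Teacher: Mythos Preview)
Your proof is correct and follows essentially the same approach as the paper: both apply Lemma~\ref{lemm:gauss.stickelberger}, observe that each term $\{-ip^k/n\}$ lies in $[1/n,\,(n-1)/n]$ because $n \nmid i$ and $\gcd(p,n)=1$, and then average over $k$ to obtain the stated bounds.
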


\begin{proof}
Let $(i, \alpha)\in S'_n$ be a representative of $o'$. We lift $i\in\Z/n\Z\smallsetminus\{0\}$ to $i\in\Z$.

In the notation of Lemma \ref{lemm:gauss.stickelberger}, for any $k\in\{0, \dots, \mu-1\}$, 
we have $1/n \leq \left\{-ip^k/n \right\} \leq (n-1)/n$ because $i$ is not a multiple of $n$, and $p$ is relatively prime to $n$. 
To conclude, sum these inequalities over all $k$ from $0$ to $\mu - 1$ and apply \eqref{eq:gauss.stickelberger} from Lemma~\ref{lemm:gauss.stickelberger}.
\end{proof}

 We now prove a more precise estimate on the denominator of $\pval(\Gauor{o'})/|o'|$. 
 The following may be viewed as a bound on the denominators of slopes of the $\gp$-adic Newton polygon of the $L$-function of the projective curve defined over $\F_r$ by $y^n = t^q-t$.
 
 \begin{proposition}\label{prop:DenDivides_nopn}
 Let $n\geq 2$ be an integer coprime to $p$. 
 Let $o'\in O'_n$ be an orbit with representative $(i, \alpha)\in S'_n$.
 Then,
 \[ \Den(o')\text{ divides } \frac{n}{\gcd(n,i)} o_p\left(\frac{n}{\gcd(n,i)}\right)   \] 
 In particular, $\Den(o')$ divides $n\,o_p(n)$.
 \end{proposition}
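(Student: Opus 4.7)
The plan is to apply the Stickelberger-type formula from Lemma~\ref{lemm:gauss.stickelberger} and then observe that the sum telescopes into repeated copies of a shorter sum whose denominator is manifestly controlled by $n/\gcd(n,i)$ and $o_p(n/\gcd(n,i))$.

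First, choose a representative $(i,\alpha)\in S'_n$ of $o'$ and lift $i$ to an integer. Set $n' := n/\gcd(n,i)$ and $i' := i/\gcd(n,i)$, so that $\gcd(n',i')=1$ and $\{-ip^k/n\}=\{-i'p^k/n'\}$ for every $k$. Write $e:=[\F_r:\F_p]$ and $\mu := e\,|o'|$ as in Lemma~\ref{lemm:gauss.stickelberger}, so that
\[
\frac{\pval(\Gauor{o'})}{|o'|}
=\frac{1}{\mu}\sum_{k=0}^{\mu-1}\left\{\frac{-i'p^k}{n'}\right\}.
\]

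Next, I would check that $\mu$ is a multiple of $o_p(n')$. By definition $\kappa_{r,n}(i)=o_r(n')$, and $|o'|$ is a multiple of $o_r(n')$ by equation~\eqref{eqn:orbitsize.prime}. Since $o_r(n')=o_p(n')/\gcd(e,o_p(n'))$, it follows that $\mu=e\,|o'|$ is a multiple of $\lcm(e,o_p(n'))$, and in particular of $o_p(n')$. Because the sequence $p^k\bmod n'$ has period $o_p(n')$, the sum above breaks into $\mu/o_p(n')$ identical copies, giving
\[
\frac{\pval(\Gauor{o'})}{|o'|}
= \frac{1}{o_p(n')}\sum_{k=0}^{o_p(n')-1}\left\{\frac{-i'p^k}{n'}\right\}.
\]

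Finally, each fractional part $\{-i'p^k/n'\}$ is a rational number with denominator dividing $n'$ (its numerator is coprime to $n'$ since $\gcd(i'p^k,n')=1$, but we only need that the denominator divides $n'$). Therefore the inner sum is a rational with denominator dividing $n'$, and dividing once more by $o_p(n')$ shows that $\Den(o')$ divides $n'\,o_p(n')=\frac{n}{\gcd(n,i)}\,o_p\!\left(\frac{n}{\gcd(n,i)}\right)$, which is the first claim. The ``in particular'' follows because $n'\mid n$ implies $o_p(n')\mid o_p(n)$, so $n'\,o_p(n')$ divides $n\,o_p(n)$.

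The only genuine step is the divisibility $o_p(n')\mid \mu$, which I expect to be routine once $|o'|$ is expressed via $o_r(n')$; all other manipulations are formal.
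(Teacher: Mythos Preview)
Your proof is correct and follows essentially the same route as the paper: apply Lemma~\ref{lemm:gauss.stickelberger}, use the periodicity of $p^k\bmod n'$ with period $o_p(n')$ to reduce the average to a sum of $o_p(n')$ fractional parts with denominator dividing $n'$, and conclude. Your verification that $o_p(n')\mid \mu$ via $o_r(n')\mid |o'|$ and $e\,o_r(n')=\lcm(e,o_p(n'))$ is in fact slightly cleaner than the paper's chain through $o_p(n)$ and $o_r(n)$, but the argument is otherwise the same.
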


 \begin{proof}
 In this proof, we use the same notation as in that of Lemma~\ref{lemm:gauss.stickelberger}. 
 With $\mu = |o'| [\F_r:\F_p]$, we know from Lemma~\ref{lemm:gauss.stickelberger} that
\begin{equation}\label{eq:gauss.pval.eq1}
\frac{\Num(o')}{\Den(o')}
= \frac{\pval(\Gauor{o'})}{|o'|} 
= \frac{1}{\mu} \sum_{k=0}^{\mu-1} \left\{\frac{-i p^k}{n}\right\}\,.
\end{equation}
 
 To lighten notation, set $\kappa = o_{p}(n/\gcd(n,i)).$ 
 We remark that $\kappa$ divides $o_{p}(n)$, which divides $o_{r}(n) [\F_{r}:\F_{p}],$ which in turn divides $|o'| [\F_{r}:\F_{p}]\,.$ In particular, $\kappa$ divides $\mu$.

 In the sum on the right-hand side of~\eqref{eq:gauss.pval.eq1},  write the Euclidean division of any index $k\in\{0,\dots, \mu-1\}$ by $\kappa$ as $k = x\kappa + y$ with $y\in\{0, \dots, \kappa-1\}$ and $x\in\{0, \dots, \mu/\kappa\}$. One may then rewrite the sum in the form
 \[\frac{1}{\mu} \sum_{k=0}^{\mu-1} \left\{\frac{-i p^k}{n}\right\} 
 = \frac{1}{\mu} \sum_{y=0}^{\kappa-1} \sum_{x=0}^{\mu/\kappa-1} \left\{\frac{-i p^y  p^{x\kappa}}{n}\right\}\,.\]
Since $\kappa = o_{p}(n/\gcd(n,i))$, we have $ip^{\kappa} \equiv i \pmod{n}$, so the inner sums (over $x$) are equal as $y$ varies. More precisely, 
\[
\sum_{x=0}^{\mu/\kappa-1} \left\{\frac{-i p^y  p^{x\kappa}}{n}\right\}
 = \sum_{x=0}^{\mu/\kappa-1} \left\{\frac{-i p^y}{n}\right\} = \frac{\mu}{\kappa}  \left\{\frac{-i p^y}{n}\right\}\,.
\]
Summing this equality over all $y\in\{0, \dots, \kappa-1\}$, we deduce that 
\begin{align}\label{eqn:Stick-proof}
\frac{\Num(o')}{\Den(o')}
= \frac{1}{\mu} \sum_{y=0}^{\kappa-1} \frac{\mu}{\kappa}\left\{\frac{-i p^y}{n}\right\}
= \frac{1}{\kappa} \sum_{y=0}^{\kappa-1}\left\{\frac{-i p^y}{n}\right\}\,.
\end{align}

Each term $\{-ip^{y}/n\}$ in the right-most sum in \eqref{eqn:Stick-proof} is a rational number with denominator $n/\gcd(n,i p^y) = n/\gcd(n,i)$. So, the right-most sum in \eqref{eqn:Stick-proof} is a rational number with denominator dividing $n/\gcd(n,i)$. 
After division by $\kappa=o_{p}(n/\gcd(n,i))$, we conclude that ${\Den(o')}$ divides $ o_{p}(n/\gcd(n,i))\cdot n/\gcd(n,i)$.

The order of $p$ modulo any divisor of $n$ divides the order of $p$ modulo $n$, so $o_{p}(n/\gcd(n,i))$ divides $o_p(n)$. This proves the second assertion of the proposition.
 \end{proof} 
 
 \subsection[]{Explicit $\gp$-adic valuations of $\omega(o)$}
 We now come back to the general setting of this paper. 
 We fix a finite extension $\F_r$ of $\F_p$. For any pair $(a,b)$ of relatively prime integers which are both coprime to $p$, and for any power $q$ of $p$, we consider the Jacobian $J$ of the curve $C$ over $K=\F_r(t)$.
 
 As was shown in Section \ref{sec:ExplicitL}, the $L$-function of $J$ involves certain character sums $\oomega(o)$, indexed by orbits $o\in O =O_{a,b,q,r}$.
 By Definition~\ref{def:oomega}, we have 
 \[\forall o\in O, \qquad 
 \oomega(o) 
 = \Gauor{\pi_a(o)}^{|o|/|\pi_a(o)|}\Gauor{\pi_b(o)}^{|o|/|\pi_b(o)|},\]
 where $\pi_a:O\to O'_a$ and $\pi_b:O\to O'_b$ are the maps introduced in Section \ref{sec:orbits}.
 For any orbit $o\in O$, in the notation introduced in \S\ref{ss:p.adic.val.Gauss}, we thus have
 \begin{equation}\label{eq:omega.pval}
 \frac{\pval(\oomega(o))}{|o|} 
 = \frac{\Num(\pi_a(o))}{\Den(\pi_a(o))} + \frac{\Num(\pi_b(o))}{\Den(\pi_b(o))}. 
 \end{equation}
In the upcoming subsection, it will be useful to know of situations in which $\pval(\oomega(o))\neq |o|$.

From the previous subsection, we deduce the following:

\begin{lemma} \label{lem:bad-Gauss-sums}
Let $a,b, q, r$ be as above.
Assume that one of the following holds:
\begin{enumerate}[(1)]
    \item $a o_p(a)$ and $b o_p(b)$ are relatively prime;
    \item  $a o_p(a)$ is odd, and $b$ is supersingular for $p$; or 
    \item  $a$ is supersingular for $p$, and $b o_p(b)$ is odd.
\end{enumerate}
Then, for any orbit $o\in O=O_{a,b,q,r}$, we have $\pval(\oomega(o))\neq |o|$.
\end{lemma}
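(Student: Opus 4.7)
The plan is to analyze equation~\eqref{eq:omega.pval} and show that in each of the three cases, the sum
\[
\frac{\pval(\oomega(o))}{|o|} = \frac{\Num(\pi_a(o))}{\Den(\pi_a(o))} + \frac{\Num(\pi_b(o))}{\Den(\pi_b(o))}
\]
cannot equal $1$. Throughout, I will abbreviate $(N_a,D_a) \colonequals (\Num(\pi_a(o)),\Den(\pi_a(o)))$ and $(N_b,D_b) \colonequals (\Num(\pi_b(o)),\Den(\pi_b(o)))$; by definition these are pairs of coprime integers, and by Corollary~\ref{cor:Den_lower_bound} we have $1 \leq N_a < D_a$ and $1 \leq N_b < D_b$.

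For case~(1), the key input is Proposition~\ref{prop:DenDivides_nopn}, which gives $D_a \mid a\,o_p(a)$ and $D_b \mid b\,o_p(b)$. By hypothesis $\gcd(a\,o_p(a),\,b\,o_p(b)) = 1$, so $\gcd(D_a, D_b) = 1$. If the sum above equalled $1$, clearing denominators would give $N_a D_b + N_b D_a = D_a D_b$, so $D_a \mid N_a D_b$, and coprimality forces $D_a \mid N_a$. Since $\gcd(N_a, D_a) = 1$, this would require $D_a = 1$, contradicting $1 \leq N_a < D_a$.

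For cases~(2) and~(3), the key input is Lemma~\ref{lemma:num.den.supersing}: if $b$ (resp.\ $a$) is supersingular for $p$, then $N_b/D_b = 1/2$ (resp.\ $N_a/D_a = 1/2$). In case~(2), if the sum were $1$, then $N_a/D_a = 1/2$; but Proposition~\ref{prop:DenDivides_nopn} gives $D_a \mid a\,o_p(a)$, which is odd by hypothesis, so $D_a$ is odd, ruling out $D_a = 2$. Case~(3) is identical by the symmetry $a \leftrightarrow b$. In all three cases, the assumption $\pval(\oomega(o)) = |o|$ leads to a contradiction, proving the lemma.

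No step here is truly an obstacle — the entire proof is an arithmetic check combining the denominator bound of Proposition~\ref{prop:DenDivides_nopn} with the supersingular computation of Lemma~\ref{lemma:num.den.supersing}; the only care needed is keeping track of the coprimality assumption in case~(1) to ensure the two denominators share no common factor.
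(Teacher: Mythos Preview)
Your proof is correct and follows essentially the same approach as the paper: both arguments use Proposition~\ref{prop:DenDivides_nopn} and Lemma~\ref{lemma:num.den.supersing} to control the denominators $\Den(\pi_a(o))$ and $\Den(\pi_b(o))$, and then observe that the two reduced fractions in \eqref{eq:omega.pval} cannot sum to~$1$. The paper phrases the conclusion uniformly as ``two reduced fractions with different denominators cannot sum to~$1$'' (and checks $\Den(\pi_a(o)) \neq \Den(\pi_b(o))$ in each case), whereas you unpack this fact directly in each case; the underlying arithmetic is identical.
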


\begin{proof}
Let $o\in O$ be an orbit.
If condition {\it (1)} is satisfied, then $\gcd(\Den(\pi_a(o)), \Den(\pi_b(o))=1$ by Proposition~\ref{prop:DenDivides_nopn}.
Hence, $\Den(\pi_a(o))\neq \Den(\pi_b(o))$ unless both $\Den(\pi_a(o)) = 1$ and $\Den(\pi_b(o)) = 1$. This situation does not occur, by Corollary~\ref{cor:Den_lower_bound}.

 If $a$ is supersingular for $p$, then $\Den(\pi_a(o))=2$ by Lemma~\ref{lemma:num.den.supersing}. 
 By Proposition~\ref{prop:DenDivides_nopn}, $\Den(\pi_b(o))$ divides $b o_r(b)$. Hence, if $b o_{r}(b)$ is odd,  so is $\Den(\pi_b(o))$.
 In particular, if {\it(2)} is satisfied, then $\Den(\pi_a(o))\neq\Den(\pi_b(o))$. 
 The case where {\it(3)} holds is treated in a similar way, by switching the roles of $a$ and $b$.
 
 In all three situations, we have shown that $\Den(\pi_a(o))\neq \Den(\pi_b(o))$. Since 
 two reduced fractions with different denominators cannot sum to $1$, the result now immediately follows from \eqref{eq:omega.pval}. 
\end{proof}

Lemma~\ref{lem:infinitude-of-bad-omega} shows that there are infinitely many choices for $a$ and $b$ satisfying each of the hypotheses of Lemma~\ref{lem:bad-Gauss-sums}.

\begin{lemma} \label{lem:infinitude-of-bad-omega}
For any fixed $p$, each of the following conditions: 
\begin{enumerate}[(1)]
    \item $a o_p(a)$ and $b o_p(b)$ are relatively prime;
    \item  $a o_p(a)$ is odd, and $b$ is supersingular for $p$; 
    \item  $a$ is supersingular for $p$, and $b o_p(b)$ is odd.
\end{enumerate}
is satisfied for infinitely many $a$ and $b$. Moreover, each condition is satisfied for infinitely many \emph{primes} $a$ and $b$.
\end{lemma}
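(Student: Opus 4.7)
The approach is to handle the three conditions separately, using Dirichlet's theorem on primes in arithmetic progressions together with quadratic reciprocity to furnish the required primes. The common building block is an infinite family $\mathcal A$ of primes $a\neq p$ with $a\,o_p(a)$ odd. To construct $\mathcal A$, I will restrict to $a\equiv 3\pmod 4$, so that $(a-1)/2$ is odd, and further require that $p$ is a quadratic residue modulo $a$, which forces $o_p(a)\mid (a-1)/2$ to be odd as well. When $p$ is odd, quadratic reciprocity recasts ``$p$ is a QR modulo $a$'' as a non-empty set of residue classes for $a$ modulo $4p$ compatible with $a\equiv 3\pmod 4$, and Dirichlet supplies infinitely many primes in that progression. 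When $p=2$, taking $a\equiv 7\pmod 8$ simultaneously makes $(a-1)/2$ odd and $2$ a QR modulo $a$, and Dirichlet again applies.

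For condition~(1), I will fix any $a\in\mathcal A$ and set $N := a\,o_p(a)$, which is odd. Dirichlet's theorem then yields infinitely many primes $b$ with $b\equiv 2\pmod N$. For each such $b$ sufficiently large, $\gcd(b,N)=\gcd(b-1,N)=1$, and since $o_p(b)\mid b-1$, this gives $\gcd(b\,o_p(b),\,N)=1$ as required. For conditions~(2) and~(3), I will use the observation that a prime $b\neq p$ is supersingular for $p$ if and only if $o_p(b)$ is even: the group $(\mathbb Z/b\mathbb Z)^\times$ is cyclic, so its unique subgroup of order two is $\{\pm 1\}$, and this subgroup is contained in $\langle p\rangle$ precisely when $o_p(b)$ is even. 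In particular, any prime $b$ for which $p$ is a quadratic non-residue satisfies $o_p(b)\nmid (b-1)/2$ and is therefore supersingular for $p$. By quadratic reciprocity, the set of such primes $b$ is a non-empty union of residue classes modulo $4p$ (or modulo $8$ when $p=2$), so Dirichlet produces infinitely many. Pairing any $a\in\mathcal A$ with any such $b$ handles condition~(2), and swapping the roles of $a$ and $b$ handles condition~(3).

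The main obstacle will be the construction of $\mathcal A$: statements about the parity of $o_p(a)$ as $a$ ranges over primes are subtle in general (e.g., variants of Artin's primitive root conjecture remain open), but the trick of restricting to $a\equiv 3\pmod 4$ (or $a\equiv 7\pmod 8$ when $p=2$) reduces the parity of $o_p(a)$ to the single Legendre symbol $\bigl(\tfrac{p}{a}\bigr)$, which is controlled unconditionally by quadratic reciprocity. Everything else in the argument is a direct application of Dirichlet's theorem, and the resulting primes $a,b$ are automatically distinct and coprime to $p$.
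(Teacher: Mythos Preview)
Your argument is correct and takes a genuinely different route from the paper. The paper builds its examples out of the integers $(p^{k}-1)/(p-1)$ for odd primes $k$: such an $a$ is odd with $o_{p}(a)\mid k$, which handles conditions~(2) and~(3), and a careful choice of two such primes $k,\ell$ handles condition~(1). To pass from these composite $a,b$ to \emph{prime} $a,b$, the paper invokes Siegel's theorem on $S$-unit equations to guarantee that $p^{k}-1$ has a prime factor not dividing $p-1$ for all but finitely many $k$.

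Your approach replaces all of this with quadratic reciprocity plus Dirichlet's theorem. The key observation---that restricting to primes $a\equiv 3\pmod 4$ (or $a\equiv 7\pmod 8$ when $p=2$) collapses the parity of $o_{p}(a)$ to the single Legendre symbol $\bigl(\tfrac{p}{a}\bigr)$---is exactly what makes the problem tractable without touching Siegel. It also has the pleasant feature of producing primes from the outset, so the ``moreover'' clause requires no extra work. The paper's construction is more explicit in that it writes down specific integers, but it pays for this with the appeal to $S$-unit equations; your argument is less explicit but rests only on standard analytic input. One small point worth stating outright in a write-up: when you fix $a\in\mathcal A$ and take primes $b\equiv 2\pmod{N}$ for condition~(1), you should note that the finitely many exceptional $b$ (namely $b=p$ and, if applicable, $b=2$) are harmless, and that $\gcd(a,b)=1$ follows because $a\mid N$ while $\gcd(b,N)=1$.
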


\begin{proof}

First, consider condition {\it(2)}. 
If $k$ is an odd positive integer and 
$a$ is any odd divisor of $p^{k} - 1$, then $o_{p}(a)$ divides $k$. So, $a o_{p}(a)$ is odd too. 
We claim that there are infinitely many such integers $a$. 
Indeed, for any odd integer $k$, the integer $a = (p^k - 1)/(p-1)$ is odd.
On the other hand, there are infinitely many supersingular prime numbers $b$, all but finitely many of which are coprime to any particular choice of $a$.
Condition {\it(3)} can be satisfied by exchanging the role of \(a\) and \(b\).


We now consider condition {\it(1)}. Choose any odd prime \(k \geq 3\) so that \(p \not\equiv 1 \pmod{k}\) and take \(a = (p^{k} - 1)/(p-1) \). Choose any odd prime $\ell$ which is relatively prime to both $k$ and $a$ and which does not divide $o_{p}(k)$. There are infinitely many such $\ell$. If we set $b = (p^{\ell} - 1)/(p-1)$, then $b \not\equiv 0 \pmod{k}$. We have \(ao_p(a) = ak\) and \(bo_{p}(b) = b\ell\). By construction, \(\gcd(a,\ell) = \gcd(k,\ell) = 1\), and \(\gcd(b,k) = 1\). Finally,
\[
\gcd(a,b) = \frac{\gcd(p^k - 1, p^{\ell} - 1)}{p-1} = \frac{p^{\gcd(k,\ell)} - 1}{p-1} = \frac{p-1}{p-1} = 1\,.
\]

Modifying these constructions slightly and still keeping $p$ fixed, we may arrange that $a$ and $b$ are both primes, as we now explain. 

Let $T$ be the set of primes $k$ so that $p^{k} - 1$ is a product of primes dividing $p-1$. We first show that $T$ is finite. By work of Siegel, given any set $S$ of primes, the set of solutions to $x - y = 1$ in $S$-units $x$ and $y$ is finite. Let $S$ be the set of primes dividing $p(p-1)$. Then, for each $k \in T$, the pair $x = p^k$, $y = p^{k} - 1$, is a solution to the $S$-unit equation. Hence, by Siegel's Theorem, $T$ is finite. In particular, if we choose distinct odd primes \(k,  \ell \notin T\) in the preceding constructions, we may choose $a$ and $b$ to be odd prime factors of $p^k-1$ and $p^{\ell} - 1$ respectively, and which do not divide $p-1$. 
We conclude that $a o_{p}(a)$ and $b o_{p}(b)$ will still be relatively prime odd integers.

A similar argument shows that there are infinitely many supersingular primes $b$ for $p$. So, conditions {\it(2)} and {\it(3)} are also satisfied for infinitely many primes $a$ and $b$.
\end{proof}

 \subsection[]{Rank $0$}\label{sec:rank.0}
 It follows from \eqref{eq:rank.formula} that
\[\rank J(K) = \ord_{T=r^{-1}}L(J, T) \leq \left|\big\{ o\in O : \pval(\oomega(o)) = |o|\big\}\right|.\]
Hence, to show that the rank is ``small'' it suffices to give conditions on $a,b,q$ that ensure that ``many'' orbits $o\in O$ satisfy $\pval(\oomega(o)) \neq |o|$.
We prove:

\begin{customthm}{\ref{thm:rankzero}}
Suppose that the pair $(a,b)$ satisfies one of the following:
\begin{enumerate}[(1)]
    \item $a o_p(a)$ and $b o_p(b)$ are relatively prime;
    \item  $a o_p(a)$ is odd, and $b$ is supersingular for $p$; or  
    \item  $a$ is supersingular for $p$, and $b o_p(b)$ is odd.
\end{enumerate}
Then, for any power $q$ of $p$, we have 
$\ord_{T=r^{-1}} L(J,T) = \rank J(K) = 0$.
\end{customthm}

\begin{proof}
The conditions here are the same as in Lemma \ref{lem:bad-Gauss-sums}. 
That Lemma asserts that, for all orbits $o \in O= O_{r,a,b,q}$, the $\gp$-adic valuation of $\oomega(o)$ does not match that of $r^{|o|}$ (which equals $|o|$).

The assertion is then immediate from \eqref{eq:rank.formula}. 
\end{proof}

\begin{example}
Let $\F_r=\F_{67^n}$ for some $n\geq 1$. For $p=67$, the pair $a=5$ and $b=7$ satisfies condition {\it (3)} of Theorem~\ref{thm:rankzero}. 
So, if $q$ is any power of $67$, the Jacobian $J=J_{a,b,q}$ satisfies $\rank J(\F_r(t))=0$.
\end{example}

For a fixed odd prime $p$, the set of parameters $a,b$ for which the conditions of Theorem~\ref{thm:rankzero} hold is infinite, as shown in Lemma~\ref{lem:infinitude-of-bad-omega}.

\begin{remark} One can provide a second proof of the BSD conjecture (Theorem \ref{thm:bsd}) in the case that $L(J,r^{-1}) \neq 0$, as follows.
By a theorem of Tate \cite{Tate1965}, one has 
\[0 \leq \rank J(K) \leq \ord_{T=r^{-1}}L(J, T).\]
(This essentially follows from injectivity of the cycle class map.) If the parameters $a,b,q$ are such that $L(J,T)$ does not vanish at $T=r^{-1}$, we deduce from the above that $\rank J(K) = \ord_{T=r^{-1}}L(J, T) =0$. 
In other words, the ``weak BSD conjecture'' holds for $J$.
\end{remark}

\subsection{Large ranks}\label{sec:rank.large}
We now provide a sufficient condition on $a,b$ and $q$ for the rank of $J(K)$ to be ``large''. We actually prove a more precise result, estimating the rank of $J(K)$ under certain assumptions. First, we prove a lemma to calculate $\oomega(o)$ for $o \in O$.

\begin{lemma}\label{lem:oomega-value}
Assume that $p \neq 2$ is an odd prime.
Let $a$ and $b$ be relatively prime positive integers which are both supersingular for $p$. Let $\nu_a, \nu_b\geq 1$ be the least positive integers such that $p^{\nu_{a}} \equiv -1 \pmod{a}$ and $p^{\nu_{b}} \equiv -1 \pmod{b}$.
Suppose also that $[\F_{r}:\F_{p}]$ is a multiple of both $4\nu_{a}$ and $4\nu_{b}$. 

If $(i,j,\alpha)$ is any representative of the orbit $o\in O$, then
\begin{equation}
    \oomega(o) = \llambda_{(i,\alpha)}(\alpha)^{-1} \llambda_{(j,\alpha)}(\alpha)^{-1} r^{|o|}\,.
\end{equation}
In particular, $\oomega(o) = r^{|o|}$ if and only if $\alpha\in \F_{q}$ is an $(ab)$\textsuperscript{th} power in $\F_r(\alpha)$ for any representative $(i,j,\alpha)$ of $o$ (equivalently for all representatives $(i,j,\alpha)$ of $o$).
\end{lemma}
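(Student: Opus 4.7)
The plan is to start from the definition $\oomega(o) = \Gauor{\pi_a(o)}^{\nu_a(o)} \Gauor{\pi_b(o)}^{\nu_b(o)}$ and apply the explicit Gauss sum evaluations in Lemmas~\ref{lemma:gauss-sum-quadratic} and~\ref{lemma:evaluate-gauss-sum}. The first step is to simplify the orbit combinatorics under the standing hypotheses. For any representative $(i,j,\alpha)$, the divisor $a/\gcd(a,i)$ of the supersingular integer $a$ is itself supersingular, and the smallest $\nu_i$ with $p^{\nu_i}\equiv -1 \pmod{a/\gcd(a,i)}$ divides $\nu_a$ (since $\nu_a$ belongs to the set $\{\nu_i(2k+1):k\geq 0\}$). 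Thus $o_p(a/\gcd(a,i))=2\nu_i$ divides $4\nu_a$, hence divides $[\F_r:\F_p]$, which forces $\kappa_{r,a}(i)=o_r(a/\gcd(a,i))=1$; the analogous statement for $b$ gives $\kappa_{r,b}(j)=1$. Therefore $|\pi_a(o)|=|\pi_b(o)|=|o|=[\F_r(\alpha):\F_r]$, so that $\nu_a(o)=\nu_b(o)=1$.

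Next, I would apply the appropriate explicit Gauss sum formula to each factor. If $2i\neq a$, Lemma~\ref{lemma:evaluate-gauss-sum} applies with $4\nu_i\mid 4\nu_a\mid[\F_r:\F_p]$ and yields $\Gauor{\pi_a(o)} = \llambda_{(i,\alpha)}(\alpha)^{-1}\,r^{|o|/2}$; if $a$ is even and $2i=a$, the same shape of identity follows from Lemma~\ref{lemma:gauss-sum-quadratic} using $4\mid[\F_r:\F_p]$. The analogous computation applies to $\Gauor{\pi_b(o)}$. Substituting these into the definition of $\oomega(o)$ and using $\nu_a(o)=\nu_b(o)=1$ from the previous step gives exactly
\[
\oomega(o) = \llambda_{(i,\alpha)}(\alpha)^{-1}\,\llambda_{(j,\alpha)}(\alpha)^{-1}\,r^{|o|},
\]
which is the main identity.

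For the ``in particular'' claim, rearranging yields $\oomega(o)=r^{|o|}$ if and only if $\llambda_{(i,\alpha)}(\alpha)\,\llambda_{(j,\alpha)}(\alpha)=1$. Since $\llambda_{(i,\alpha)}(\alpha)$ is a root of unity of order dividing $a$ and $\llambda_{(j,\alpha)}(\alpha)$ is one of order dividing $b$, and because $\gcd(a,b)=1$, the product vanishes precisely when each factor equals $1$ separately. Each such character is trivial on the subgroup of $a$-th (resp.\ $b$-th) powers in its domain $\F_r(\alpha)$; conversely, identifying the kernel with the corresponding power-residue subgroup and invoking $\gcd(a,b)=1$ shows that the pair of triviality conditions is equivalent to $\alpha$ being an $(ab)$-th power in $\F_r(\alpha)$. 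Invariance under change of representative $(i,j,\alpha)\mapsto (ri,rj,\alpha^{1/r})$ is immediate from $\gcd(r,ab)=1$, which preserves the power-residue condition on~$\alpha$.

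The main obstacle is the structural simplification in the first paragraph: showing that the seemingly intricate quantities $\kappa_{r,a}(i)$, $\kappa_{r,b}(j)$, $\nu_a(o)$, and $\nu_b(o)$ all collapse to $1$ under the quantitative hypothesis $4\nu_a,4\nu_b\mid[\F_r:\F_p]$. Once this is established, the Gauss sum evaluation and the root-of-unity/coprimality argument are direct; the delicate point in the converse direction of the ``in particular'' statement is matching the kernel of $\llambda_{(i,\alpha)}$ (on $\F_r(\alpha)$) with the correct power-residue subgroup, which is what justifies combining the $a$- and $b$-conditions into a single $(ab)$-th power condition.
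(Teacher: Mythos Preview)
Your proposal is correct and follows essentially the same route as the paper's proof: first collapse the orbit combinatorics by showing $|o|=|\pi_a(o)|=|\pi_b(o)|$ (the paper does this by observing directly that $r\equiv 1\pmod{a}$ and $r\equiv 1\pmod{b}$, while you reach the equivalent conclusion $\kappa_{r,a}(i)=\kappa_{r,b}(j)=1$ via the divisibility $\nu_i\mid\nu_a$), then apply Lemmas~\ref{lemma:gauss-sum-quadratic} and~\ref{lemma:evaluate-gauss-sum}, and finally use coprimality of the character orders for the ``in particular'' clause. The only cosmetic difference is that the paper's phrasing of the first step is slightly more direct.
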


\begin{proof}
Since $4\nu_{a}$ divides $[\F_{r}:\F_{p}]$ and $p^{\nu_{a}} \equiv -1 \pmod{a}$, we see that $r \equiv 1 \pmod{a}$. Hence $\langle r \rangle$ acts trivially by multiplication on $\Z/a\Z\smallsetminus\{0\}$.
Similarly, $r \equiv 1 \pmod{b}$, so $\langle r \rangle$ acts trivially by multiplication on $\Z/b\Z\smallsetminus\{0\}$.
Hence, the orbit $o$ 
is of the form $\{(i,j,\alpha(o)^{1/r^t}) : t \in \Z\}$ for some $(i,j,\alpha)\in S$ depending on $o$. 
We then have $|o| = |\pi_{a}(o)| = |\pi_{b}(o)|$.

In particular, 
\[
\oomega(o) = \Gauor{\pi_{a}(o)} \Gauor{\pi_{b}(o)}.
\]
We may now apply Lemma~\ref{lemma:gauss-sum-quadratic} (resp.\ Lemma~\ref{lemma:evaluate-gauss-sum}) to compute $\Gauor{\pi_{a}(o)}$ 
when $2i=n$ (resp.\ $2i \neq n$). 
Since $4\nu_{a}$ divides  $[\F_{r}:\F_{p}]$ and the $\nu_{i}$'s appearing in Lemmas~\ref{lemma:gauss-sum-quadratic}~and~\ref{lemma:evaluate-gauss-sum} applied to $\Gauor{\pi_{a}(o)}$ are divisors of $\nu_{a}$, we have $4\nu_{i}|[\F_{r}:\F_{p}]$. So, equations \eqref{eqn:quadratic-Gauss-Sum-simple-2} and \eqref{eqn:Gauss-Sum-simple-2} hold. We find that $\Gauor{\pi_{a}(o)} = \llambda_{(i,\alpha)}(\alpha)^{-1}  r^{|o'|/2}$. Computing $\Gauor{\pi_{b}(o)}$ in the same way yields that 
\[
\oomega(o) = \Gauor{\pi_{a}(o)} \Gauor{\pi_{b}(o)} = \llambda_{(i,\alpha)}(\alpha)^{-1} \llambda_{(j,\alpha)}(\alpha)^{-1} r^{|o|}\,.
\]
Now, $\llambda_{(i,\alpha)}$ and $\llambda_{(j,\alpha)}$ are characters of relatively prime orders $a$ and $b$, so 
$\llambda_{(i,\alpha)}(\alpha)^{-1} \llambda_{(j,\alpha)}(\alpha)^{-1} = 1$ if and only if both $\llambda_{(i,\alpha)}(\alpha) = 1$ and $\llambda_{(j,\alpha)}(\alpha) = 1$.

Since $|\pi_{a}(o)|$ and $|\pi_{b}(o)|$ are both equal to the size of the orbit of $r$ acting on $\F_{q}^{\times}$, the extensions of $\F_{r}$ of degree $|\pi_{a}(o)|$ and $|\pi_{b}(o)|$ are both $\F_{r}(\alpha)$. This is the extension over which both $\llambda_{(i,\alpha)}$ and $\llambda_{(j,\alpha)}$ are defined. To conclude, observe that $\llambda_{(i,\alpha)}(\alpha) = \llambda_{(j,\alpha)}(\alpha) = 1$ if and only if $\alpha$ is an $(ab)$\textsuperscript{th} power in $\F_r(\alpha)$.
\end{proof}

\begin{customthm}{\ref{thm:analytic-rank-lower-bounds}}
Let $p \neq 2$ be an odd prime.
Let $a$ and $b$ be relatively prime positive integers which are both supersingular for $p$. Let $\nu_a, \nu_b\geq 1$ be the least positive integers such that $p^{\nu_{a}} \equiv -1 \pmod{a}$ and $p^{\nu_{b}} \equiv -1 \pmod{b}$.
Suppose also that $[\F_{r}:\F_{p}]$ is a multiple of both $4\nu_{a}$ and $4\nu_{b}$.

Then, we have
\[
(a-1)(b-1) \left \lceil \frac{1}{\log_p(q)}\left( \frac{q-1}{ab}  - \frac{p\sqrt{q} - 1}{p-1}\right)\right \rceil \leq \rank J(K)\,.
\]
\end{customthm}

\begin{proof}[Proof of Theorem~\ref{thm:analytic-rank-lower-bounds}]

Combining Lemma~\ref{lem:rank-formula-orbits} and Lemma~\ref{lem:oomega-value} gives that $\rank J(K)$ is equal to the number of orbits $o \in O$ such that a representative $(i,j, \alpha)$ satisfies the property that $\alpha$ is an $(ab)$\textsuperscript{th} power in $\F_r(\alpha)$.

We first bound the number of $\alpha \in \F_{q}^{\times}$ such that $\alpha$ is an $(ab)$\textsuperscript{th} power in $\F_p(\alpha)$. 
We remark that~$\F_{q}^{\times}$ contains at least $(q-1)/ab$ distinct values which are $(ab)$\textsuperscript{th} powers. 
Indeed the image of the map $x\in\F_q^\times\mapsto x^{ab}\in\F_q^\times$ has order $|\F_q^\times|/\gcd(ab,|\F_q^\times|)=(q-1)/\gcd(ab,q-1)$. Note that $\gcd(ab,q-1)\leq ab$. 
Now, at most $q^{1/2} + q^{1/2}p^{-1} + \cdots + 1 = (p\sqrt{q} - 1)(p-1)$ elements of $\F_{q}$ lie in a proper subfield, since each proper subfield has order a distinct power of $p$ which is at most $\sqrt{q}.$ Hence, there are at least 
\[
\frac{q-1}{ab}  - \frac{p\sqrt{q} - 1}{p-1}
\]
distinct values $\alpha \in \F_{q}$ such that $\F_{p}(\alpha) = \F_{q}$ and $\alpha$ is an $(ab)$\textsuperscript{th} power in $\F_{p}(\alpha)$. Each orbit of $\langle r \rangle$ on $\F_{q}^{\times}$ contains at most $[\F_q:\F_p] = \log_{p}(q)$ elements and so contains at most $\log_{p}(q)$ many such $\alpha$.

Since $\langle r \rangle$ acts trivially on both $\Z/a\Z$ and $\Z/b\Z$ under the hypotheses, the number of orbits $o \in O$ such that a representative $(i,j, \alpha)$ satisfies the property that $\alpha$ is an $(ab)$\textsuperscript{th} power in $\F_r(\alpha)$ is at least
\[
(a-1)(b-1) \left \lceil \frac{1}{\log_p(q)}\left( \frac{q-1}{ab}  - \frac{p\sqrt{q} - 1}{p-1}\right)\right \rceil\,. 
\]
\end{proof}

\begin{theorem} \label{thm:rank.upperbound.met}
Let $p \neq 2$ be an odd prime.
Let $a$ and $b$ be relatively prime positive integers which are both supersingular for $p$. Let $\nu_a, \nu_b\geq 1$ be the least positive integers such that $p^{\nu_{a}} \equiv -1 \pmod{a}$ and $p^{\nu_{b}} \equiv -1 \pmod{b}$.
Suppose that $[\F_{r}:\F_{p}]$ is a multiple of $4\nu_{a}, 4\nu_{b},$ and $ab(q-1)$. Then,

\[ \rank J(K) =  (a-1)(b-1)(q-1) = 2g(q-1)\,.\] 
In other words, the upper bound in Theorem~\ref{thm:rank.upperbound} is met.
\end{theorem}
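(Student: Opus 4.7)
The strategy is to show that each inequality in $\rank J(K) \leq |O| \leq |S| = (a-1)(b-1)(q-1)$ becomes an equality under the stated hypotheses; the first inequality comes from Lemma~\ref{lem:rank-formula-orbits}, and the second from $S$ being the union of its $\langle r\rangle$-orbits. Concretely, it suffices to verify: (i) $\langle r\rangle$ acts trivially on $S = (\Z/a\Z \setminus \{0\})\times(\Z/b\Z \setminus \{0\})\times \F_q^\times$, so every orbit is a singleton and $|O|=|S|$; and (ii) for every $o \in O$, $\oomega(o) = r^{|o|}$.

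For (i), the conditions $4\nu_a \mid [\F_r:\F_p]$ and $4\nu_b \mid [\F_r:\F_p]$ immediately yield $r \equiv 1 \pmod{a}$ and $r \equiv 1 \pmod{b}$, using $\ord_a(p) = 2\nu_a$ and $\ord_b(p) = 2\nu_b$. The remaining step is $r \equiv 1 \pmod{q-1}$, equivalently $\ord_{q-1}(p) = \log_p q$ divides $[\F_r:\F_p]$. I would derive this from $ab(q-1)\mid [\F_r:\F_p]$ together with the supersingularity data, concluding the stronger statement $\ord_{ab(q-1)}(p) = \lcm(2\nu_a, 2\nu_b, \log_p q) \mid [\F_r:\F_p]$, which in fact gives $ab(q-1) \mid r-1$.

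For (ii), Lemma~\ref{lem:oomega-value} applies under the assumptions on $\nu_a, \nu_b$ and reduces $\oomega(o) = r^{|o|}$ to showing that $\alpha$ is an $(ab)$-th power in $\F_r(\alpha)$, where $\alpha \in \F_q^\times$ is the third coordinate of any representative of $o$. Step (i) shows $q-1 \mid r-1$, forcing $\F_q \subseteq \F_r$, so $\F_r(\alpha) = \F_r$; the cyclic group $\F_r^\times$ has order $r-1$ divisible by $ab$, and its $(ab)$-th powers form the unique subgroup of order $(r-1)/ab$. Since $\ord(\alpha) \mid q-1$ and $(q-1) \mid (r-1)/ab$ (both consequences of $ab(q-1)\mid r-1$), we have $\alpha^{(r-1)/ab} = 1$, so $\alpha$ is an $(ab)$-th power in $\F_r^\times$. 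Together with (i), this gives $\rank J(K) = (a-1)(b-1)(q-1) = 2g(q-1)$.

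The main obstacle is the arithmetic step promoting $ab(q-1) \mid [\F_r:\F_p]$ to $ab(q-1) \mid r-1$. This amounts to a careful bookkeeping of multiplicative orders of $p$ modulo $a$, $b$, and $q-1$, together with the supersingularity identities $p^{\nu_a} \equiv -1 \pmod{a}$ and $p^{\nu_b} \equiv -1 \pmod{b}$; once this is verified, the conclusions of (i) and (ii) follow by direct application of Lemma~\ref{lem:rank-formula-orbits} and Lemma~\ref{lem:oomega-value}, saturating the upper bound of Theorem~\ref{thm:rank.upperbound}.
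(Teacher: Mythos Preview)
Your overall strategy---show that $\langle r\rangle$ acts trivially on $S$ and then invoke Lemma~\ref{lem:oomega-value} to get $\oomega(o)=r^{|o|}$ for every orbit---is exactly the paper's approach, and you correctly isolate the one nontrivial arithmetic step: passing from the hypothesis $ab(q-1)\mid [\F_r:\F_p]$ to the conclusion $ab(q-1)\mid r-1$.

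That step, however, does not go through, and your sketch of it is circular. Write $m=[\F_r:\F_p]$ and $q=p^e$. Triviality of the $\langle r\rangle$-action on $\F_q^\times$ is equivalent to $\F_q\subseteq\F_r$, i.e.\ to $e\mid m$ (since $\ord_{q-1}(p)=e$). The hypotheses give $4\nu_a\mid m$, $4\nu_b\mid m$, and $(q-1)\mid m$, but none of these force $e\mid m$, because $e\nmid p^e-1$ in general. Concretely: take $p=3$, $a=7$, $b=2$ (both supersingular for $3$, with $\nu_a=3$, $\nu_b=1$), $q=3^{5}=243$, and $m=\lcm(12,4,14\cdot 242)=10164$. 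All stated divisibility hypotheses hold, yet $5\nmid 10164$, so $\F_q\not\subseteq\F_r$ and $\langle r\rangle$ acts nontrivially on $\F_q^\times$; one finds $|O|=(a-1)(b-1)\cdot 50=300<1452=(a-1)(b-1)(q-1)$, so the asserted rank equality cannot hold. Your proposed fix, verifying $\lcm(2\nu_a,2\nu_b,\log_p q)\mid m$, already presupposes $e=\log_p q\mid m$, which is precisely the point at issue. The paper's own proof makes the same unjustified leap (``the product $ab(q-1)$ divides $r-1$''); the argument becomes valid if one instead assumes $ab(q-1)\mid r-1$ directly, and this appears to be the intended hypothesis in light of Remark~\ref{rem:hypotheses-of-rank-lower-bounds}.
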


\begin{proof}
Under these assumptions, the product $ab(q-1)$ divides $r-1$, hence $\langle r \rangle$ acts trivially on $S$. Hence each orbit $o \in O$ has $|o| = 1$. Moreover, each $\alpha \in \F_{q}$ is an $(ab)$\textsuperscript{th} power in $\F_{r}$ (and therefore also in $\F_{r}(\alpha)$.)
Then, Lemma~\ref{lem:rank-formula-orbits} and Lemma~\ref{lem:oomega-value} together imply
\[
\rank J(K) = |O| = (a-1)(b-1)(q-1)\,.
\]
\end{proof}

\begin{remark}\label{rem:hypotheses-of-rank-lower-bounds}[Hypotheses of Theorems~\ref{thm:analytic-rank-lower-bounds}~and~\ref{thm:rank.upperbound.met}] 
For any fixed $p$, there are infinitely many choices of $a,b,r$ satisfying the hypotheses of Theorem~\ref{thm:analytic-rank-lower-bounds} and Theorem~\ref{thm:rank.upperbound.met}, as we now explain.

For any choice of $a$ and $b$, a positive density of primes $p$ satisfy $p \equiv -1 \pmod{ab}$. In that case we may take $\nu_{a} = \nu_{b} = 1$. 
Let $\F$ be the smallest extension of $\F_{p}$ such that $4$ divides $[\F:\F_{p}]$\,. The hypotheses of Theorem~\ref{thm:analytic-rank-lower-bounds} hold whenever $\F_{r} \supset \F$. Let $t$ be the order of $p$ in $\mathbb Z/ab(q-1) \mathbb Z$. 
Let $\F'$ be the smallest extension of of $\F_{p}$ such that both $4$ and $t$ divide $[\F':\F_{p}]$\,.
The hypotheses of Theorem~\ref{thm:rank.upperbound.met} are satisfied whenever $\F_{r} \supset \F'$\,.

In fact, if $a$ and $b$ are prime, $a$ and $b$ are supersingular for $p$ whenever $p$ has even order in both $(\mathbb Z/a\mathbb Z)^{\times}$ and $(\mathbb Z/b\mathbb Z)^{\times}$. Again, Theorem~\ref{thm:analytic-rank-lower-bounds} holds whenever $\F_{r}$ contains an appropriate finite extension of $\F_{p}$. The same is true for Theorem~\ref{thm:rank.upperbound.met}.

\end{remark}

\begin{remark}
Theorem \ref{thm:analytic-rank-lower-bounds} implies that when both $a$ and $b$ are supersingular for $p$ and $[\F_{r}: \F_{p}]$ is a fixed multiple of some number depending only on $a, b,$ and $p$, the analytic rank of $J$ is unbounded as $q \to \infty$.  This means that if we take $a$ and $b$ to be distinct primes, the Jacobians of the curves $y^b+x^a=t^q-t$ as $q$ varies give a family of simple abelian varieties of dimension $(a-1)(b-1)/2$ which satisfy BSD and which have unbounded algebraic and analytic rank. The dimension can be made arbitrarily large by increasing $a$ and $b$.

\end{remark}

\section{Size of the special value} 
\label{sec:specialvalue}

Recall that the special value $L^\ast(J)$ is defined as
\[L^\ast(J) := \left. \frac{L(J, T)}{(1-rT)^{v}}\right|_{T=r^{-1}}, \quad\text{where } v=\ord_{T=r^{-1}} L(J,T).\]
As discussed in Section~\ref{sec:bsd_conj_for_j}, the Riemann Hypothesis for $L(J, T)$ implies that $L^\ast(J)$ is a positive rational number.
The goal of this section is to prove the following estimate on $L^\ast(J)$:

\begin{customthm}{\ref{thm:specialvalue}}
For fixed $a,b$ as above, as $q\to\infty$ through powers of $p$, we have
\[\frac{\log L^\ast(J)}{\log H(J)} = o(1)\,, \]
where the implicit constants depend only on $a,b$ and $p$.
\end{customthm}

Throughout this section, we will use Vinogradov's asymptotic notation. Namely, for two functions $f, g$ of a variable $x$ on $[0, \infty)$, we use $f(x)\ll_a g(x)$ to mean that there is a constant $C>0$ (depending at most on the mentioned parameter(s) $a$) such that $|f(x)|\leq C g(x)$ for $x\to\infty$.

\subsection{Preliminary estimates}
The proof of Theorem~\ref{thm:specialvalue} requires two preliminary estimates that we now state.

We choose, once and for all, an algebraic closure $\Qbar$ of $\Q$. 
We write $\log:\C\to\C$ for the 
branch of the complex logarithm such that the imaginary part of $\log z$ belongs to $(-\pi, \pi]$ for all $z\in\C$.
For a given $\theta\in \frac{1}{2}\Z_{\geq 0}$, 
an algebraic integer will be called a \emph{Weil integer of size $p^\theta$} if its absolute value in any complex embedding of $\Qbar$ is $p^\theta$. (These are sometimes called Weil integers of weight $2\theta$.)

\begin{theorem}
\label{theorem:diophantine.estimates}
Let $p$ be a prime number, and $\theta\in \frac{1}{2}\Z_{\geq 0}$.
Let $z\in\Qbar$ be a Weil integer of size $p^\theta$, and $\zeta\in\Qbar$ be a root of unity.
For any integer $L\neq 0$, either $\zeta \cdot (zp^{-\theta})^L=1$ or, in any complex embedding $|\cdot|$ of $\Qbar$ in $\C$, we have
\begin{equation}
    \log\left|1-\zeta \cdot (zp^{-\theta})^L\right|
    \geq -c_0 -c_1 \log |L|,
\end{equation}
where $c_0, c_1>0$ are effective constants depending at most on $p$, $\theta$, the degree of $z$ over $\Q$, and the (multipicative) order of $\zeta$.
\end{theorem}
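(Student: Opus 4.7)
The plan is to reduce the estimate to Baker's theorem on linear forms in complex logarithms of algebraic numbers. First, I would observe that since $z$ is a Weil integer of size $p^\theta$, the number $\beta := zp^{-\theta}$ has complex absolute value exactly $1$ in every complex embedding of $\Qbar$ into $\C$, so $\alpha := \zeta\beta^L$ lies on the unit circle. Fixing a complex embedding, I would write $\beta = e^{i\psi}$ with $\psi \in (-\pi,\pi]$ and $\zeta = e^{2\pi i k/N}$, where $N = \ord(\zeta)$ and $k\in(-N/2, N/2]$, so that $\alpha = e^{i(L\psi + 2\pi k/N)}$.

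Assuming $\alpha\neq 1$, I would let $m\in\Z$ be the integer nearest to $(L\psi+2\pi k/N)/(2\pi)$, so $|m|\leq \tfrac{1}{2}|L|+1$, and consider
$\Lambda := L\log\beta + \log\zeta - 2m\log(-1)$
using the principal branch. Then $\Lambda = i(L\psi+2\pi k/N - 2\pi m)$ is purely imaginary, $\Lambda = 0$ if and only if $\alpha = 1$, and the elementary inequality $|1-e^{it}| \geq \tfrac{2}{\pi}|t|$ (valid for real $|t|\leq\pi$) yields $|1-\alpha| \geq \tfrac{2}{\pi}|\Lambda|$.

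Next I would invoke Baker's theorem on linear forms in logarithms (in the effective form of Baker--W\"ustholz, or a subsequent sharpening): for any nonzero linear form in logarithms of finitely many nonzero algebraic numbers with integer coefficients bounded in absolute value by $B$, one has $\log|\Lambda| \geq -\kappa\log B - \kappa'$, with effective constants $\kappa,\kappa'>0$ depending only on the number of logarithms and on upper bounds for the degrees and (logarithmic) Weil heights of the algebraic numbers involved. Here the three numbers are $\beta,\zeta,-1$: their degrees divide $[\Qbar(\zeta,z):\Q]$, which is controlled by $\ord(\zeta)$ and the degree of $z$; the heights $h(\zeta)$ and $h(-1)$ vanish; and I would bound $h(\beta) \leq h(z) + h(p^{-\theta}) \leq 2\theta\log p$, using that $h(z) = \theta\log p$ (archimedean contribution from $|z|_v = p^\theta$, zero non-archimedean contribution since $z$ is an algebraic integer) and $h(p^{-\theta}) = \theta\log p$. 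Since $B = \max(|L|,1,2|m|) \leq 2|L|+2$, this yields $\log|1-\alpha| \geq -c_0 - c_1\log|L|$ with the claimed dependence on $p,\theta,\deg z,\ord(\zeta)$.

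The hard part is of course the deep input from the theory of linear forms in logarithms; once accepted as a black box, the remaining steps (controlling the height of $\beta$, bounding $|m|$ in terms of $|L|$, and identifying the field of definition) are routine and effective.
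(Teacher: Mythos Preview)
Your proposal is correct and follows exactly the route the paper indicates: the paper does not give its own argument but refers to \cite[Thm.~11.6]{GriffonUlmer}, noting that the key input is the Baker--W\"ustholz lower bound for linear forms in logarithms, which is precisely the black box you invoke after the standard reduction $|1-\alpha|\gg|\Lambda|$.

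One small correction: when $\theta$ is a genuine half-integer, $\beta=zp^{-\theta}$ need not lie in $\Q(\zeta,z)$; the ambient number field should be $\Q(\zeta,z,\sqrt{p})$, whose degree is bounded by $2\,\phi(\ord\zeta)\,[\Q(z):\Q]$. This only changes the degree bound by a factor of~$2$ and does not affect the shape of the conclusion.
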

 We refer the reader to \cite[Thm 11.6]{GriffonUlmer} for a proof of Theorem~\ref{theorem:diophantine.estimates}. The main ingredient in the proof is a lower bound for linear forms in logarithms of algebraic numbers due to Baker--W\"ustholz in~\cite{BakerWustholz93}.

We  also need some estimates on the orbits in $O$. 
As before, $p$ is a prime number and $r$ is a fixed power of $p$.
For any relatively prime integers $a,b$ which are coprime to $p$, and for any power $q$ of $p$, we let 
$S:=(\Z/a\Z)\smallsetminus\{0\}\times(\Z/b\Z)\smallsetminus\{0\}\times\F_q^\times$. 
As in \S\ref{sec:orbits}, let $O$ denote the set of orbits for the action of~$\langle r\rangle$ on $S$.
\begin{lemma}\label{lemma:estimates.orbits}
For fixed $a,b$ as above, the following bounds hold as $q\to\infty$ through powers of $p$.
\begin{enumerate}[(1)]
\item $\sum_{o\in O} |o| = |S| = (a-1)(b-1)(q-1) \ll q$,

\item $\sum_{o\in O}1 =|O| \ll q/\log q$,
\item $\sum_{o\in O}\log |o| \ll q \log\log q/\log q$.
\end{enumerate}
The implied constants  depend at most on the product $ab$.
\end{lemma}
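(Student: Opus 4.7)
The plan is to prove the three bounds in turn. Claim (1) is immediate: the orbits partition $S$, so $\sum_{o\in O}|o|=|S|=(a-1)(b-1)(q-1)\ll q$.

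For (2), I will use the orbit-size formula $|o|=\lcm\bigl(\kappa_{r,a}(i),\kappa_{r,b}(j),[\F_r(\alpha):\F_r]\bigr)$ for any representative $(i,j,\alpha)\in o$, established in \S\ref{sec:orbits}. Since $\kappa_{r,a}(i)<a$ and $\kappa_{r,b}(j)<b$, this yields the sandwich
\[[\F_r(\alpha):\F_r]\leq|o|\leq ab\,[\F_r(\alpha):\F_r].\]
The lower bound, together with the elementary identity $|O|=\sum_{s\in S}|\mathrm{orbit}(s)|^{-1}$, gives
\[|O|\leq(a-1)(b-1)\sum_{\alpha\in\F_q^\times}\frac{1}{[\F_r(\alpha):\F_r]}.\]
I will then stratify the inner sum by $d:=[\F_r(\alpha):\F_r]$, which must divide $n:=\log_r q$. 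The stratum $d=n$ contains at most $q$ elements and contributes at most $q/n\ll q/\log q$. Every proper divisor $d<n$ satisfies $d\leq n/2$, and the corresponding stratum sits inside $\F_{r^d}\subseteq\F_{r^{n/2}}$, which has at most $\sqrt q$ elements; summing over the at most $n$ proper divisors of $n$ produces the negligible contribution $\ll\sqrt q\log q$. Combining gives $|O|\ll q/\log q$.

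Finally for (3), the upper half of the sandwich gives $|o|\leq ab\cdot n=O(\log q)$ uniformly in $o\in O$, so $\log|o|=O(\log\log q)$. Combining with (2) yields
\[\sum_{o\in O}\log|o|\leq|O|\cdot O(\log\log q)\ll\frac{q\log\log q}{\log q}.\]
The only delicate step in the argument is the degree stratification of $\F_q^\times$ in (2); everything else reduces to elementary orbit counting for the product action together with the standard fact that proper subfields of $\F_q/\F_r$ have size at most $\sqrt q$.
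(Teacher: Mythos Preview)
Your argument is essentially sound and is more self-contained than the paper's, which simply identifies $O$ with a subset of $O'_{ab}$ and then invokes Lemma~11.4.1 of \cite{GriffonUlmer} as a black box. Your direct orbit count via the identity $|O|=\sum_{s\in S}|\mathrm{orbit}(s)|^{-1}$ together with the sandwich $[\F_r(\alpha):\F_r]\le|o|\le ab\,[\F_r(\alpha):\F_r]$ is exactly the right idea.

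There is, however, one slip. You set $n=\log_r q$ and assert that $[\F_r(\alpha):\F_r]$ divides $n$; this presupposes $\F_q\supseteq\F_r$, i.e.\ that $q$ is a power of $r$. In the paper's setup $q$ and $r$ are merely both powers of $p$, so $n$ need not be an integer. The fix is painless: replace $n$ by $M:=[\F_r\F_q:\F_r]=e/\gcd(e,f)$ where $q=p^e$, $r=p^f$. Then $[\F_r(\alpha):\F_r]$ divides $M$ for every $\alpha\in\F_q$, and $M\ge e/f=\log_r q$, so the main stratum $d=M$ still contributes $q/M\ll q/\log q$ (the implied constant now picking up a factor $\log_p r$). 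For a proper divisor $d<M$ one checks that $\F_{r^d}\cap\F_q$ is a \emph{proper} subfield of $\F_q$ (indeed $\F_q\subseteq\F_{r^d}$ would force $e\mid df$, hence $d=M$), so it has at most $\sqrt{q}$ elements; summing over the $\le M\le\log_p q$ divisors gives the same negligible $\ll\sqrt{q}\log q$ as before. A related cosmetic point: the containment ``$\F_{r^d}\subseteq\F_{r^{n/2}}$'' need not hold as fields, but only the cardinality bound $r^d\le\sqrt{q}$ is used, and that is fine.

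With this adjustment parts (2) and (3) go through exactly as you wrote them.
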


\begin{proof} As defined in Section~\ref{sec:orbits}, the set $S$ is a subset of $S'_{ab} = (\Z/ab\Z)\smallsetminus\{0\} \times \F_q^\times$. 
Hence $O$ may be viewed as a subset of the set $O'_{ab}$ of orbits for the action of $\langle r \rangle$ on $S'_{ab}$. Lemma~11.4.1 of \cite{GriffonUlmer} directly gives the required bounds.
\end{proof}

\subsection{Size of the special value}
For any $a,b,q$ as above, 
 for any orbit $o\in O$, recall that we have defined
\[\oomega(o) = \Gauor{\pi_a(o)}^{\nu_a(o)} \Gauor{\pi_b(o)}^{\nu_b(o)}.\]
Let $O_0$ denote the set of orbits $o\in O$ such that $\oomega(o) = r^{|o|}$,
and $O_\ast:= O\smallsetminus O_0$ denote its complement.
We require the following special case of Theorem \ref{theorem:diophantine.estimates}:

\begin{proposition}\label{prop:spval.prelim}
There exist constants $c_2, c_{3} > 0$ depending only on $a, b, p$ and $r$ such that for any orbit $o\in O$, either $\oomega(o) = r^{|o|}$ or
\[\log\left|1-\frac{\oomega(o)}{r^{|o|}}\right| \geq -c_2 -c_3\log|o|\,.\]
\end{proposition}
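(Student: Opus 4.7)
The plan is to apply the Diophantine bound of Theorem~\ref{theorem:diophantine.estimates} to a suitable integer power of $\oomega(o)/r^{|o|}$, leveraging the multiplicative decomposition from Proposition~\ref{prop:oomega.product}. The subtle point is that the exponent $[\F_r:\F_p]|o|/\theta_{a,b}$ appearing in that decomposition need not be an integer, whereas Theorem~\ref{theorem:diophantine.estimates} requires one; we circumvent this by first raising both sides to the $\theta_{a,b}$-th power.

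By Proposition~\ref{prop:oomega.product}, there exist an $ab$-th root of unity $\zeta_o$ and a Weil integer $g_o$ of size $p^{\theta_{a,b}}$ with $\oomega(o)=\zeta_o\,g_o^{[\F_r:\F_p]|o|/\theta_{a,b}}$. Raising to the $\theta_{a,b}$-th power gives the genuine integer-exponent identity $\oomega(o)^{\theta_{a,b}}=\zeta_o^{\theta_{a,b}}g_o^{[\F_r:\F_p]|o|}$, and since $r^{|o|\theta_{a,b}}=(p^{\theta_{a,b}})^{[\F_r:\F_p]|o|}$, dividing yields
\[
\bigl(\oomega(o)/r^{|o|}\bigr)^{\theta_{a,b}}=\zeta_o^{\theta_{a,b}}\bigl(g_o\,p^{-\theta_{a,b}}\bigr)^{L},\qquad L:=[\F_r:\F_p]\,|o|\in\Z_{\geq 1}\,.
\]
The input data to Theorem~\ref{theorem:diophantine.estimates} are uniformly controlled in terms of $a,b,p$: the weight $\theta=\theta_{a,b}$ depends only on $a,b,p$; the root of unity $\zeta_o^{\theta_{a,b}}$ has order dividing $ab$; and from the construction of $g_o$ (as a product of Gauss sums of characters of order dividing $ab$ on finite extensions of $\F_p$), one has $g_o\in\Q(\mu_{abp})$, hence $[\Q(g_o):\Q]\leq\varphi(abp)$.

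Applying Theorem~\ref{theorem:diophantine.estimates} with $z=g_o$, $\theta=\theta_{a,b}$, $\zeta=\zeta_o^{\theta_{a,b}}$, and $L$ as above then gives: either $(\oomega(o)/r^{|o|})^{\theta_{a,b}}=1$, or
\[
\log\bigl|1-\bigl(\oomega(o)/r^{|o|}\bigr)^{\theta_{a,b}}\bigr|\geq -c_0-c_1\log L\geq -c_0'-c_1\log|o|\,,
\]
with $c_0,c_0',c_1>0$ depending only on $a,b,p$ (the fixed summand $c_1\log[\F_r:\F_p]$ being absorbed into $c_0'$, since $r$ is fixed throughout). To descend to a bound on $|1-w|$ with $w:=\oomega(o)/r^{|o|}$, observe that $|w|=1$ and the factorization $1-w^{\theta_{a,b}}=(1-w)\sum_{k=0}^{\theta_{a,b}-1}w^k$ together with $\bigl|\sum_{k=0}^{\theta_{a,b}-1}w^k\bigr|\leq\theta_{a,b}$ yields $|1-w^{\theta_{a,b}}|\leq\theta_{a,b}\,|1-w|$, whence $\log|1-w|\geq\log|1-w^{\theta_{a,b}}|-\log\theta_{a,b}$. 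Combined with the previous display, this produces the claimed bound in the case $w^{\theta_{a,b}}\neq 1$. In the residual case where $w^{\theta_{a,b}}=1$ but $w\neq 1$, the value $w$ ranges over the finite set of nontrivial $\theta_{a,b}$-th roots of unity, so $|1-w|$ is bounded below by a positive constant depending only on $\theta_{a,b}$; enlarging $c_2$ if necessary then handles this case.

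The argument is mostly bookkeeping; beyond the two cited inputs, the only genuinely new ingredient is the $\theta_{a,b}$-th power trick used to integerize the exponent coming from Proposition~\ref{prop:oomega.product}, and the elementary inequality converting the bound on $|1-w^{\theta_{a,b}}|$ to one on $|1-w|$. The main obstacle, conceptually, is simply verifying that the algebraic invariants of $\zeta_o$ and $g_o$ (order, degree, weight) remain bounded uniformly as $o$ varies over $O$, so that the constants furnished by Theorem~\ref{theorem:diophantine.estimates} do not degrade with $q$.
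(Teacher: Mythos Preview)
Your proof is correct and follows the same architecture as the paper's: express $\oomega(o)/r^{|o|}$ via Proposition~\ref{prop:oomega.product} as $\zeta_o(g_o p^{-\theta_{a,b}})^{L_o}$ with $L_o=[\F_r:\F_p]\,|o|/\theta_{a,b}$, then invoke Theorem~\ref{theorem:diophantine.estimates}. The paper applies that theorem directly with exponent $L_o$ and finishes by asserting that $L_o$ divides $|o|$ (hence $L_o\le|o|$). You instead raise to the $\theta_{a,b}$-th power to force the exponent to be the genuine integer $L=[\F_r:\F_p]\,|o|$, and then descend via the elementary estimate $|1-w^{\theta_{a,b}}|\le\theta_{a,b}\,|1-w|$ together with a separate treatment of the finitely many nontrivial $\theta_{a,b}$-th roots of unity. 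Your extra care is justified: when $\gcd(a,i)>1$ or $\gcd(b,j)>1$ one can have $\lcm(\kappa_{p,a}(i),\kappa_{p,b}(j))$ strictly dividing $\theta_{a,b}$, and then $L_o$ need not be an integer, so the paper's direct application of Theorem~\ref{theorem:diophantine.estimates} is at least informal at that point.

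One minor observation: absorbing the term $c_1\log[\F_r:\F_p]$ into your constant $c_0'$ makes the resulting $c_2$ depend on $r$ as well as on $a,b,p$, whereas the proposition as stated claims dependence only on $a,b,p$. This is harmless for the application to Theorem~\ref{thm:specialvalue}, where $r$ is held fixed, and the paper's own bound $L_o\le|o|$ (were it valid) would likewise rely on the relative size of $[\F_r:\F_p]$ and $\theta_{a,b}$; so both arguments tacitly allow the constants to depend on the fixed ground field.
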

\begin{proof} It suffices to treat the case when $o\in O_\ast$, since otherwise $\oomega(o)= r^{|o|}$. 
Recall from \S\ref{sec:gausssums.orbits} that we may write $\oomega(o) = \zeta_o \cdot g_o^{L_o}$, where 
$\zeta_o$ is an $(ab)$\textsuperscript{th} root of unity, 
$g_o$ is a Weil integer of size $p^{\theta_{a,b}}$, and 
$L_o = [\F_r:\F_p]|o|/\theta_{a,b}$, with $\theta_{a,b}=\lcm(o_p(a),o_p(b))$.
We thus have
\[\log\left|1-\frac{\oomega(o)}{r^{|o|}}\right|
=\log\left|1-\zeta_o \cdot \big(g_o  p^{-\theta_{a,b}}\big)^{L_o}\right|\,.\]
 Applying Theorem~\ref{theorem:diophantine.estimates} and the definition of $L_{0}$ yields that
\[\log\left|1-\zeta_o \cdot \big(g_o  p^{-\theta_{a,b}}\big)^{L_o}\right|
\geq -c_0-c_1\log|L_o| \geq (-c_{0} - c_{1} \log [\F_{r}:\F_{p}]) - c_{1} \log |o|,\]
for some constants $c_0$ and $c_1$ depending on at most $p$, the integer $\theta_{a,b}$, the degree of $g_o$ over $\Q$ and the order of $\zeta_o$.
These three quantities can be bounded solely in terms of $a,b,$ and $p$, as we now explain. 
The root of unity $\zeta_o$ has order at most $ab$, 
the Gauss sum $g_o$ has degree at most $[\Q(g_o):\Q] \leq [\Q(\zeta_{a}, \zeta_b, \zeta_p):\Q] \leq abp$,
and $\theta_{a,b}\leq o_p(a)o_p(b)\leq \phi(a)\phi(b)\leq ab$.
\end{proof}

We are now ready to prove Theorem~\ref{thm:specialvalue}.

\begin{proof}[Proof of Theorem~\ref{thm:specialvalue}]
Combining the definition of $L^\ast(J)$ with the explicit expression for the $L$-function from Theorem~\ref{theorem:Lfunction} yields that 
\begin{equation*} 
    L^\ast(J) = \prod_{o\in O_0} |o| \prod_{o\in O_\ast}\left(1- \frac{\oomega(o)}{r^{|o|}}\right).
\end{equation*}
From this, we deduce that 
\begin{equation}\label{equation:size.spvalue.1}
\frac{\log L^\ast(J)}{q}
= \frac{1}{q}\sum_{o\in O_0} \log |o|
+ \frac{1}{q}\sum_{o\in O_\ast} \log \left|1-\frac{\oomega(o)}{r^{|o|}}\right|.
\end{equation}
We now estimate the two terms on the right-hand side separately. Lemma \ref{lemma:estimates.orbits}{\it(3)} gives
\begin{equation}\label{equation:size.spvalue.2}
0\leq\frac{1}{q}\sum_{o\in O_0}\log|o|
\leq\frac{1}{q}\sum_{o\in O^\times}\log|o|
\ll \frac{q}{q} \frac{\log\log q}{\log q}
\ll \frac{\log\log q}{\log q}.    
\end{equation}
As $q$ tends to infinity through powers of $p$, this term is $o(1)$.

We estimate the second term on the right-hand side of \eqref{equation:size.spvalue.1} in two steps. 
We begin by proving a suitable upper bound.
Since  
$|\oomega(o)|=r^{|o|}$ for all $o\in O$,
the triangle inequality implies that
\[ \frac{1}{q}\sum_{o\in O_\ast} \log \left|1-\frac{\oomega(o)}{r^{|o|}}\right| 
\leq \frac{|O_\ast|}{q} \log 2 \leq  \frac{|O|}{q}\log 2\,.\]
We know from Lemma~\ref{lemma:estimates.orbits}{\it(2)} that $|O|/q \ll (\log q)^{-1} $ as $q$ tends to infinity.

We now prove the required lower bound. 
By Proposition \ref{prop:spval.prelim}, we have
\[ - \frac{1}{q}\sum_{o\in O_\ast} \log \left|1-\frac{\oomega(o)}{r^{|o|}}\right| 
\leq \frac{1}{q}\sum_{o\in O_\ast} c_2 + c_3 \log|o| 
\leq c_2 \frac{|O|}{q} + c_3   \frac{1}{q} \sum_{o\in O_\ast}\log |o|\,. \]
By Lemma~\ref{lemma:estimates.orbits}{\it(2)}, we have $|O|/q \ll (\log q)^{-1}$.  Lemma~\ref{lemma:estimates.orbits}{\it(3)} implies that $\sum_{o\in O_\ast}\log |o|$ is $o(q)$ as $q\to\infty$.
Thus, the second terms on the right-hand side of  \eqref{equation:size.spvalue.1} satisfies
\begin{equation}\label{equation:size.spvalue.3}
  - \frac{\log\log q}{\log q}\ll \frac{1}{q}\sum_{o\in O_\ast} \log \left|1-\frac{\oomega(o)}{r^{|o|}}\right| \ll \frac{1}{\log q}
\end{equation}
as $q\to\infty$ through powers of $p$. 
Summing the inequalities \eqref{equation:size.spvalue.2} and \eqref{equation:size.spvalue.3} yields that
\[ - \frac{\log\log q}{\log q} \ll \frac{\log L^\ast(J)}{q} \ll \frac{1}{\log q}, \]
as $q\to\infty$ through powers of $p$.
We conclude that
\[ \frac{|\log L^\ast(J)|}{q} = O\left(\frac{\log \log q}{\log q}\right)
\qquad \text{as } q\to\infty\,.\] 
Our estimate from the height $H(J)$ in Lemma~\ref{lem:height-computation}  shows that the ratio $q/\log H(J)$ remains bounded (in terms of constants depending only on $a$ and $b$) as $q$ varies.
We conclude that
\[\frac{|\log L^\ast(J)|}{\log H(J)} = \frac{|\log L^\ast(J)|}{q} \frac{q}{\log H(J)}  = o(1)\,. \]
The implicit constants depend at most on $a,b,p,$ and $r$.
This concludes the proof of Theorem \ref{thm:specialvalue}.
\end{proof}

\subsection{Analogue of the Brauer--Siegel theorem}\label{sec:BrauerSiegel}
Combining Theorem~\ref{thm:specialvalue} and the Birch and Swinnerton-Dyer conjecture (Theorem \ref{thm:bsd}), we arrive at the following estimate.
\begin{customcor}{\ref{cor:brauersiegel}}\label{cor:brauersiegelII}
For given $a,b,$ and $r,$ as $q\to\infty$ runs through powers of $p$, we have
\[\log\big(|\Sh(J)|\,\Reg(J)\big) \sim \log H(J).\]
\end{customcor}
In the interpretation suggested by \cite{HindryPacheco}, this result provides an analogue of the Brauer--Siegel theorem for the family $(J_{a,b,q})_{q}$ of Jacobians.

Note that, except for a few examples in \cite[\S10.4, \S11.4]{Ulmer2019}, the relationship between the asymptotic growth rate of the product $|\Sh(A)|\,\Reg(A)$ and the asymptotic growth rate of the height $H(A)$ has not previously been elucidated in any sequence of abelian varieties $A$ of dimension greater than $1$. We note that there are several sequences of elliptic curves for which similar behaviour has been described. See \cite{HindryPacheco, GriffonPHD16, GriffonLegendre18, GriffonAS18, GriffonUlmer} for examples.

\begin{proof}
By the BSD formula (see \eqref{eq:BSD.formula} in Theorem~\ref{thm:bsd}), we have
\begin{equation*}
\frac{\log\big(|\Sh(J)|\,\Reg(J)\big)}{\log H(J)} 
= 1 - \frac{\log r^g}{\log H(J)} 
+ \frac{2 \log |J(K)_{\mathrm{tors}}|}{\log H(J)}
- \frac{\log \prod_v c_v}{\log H(J)} + \frac{\log L^\ast(J)}{\log H(J)}.
\end{equation*}
For a fixed pair $(a,b)$, the genus $g$ of $C=C_{a,b,q}$ is constant as $q$ varies. Hence the term $\log r^{g} / \log H(J) $ is $o(1)$ as $q\to\infty$. 
By Theorem 3.8 in \cite{HindryPacheco}, we have
\[\log|J(K)_{\mathrm{tors}}| = o\big(\log H(J)\big),\]
as $q\to\infty$ for fixed $a,b$, and $r$.
Furthermore, since the local Tamagawa numbers $c_v$ are all equal to~$1$ (see Proposition~\ref{prop:TamagawaNumber1}), we have 
 ${\log \prod_v c_v}  = 0$. 

Now, Theorem~\ref{thm:specialvalue} shows that the term ${\log L^\ast(J)}/{\log H(J)} $ is also $o(1)$ as $q\to\infty$. 
All in all, we obtain 
\[\frac{\log\big(|\Sh(J)|\,\Reg(J)\big)}{\log H(J)}  = 1+o(1),\]
\emph{ce qu'il fallait d\'emontrer.}
\end{proof}

\section{Large Tate--Shafarevich Groups}\label{sec:sha}

In this section we prove Theorem~\ref{thm:largesha}, which we recall for convenience:

\begin{customthm}{\ref{thm:largesha}}

Fix parameters $a,b$, and $r$ which satisfy the hypotheses of Theorem~\ref{thm:rankzero}. 
Then, as $q$ runs through powers of $p$, we have 
\[
|\Sh(J)| = H(J)^{1+ o(1)}.
\]

\end{customthm}

\begin{proof}
By Corollary \ref{cor:brauersiegelII}, we have 
\[\frac{\log\big(|\Sh(J)|\,\Reg(J)\big)}{\log H(J)}  = 1+o(1).\]
Theorem~\ref{thm:rankzero} shows that given
the hypotheses made on $(a,b)$, the analytic rank of $J$ is $0$ and so $\Reg(J) = 1$. Hence, we have
\[\frac{\log|\Sh(J)|}{\log H(J)} = 1 +o(1), \]
as $q\to\infty$ through powers of $p$. 
\end{proof}

\begin{corollary}
There are arbitrarily large integers $d\geq 1$ such that there exists an infinite sequence of $K$-simple Abelian varieties $A$ over $K$ of dimension $d$  satisfying
\[|\Sh(A)| = H(A)^{1+o(1)}
\qquad \text{as }H(A)\to\infty\,.\]
\end{corollary}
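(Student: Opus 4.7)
The plan is to assemble this corollary directly from Theorem~\ref{thm:largesha}, Theorem~\ref{thm:simplicity}, Lemma~\ref{lem:infinitude-of-bad-omega}, and Lemma~\ref{lem:height-computation}. Fix a prime $p$. By Lemma~\ref{lem:infinitude-of-bad-omega}, at least one of the three conditions of Theorem~\ref{thm:rankzero} is satisfied by infinitely many pairs of \emph{primes} $(a,b)$; moreover, the construction in the proof of that lemma (which relies on Siegel's theorem on $S$-unit equations) produces such prime pairs with $\min(a,b)$ arbitrarily large.

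Now fix any such pair of primes $(a,b)$, set $g=(a-1)(b-1)/2$, and pick a power $r$ of $p$. As $q$ ranges over powers of $p$, the Jacobians $J_{a,b,q}$ form an infinite family of abelian varieties over $K=\F_r(t)$ of dimension $g$. Theorem~\ref{thm:simplicity} shows that each $J_{a,b,q}$ is $K$-simple, precisely because both $a$ and $b$ are prime. Since the pair $(a,b)$ satisfies the hypotheses of Theorem~\ref{thm:rankzero}, Theorem~\ref{thm:largesha} gives
\[
|\Sh(J_{a,b,q})| = H(J_{a,b,q})^{1+o(1)} \qquad \text{as } q \to \infty.
\]
To reformulate this as the asymptotic required by the corollary, I appeal to Lemma~\ref{lem:height-computation}, which shows that $h(J_{a,b,q}) = Dq + E$ with $D>0$ depending only on $a$ and $b$; in particular, $H(J_{a,b,q}) \to \infty$ along the sequence, so the asymptotic may be rewritten as $|\Sh(A)| = H(A)^{1+o(1)}$ as $H(A)\to\infty$ with $A$ running through $\{J_{a,b,q}\}_q$.

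Finally, allowing the prime pair $(a,b)$ itself to vary along the infinite family produced by Lemma~\ref{lem:infinitude-of-bad-omega} with $\min(a,b) \to \infty$, the dimension $d = g = (a-1)(b-1)/2$ is unbounded; for each such $d$, the infinite sequence $\{J_{a,b,q}\}_q$ furnishes the required simple abelian varieties. No step here is a serious obstacle: the corollary is essentially a repackaging of results already established. The only subtle point is the simultaneous requirement that $a,b$ be prime, that $\min(a,b)$ be arbitrarily large, and that the rank-zero hypotheses of Theorem~\ref{thm:rankzero} hold, and this is exactly what the last paragraph of the proof of Lemma~\ref{lem:infinitude-of-bad-omega} secures.
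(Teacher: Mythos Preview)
Your proof is correct and follows essentially the same approach as the paper's: both invoke Lemma~\ref{lem:infinitude-of-bad-omega} to produce prime pairs $(a,b)$ of arbitrarily large size satisfying a condition of Theorem~\ref{thm:rankzero}, apply Theorem~\ref{thm:simplicity} for simplicity, and conclude via Theorem~\ref{thm:largesha}. You add the minor extra justification (via Lemma~\ref{lem:height-computation}) that $H(J_{a,b,q})\to\infty$ as $q\to\infty$, which the paper leaves implicit; and where the paper only needs $(a-1)(b-1)\geq 2d_0$, you assert the slightly stronger (but still true, from the proof of Lemma~\ref{lem:infinitude-of-bad-omega}) fact that $\min(a,b)$ can be taken arbitrarily large.
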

\begin{proof}
Let $d_0\geq 1$ be any integer. 
By Lemma~\ref{lem:infinitude-of-bad-omega}, we may choose a pair of coprime integers $(a,b)$ such that $a$ and $b$ are both prime, $(a-1)(b-1)\geq 2d_0$, and one of the conditions of Theorem \ref{thm:rankzero} is satisfied.
For such a pair $(a,b)$, consider the sequence  $(J_{a,b,q})_q$ of Jacobian varieties of dimension $d=(a-1)(b-2)/2$ indexed by powers $q$ of $p$. 
Since both $a$ and $b$ are prime,  Theorem~\ref{thm:simplicity} says that for any power $q$ of $p$, the Jacobian $J_{a,b,q}$ is $K$-simple.
By Theorem~\ref{thm:largesha}, the sequence 
$(J_{a,b,q})_q$ satisfies $|\Sh(J_{a,b,q})| = H(J_{a,b,q})^{1+ o(1)}$ as $q$ grows.
\end{proof}

\small
\bibliographystyle{alpha}
\bibliography{biblio.bib}

\normalsize
\vfill

\noindent\rule{7cm}{0.5pt}

\smallskip

\noindent
Sarah {\sc Arpin} ({\it \href{sarah.arpin@colorado.edu}{sarah.arpin@colorado.edu}}) --
{\sc University of Colorado Boulder, } 
Boulder, CO 80309 (USA).
\medskip

\noindent
Richard {\sc Griffon} ({\it \href{richard.griffon@uca.fr}{richard.griffon@uca.fr}}) --
{\sc Laboratoire de Math\'ematiques B. Pascal, 
Universit\'e Clermont--Auvergne,} 
Campus des C\'ezeaux,
3 place Vasarely,
TSA~60026 CS~60026, 
63178 Aubi\`ere Cedex (France).

\medskip
\noindent
Libby {\sc Taylor} ({\it \href{lt691@stanford.edu}{lt691@stanford.edu}}) --
{\sc Stanford University,} 
380 Serra Mall, Stanford, CA 94305 (USA).
\medskip

\noindent
Nicholas {\sc Triantafillou} ({\it \href{nicholas.triantafillou@gmail.com}{nicholas.triantafillou@gmail.com}}) --
{\sc University of Georgia,}
Boyd Graduate Research Center, Athens, GA 30602 (USA).
\medskip

\newpage

\appendix

\section{Conductor Computations}\label{app:Conductor}

Recall that $N_{J} \in \text{Div}(\mathbb{P}^1)$ is the conductor divisor of $J/K$.

\begin{proposition}\label{prop:degOfL}
We prove the statement from Theorem~\ref{thm:weil2} regarding the global degree $b(J)$ of the $L$-function $L(J,T)$:
\[b(J) = \deg(N_J) - 4g.\]

\end{proposition}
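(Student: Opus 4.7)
The plan is to interpret $b(J)$ cohomologically on $\mathbb{P}^1$ and then evaluate it via the Grothendieck--Ogg--Shafarevich formula. Fix a prime $\ell \neq p$. Let $U \subset \mathbb{P}^1_{\mathbb{F}_r}$ denote the open subscheme of good reduction for $J$, and let $j : U \hookrightarrow \mathbb{P}^1$ be the inclusion. Let $\mathcal{F}$ be the lisse $\overline{\mathbb{Q}}_\ell$-sheaf on $U$ corresponding to the Galois representation $H^1(J_{\overline{K}}, \overline{\mathbb{Q}}_\ell)$, so $\operatorname{rank}\mathcal{F} = 2g$. By Grothendieck's trace formula, the Euler product \eqref{eq:def.Lfunction} admits the cohomological expression
\[
L(J,T) = \prod_{i=0}^{2} \det\bigl(1 - T\cdot \mathrm{Fr} \,\bigm|\, H^{i}(\mathbb{P}^1_{\overline{\mathbb{F}_r}}, j_*\mathcal{F})\bigr)^{(-1)^{i+1}}.
\]
Writing $h^i \colonequals \dim H^i(\mathbb{P}^1_{\overline{\mathbb{F}_r}}, j_*\mathcal{F})$ and taking degrees of the rational function on each side, one obtains
\[
b(J) \;=\; h^1 - h^0 - h^2 \;=\; -\chi\bigl(\mathbb{P}^1_{\overline{\mathbb{F}_r}}, j_*\mathcal{F}\bigr).
\]

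Next, I will compute this Euler characteristic via Grothendieck--Ogg--Shafarevich. For the compactly-supported cohomology of the lisse sheaf $\mathcal{F}$ on $U$, the formula gives
\[
\chi_c(U_{\overline{\mathbb{F}_r}}, \mathcal{F}) \;=\; \chi_c(U_{\overline{\mathbb{F}_r}})\cdot \operatorname{rank}\mathcal{F} \;-\; \sum_{v \in \mathbb{P}^1 \setminus U} \mathrm{Sw}_v(\mathcal{F}),
\]
where the sum is over geometric points. The excision sequence relating $j_*\mathcal{F}$ to $j_!\mathcal{F}$ supplies
\[
\chi(\mathbb{P}^1_{\overline{\mathbb{F}_r}}, j_*\mathcal{F}) \;=\; \chi_c(U_{\overline{\mathbb{F}_r}}, \mathcal{F}) \;+\; \sum_{v \in \mathbb{P}^1 \setminus U} \dim \mathcal{F}^{I_v}.
\]

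Now I would invoke the Ogg--Serre formula for the local conductor exponent, namely
\[
f_v(\mathcal{F}) \;=\; \operatorname{rank}\mathcal{F} - \dim \mathcal{F}^{I_v} + \mathrm{Sw}_v(\mathcal{F}),
\]
so that the Swan terms cancel against part of the rank terms. Using $\chi_c(U_{\overline{\mathbb{F}_r}}) = 2 - s$, where $s$ is the number of geometric missing points, a direct substitution gives
\[
\chi(\mathbb{P}^1_{\overline{\mathbb{F}_r}}, j_*\mathcal{F}) \;=\; 2\cdot\operatorname{rank}\mathcal{F} \;-\; \sum_{v \in \mathbb{P}^1 \setminus U} f_v(\mathcal{F}).
\]
Since each closed point of degree $d$ contributes $d$ geometric points with the same local conductor, the geometric sum on the right equals $\deg N_J$. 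Combined with $\operatorname{rank}\mathcal{F} = 2g$, this yields $\chi = 4g - \deg N_J$, and therefore $b(J) = \deg N_J - 4g$.

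No real obstacle is expected; the one point requiring a pinch of care is the bookkeeping between closed points of $\mathbb{P}^1_{\mathbb{F}_r}$ and geometric points after base change to $\overline{\mathbb{F}_r}$, so that the ``geometric'' Ogg--Serre sum indeed reproduces $\deg N_J$. (As a consistency check, in the case at hand one can specialize: by Proposition~\ref{prop:unipotentreduction}, $\mathcal{F}^{I_v} = 0$ at every bad place, and tameness of the potentially-good-reduction extension $\mathbb{F}_r(\sqrt[ab]{t^q - t})/K$ forces $\mathrm{Sw}_v(\mathcal{F}) = 0$; so $f_v = 2g$ at each bad $v$, and one recovers $\deg N_J = 2g(q+1)$ and $b(J) = 2g(q-1)$, matching Remark~\ref{remark:deg_check}.)
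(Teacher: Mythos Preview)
Your proposal is correct and follows essentially the same strategy as the paper---express $b(J)$ as minus an Euler characteristic and evaluate it by the Grothendieck--Ogg--Shafarevich formula. The organization differs slightly: the paper splits the Euler product into good and bad factors, computes $\deg \tilde L(J,T) = -\chi(U,\mathcal F)$ for the good part directly on the open $U$, and then subtracts $\sum_{\text{bad }v}\dim V_\ell^{I_v}$; you instead work with $j_*\mathcal F$ on the complete $\mathbb P^1$ and pass to $\chi_c(U,\mathcal F)$ via excision. The paper also invokes tameness (specific to this $J$) at the outset so that all Swan terms vanish, whereas you keep the Swan conductors in the general Ogg--Serre formula $f_v = \operatorname{rank}\mathcal F - \dim\mathcal F^{I_v} + \mathrm{Sw}_v(\mathcal F)$ and let them cancel algebraically; this makes your argument apply to an arbitrary abelian variety over $K$, not just the tame ones at hand. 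Your remark about the bookkeeping between closed points and geometric points is exactly the right caveat, and your consistency check against Remark~\ref{remark:deg_check} matches the paper's computation.
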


\begin{proof}

We begin by defining the conductor divisor $N_J$ as a divisor on the base $\P^1$. 
The action of inertia $I_v$ on the $\ell$-adic Tate module $V_\ell$ is tame\footnote{\cite{SerreTate} proves this when $p>2g+1$.  In our case, we can remove the hypothesis on $p$ as follows.  $J$ becomes trivial after a degree $ab$ field extension.  Over this extension, the action of inertia is trivial, so descending back to $K$ gives that the ramification degree must divide $ab$.  But $ab$ is prime to $p$, so the ramification must be tame.}.  For any place $v$ of $K$, define
\[
f(v):=\dim(V_\ell) - \dim(V_\ell^{I_v}),
\]
and let the conductor of $J$ be the divisor $N_J:=\sum_{v}f(v)v$ on $\P^1$. 
By~\cite{Serre}, $f(v)=0$ whenever $v$ is a place of good reduction for $J$.  Plugging in $\dim(V_\ell)=2g$ gives
\[\deg(N_J)= \sum_{v \text{ bad reduction}}(2g - \dim(V_\ell^{I_v})) \deg v,\]
where the sum  is over places $v$ of $K$ where $J$ has bad reduction.
Now, we investigate the $L$-function and see how its global degree relates to $\deg N_J$. Begin with the definition:
\[L(J,T):= \prod_{v}\det(1 - T\frob_v^{-1}|V_\ell^{I_v})^{-1}. \]

This product can be split up into products over good and bad places of $C$:
\[L(J,T):= \prod_{\text{good }v}\det(1 - T\text{Fr}_v^{-1}|V_\ell^{I_v})^{-1}\prod_{\text{bad }v}\det(1 - T\text{Fr}_v^{-1}|V_\ell^{I_v})^{-1}. \]
Let $\tilde{L}(J,T):= \underset{\text{good }v}{\prod}\det(1 - T\text{Fr}_v^{-1}|V_\ell^{I_v})^{-1}$. This gives a decomposition of the global degree:
\[\deg(L(J,T)) = \deg(\tilde{L}(J,T)) - \underset{\text{bad }v}{\sum}\dim(V_\ell^{I_v}).\]
Since $L(J,T)$ is rational, and since the sum $\underset{\text{bad }v}{\sum}\dim(V_\ell^{I_v})$ is finite, the ``complement'' $\tilde{L}(J,T)$ is also rational.
From here, we need a more precise formula for $\deg(\tilde{L}(J,T))$.  Let $U$ denote the affine open subset of $\P^1$ above which $J$ has good reduction. 
Since $U$ is a punctured $\P^1$, by the \'etale-singular cohomology comparison theorem, we have $\chi(U,\overline{\Q_\ell}):=\dim H^0(U,\overline{\Q}_\ell)-\dim H^1(U,\overline{\Q}_\ell)+\dim H^2(U,\overline{\Q}_\ell)=2-2g(\P^1)-r$, 
where $g(\P^1)$ is the genus of $\P^1$ 
and $r$ is the number of geometric points over which $J$ has bad reduction.  That is, $r$ is the sum of the degrees of places of bad reduction for $J$, namely $r=\underset{\text{bad }v}{\sum} \deg v$.
Therefore $\chi(U,\overline{\Q_\ell})=2-r$.  

\newcommand{\Fcal}{\mathcal{F}}
The Grothendieck--Ogg--Shafarevich formula (see~\cite{Castillejo16}) yields that
\[\chi(U,\Fcal)=\chi(U,\overline{\Q_\ell})\cdot\rank(\Fcal)-\sum_{x\in \P^1\setminus U}(\rank(\Fcal)+Sw_x(\Fcal)),\]
where in our case $\Fcal=V_\ell$, which is a lisse $\ell$-adic sheaf of rank $\dim V_\ell = 2g$ on $U$.  
Since the action of inertia on $V_\ell$ is tame (see \cite[Corollary 2, p. 497]{SerreTate}), this implies that
\[
\chi(U,\Fcal)=\chi(U,\overline{\Q_\ell})\cdot \rank(\Fcal)=2g(2-r).
\]
Now, since $\deg \tilde{L}(J,T)=-\chi(U,\Fcal)$, we deduce that
$\det \tilde{L}(J,T) =-2g(2-r)$. 
Putting this back into the equation for $\deg L(J,T)$ gives
\begin{align*}
    \deg(L(J,T)) &= \deg(\tilde{L}(J,T)) - \sum_{\text{bad $v$}}\dim(V_\ell^{I_v})
    =-4g + \sum_{\text{bad $v$}}2g - \sum_{\text{bad $v$}}\dim(V_\ell^{I_v})\\
    &= \sum_{\text{bad $v$}}(2g -\dim(V_\ell^{I_v})) - 4g
    = \deg(N_J)-4g.
\end{align*}
\end{proof}

\end{document}